\documentclass{amsart}
\usepackage[margin=2cm]{geometry}
\usepackage{amsthm}
\usepackage{amsmath}
\usepackage{amssymb}
\usepackage[all]{xy}
\usepackage{graphicx}
\usepackage{indentfirst}
\usepackage{bm}
\usepackage{mathrsfs}
\usepackage{latexsym}
\usepackage{color}
\usepackage{hyperref}
\usepackage{microtype}
\usepackage{tikz}
\usepackage{tikz-cd}
\usepackage{enumerate}
\usepackage{comment}
\usepackage{braket}
\usepackage{manfnt}
\usepackage{hyperref}
\usepackage[utf8]{inputenc}
\usepackage{relsize}
\usepackage{listings}
\usepackage{textcomp}
\usepackage{spverbatim}
\usepackage[T1]{fontenc}
\usepackage{lmodern}
\usepackage{bbm}
\usepackage{dsfont}
\usepackage{mathtools}
\usepackage{enumitem}
\allowdisplaybreaks
\numberwithin{equation}{section}

\definecolor{DarkGreen}{RGB}{0,100,0}      
\definecolor{DarkBlue}{RGB}{0,0,139}       
\definecolor{DarkYellow}{RGB}{204,204,0}   
\definecolor{DarkRed}{RGB}{139,0,0}        
\definecolor{DarkOrange}{RGB}{204,90,0}  

\begin{document}

\theoremstyle{plain}
\newtheorem{theorem}{Theorem}[section]
\newtheorem{main}{Main Theorem}
\newtheorem{proposition}[theorem]{Proposition}
\newtheorem{corollary}[theorem]{Corollary}
\newtheorem{lemma}[theorem]{Lemma}
\newtheorem{conjecture}[theorem]{Conjecture}
\newtheorem{claim}[theorem]{Claim}
\newtheorem{fact}[theorem]{Fact}
\newtheorem{question}[theorem]{Question}
\newtheorem{statement}[theorem]{Statement}
\newtheorem{speculation}[theorem]{Speculation}
\newtheorem*{st}{Statements}	

\theoremstyle{definition}
\newtheorem{definition}[theorem]{Definition}
\newtheorem{notation}[theorem]{Notation}
\newtheorem{convention}[theorem]{Convention}
\newtheorem{example}[theorem]{Example}
\newtheorem{remark}[theorem]{Remark}
\newtheorem*{ac}{Acknowledgements}	

\newcommand{\Rep}{\mathrm{Rep}}
\newcommand{\Corep}{\mathrm{Corep}}
\newcommand{\ch}{\mathrm{ch}}
\newcommand{\VVec}{\mathrm{Vec}}
\newcommand{\PSU}{\mathrm{PSU}}
\newcommand{\SO}{\mathrm{SO}}
\newcommand{\SU}{\mathrm{SU}}
\newcommand{\FPdim}{\mathrm{FPdim}}
\newcommand{\PSL}{\mathrm{PSL}}

\newcommand{\mC}{\mathcal{C}}
\newcommand{\mB}{B}
\newcommand{\hc}{\Hom_{\mC}}
\newcommand{\one}{{\bf 1}} 
\newcommand{\id}{{\mathrm{id}}} 
\newcommand{\spec}{i_0} 
\newcommand{\Sp}{i_0} 
\newcommand{\SpecS}{I_{s}} 
\newcommand{\SpS}{I_{s}'} 
\newcommand{\field}{\mathbbm{k}} 
\newcommand{\white}{\textcolor{red}{\bullet}} 
\newcommand{\black}{\bullet} 
\newcommand{\ev}{{\mathrm{ev}}} 
\newcommand{\coev}{{\mathrm{coev}}} 
\newcommand{\tr}{{\mathrm{tr}}} 
\newcommand{\Tr}{{\mathrm{Tr}}} 
\newcommand{\End}{{\mathrm{End}}}
\newcommand{\Hom}{{\mathrm{Hom}}}
\newcommand{\customcite}[2]{\cite[#2]{#1}}
\newcommand{\Irr}{\rm Irr}
\newcommand{\inner}[2]{\langle #1, #2 \rangle_A}
\newenvironment{restatetheorem}[1]
{
  \par\addvspace{0.25\baselineskip}%
  \noindent\textbf{Theorem~\ref{#1}.}\thinspace\itshape
}
{
  \par\addvspace{0.25\baselineskip}
}
\newenvironment{restatecorollary}[1]
  {
   \par\addvspace{0.25\baselineskip}
   \noindent\textbf{Corollary \ref{#1}.}\thinspace \itshape
  }
  {
   \par\addvspace{0.25\baselineskip}
  }  
\newenvironment{restateproposition}[1]
  {
   \par\addvspace{0.25\baselineskip}
   \noindent\textbf{Proposition \ref{#1}.}\thinspace \itshape
  }
  {
   \par\addvspace{0.25\baselineskip}
  }   

\newcommand{\sebastien}[1]{\textcolor{blue}{#1}}

\def\ZZ{{\mathbb Z}}
\def\NN{{\mathbb N}}
\def\RR{{\mathbb R}}
\def\CC{{\mathbb C}}
\def\QQ{{\mathbb Q}}
\def\HH{{\mathbb H}}
\def\EE{{\mathbb E}}

\def\cL{{\mathcal L}}
\def\cS{{\mathcal S}}
\def\cN{{\mathcal N}}

\def\Aut{\operatorname{Aut}}
\def\csupp{\operatorname{csupp}}

\setcounter{MaxMatrixCols}{20}

\title{Frobenius subalgebra lattices in tensor categories}

\author{Mainak Ghosh}
\address{M. Ghosh, Beijing Institute of Mathematical Sciences and Applications, Huairou District, Beijing, China}
\email{mainakghosh@bimsa.cn}

\author{Sebastien Palcoux}
\address{S. Palcoux, Beijing Institute of Mathematical Sciences and Applications, Huairou District, Beijing, China}
\email{sebastienpalcoux@gmail.com}
\urladdr{https://sites.google.com/view/sebastienpalcoux}

\keywords{Frobenius subalgebras, lattices, tensor categories, monoidal categories, subfactor theory, planar algebras, C*-algebras, Hopf algebras, vertex operator algebras, quantum arithmetic}
\subjclass[2020]{18M05, 18M20, 46L37, 46L05, 16T05 (Primary) 05E16, 11N37, 17B69 (Secondary)}

\maketitle

\begin{abstract} 
This paper generalizes Watatani's finiteness theorem for intermediate subfactors to a wide class of monoidal categories. We characterize the sublattices of Frobenius subalgebra posets in abelian monoidal categories by introducing a notion of ambient selfduality. By extending several key results---such as the planar algebraic exchange relation and Landau's theorems---to linear monoidal categories, we establish a structural rigidity property for a formal angle associated to every coherent pair of Frobenius subalgebras (that is, whose intersection and sum are ambiently selfdual). Furthermore, within a weak positivity framework, we deduce that such coherent sublattices are finite for any connected Frobenius algebra. This significantly generalizes Watatani's theorem, since the unitary Frobenius subalgebra lattices are inherently coherent through a property termed rigidity invariance.

Applications of this work include a unified framework that encompasses several previously unrelated finiteness results. Specifically, we recover the finiteness of the left coideal subalgebra lattice of a finite-dimensional semisimple Hopf algebra (Etingof-Walton theorem) under a well-supported coherence hypothesis, as well as the finiteness of the intermediate $C^*$-algebra lattice for a finite-index unital irreducible inclusion of $C^*$-algebras (relaxing simplicity in Ino-Watatani theorem) under an $E$-compatibility condition shown to be unavoidable. Furthermore, we present a variety of new applications involving abstract spin chains and vertex operator algebras, alongside speculations on quantum arithmetic that include extensions of Ore's theorem, Euler's totient and sigma functions, and RH.
\end{abstract}
%
\section{Introduction}

A \emph{subfactor} is a unital inclusion of factors. The modern theory of subfactors (type ${\rm II}_1$) was initiated by Vaughan Jones \cite{J83}. In \cite{J80}, it was shown that every finite group $G$ admits an outer action on the hyperfinite ${\rm II}_1$ factor $R$, so that the resulting crossed-product $R \rtimes G$ is again a ${\rm II}_1$ factor. The intermediate subfactor lattice of $ (R \subseteq R \rtimes G) $ is isomorphic to the subgroup lattice of $ G $. Since $ G $ is finite, this lattice is also finite. Generalizing this, Yasuo Watatani \cite{Wat96} proved that any irreducible finite index subfactor has a finite intermediate subfactor lattice, henceforth referred to as \emph{Watatani's theorem}. This paper aims to extend this theorem to the setting of rigid abelian monoidal categories and explore its applications. This closely aligns with the approach in \cite{JP19}, where the authors emphasize the importance of distinguishing the functional analysis aspects of subfactor theory from its categorical components.
%
The original proof of Watatani's theorem in \cite{Wat96} relies on functional analysis. An alternative proof using planar algebra and angles between biprojections was later provided in \cite{BDLR19}. Dave Penneys observed that Watatani's theorem can be reformulated within the framework of unitary tensor categories by using the concept of a Frobenius algebra object in a monoidal category \cite{Lon94, Mug03, Str04, FS08, BKLR15, JP20}.

A \emph{Frobenius algebra} in $\VVec$ is a finite-dimensional unital algebra $ A $ that is isomorphic to its dual $ A^* $ as an $ A $-module. Alternatively, it can be characterized by the existence of a nondegenerate associative bilinear form $ \langle \cdot,\cdot \rangle $, as described in \cite[Chapter IV, \S 1]{SY11}. 
Any complexified fusion ring forms a Frobenius algebra, where $ \langle a, b \rangle $ is the coefficient of the unit summand in $ ab $. The standard invariant of subfactors, as discussed in \cite{JS97}, can be axiomatized as a Frobenius algebra object. For further details, refer to \cite{Mug03}, along with Example \ref{ex:subf} and Remark \ref{rk:subf}.


A key first step toward our goal was formulating a monoidal category version of the \emph{exchange relation} from planar algebras \cite{Bi94, BJ00, Lan02, Liu16}, which we establish in Theorem \ref{thm:exchange}. Building on this, we generalize \emph{Landau's theorem} (Theorem \ref{thm:landau}) to connected Frobenius algebras in any $ \mathbbm{k} $-linear monoidal category. The Frobenius algebra connectedness, which generalizes subfactor irreducibility, plays a crucial role in the proof. 

Equivalence classes of Frobenius subalgebras (Definitions \ref{def:sub} and \ref{def:equi})---capturing the concept of substructure categorically---provide the precise framework needed to define the \emph{Frobenius subalgebra poset} (Definition \ref{def:FrobSubPoset}). A Frobenius algebra generalizes the concept of a subfactor, while these equivalence classes correspond to intermediate subfactors. See Proposition~\ref{prop:biprojection} for the connection with biprojections.

A central issue in extending Watatani's theorem lies in equipping the Frobenius subalgebra poset in a tensor category with a \emph{lattice} structure. While such a structure arises naturally for subgroups and intermediate subfactors, it fails for Frobenius subalgebras in general, as demonstrated by Dave Benson and Will Sawin in~\cite{BenMO2, WillMO1}; the first such instance is detailed in Example~\ref{ex:Ben02}.

To clarify this aspect and address this limitation, we introduce the notion of the \emph{ambient dual} in \S\ref{sub:Ambient}, and show that, in an abelian monoidal category, the Frobenius subalgebras are precisely the ambiently selfdual unital subalgebras (Proposition~\ref{prop:AmbientFrobSub}). This allows us to characterize the sublattices of the Frobenius subalgebra poset:

\begin{restatetheorem}{thm:AmbientSublattice}
Let $X$ be a finite-length Frobenius algebra in an abelian rigid monoidal category. A subposet of its Frobenius subalgebra poset extends to a sublattice with intersection if and only if it preserves ambient selfduality.
\end{restatetheorem}

To obtain a large class of such sublattices, we introduce the concept of \emph{rigidity invariance}, developed in the semisimple case in~\S\ref{sec:RigidInvariant}. This concept requires the choice of a basis~$\mathcal{B}$, thereby providing a \emph{perspective} under which the Frobenius subalgebra poset \emph{collapses} to the Frobenius $\mathcal{B}$-subalgebra sublattice (see Notation~\ref{not:B-sub}). Furthermore, every Frobenius subalgebra belongs to one of these sublattices.

\begin{restatetheorem}{thm:Bsublattice}
For every Frobenius algebra that is semisimple of finite length as an object in a $\mathbb{C}$-linear abelian rigid monoidal category, its Frobenius $\mathcal{B}$-subalgebra poset is a lattice.
\end{restatetheorem}


\begin{restatecorollary}{cor:lattice}
For every Frobenius algebra in a semisimple tensor category over $\mathbb{C}$, its Frobenius $\mathcal{B}$-subalgebra poset is a lattice.
\end{restatecorollary}

The dependence on $\mathcal{B}$ can be eliminated by considering the sublattice of Frobenius subalgebras that are rigid invariant with respect to every basis (see Corollary~\ref{cor:lattice0}). While rigidity invariance is \emph{essential} in general---as demonstrated by Example~\ref{ex:Ben02}---it represents a relaxation of the unitary condition (Proposition~\ref{prop:UnitFrobSub}). Hence:

\begin{restatecorollary}{cor:latticeunitary}
For every unitary Frobenius algebra in a unitary tensor category, its unitary Frobenius subalgebra poset is a lattice.
\end{restatecorollary}

The following problem remains open for a general unitary tensor category~$\mathcal{C}$ (see~\cite[Question 4.11]{GP25}).

\begin{question}\label{q:UnitFrob}
Is every Frobenius subalgebra of a unitary Frobenius algebra in~$\mathcal{C}$ necessarily unitary?
\end{question}

In the setting of Hopf algebras, this problem specializes to the following question (see~\cite{PalMO26b} and Theorem \ref{thm:HopfFrobRep}).

\begin{question}\label{q:UnitFrob2}
Is every left coideal subalgebra of a finite-dimensional $C^*$-Hopf algebra necessarily a $*$-subalgebra?
\end{question}


To establish finiteness results, we generally rely on comparison arguments involving a suitable notion of \emph{positivity} (Definition~\ref{def:PositiveTensor}). Every unitary tensor category is positive (Proposition~\ref{prop:C*positive}), although the converse fails in general (see Remark~\ref{rk:PosVsUnitary}).
Since our work is carried out in the broader setting of linear monoidal categories, we introduce the notion of weak positivity (Definition~\ref{def:weaklyPositiveTensor}), which is the most general relaxation of positivity we have found sufficient for our arguments. We prove that every semisimple tensor category over~$\mathbb{C}$ is weakly positive (Theorem~\ref{thm:semisweakly}).



Inspired by ideas from the arXiv version of \cite{BDLR19} (also adapted to the $C^*$-algebra setting in \cite{BG21}), we introduce in \S\ref{sec:Angle} the notion of a \emph{formal angle} (see Remark \ref{rk:formal}) between two Frobenius subalgebras of a connected Frobenius algebra whose intersection and sum are both ambiently selfdual; such a pair is called \emph{coherent} (Definition \ref{def:CoherentPair}). We also define a \emph{coherent sublattice} (Definition \ref{def:CoherentSublattice}) of the Frobenius subalgebra poset as a sublattice in which every pair is coherent. This framework yields the following generalization of Watatani's finiteness theorem, established in \S\ref{sec:Wat}:
\begin{restatetheorem}{thm:allfinite}
Let $\mathcal{C}$ be a $\mathbb{C}$-linear abelian rigid monoidal category with a linear-simple unit object $\one$ and a quasi-pivotal structure $\phi$ (Definition \ref{def:quasipivotal}). Let $X$ be a finite-length connected Frobenius algebra in $\mathcal{C}$ that is $\phi_X$-weakly positive (Definition \ref{def:weakpositive}), where $\phi_X$ is an algebra isomorphism (Definition \ref{def:FrobMor}), and suppose that $\End_{\mathcal{C}}(X)$ is finite-dimensional. Then every coherent sublattice of the Frobenius subalgebra poset of $X$ is finite.
\end{restatetheorem}


The notions of \emph{finite-length}, \emph{connected}, and \emph{Frobenius algebra} within a monoidal category extend the notions of \emph{finite-index}, \emph{irreducible}, and \emph{subfactor}, respectively. The connected assumption cannot be avoided because there are non-irreducible finite index subfactors with infinite intermediate subfactor lattice (see \cite[page 314]{Wat96}). 

The following corollaries make the original Watatani's theorem increasingly transparent.


\begin{restatecorollary}{cor:allfinite1.75}
Let $\mathcal{C}$ be a semisimple tensor category over $\mathbb{C}$, so weakly positive for some quasi-pivotal strucutre $\phi$. Let $X$ be a connected Frobenius algebra such that $\phi_X$ an algebra morphism. Then every coherent sublattice of the Frobenius subalgebra poset of $X$ is finite. 
\end{restatecorollary}


Since every Frobenius $\mathcal{B}$-subalgebra lattice is coherent (Proposition~\ref{prop:Bcoherent}), it is also finite. As discussed in~\S\ref{sub:RigidNonSS}, rigidity invariance is problematic to formulate directly in the non-semisimple setting. We resolve this via semisimplification in the sense of~\cite{EO22} (see~\S\ref{sec:semi}), leading to Corollary~\ref{cor:allfinite1}.
%
Next, we present an application of Theorem~\ref{thm:allfinite} to positive tensor categories (which need not be unitary or semisimple, as explained in Remark~\ref{rk:PosVsUnitary}).
\begin{restatecorollary}{cor:allfinite2b}
Let $X$ be a connected Frobenius algebra in a positive tensor category. Then every coherent sublattice of the Frobenius subalgebra poset of $X$ is finite.
\end{restatecorollary}

Again, since the notion of a Frobenius $\mathcal{B}$-subalgebra extends that of a unitary Frobenius subalgebra:

\begin{restatecorollary}{cor:allfiniteUnitary}
Let \( X \) be a connected unitary Frobenius algebra in a unitary tensor category. Then its unitary Frobenius subalgebra lattice is finite.
\end{restatecorollary}

Furthermore, using Corollary~\ref{cor:allfinite2b} again and Proposition~\ref{prop:pseudo}, we obtain:
\begin{restatecorollary}{cor:allfinite3}
Let $X$ be a connected Frobenius algebra in a pseudo-unitary fusion category over $\mathbb{C}$. Then every coherent sublattice of the Frobenius subalgebra poset of $X$ is finite.
\end{restatecorollary}
%
 

In \S\ref{sec:IncC*}, \S\ref{sec:hopf}, and \S\ref{sec:other}, we present several applications of Theorem~\ref{thm:allfinite}. In \cite{IW14}, Watatani's theorem for subfactors was extended to irreducible unital inclusions of finite-index simple $C^*$-algebras. In \S\ref{sec:IncC*}, we generalize this further by removing the simplicity assumption. Specifically, we show that the $E$-compatible (Definition~\ref{def:Ecomp}) intermediate $C^*$-algebras of a finite-index unital irreducible inclusion form a finite lattice (Corollary \ref{cor:InoWat}), a result previously obtained by functional-analytic methods in \cite{GK24}. The $E$-compatibility is equivalent to the inclusion map to be adjointable (Proposition~\ref{prop:AdjointEcomp}), which is required in the unitary context of Corollary~\ref{cor:allfiniteUnitary}. Example~\ref{ex:coburn} shows that this condition is necessary, exhibiting a unital irreducible finite-index inclusion with uncountably many intermediates.

In \S\ref{sec:hopf}, we provide a detailed proof that every finite-dimensional Hopf algebra $H$ is a connected Frobenius algebra object in $\Corep(H)$. Moreover, when $H$ is semisimple over $\mathbb{C}$, we show that its Frobenius subalgebras coincide with its left coideal subalgebras (Theorem~\ref{thm:HopfFrobRep}). 
Consequently, Corollary~\ref{cor:allfinite3} recovers \cite[Theorem~3.6]{EW14} on the finiteness of the left coideal subalgebra lattice, under the well-supported coherence hypothesis---equivalent to the nondegeneracy of $\lambda_{L+K}$ for all left coideal subalgebras $L,K \subseteq H$, where $\lambda$ denotes the left integral (Question~\ref{Q:TwoCoideal})---which is established when either $L$ or $K$ is a Hopf subalgebra (Theorem~\ref{thm:NonDegSum}) and in particular implies the coherence of the Hopf subalgebra lattice (Corollary~\ref{cor:HopfSubCoherent}).
Note that semisimplicity is essential for finiteness, even when restricting to Hopf subalgebras alone, as illustrated by the classical Example~\ref{ex:nichols}.
%

In \S \ref{sec:other}, we introduce additional examples of connected Frobenius algebra objects. In \S \ref{sub:canon}, we demonstrate that the canonical Frobenius algebra object in a unimodular multitensor category $ \mathcal{C} $ \cite[\S 7.20]{EGNO15} is connected if and only if $ \mathcal{C} $ is tensor (Proposition \ref{prop:canon}). This result allows us to provide a connected Frobenius algebra structure on $ H^* $ as an object in $ \Rep(H \otimes H^{\rm cop}) $, where $ H $ is a finite-dimensional unimodular Hopf algebra. This provides a class of non-semisimple examples where Corollary \ref{cor:allfinite1} is not applicable (Remark \ref{rk:failpositive}).
In \S \ref{sub:ASC}, we discuss inclusions of abstract spin chains. In \cite{JSW24}, an algebraic model of categorical inclusions was employed to examine the extensions of bounded-spread isomorphisms of symmetric local algebras to quantum cellular automata (QCA), defined on either the full or edge-restricted local operator algebras. We demonstrate that the lattice of categorical inclusions is finite. In \S \ref{sub:voa}, we report a brief discussion with Kenichi Shimizu about the challenge of establishing a Frobenius algebra structure on a vertex operator algebra $V$ as an object within $\Rep(V)$. 

In \S \ref{sec:QA}, we present a collection of open problems and speculations in quantum arithmetic that have emerged from our research and that we plan to investigate further in future work. These developments are enabled by the finiteness of the coherent sublattices. In \S \ref{sub:Euler}, we introduce an analogue of Euler's totient function for connected Frobenius algebra objects in tensor categories, and in \S \ref{sub:Ore}, we propose generalizations of Ore's theorem. We also discuss extending the notions of subfactor depth (\S \ref{sub:depth}) and index (\S \ref{sub:Dim}), and in \S \ref{sub:RH}, we propose a tensor-categorical generalization of the Riemann Hypothesis, inspired by Guy Robin's reformulation \cite{Rob84} involving the sigma function.


The primary aim of this paper is to further develop the interplay between tensor category theory and subfactor theory. It is intended to serve as an accessible gateway between these two subjects. For a general introduction to tensor categories, we refer the reader to \cite{EGNO15}.

\tableofcontents

\section{Frobenius algebra} \label{sec:Frob}

Let us review some definitions and fundamental results. For the basic definition of monoidal and tensor categories, we refer to \cite{EGNO15}. According to Mac Lane's strictness theorem, we can assume that monoidal categories are strict without any loss of generality. We will use graphical calculus, interpreting diagrams from top to bottom.

\begin{definition}
A \emph{unital algebra} in a monoidal category $\mathcal{C}$ consists of a triple $(X, m, e)$, where $X$ is an object in $\mathcal{C}$, $m$ is a multiplication morphism in $\Hom_{\mathcal{C}}(X \otimes X, X)$, and $e$ is a unit morphism in $\Hom_{\mathcal{C}}(\one, X)$, depicted as follows:
\[ m = \raisebox{-6mm}{
	\begin{tikzpicture}
	\draw[blue,in=-90,out=-90,looseness=2] (-0.5,0.5) to (-1.5,0.5);
	\draw[blue] (-1,-.1) to (-1,-.6);
	\node[left,scale=0.7] at (-1,-.4) {$X$};
	\node[left,scale=0.7] at (-1.6,0.5) {$X$};
	\node[right,scale=0.7] at (-.5,.5) {$X$};
	\end{tikzpicture}} \ \ \ \ \ \ \ \ \ \
	e= \raisebox{-6mm}{
		\begin{tikzpicture}
		\draw [blue] (-0.8,-.6) to (-.8,.6);
		\node at (-.8,.6) {${\color{blue}\bullet}$};
		\node[left,scale=.8] at (-.8,.9) {$\one$};
		\node[left,scale=0.7] at (-.8,-.5) {$X$};
		\end{tikzpicture}}\]
These satisfy the following axioms:
\begin{itemize}
\item (Associativity) \qquad $m \circ (m \otimes \id_X) = m \circ (\id_X \otimes m)$,
\item (Unitality) \qquad $m \circ (e \otimes \id_X) = \id_X = m \circ (\id_X \otimes e)$.
\end{itemize}
These relations are typically represented as follows:

	 \[\text{(associativity)} \ \ \ \ \raisebox{-6mm}{
		\begin{tikzpicture}[rotate=180,transform shape]
		\draw[blue,in=90,out=90,looseness=2] (0,0) to (1,0);
		\draw[blue,in=90,out=90,looseness=2] (0.5,.6) to (-.5,.6);
		\draw[blue] (-.5,.6) to (-.5,0);
		 \draw[blue] (0,1.2) to (0,1.6);
		\end{tikzpicture}}
		= \raisebox{-6mm}{\begin{tikzpicture}[rotate=180,transform shape]
		\draw[blue,in=90,out=90,looseness=2] (0,0) to (1,0);
		\draw[blue,in=90,out=90,looseness=2] (.5,.6) to (1.5,.6);
		 \draw[blue] (1.5,.6) to (1.5,0);
		 \draw[blue] (1,1.2) to (1,1.6);
			\end{tikzpicture}} \]
		
		\[ \text{(unitality)} \ \ \ \ 
		\raisebox{-4mm}{
			\begin{tikzpicture}[rotate=180,transform shape]
			\draw[blue,in=90,out=90,looseness=2] (0,0) to (1,0);
			\node at (1,0) {$\textcolor{blue}{\bullet}$};
			\draw[blue] (.5,.6) to (.5,1.2);
			\end{tikzpicture}}
		=
		\raisebox{-4mm}{
			\begin{tikzpicture}
			\draw[blue] (0,0) to (0,1.2);
			\end{tikzpicture}} 
		= 
			\raisebox{-4mm}{
			\begin{tikzpicture}[rotate=180,transform shape]
			\draw[blue,in=90,out=90,looseness=2] (0,0) to (1,0);
			\node at (0,0) {$\textcolor{blue}{\bullet}$};
			\draw[blue] (.5,.6) to (.5,1.2);
			\end{tikzpicture}} \]
\end{definition}

\begin{definition} \label{def:coalgebra}
A \emph{counital coalgebra} in a monoidal category $\mathcal{C}$ is defined as a triple $(X, \delta, \epsilon)$, where $X$ is an object in $\mathcal{C}$, $\delta$ is a comultiplication morphism in $\Hom_{\mathcal{C}}(X, X \otimes X)$, and $\epsilon$ is a counit morphism in $\Hom_{\mathcal{C}}(X, \one)$, depicted as:
\[ \delta = \raisebox{-6mm}{
		\begin{tikzpicture}
		\draw[blue,in=90,out=90,looseness=2] (-0.5,0.5) to (-1.5,0.5);
		\draw[blue] (-1,1.1) to (-1,1.6);
		\node[left,scale=0.7] at (-1,1.6) {$X$};
		\node[left,scale=0.7] at (-1.6,0.5) {$X$};
		\node[right,scale=0.7] at (-.5,.5) {$X$};
		\end{tikzpicture}} \ \ \ \ \ \ 
		\epsilon = \raisebox{-8mm}{
		\begin{tikzpicture}
		\draw [blue] (-0.8,-.6) to (-.8,.6);
		\node at (-.8,-.6) {${\color{blue}\bullet}$};
		\node[left,scale=0.7] at (-.8,.6) {$X$};
		\node[scale=.8] at (-.8,-.9) {$\one$};
		\end{tikzpicture}} \]
These satisfy the following axioms:
\begin{itemize}
\item (Coassociativity) \qquad $(\id_X \otimes \delta) \circ \delta = (\delta \otimes \id_X) \circ \delta$,
\item (Counitality) \qquad $(\epsilon \otimes \id_X) \circ \delta = \id_X = (\id_X \otimes \epsilon) \circ \delta$.
\end{itemize}
These relations are typically represented as follows:
	\[\text{(coassociativity)} \ \ \ \ \ \ \ \raisebox{-6mm}{
		\begin{tikzpicture}
		\draw[blue,in=90,out=90,looseness=2] (0,0) to (1,0);
		\draw[blue,in=90,out=90,looseness=2] (0.5,.6) to (-.5,.6);
		\draw[blue] (-.5,.6) to (-.5,0);
		\draw[blue] (0,1.2) to (0,1.6);
		\end{tikzpicture}}
	= \raisebox{-6mm}{\begin{tikzpicture}
		\draw[blue,in=90,out=90,looseness=2] (0,0) to (1,0);
		\draw[blue,in=90,out=90,looseness=2] (.5,.6) to (1.5,.6);
		\draw[blue] (1.5,.6) to (1.5,0);
		\draw[blue] (1,1.2) to (1,1.6);
		\end{tikzpicture}} \]
	
	\[ \text{(counitality)} \ \ \ \ 
		\raisebox{-6mm}{
		\begin{tikzpicture}
		\draw[blue,in=90,out=90,looseness=2] (0,0) to (1,0);
		\node at (0,0) {$\textcolor{blue}{\bullet}$};
		\draw[blue] (.5,.6) to (.5,1.2);
		\end{tikzpicture}} 
		\ \ = \ \  
		\raisebox{-4mm}{\begin{tikzpicture}
		\draw[blue] (0,0) to (0,1.2);
		\end{tikzpicture}} 
		\ \ = \ \  
		\raisebox{-6mm}{
		\begin{tikzpicture}
		\draw[blue,in=90,out=90,looseness=2] (0,0) to (1,0);
		\node at (1,0) {$\textcolor{blue}{\bullet}$};
		\draw[blue] (.5,.6) to (.5,1.2);
		\end{tikzpicture}} \]
\end{definition}

\begin{lemma} \label{lem:Dualgebra}
Let $(X,m,e)$ be a unital algebra object in a monoidal category $\mathcal{C}$. If $X$ admits a left dual $X^*$, then $(X^*, m^*, e^*)$ naturally forms a counital coalgebra in $\mathcal{C}$.
\end{lemma}

\begin{proof}
This follows from a straightforward verification, using the identities $(f \circ g)^* = g^* \circ f^*$ and $(f \otimes g)^* = g^* \otimes f^*$:
\[
(\id_{X^*} \otimes m^*) \circ m^* 
= (m \circ (m \otimes \id_X))^* 
= (m \circ (\id_X \otimes m))^* 
= (m^* \otimes \id_{X^*}) \circ m^*,
\]
\[
(e^* \otimes \id_{X^*}) \circ m^* 
= (m \circ (\id_X \otimes e))^* 
= \id_X^* 
= \id_{X^*} 
= (m \circ (e \otimes \id_X))^* 
= (\id_{X^*} \otimes e^*) \circ m^*.
\qedhere
\]
\end{proof}

\begin{definition} \label{def:Frob}
A \emph{Frobenius algebra} in a monoidal category $\mathcal{C}$ is a quintuple $(X, m, e, \delta, \epsilon)$, where $(X, m, e)$ is a unital algebra and $(X, \delta, \epsilon)$ is a counital coalgebra. These must satisfy the following axiom:

\begin{itemize}
\item (Frobenius) \qquad $(\id_X \otimes m) \circ (\delta \otimes \id_X) = \delta \circ m = (m \otimes \id_X) \circ (\id_X \otimes \delta)$.
\end{itemize}

These relations are typically illustrated as follows:  
	\[ \text{(Frobenius)} \ \ \ \ \ \ \raisebox{-10mm}{
		\begin{tikzpicture}
		\draw[blue,in=90,out=90,looseness=2] (0,0) to (1,0);
		\draw[blue,in=-90,out=-90,looseness=2] (1,0) to (2,0);
		 \draw[blue] (.5,.6) to (.5,1.2);
		\draw[blue] (1.5,-.6) to (1.5,-1.2);
		\draw[blue] (0,0) to (0,-1.2);
		\draw[blue] (2,0) to (2,1.2);
		\end{tikzpicture}} \ \ = \ \
	\raisebox{-8mm}{
		\begin{tikzpicture}
		\draw[blue,in=90,out=90,looseness=2] (0,0) to (1,0);
		\draw[blue] (.5,.6) to (.5,1.2);
		\draw[blue,in=-90,out=-90,looseness=2] (0,1.8) to (1,1.8);
		\end{tikzpicture}}  \ \ = \ \
	\raisebox{-10mm}{
		\begin{tikzpicture}
		\draw[blue,in=-90,out=-90,looseness=2] (0,0) to (1,0);
		\draw[blue,in=90,out=90,looseness=2] (1,0) to (2,0);
		 \draw[blue] (.5,-.6) to (.5,-1.2);
		\draw[blue] (1.5,.6) to (1.5,1.2);
		\draw[blue] (0,0) to (0,1.2);
		\draw[blue] (2,0) to (2,-1.2);
		\end{tikzpicture}} \]

We define a \emph{weak Frobenius algebra} by relaxing the Frobenius condition in the following way:
\[\text{(weak Frobenius)} \ \ \ \ \ \ \raisebox{-10mm}{
		\begin{tikzpicture}
		\draw[blue,in=90,out=90,looseness=2] (0,0) to (1,0);
		\draw[blue,in=-90,out=-90,looseness=2] (1,0) to (2,0);
		\draw[blue] (.5,.6) to (.5,1.2);
		\draw[blue] (1.5,-.6) to (1.5,-1.2);
		\draw[blue] (0,0) to (0,-1.2);
		\draw[blue] (2,0) to (2,1.2);
		\end{tikzpicture}}   \ \ = \ \ 
	\raisebox{-10mm}{
		\begin{tikzpicture}
		\draw[blue,in=-90,out=-90,looseness=2] (0,0) to (1,0);
		\draw[blue,in=90,out=90,looseness=2] (1,0) to (2,0);
		\draw[blue] (.5,-.6) to (.5,-1.2);
		\draw[blue] (1.5,.6) to (1.5,1.2);
		\draw[blue] (0,0) to (0,1.2);
		\draw[blue] (2,0) to (2,-1.2);
		\end{tikzpicture}} \] 
  
\end{definition}

We will demonstrate (Lemma \ref{lem:weakfull}) that an algebra is weak Frobenius if and only if it is Frobenius.

\begin{lemma} \label{lem:selfdual}
Let $(X, m, \delta, e, \epsilon)$ be a weak Frobenius algebra. Then $X$ is selfdual (i.e., $X^* = X$), with the evaluation morphism defined as $\ev_X := \epsilon \circ m$ and the coevaluation morphism defined as $\coev_X := \delta \circ e$, as illustrated below:
 \[ \raisebox{-4mm}{\begin{tikzpicture}
  	\draw[blue,in=-90,out=-90,looseness=2] (-0.5,0.5) to (-1.6,0.5);
  	\end{tikzpicture}}
  	\coloneqq \,
    \raisebox{-6mm}{\begin{tikzpicture}
  	\draw[blue,in=-90,out=-90,looseness=2] (-0.5,0.5) to (-1.5,0.5);
  	\node at (-1,-.6) {${\color{blue}\bullet}$};
  	\draw[blue] (-1,-.1) to (-1,-.6);
   	\end{tikzpicture}} \hspace*{6mm} \text{and} \hspace*{6mm} \raisebox{-2mm}{\begin{tikzpicture}
   \draw[blue,in=90,out=90,looseness=2] (-0.5,0.5) to (-1.6,0.5);
\end{tikzpicture}}  \coloneqq \, \raisebox{-4mm}
   	{\begin{tikzpicture}
   	\draw[blue,in=90,out=90,looseness=2] (-0.5,0.5) to (-1.5,0.5);
   	\node at (-1,1.6) {${\color{blue}\bullet}$};
   	\draw[blue] (-1,1.1) to (-1,1.6);
   	\end{tikzpicture}}  \]
\end{lemma}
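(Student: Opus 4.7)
The plan is to verify the two zigzag (snake) identities
\[
(\id_X \otimes \ev_X) \circ (\coev_X \otimes \id_X) = \id_X, \qquad (\ev_X \otimes \id_X) \circ (\id_X \otimes \coev_X) = \id_X,
\]
which, together with the proposed $\ev_X := \epsilon \circ m$ and $\coev_X := \delta \circ e$, establish that $X$ is selfdual. Graphically, each zigzag unfolds into a diagram containing exactly one $e$, one $\delta$, one $m$, and one $\epsilon$, arranged around a single pass-through strand, and the weak Frobenius relation is precisely the move that commutes the $\delta$ and $m$ nodes past each other.

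For the first identity, I would expand
\[
(\id_X \otimes \ev_X) \circ (\coev_X \otimes \id_X) \;=\; (\id_X \otimes \epsilon) \circ \bigl[(\id_X \otimes m) \circ (\delta \otimes \id_X)\bigr] \circ (e \otimes \id_X),
\]
and use the weak Frobenius relation to replace the bracketed block by $(m \otimes \id_X) \circ (\id_X \otimes \delta)$. Pushing $(\id_X \otimes \delta)$ past $(e \otimes \id_X)$ produces $\delta$ acting on the input strand with $e$ inserted beside it; unitality $m \circ (e \otimes \id_X) = \id_X$ then collapses the outer $m$ against $e$, leaving $(\id_X \otimes \epsilon) \circ \delta$, which equals $\id_X$ by counitality. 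The second identity is obtained by the mirror computation: expand, apply weak Frobenius in the opposite direction to swap the middle block, then collapse via the other unitality $m \circ (\id_X \otimes e) = \id_X$ and the other counitality $(\epsilon \otimes \id_X) \circ \delta = \id_X$.

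The one point to watch is the order of results: Lemma \ref{lem:weakfull} (weak Frobenius $\Leftrightarrow$ Frobenius) is stated to appear later and presumably builds on the selfduality established here, so the argument must rely only on the two-sided weak form and not on the stronger three-sided Frobenius equalities. The computation above uses exactly this, so no circularity arises and I do not foresee any additional obstacle; the proof is essentially a short string of graphical moves.
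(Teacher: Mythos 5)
Your proof is correct and takes essentially the same route as the paper: the paper's diagrammatic proof likewise verifies the two zigzag identities by expanding $\ev_X = \epsilon \circ m$ and $\coev_X = \delta \circ e$, applying the weak Frobenius relation once to swap the $m$ and $\delta$ core, and then collapsing with unitality and counitality. Your remark that only the weak (two-sided) form is used, so no circularity with Lemma~\ref{lem:weakfull} arises, is also exactly how the paper proceeds.
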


\begin{proof}
The following diagram illustrates that $\epsilon \circ m$ and $\delta \circ e$ satisfy the zigzag relations (see \cite[Definition 2.10.1]{EGNO15}).
  \[ \begin{tikzpicture}
   \draw[blue,in=-90,out=-90,looseness=2] (0,0) to (1,0);
   \draw[blue,in=90,out=90,looseness=2] (1,0) to (2,0);
   \draw[blue] (.5,-.6) to (.5,-1.2);
   \draw[blue] (1.5,.6) to (1.5,1.2);
   \draw[blue] (0,0) to (0,1.2);
   \draw[blue] (2,0) to (2,-1.2);
   \node at (.5,-1.2) {${\color{blue}\bullet}$};
   \node at (1.5,1.2) {${\color{blue}\bullet}$};
   
   \draw[blue,in=90,out=90,looseness=2] (3,0) to (4,0);
   \draw[blue,in=-90,out=-90,looseness=2] (4,0) to (5,0);
   \draw[blue] (3.5,.6) to (3.5,1.2);
   \draw[blue] (4.5,-.6) to (4.5,-1.2);
   \draw[blue] (3,0) to (3,-1.2);
   \draw[blue] (5,0) to (5,1.2);
   \node at (3,-1.2) {${\color{blue}\bullet}$};
   \node at (5,1.2) {${\color{blue}\bullet}$};
   
   \draw[blue] (6,-1.2) to (6,1.2);
   
   \draw[blue,in=-90,out=-90,looseness=2] (7,0) to (8,0);
   \draw[blue,in=90,out=90,looseness=2] (8,0) to (9,0);
   \draw[blue] (7.5,-.6) to (7.5,-1.2);
   \draw[blue] (8.5,.6) to (8.5,1.2);
   \draw[blue] (7,0) to (7,1.2);
   \draw[blue] (9,0) to (9,-1.2);
   \node at (9,-1.2) {${\color{blue}\bullet}$};
   \node at (7,1.2) {${\color{blue}\bullet}$};
   
   \draw[blue,in=90,out=90,looseness=2] (10,0) to (11,0);
   \draw[blue,in=-90,out=-90,looseness=2] (11,0) to (12,0);
   \draw[blue] (10.5,.6) to (10.5,1.2);
   \draw[blue] (11.5,-.6) to (11.5,-1.2);
   \draw[blue] (10,0) to (10,-1.2);
   \draw[blue] (12,0) to (12,1.2);
   \node at (11.5,-1.2) {${\color{blue}\bullet}$};
   \node at (10.5,1.2) {${\color{blue}\bullet}$};
   
   
   \draw[dashed, DarkOrange, thick] (-0.2,-.8) rectangle (2.2,1);
   \draw[dashed, DarkOrange, thick] (6.8,-.8) rectangle (9.2,1);
   \draw[dashed, DarkGreen!70!black, thick] (2.75,-1.35) rectangle (3.95,1.25);
   \draw[dashed, DarkGreen!70!black, thick] (4.1,-1.25) rectangle (5.2,1.35);
   
   \node at (2.8,1.3) [align=center, above, DarkOrange] {weak Frobenius};
   \node at (5.8,1.3) [align=center, above, DarkGreen!70!black] {(co)unitality};
   \node at (9.8,1.3) [align=center, above, DarkOrange] {weak Frobenius};
   
   \node at (2.5,0) {$=$};
   \node at (5.6,0) {$=$};
   \node at (6.5,0) {$=$};
   \node at (9.5,0) {$=$};
   
   \draw[<-, DarkOrange, thick] (2.5,.2) -- (2.5,1.2);
   \draw[<-, DarkGreen!70!black, thick] (5.6,.2) -- (5.6,1.2);
   \draw[<-, DarkOrange, thick] (9.5,.2) -- (9.5,1.2);
   \end{tikzpicture} \qedhere \]  
\end{proof}

\begin{remark} \label{rk:nosym}
The definitions $\ev_X := \epsilon \circ m$ and $\coev_X := \delta \circ e$ may not align perfectly with a given rigid structure in the monoidal category. However, it is possible to adjust the rigid structure to achieve a perfect match, as noted in \cite[Definition 2.10.11]{EGNO15}. Additionally, the rigid structure on an object is unique up to a unique isomorphism, as stated in \cite[Proposition 2.10.5]{EGNO15}. In this paper, we will assume that this alignment is always satisfied. 
\end{remark}

\begin{lemma} \label{lem:intermultdualcomult}
A unital algebra and counital coalgebra $(X, m, \delta, e, \epsilon)$ is weak Frobenius if and only if the following equalities hold:
$$
(m \otimes \id_X) \circ (\id_X \otimes \coev_X) = \delta = (\id_X \otimes m) \circ (\coev_X \otimes \id_X),
$$
if and only if the following equalities hold
$$
(\ev_X \otimes \id_X) \circ (\id_X \otimes \delta) = m = (\id_X \otimes \ev_X) \circ (\delta \otimes \id_X).
$$
They are depicted as follows:
   \[\raisebox{-10mm}{
   	\begin{tikzpicture}
   	\draw[blue,in=-90,out=-90,looseness=2] (0,0) to (1,0);
   	\draw[blue,in=90,out=90,looseness=2] (1,0) to (2,0);
   	\draw[blue] (.5,-.6) to (.5,-1.2);
   	 \draw[blue] (0,0) to (0,1.2);
   	\draw[blue] (2,0) to (2,-1.2);
   	\end{tikzpicture}}  
   	=
   	\raisebox{-8mm}{
   	\begin{tikzpicture}
   	\draw[blue,in=90,out=90,looseness=2] (-0.5,0.5) to (-1.5,0.5);
   	\draw[blue] (-1,1.1) to (-1,2);
   	\end{tikzpicture}}
    =
    \raisebox{-10mm}{
    \begin{tikzpicture}
    \draw[blue,in=90,out=90,looseness=2] (0,0) to (1,0);
    \draw[blue,in=-90,out=-90,looseness=2] (1,0) to (2,0);
     \draw[blue] (1.5,-.6) to (1.5,-1.2);
    \draw[blue] (0,0) to (0,-1.2);
    \draw[blue] (2,0) to (2,1.2);
    \end{tikzpicture}} \hspace*{6mm} \Leftrightarrow \hspace*{6mm} 
    \raisebox{-10mm}{
    \begin{tikzpicture}
    \draw[blue,in=-90,out=-90,looseness=2] (0,0) to (1,0);
    \draw[blue,in=90,out=90,looseness=2] (1,0) to (2,0);
     \draw[blue] (1.5,.6) to (1.5,1.2);
    \draw[blue] (0,0) to (0,1.2);
    \draw[blue] (2,0) to (2,-1.2);
    \end{tikzpicture}} 
	=  
	\raisebox{-6mm}{
		\begin{tikzpicture}
		\draw[blue,in=-90,out=-90,looseness=2] (-0.5,0.5) to (-1.5,0.5);
		 \draw[blue] (-1,-.1) to (-1,-1);
		 \end{tikzpicture}}
	 = 
	 \raisebox{-10mm}{
	 	\begin{tikzpicture}
	 	\draw[blue,in=90,out=90,looseness=2] (0,0) to (1,0);
	 	\draw[blue,in=-90,out=-90,looseness=2] (1,0) to (2,0);
	 	\draw[blue] (.5,.6) to (.5,1.2);
	 	 \draw[blue] (0,0) to (0,-1.2);
	 	\draw[blue] (2,0) to (2,1.2);
	 	\end{tikzpicture}} \]
\end{lemma}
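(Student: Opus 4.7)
The plan is to prove both implications by direct manipulation of the axioms; graphically, each identity reduces to an isotopy of the string diagrams.

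For the forward direction, assume weak Frobenius. I would verify the first identity $(m \otimes \id_X) \circ (\id_X \otimes \coev_X) = \delta$ by substituting $\coev_X = \delta \circ e$, applying weak Frobenius to interchange $m$ and $\delta$ on the relevant strands, and then collapsing $m \circ (\id_X \otimes e) = \id_X$ via right unitality. The three remaining identities follow by the same template: substitute the definition of $\ev_X$ or $\coev_X$, apply weak Frobenius to swap a cup past a cap, and use the appropriate (co)unitality law to collapse the resulting unit or counit. The two $\delta$-identities use right/left unitality, while the two $m$-identities use left/right counitality.

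For the converse direction, assume the four snake identities. I would substitute $m = (\ev_X \otimes \id_X) \circ (\id_X \otimes \delta)$ into the left-hand side $(\id_X \otimes m) \circ (\delta \otimes \id_X)$ of weak Frobenius. Since the two resulting applications of $\delta$ act on disjoint strands and therefore commute, the left-hand side rewrites as $(\id_X \otimes \ev_X \otimes \id_X) \circ (\delta \otimes \delta)$. Symmetrically, substituting $m = (\id_X \otimes \ev_X) \circ (\delta \otimes \id_X)$ into the right-hand side $(m \otimes \id_X) \circ (\id_X \otimes \delta)$ yields the same expression, so weak Frobenius follows. Only two of the four identities (the $m$-identities) are needed here; alternatively one could use both $\delta$-identities to rewrite both sides as $(m \otimes m) \circ (\id_X \otimes \coev_X \otimes \id_X)$.

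The main obstacle is purely notational bookkeeping: one has to track carefully which strand each morphism acts on, so that the two applications of $\delta$ can be recognised as $\delta \otimes \delta$ and the two substitutions in the converse direction produce identical middle terms. Otherwise, the proof is a direct sequence of applications of the axioms.
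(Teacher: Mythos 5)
Your proof is correct, and your forward direction coincides with the paper's: expand $\coev_X = \delta \circ e$ (resp.\ $\ev_X = \epsilon \circ m$) by definition, apply weak Frobenius once, and collapse with the appropriate (co)unitality law. Your converse, however, takes a genuinely different route. The paper first notes that the zigzag relations of Lemma~\ref{lem:selfdual} can be derived from the four assumed identities, then inserts a zigzag into the middle strand of $(\id_X \otimes m)\circ(\delta \otimes \id_X)$ and applies one $m$-identity and one $\delta$-identity to the two resulting regions, landing directly on $(m \otimes \id_X)\circ(\id_X \otimes \delta)$. You instead substitute the two $m$-identities into the two sides of the weak Frobenius relation and check, using only the interchange law, that both collapse to the common expression $(\id_X \otimes \ev_X \otimes \id_X)\circ(\delta \otimes \delta)$. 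Your route is shorter and bypasses the zigzag lemma entirely; the paper's route has the incidental benefit of making explicit, inside this proof, that the duality data is recoverable from the four identities, which is how the lemma gets reused later (e.g.\ in Lemma~\ref{lem:weakfull} and Lemma~\ref{lem:intermultdualcomultalpha}). Your alternative remark, that both $\delta$-identities reduce the two sides to $(m \otimes m)\circ(\id_X \otimes \coev_X \otimes \id_X)$, also checks out.

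One small point of care: the displayed algebraic formulas in the statement contain typos — as literally written, $(m \otimes \id_X)\circ(\coev_X \otimes \id_X)$ and $(\id_X \otimes \ev_X)\circ(\id_X \otimes \delta)$ are not equal to $\delta$ and $m$ in general. You implicitly worked with the versions shown in the pictures, namely $(\id_X \otimes m)\circ(\coev_X \otimes \id_X)$ and $(\id_X \otimes \ev_X)\circ(\delta \otimes \id_X)$, which is the intended statement, so your argument is sound.
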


\begin{proof}
Assume the weak Frobenius condition. We prove the first equality by the following diagram (where \emph{selfdual} refers to Lemma \ref{lem:selfdual}):
   \[ \raisebox{-8mm}{
   	\begin{tikzpicture}
   	\draw[blue,in=-90,out=-90,looseness=2] (0,0) to (1,0);
   	\draw[blue,in=90,out=90,looseness=2] (1,0) to (2,0);
   	\draw[blue] (.5,-.6) to (.5,-1.2);
   	\draw[blue] (0,0) to (0,1.2);
   	\draw[blue] (2,0) to (2,-1.2);
   	\draw[dashed, DarkOrange, thick] (.9,0) rectangle (2.1,.8);
   	\draw[->, DarkOrange] (2.4,.8) to (2.4,0);
   	\node at (2.4,1.1) [orange] {selfdual};
   	\node at (2.4,-.2) {$=$};
   	\end{tikzpicture}}
  	\raisebox{-8mm}{
   		\begin{tikzpicture}
   		\draw[blue,in=-90,out=-90,looseness=2] (0,0) to (1,0);
   		\draw[blue,in=90,out=90,looseness=2] (1,0) to (2,0);
   		\draw[blue] (.5,-.6) to (.5,-1.2);
   		\draw[blue] (0,0) to (0,1.2);
   		\draw[blue] (2,0) to (2,-1.2);
   		\draw[blue] (1.5,.6) to (1.5,1.2);
   		\draw[dashed, DarkGreen!70!black, thick] (-.2,-.8) rectangle (2.2,.8);
   		\draw[->, DarkGreen!70!black] (2.7,.8) to (2.7,0);
   		\node at (2.9,1.1) [green!70!black] {weak Frobenius};
   		\node at (1.5,1.2) {$\color{blue}{\bullet}$};
   		\node at (2.7,-.2) {$=$};
   		\end{tikzpicture}}
   	   	\raisebox{-8mm}{
   		\begin{tikzpicture}
   		\draw[blue,in=90,out=90,looseness=2] (0,0) to (1,0);
   		\draw[blue,in=-90,out=-90,looseness=2] (1,0) to (2,0);
   		\draw[blue] (.5,.6) to (.5,1.2);
   		\draw[blue] (1.5,-.6) to (1.5,-1.2);
   		\draw[blue] (0,0) to (0,-1.2);
   		\draw[dashed, red, thick] (.8,-.7) rectangle (2.2,.2);
   		\draw[->, red] (2.6,.8) to (2.6,0);
   		\node at (2.6,1) [red] {unitality};
   		\node at (2,0) {$\color{blue}{\bullet}$};
   		\node at (2.6,-.2) {$=$};
   		\end{tikzpicture}} 
   	  	\raisebox{-2mm}{
   		\begin{tikzpicture}
   		\draw[blue,in=90,out=90,looseness=2] (-0.5,0.5) to (-1.5,0.5);
   		 \draw[blue] (-1,1.1) to (-1,1.8);
   		 \end{tikzpicture}} \]
Similarly, the second equality follows. Next, Lemma~\ref{lem:selfdual} can also be established from these first two equalities, which, by the zigzag relations, are equivalent to the last two equalities. Finally, assuming these equalities hold, we now verify the weak Frobenius condition:
   	 \[\raisebox{-8mm}{
   	 	\begin{tikzpicture}
   	 	\draw[blue,in=90,out=90,looseness=2] (0,0) to (1,0);
   	 	\draw[blue,in=-90,out=-90,looseness=2] (1,0) to (2,0);
   	 	\draw[blue] (.5,.6) to (.5,1.2);
   	 	\draw[blue] (1.5,-.6) to (1.5,-1.2);
   	 	\draw[blue] (0,0) to (0,-1.2);
   	 	\draw[blue] (2,0) to (2,1.2);
   	 	\draw[dashed, violet, thick] (.6,-.4) rectangle (1.3,.4);
   	 	\draw[->, violet] (2.7,.8) to (2.7,0);
   	 	\node at (2.7,1) [violet] {zigzag};
   	 	\node at (2.7,-.2) {$=$};
   	 	\end{tikzpicture}} 
    	\raisebox{-8mm}{
    		\begin{tikzpicture}
    		\draw[blue,in=90,out=90,looseness=2] (0,0) to (1,0);
    		\draw[blue,in=-90,out=-90,looseness=2] (1,0) to (2,0);
    		\draw[blue,in=90,out=90,looseness=2] (2,0) to (3,0);
    		\draw[blue,in=-90,out=-90,looseness=2] (3,0) to (4,0);
    		\draw[blue] (.5,.6) to (.5,1.2);
    		\draw[blue] (0,0) to (0,-1.2);
    		\draw[blue] (3.5,-.58) to (3.5,-1.2);
    		\draw[blue] (4,0) to (4,1.2);
    		\draw[dashed, yellow!70!black, thick] (-.2,-.75) rectangle (1.9,1);
    		\draw[dashed, yellow!70!black, thick] (2.05,-1) rectangle (4.1,.75);
    		\node at (5,1) [yellow!70!black] {assumption};
    		\draw[->, yellow!70!black] (4.7,.8) to (4.7,0);
    		\node at (4.7,-.2) {$=$};
    		\end{tikzpicture}}
    	\raisebox{-8mm}{
    		\begin{tikzpicture}
    		\draw[blue,in=-90,out=-90,looseness=2] (0,0) to (1,0);
    		\draw[blue,in=90,out=90,looseness=2] (1,0) to (2,0);
    		\draw[blue] (.5,-.6) to (.5,-1.2);
    		\draw[blue] (1.5,.6) to (1.5,1.2);
    		\draw[blue] (0,0) to (0,1.2);
    		\draw[blue] (2,0) to (2,-1.2);
    		\end{tikzpicture}} \]
The result follows.
\end{proof}

\begin{lemma} \label{lem:weakfull}
A unital algebra and counital coalgebra is weak Frobenius if and only if it is Frobenius.
\end{lemma}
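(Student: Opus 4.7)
The direction ``Frobenius implies weak Frobenius'' is immediate, since the weak Frobenius equality is the outer equality of the full Frobenius triple identity. The substantive content is the converse: assuming $(X, m, \delta, e, \epsilon)$ satisfies weak Frobenius, I need to show that $\delta \circ m$ agrees with both $(m \otimes \id_X) \circ (\id_X \otimes \delta)$ and $(\id_X \otimes m) \circ (\delta \otimes \id_X)$.

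The central idea is to invoke Lemma~\ref{lem:intermultdualcomult}, which is already available under the weak Frobenius hypothesis and supplies the two ``bent'' presentations $\delta = (m \otimes \id_X) \circ (\id_X \otimes \coev_X)$ and (in its mirror form) $\delta = (\id_X \otimes m) \circ (\coev_X \otimes \id_X)$. Each expresses $\delta$ as a multiplication preceded by a $\coev$, and once $\delta$ is written in this form, associativity of $m$ does essentially all the remaining work.

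Concretely, to prove $\delta \circ m = (m \otimes \id_X) \circ (\id_X \otimes \delta)$, I would substitute the first bent presentation on the left to obtain $\delta \circ m = (m \otimes \id_X) \circ (\id_X \otimes \coev_X) \circ m$. Functoriality of $\otimes$ rewrites the middle factor $(\id_X \otimes \coev_X) \circ m$ as $(m \otimes \id_X \otimes \id_X) \circ (\id_X \otimes \id_X \otimes \coev_X)$. Associativity of $m$ then replaces $(m \otimes \id_X) \circ (m \otimes \id_X \otimes \id_X)$ with $(m \otimes \id_X) \circ (\id_X \otimes m \otimes \id_X)$, after which the rightmost two tensor factors re-collapse to $\id_X \otimes \delta$ by the same bent presentation applied inside the second tensor slot. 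The companion identity $\delta \circ m = (\id_X \otimes m) \circ (\delta \otimes \id_X)$ follows from the mirror bent presentation by an entirely symmetric argument; the corresponding intermediate step uses $(\coev_X \otimes \id_X) \circ m = (\id_X \otimes \id_X \otimes m) \circ (\coev_X \otimes \id_X \otimes \id_X)$ and the dual grouping of associativity.

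I do not anticipate any real obstacle: the proof is a short graphical chase in which the ``hourglass'' $\delta \circ m$ is opened up via the bent $\coev$-presentation of $\delta$ and then sheared into one half of the Frobenius diagram using associativity of $m$. The only care needed is bookkeeping of tensor slots and choosing the correct left/right form of Lemma~\ref{lem:intermultdualcomult} for each of the two Frobenius equalities.
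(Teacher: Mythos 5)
Your proof is correct and is essentially the paper's own argument: both reduce the Frobenius identity to Lemma~\ref{lem:intermultdualcomult} (the bent presentations of $\delta$ and $m$ available under the weak Frobenius hypothesis) combined with associativity of $m$, treating the two outer equalities by mirror-symmetric chases. The only cosmetic difference is the direction of the rewriting—the paper starts from $(\id_X \otimes m)\circ(\delta \otimes \id_X)$, inserts a zigzag, and contracts via Lemma~\ref{lem:intermultdualcomult}, whereas you start from $\delta \circ m$ and expand $\delta$ via the bent presentation before re-contracting—so the ingredients and the order in which they are used coincide.
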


\begin{proof}
The proof is illustrated by the following diagram (where \emph{weak Frobenius} refers to Lemma \ref{lem:intermultdualcomult}):
		\[ \raisebox{-8mm}{
			\begin{tikzpicture}
			\draw[blue,in=90,out=90,looseness=2] (0,0) to (1,0);
			\draw[blue,in=-90,out=-90,looseness=2] (1,0) to (2,0);
			\draw[blue] (.5,.6) to (.5,1.2);
			\draw[blue] (1.5,-.6) to (1.5,-1.2);
			\draw[blue] (0,0) to (0,-1.2);
			\draw[blue] (2,0) to (2,1.2);
			\draw[dashed, violet, thick] (-.2,-.4) rectangle (.2,-1.3);
			\draw[->, violet] (2.7,.8) to (2.7,0);
			\node at (2.7,1) [violet] {zigzag};
			\node at (2.7,-.2) {$=$};
			\end{tikzpicture}}
		\raisebox{-8mm}{
			\begin{tikzpicture}
			\draw[blue,in=90,out=90,looseness=2] (-1,0) to (-2,0);
			\draw[blue,in=-90,out=-90,looseness=2] (0,0) to (-1,0);
			\draw[blue,in=90,out=90,looseness=2] (0,0) to (1,0);
			\draw[blue,in=-90,out=-90,looseness=2] (1,0) to (2,0);
			\draw[blue] (.5,.6) to (.5,1.2);
			\draw[blue] (1.5,-.6) to (1.5,-1.2);
			\draw[blue] (-2,0) to (-2,-1.2);
			\draw[blue] (2,0) to (2,1.2);
			\draw[dashed, DarkOrange, thick] (1,1.3) rectangle (-1,-.7);
			\node at (2.8,1.6) [orange] {weak Frobenius};
			\draw[->, DarkOrange] (2.7,1.3) to (2.7,0);
			\node at (2.7,-.2) {$=$};
			\end{tikzpicture}}  
		\hspace*{-1cm}
		\raisebox{-6mm}{
			\begin{tikzpicture}[rotate=180,transform shape]
			\draw[blue,in=90,out=90,looseness=2] (0,0) to (1,0);
			\draw[blue,in=90,out=90,looseness=2] (0.5,.6) to (-.5,.6);
			\draw[blue,in=-90,out=-90,looseness=2] (1,0) to (2,0);
			\draw[blue] (-.5,.6) to (-.5,0);
			\draw[blue] (0,1.2) to (0,1.6);
			\draw[blue] (2,0) to (2,1.2);
			\draw[dashed, DarkRed, thick] (-.6,-.15) rectangle (1,1.7);
			\draw[->, DarkRed] (-1,-.75) to (-1,.5);
			\node[rotate=180] at (-1,-.95) [DarkRed] {associativity};
			\node at (-1,.6) {$=$};
			\end{tikzpicture}}
		\hspace*{-.7cm}
		\raisebox{-7mm}{\begin{tikzpicture}[rotate=180,transform shape]
			\draw[blue,in=90,out=90,looseness=2] (0,0) to (1,0);
			\draw[blue,in=90,out=90,looseness=2] (.5,.6) to (1.5,.6);
			\draw[blue,in=-90,out=-90,looseness=2] (1.5,0) to (2.5,0);
			\draw[blue] (1.5,.6) to (1.5,0);
			\draw[blue] (1,1.2) to (1,1.6);
			\draw[blue] (2.5,0) to (2.5,1.2);
			\draw[->, red] (-.4,-.3) to (-.4,.5);
			\node[rotate=180] at (-.4,-.5) [red] {weak Frobenius};
			\node at (-.4,.6) {$=$};
			\draw[blue,in=90,out=90,looseness=2] (-1.7,0) to (-.7,0);
			\draw[blue] (-1.2,.6) to (-1.2,1.2);
			\draw[blue,in=-90,out=-90,looseness=2] (-1.7,1.8) to (-.7,1.8);
			\end{tikzpicture}} \]
The reverse direction is immediate.
\end{proof}

\begin{lemma} \label{lem:multdualcomult}
Let $(X, m, \delta, e, \epsilon)$ be a Frobenius algebra. Then $m^* = \delta$ and $\delta^* = m$, as shown below:
			\[ \raisebox{-10mm}{\begin{tikzpicture}
   	 		\draw[blue,in=-90,out=-90,looseness=2] (0,0) to (1,0);
   	 		\draw[blue,in=-90,out=-90,looseness=2] (-.5,-.6) to (.5,-.6);
   	 		\draw[blue,in=90,out=90,looseness=2] (1,0) to (2,0);
   	 		\draw[blue,in=90,out=90,looseness=2] (0,0) to (2.5,0);
   	 		\draw[blue] (-.5,-.6) to (-.5,1);
   	 		\draw[blue] (2,0) to (2,-1);
   	 		\draw[blue] (2.5,0) to (2.5,-1);
   	 		\end{tikzpicture}} \, = \, 
   	 		\raisebox{-2mm}{
   	 			\begin{tikzpicture}
   	 			\draw[blue,in=90,out=90,looseness=2] (-0.5,0.5) to (-1.5,0.5);
   	 			\draw[blue] (-1,1.1) to (-1,1.6);
   	 			\end{tikzpicture}} \hspace*{4mm} \text{and} \hspace*{4mm} 
    		\raisebox{-14mm}{\begin{tikzpicture}
    		\draw[blue,in=90,out=90,looseness=2] (0,0) to (1,0);
    		\draw[blue,in=90,out=90,looseness=2] (.5,.6) to (1.5,.6);
    		\draw[blue,in=-90,out=-90,looseness=2] (0,0) to (-1,0);
    		\draw[blue,in=-90,out=-90,looseness=2] (1,0) to (-1.5,0);
    		\draw[blue] (1.5,.6) to (1.5,-1);
    		\end{tikzpicture}}
    		 \, = \,
    		\raisebox{-2mm}{
    		\begin{tikzpicture}
    		\draw[blue,in=-90,out=-90,looseness=2] (-0.5,0.5) to (-1.5,0.5);
    		\draw[blue] (-1,-.1) to (-1,-.6);
    		\end{tikzpicture}}   \]
\end{lemma}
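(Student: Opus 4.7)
The plan is to unpack the graphical definition of $m^*$ under the self-duality $X^* = X$ furnished by Lemma \ref{lem:selfdual}, and then reduce it to $\delta$ using the leg-bending identities of Lemma \ref{lem:intermultdualcomult} together with the zigzag relations.

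Concretely, I would first write out $m^*: X \to X \otimes X$ via the standard rigidity formula
$$m^* = (\id_{X \otimes X} \otimes \ev_X) \circ (\id_{X \otimes X} \otimes m \otimes \id_X) \circ (\coev_{X \otimes X} \otimes \id_X),$$
with $\coev_{X \otimes X} = (\id_X \otimes \coev_X \otimes \id_X) \circ \coev_X$. Diagrammatically this is precisely the left-hand side of the first equation in the statement: the output of $m$ bent down to become an input, with the two original inputs bent up through nested cups to become outputs. By Remark \ref{rk:nosym}, the rigid structure on $X$ is chosen so that these cups and caps are literally $\coev_X = \delta \circ e$ and $\ev_X = \epsilon \circ m$, so no additional isomorphisms need to be tracked.

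Next, I would locate inside this diagram a subdiagram of the shape $(m \otimes \id_X) \circ (\id_X \otimes \coev_X)$, which by the right-most identity of Lemma \ref{lem:intermultdualcomult} equals $\delta$. Substituting $\delta$ for this subdiagram leaves a single $\delta$ composed with a leftover cap-and-cup pair; one application of the zigzag relation (established within the proof of Lemma \ref{lem:selfdual}) collapses this pair to the identity, yielding $m^* = \delta$.

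For the second equality $\delta^* = m$, the cleanest route is to invoke involutivity of the dual: $\delta^* = (m^*)^* = m^{**} = m$, where the last identification is canonical under our conventions (Remark \ref{rk:nosym}). Alternatively, one can repeat the same diagrammatic manipulation beginning from the analogous formula for $\delta^*$ and applying the other half of Lemma \ref{lem:intermultdualcomult}, namely $(\ev_X \otimes \id_X) \circ (\id_X \otimes \delta) = m$. The main, and essentially only, obstacle is graphical bookkeeping: ensuring that the nested cups and caps in the expression for $m^*$ are oriented and composed so that Lemma \ref{lem:intermultdualcomult} applies to the intended subdiagram. Once the picture is drawn carefully, the argument is a short sequence of local rewrites.
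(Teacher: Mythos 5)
Your setup is sound and coincides with the paper's starting point: unpack $m^*$ as the bent diagram with cups and caps given by $\coev_X=\delta\circ e$ and $\ev_X=\epsilon\circ m$ (Lemma \ref{lem:selfdual}, Remark \ref{rk:nosym}), then rewrite locally. Your first substitution is also correct: the subdiagram $(m\otimes\id_X)\circ(\id_X\otimes\coev_X)$ sits inside the picture and equals $\delta$ by Lemma \ref{lem:intermultdualcomult}. The gap is in your final step. After that substitution, what remains is
\[
(\ev_X\otimes\id_{X\otimes X})\circ(\id_X\otimes\delta\otimes\id_X)\circ(\id_X\otimes\coev_X),
\]
and this is \emph{not} ``a single $\delta$ composed with a leftover cap-and-cup pair'': the cap consumes one \emph{output} of $\delta$ while the cup feeds its \emph{input}, so the cap and cup are not joined by a single strand and the zigzag relation cannot be applied to them. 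What you actually have is $\delta$ with two of its three legs bent (a partial rotation of $\delta$), and flattening such a rotation is precisely the kind of identity Lemma \ref{lem:intermultdualcomult} exists to provide; it does not follow from duality alone. The correct completion is two more local rewrites: group $(\ev_X\otimes\id_X)\circ(\id_X\otimes\delta)=m$ (the other half of Lemma \ref{lem:intermultdualcomult}), which reduces the expression to $(m\otimes\id_X)\circ(\id_X\otimes\coev_X)$, and then apply the first half once more to get $\delta$. This is exactly the paper's proof, which applies Lemma \ref{lem:intermultdualcomult} three times and never invokes the zigzag at this stage.

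Your preferred route for the second equality, $\delta^*=(m^*)^*=m^{**}=m$, is also not justified in this framework. The strict selfduality $X^*=X$ uses a single evaluation/coevaluation pair, and the double transpose of a morphism computed with it is a full $360^\circ$ rotation, which is not canonically the identity; the paper consistently treats $f^{**}=f$ as a genuine hypothesis rather than an automatic fact (see Definition \ref{def:sub} and Theorem \ref{thm:exchange}). Moreover, deducing $m^{**}=m$ here would essentially require already knowing $\delta^*=m$, so the shortcut is circular. Your fallback option --- repeating the mirror-image diagrammatic computation using the other chain of identities in Lemma \ref{lem:intermultdualcomult} --- is the correct fix, and it is what the paper means by ``the second equality follows similarly.''
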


\begin{proof}
To prove the first equality, we apply Lemma \ref{lem:intermultdualcomult} three times:
			\[ \raisebox{-8mm}{\begin{tikzpicture}
				\draw[blue,in=-90,out=-90,looseness=2] (0,0) to (1,0);
				\draw[blue,in=-90,out=-90,looseness=2] (-.5,-.6) to (.5,-.6);
				\draw[blue,in=90,out=90,looseness=2] (1,0) to (2,0);
				\draw[blue,in=90,out=90,looseness=2] (0,0) to (2.5,0);
				\draw[blue] (-.5,-.6) to (-.5,1);
				\draw[blue] (2,0) to (2,-.6);
				\draw[blue] (2.5,0) to (2.5,-1);
				\draw[dashed, DarkOrange, thick] (2.2, .7) rectangle (-.2,-.9);
				\node at (3.5,1.3) [orange] {weak Frobenius};
				\draw[->,orange] (2.8,1.1) to (2.8,0);
				\end{tikzpicture}} 
				\hspace*{-2cm} = \,
			\raisebox{-7mm}{
				\begin{tikzpicture}
				\draw[blue,in=-90,out=-90,looseness=2] (0,0) to (1,0);
				\draw[blue,in=90,out=90,looseness=2] (1,0) to (2,0);
				\draw[blue,in=90,out=90,looseness=2] (1.5,.6) to (2.5,.6);
				\draw[blue] (2.5,.6) to (2.5,-1);
				\draw[blue] (0,0) to (0,1);
				\draw[blue] (2,0) to (2,-1);
				\draw[dashed, DarkOrange, thick] (2.2, .7) rectangle (-.2,-.9);
				\end{tikzpicture}} 
				 \, = \,
			 \raisebox{-8mm}{
				\begin{tikzpicture}
				\draw[blue,in=-90,out=-90,looseness=2] (0,0) to (1,0);
				\draw[blue,in=90,out=90,looseness=2] (1,0) to (2,0);
				\draw[blue] (.5,-.6) to (.5,-1.2);
				\draw[blue] (0,0) to (0,1.2);
				\draw[blue] (2,0) to (2,-1.2);
				\draw[dashed, DarkOrange, thick] (2.2, .7) rectangle (-.2,-.9);
				\end{tikzpicture}}  
			    \, = \,
			\raisebox{-5mm}{
				\begin{tikzpicture}
				\draw[blue,in=90,out=90,looseness=2] (-0.5,0.5) to (-1.5,0.5);
				\draw[blue] (-1,1.1) to (-1,2);
				\end{tikzpicture}} \]
The second equality follows similarly.
\end{proof}

\begin{lemma} \label{lem:intermultdualcomultalpha}
Let $X$ and $X'$ be two selfdual objects, and let $\alpha \in \Hom_{\mathcal{C}}(X', X)$.
\begin{itemize}
\item If $(X, m, \delta, e, \epsilon)$ is a Frobenius algebra, then  
			\[ \raisebox{-8mm}{
				\begin{tikzpicture}
				\draw[blue,in=-90,out=-90,looseness=2] (0,0) to (1,0);
				\draw[blue,in=90,out=90,looseness=2] (1,0) to (2,0);
				\draw[blue] (.5,-.6) to (.5,-1.2);
				\draw[blue] (0,0) to (0,1.2);
				\draw[blue] (2,0) to (2,-1.2);
				\node[draw,thick, rounded corners, fill=white,minimum width = 20] at (1,0) {$\alpha$}; 
				\end{tikzpicture}}  
			=
			\raisebox{-7mm}{
				\begin{tikzpicture}
				\draw[blue,in=90,out=90,looseness=2] (-0.5,0.5) to (-1.5,0.5);
				\draw[blue] (-1,1.1) to (-1,2);
				\draw[blue] (-.5,.2) to (-.5,-.3);
				\draw[blue] (-1.5,.5) to (-1.5,-.3);
				\node[draw,thick, rounded corners, fill=white,minimum width = 20] at (-.5,.3) {$\alpha^*$};
				\end{tikzpicture}} \hspace*{4mm} \text{and} \hspace*{4mm}
				 \raisebox{-6mm}{
				 	\begin{tikzpicture}
				 	\draw[blue,in=-90,out=-90,looseness=2] (-0.5,0.5) to (-1.5,0.5);
				 	\draw[blue] (-1,-.1) to (-1,-1);
				 	\draw[blue] (-.5,.6) to (-.5,1);
				 	\draw[blue] (-1.5,.5) to (-1.5,1);
				 	\node[draw,thick, rounded corners, fill=white,minimum width = 20] at (-.5,.5) {$\alpha$};
				 	\end{tikzpicture}}
				 = \raisebox{-6mm}{
				 	\begin{tikzpicture}
				 	\draw[blue,in=90,out=90,looseness=2] (0,0) to (1,0);
				 	\draw[blue,in=-90,out=-90,looseness=2] (1,0) to (2,0);
				 	\draw[blue] (.5,.6) to (.5,1.2);
				 	\draw[blue] (0,0) to (0,-.8);
				 	\draw[blue] (2,0) to (2,1.2);
				 	\node[draw,thick, rounded corners, fill=white,minimum width = 20] at (1,0) {$\alpha^*$};
				 	\end{tikzpicture}}\]
\item If $(X', m', \delta', e', \epsilon')$ is a Frobenius algebra, then  
		\[ \raisebox{-7mm}{
			\begin{tikzpicture}
			\draw[blue,in=-90,out=-90,looseness=2] (0,0) to (1,0);
			\draw[blue,in=90,out=90,looseness=2] (1,0) to (2,0);
			\draw[blue] (1.5,.6) to (1.5,1.2);
			\draw[blue] (0,0) to (0,1.2);
			\draw[blue] (2,0) to (2,-.8);
			\node[draw,thick, rounded corners, fill=white,minimum width = 20] at (1,0) {$\alpha$};
			\end{tikzpicture}} 
		=  
		\raisebox{-6mm}{
			\begin{tikzpicture}
			\draw[blue,in=-90,out=-90,looseness=2] (-0.5,0.5) to (-1.5,0.5);
			\draw[blue] (-1,-.1) to (-1,-1);
			\draw[blue] (-1.5,.5) to (-1.5,1);
			\draw[blue] (-.5,.5) to (-.5,1);
			\node[draw,thick, rounded corners, fill=white,minimum width = 20] at (-1.5,.4) {$\alpha^*$};
			\end{tikzpicture}} \hspace*{4mm} \text{and} \hspace*{4mm} 
		\raisebox{-6mm}{
			\begin{tikzpicture}
			\draw[blue,in=90,out=90,looseness=2] (-0.5,0.5) to (-1.5,0.5);
			\draw[blue] (-1,1.1) to (-1,2);
			\draw[blue] (-1.5,.4) to (-1.5,0);
			\draw[blue] (-.5,.5) to (-.5,0);
			\node[draw,thick, rounded corners, fill=white,minimum width = 20] at (-1.5,.6) {$\alpha$};
			\end{tikzpicture}}
		=
		\raisebox{-6mm}{
			\begin{tikzpicture}
			\draw[blue,in=90,out=90,looseness=2] (0,0) to (1,0);
			\draw[blue,in=-90,out=-90,looseness=2] (1,0) to (2,0);
			\draw[blue] (1.5,-.6) to (1.5,-1.2);
			\draw[blue] (0,0) to (0,-1.2);
			\draw[blue] (2,0) to (2,1);
			\node[draw,thick, rounded corners, fill=white,minimum width = 20] at (1,0) {$\alpha^*$};
			\end{tikzpicture}} \]
\end{itemize} 
\end{lemma}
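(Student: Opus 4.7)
The plan is to reduce each of the four identities to two moves: a rigidity ``sliding'' of the $\alpha$-box across a cup or cap, and a final collapse via Lemma \ref{lem:intermultdualcomult}.

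More concretely, I begin with the first identity of Part 1, whose LHS reads $(m \otimes \id_{X'}) \circ (\id_X \otimes \alpha \otimes \id_{X'}) \circ (\id_X \otimes \coev_{X'})$. The key input is the standard rigidity identity $(\alpha \otimes \id_{X'}) \circ \coev_{X'} = (\id_X \otimes \alpha^*) \circ \coev_X$ for a morphism $\alpha : X' \to X$ between selfdual objects, which is a direct consequence of the zigzag axioms together with the definition of $\alpha^*$ via the duality cups and caps. Substituting this into the LHS replaces the inner $\coev_{X'}$ and $\alpha$ by $\coev_X$ and $\alpha^*$, producing $(m \otimes \id_{X'}) \circ (\id_X \otimes \id_X \otimes \alpha^*) \circ (\id_X \otimes \coev_X)$. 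By functoriality, $\alpha^*$ and $m$ act on disjoint strands and therefore commute, so the expression rearranges to $(\id_X \otimes \alpha^*) \circ [(m \otimes \id_X) \circ (\id_X \otimes \coev_X)]$; Lemma \ref{lem:intermultdualcomult} identifies the bracketed composite with $\delta$, yielding the RHS.

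The remaining three identities are obtained by the same template. For the second identity of Part 1 one uses the complementary sliding rule $\ev_X \circ (\id_X \otimes \alpha) = \ev_{X'} \circ (\alpha^* \otimes \id_{X'})$ together with the formula $m = (\ev_X \otimes \id_X) \circ (\id_X \otimes \delta)$ from Lemma \ref{lem:intermultdualcomult}. For the two identities of Part 2 the same arguments apply but with the Frobenius structure transferred from $X$ to $X'$: the sliding is performed across the duality morphisms of $X$ and $X'$ as appropriate, and Lemma \ref{lem:intermultdualcomult} is invoked for the quintuple $(X', m', \delta', e', \epsilon')$ instead of $(X, m, \delta, e, \epsilon)$, so that it is now $m'$ (resp.\ $\delta'$) that gets assembled from $\ev$/$\coev$ in the final step.

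I do not expect a serious obstacle. In graphical calculus each identity reduces to visibly ``sliding the $\alpha$-box around a bend'': the combination of the two ingredients above reflects the fact that, once $\alpha$ has crossed a rigidity cup/cap and become $\alpha^*$, the rest of the diagram is exactly the appropriate (co)multiplication. The only care required is bookkeeping, namely making sure that the correct orientation of the sliding rule and the correct form of Lemma \ref{lem:intermultdualcomult} are invoked, and that tensor factors stay in matching positions between the two sides; once this is set up, each of the four identities collapses in two moves.
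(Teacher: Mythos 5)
Your proof is correct and follows essentially the same route as the paper: the paper's proof also consists of a zigzag insertion at the $\alpha$-box plus recognition of the resulting cap--$\alpha$--cup as $\alpha^*$ (which together are exactly your ``sliding'' rule), followed by the collapse of the $m$/cup (resp.\ $\ev$/$\delta$) pair via Lemma~\ref{lem:intermultdualcomult}. The only difference is packaging—you cite the sliding identity as a standard rigidity fact, while the paper performs the two constituent moves explicitly in the diagram.
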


\begin{proof}
We can prove the first equality using the following diagram:
		\[\raisebox{-4mm}{
			\begin{tikzpicture}
			\draw[blue,in=-90,out=-90,looseness=2] (0,0) to (1,0);
			\draw[blue,in=90,out=90,looseness=2] (1,0) to (2,0);
			\draw[blue] (.5,-.6) to (.5,-1.2);
			\draw[blue] (0,0) to (0,1.2);
			\draw[blue] (2,0) to (2,-1.2);
			\draw[dashed, violet, thick] (.9,-.4) circle (0.2);
			\draw[->,violet] (2.3,.5) to (2.3, 0);
			\node[draw,thick, rounded corners, fill=white,minimum width = 20] at (1,.1) {$\alpha$};
			\node at (2.3,.7) [violet] {zigzag};
			\node at (2.3,-.2) {$=$}; 
			\end{tikzpicture}}  
		 	\raisebox{-4mm}{\begin{tikzpicture}
		 	\draw[blue,in=-90,out=-90,looseness=2] (0,0) to (1,0);
		 	\draw[blue,in=90,out=90,looseness=2] (1,0) to (2,0);
		 	\draw[blue,in=-90,out=-90,looseness=2] (2,0) to (3,0);
		 	\draw[blue,in=90,out=90,looseness=2] (3,0) to (4,0);
		 	\draw[blue] (.5,-.6) to (.5,-1);
		 	\draw[blue] (4,0) to (4,-1);
		 	\draw[dashed, DarkOrange, thick] (1,-.09) circle (1.06);
		 	\draw[dashed, DarkGreen!70!black, thick] (3.2,-.3) circle (1.1);
		 	\draw[->,orange] (4.8,.6) to (4.8,0);
		 	\draw[<-,green!70!black] (4.8,-.4) to (4.8,-.8);
		 	\node[draw,thick, rounded corners, fill=white,minimum width = 20] at (2.9,-.2) {$\alpha$};
		 	\node at (5,.9) [orange] {weak Frobenius};
		 	\node at (5,-1) [green!70!black] {duality};
		 	\node at (4.8,-.2) {$=$};
		 	\end{tikzpicture}}
	 		\hspace*{-.4cm}
	 		\raisebox{-4mm}{
	 			\begin{tikzpicture}
	 			\draw[blue,in=90,out=90,looseness=2] (-0.5,0.5) to (-1.5,0.5);
	 			\draw[blue] (-1,1.1) to (-1,2);
	 			\draw[blue] (-.5,.2) to (-.5,-.3);
	 			\draw[blue] (-1.5,.5) to (-1.5,-.3);
	 			\node[draw,thick, rounded corners, fill=white,minimum width = 20] at (-.5,.3) {$\alpha^*$};
	 			\end{tikzpicture}} \]
The proof of the other equalities follows a similar approach.
\end{proof}

In the rest of the paper, we will refer to the application of Lemma \ref{lem:intermultdualcomultalpha} as \emph{weak Frobenius}.

\begin{lemma} \label{lem:unitdualcounit}
Let $(X, m, \delta, e, \epsilon)$ be a Frobenius algebra. Then $e^* = \epsilon$ and $\epsilon^* = e$, as depicted below:
 			\[ \raisebox{-7mm}{\begin{tikzpicture}
 				\draw[blue,in=-90,out=-90,looseness=2] (0,0) to (1,0);
 				\draw[blue,dashed,in=90,out=90,looseness=2] (1,0) to (2,0);
 				\draw[blue,dashed] (2,0) to (2,-.5);
 				\draw[blue] (0,0) to (0,.5);
 				\node at (1,0) {$\color{blue}{\bullet}$};
 			\end{tikzpicture}}
 			\, =
 			\raisebox{-5mm}{
 			\begin{tikzpicture}
 			\draw[blue] (0,0) to (0,1);
 			\node at (0,0) {$\color{blue}{\bullet}$};
 			\end{tikzpicture}} \hspace*{4mm} \text{and} \hspace*{4mm}
 			\raisebox{-7mm}{\begin{tikzpicture}
 				\draw[blue,dashed,in=-90,out=-90,looseness=2] (0,0) to (1,0);
 				\draw[blue,in=90,out=90,looseness=2] (1,0) to (2,0);
 				\draw[blue] (2,0) to (2,-.5);
 				\draw[blue,dashed] (0,0) to (0,.5);
 				\node at (1,0) {$\color{blue}{\bullet}$};
 				\end{tikzpicture}}
 			\, =
 			\raisebox{-5mm}{
 				\begin{tikzpicture}
 				\draw[blue] (0,0) to (0,1);
 				\node at (0,1) {$\color{blue}{\bullet}$};
 				\end{tikzpicture}} \]
\end{lemma}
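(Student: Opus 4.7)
The plan is to observe that the two desired equalities follow immediately once one unpacks what $e^{*}$ and $\epsilon^{*}$ mean via the self-dual structure from Lemma \ref{lem:selfdual}, and then applies (co)unitality. So the main work is purely formal: identify $\ev_X$ and $\coev_X$ explicitly in terms of the Frobenius data, and collapse the resulting composition.

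Concretely, for the first equality I would write out the dual of $e \colon \one \to X$ using the general rigidity formula
\[
e^{*} \;=\; \ev_X \circ (\id_X \otimes e),
\]
(where I use the canonical identifications $\one^{*}=\one$, $\coev_\one = \id_\one$, so the usual zigzag-based definition of the dual morphism reduces to this). By Lemma \ref{lem:selfdual}, $\ev_X = \epsilon \circ m$, so
\[
e^{*} \;=\; \epsilon \circ m \circ (\id_X \otimes e) \;=\; \epsilon \circ \id_X \;=\; \epsilon,
\]
where the middle equality is exactly the unitality axiom $m \circ (\id_X \otimes e) = \id_X$. Diagrammatically this is the computation shown in the statement: the $\bullet$ for $e$ sits inside the right leg of the cup, which is straightened out by unitality to leave a single strand capped by $\epsilon$.

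For the second equality I would argue dually. Writing
\[
\epsilon^{*} \;=\; (\id_X \otimes \epsilon) \circ \coev_X,
\]
and applying Lemma \ref{lem:selfdual} to replace $\coev_X$ by $\delta \circ e$, gives
\[
\epsilon^{*} \;=\; (\id_X \otimes \epsilon) \circ \delta \circ e \;=\; \id_X \circ e \;=\; e,
\]
the middle equality now being the counitality axiom $(\id_X \otimes \epsilon) \circ \delta = \id_X$. This is the same picture as before but reflected vertically.

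There is no real obstacle here; the only subtlety is the conventional one flagged in Remark \ref{rk:nosym}, namely that we are implicitly using the rigid structure on $X$ given by $\ev_X = \epsilon \circ m$ and $\coev_X = \delta \circ e$, so that the abstract dual $e^{*}$ literally equals the string-diagram expression obtained by bending $e$ around the cup. With that convention in force (which the excerpt adopts throughout), the proof reduces to a single application of (co)unitality in each case.
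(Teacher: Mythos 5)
Your proposal is correct and follows essentially the same route as the paper: unpack $e^{*}$ and $\epsilon^{*}$ via the rigid structure, substitute $\ev_X = \epsilon \circ m$ and $\coev_X = \delta \circ e$ from Lemma \ref{lem:selfdual}, and collapse with (co)unitality — this is exactly the algebraic transcription of the paper's diagrammatic proof. The only cosmetic difference is that you place $\epsilon$ on the other tensor leg of $\coev_X$ than the paper's picture does, which is harmless since counitality is two-sided.
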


\begin{proof}
We can demonstrate the first equality using the following diagram:
 			\[\raisebox{-2mm}{\begin{tikzpicture}
 				\draw[blue,in=-90,out=-90,looseness=2] (0,0) to (1,0);
 				\draw[blue,dashed,in=90,out=90,looseness=2] (1,0) to (2,0);
 				\draw[blue,dashed] (2,0) to (2,-.5);
 				\draw[dashed, DarkOrange,thick] (-.1,-.2) rectangle (1.1,-.8);
 				\draw[blue] (0,0) to (0,.5);
 				\draw[->,orange] (2.4,.4) to (2.4,-.1);
 				\node at (1,0) {$\color{blue}{\bullet}$};
 				\node at (2.6,.6) [orange] {selfdual};
 				\node at (2.4,-.3) {$=$};
 				\end{tikzpicture}}
 				\raisebox{-2mm}{
 					\begin{tikzpicture}
 					\draw[blue,in=-90,out=-90,looseness=2] (-0.5,0.5) to (-1.5,0.5);
 					\draw[blue] (-1,-.1) to (-1,-.6);
 					\draw[dashed,green!70!black,thick] (-1.6,.7) rectangle (-.4,-.3);
 					\draw[->,green!70!black] (0,.3) to (0,-.2);
 					\node at (.4,.5) [green!70!black] {unitality};
 					\node at (-.5,.5) {$\color{blue}{\bullet}$};
 					\node at (-1,-.6) {$\color{blue}{\bullet}$};
 					\node at (0,-.3) {$=$};
 					\end{tikzpicture}}
 				\raisebox{-1.5mm}{
 				\begin{tikzpicture}
 				\draw[blue] (0,0) to (0,1);
 				\node at (0,0) {$\color{blue}{\bullet}$};
 				\end{tikzpicture}} \]
The second equality follows similarly.
\end{proof}

\begin{lemma} \label{lem:evdualcoev}
Let $X$ be a Frobenius algebra. Then $\ev_X^* = \coev_X$ and $\coev_X^* = \ev_X$.
\end{lemma}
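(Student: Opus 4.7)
The plan is to combine the preceding two lemmas via the contravariance of the duality operation. Recall from Lemma \ref{lem:selfdual} that, for a Frobenius algebra $X$, the evaluation and coevaluation morphisms are defined by the compositions
\[
\ev_X = \epsilon \circ m \qquad \text{and} \qquad \coev_X = \delta \circ e.
\]
Since $(-)^*$ reverses the order of composition, I would simply write
\[
\ev_X^* = (\epsilon \circ m)^* = m^* \circ \epsilon^*, \qquad \coev_X^* = (\delta \circ e)^* = e^* \circ \delta^*,
\]
and then substitute the identifications already established: $m^* = \delta$ and $\delta^* = m$ from Lemma \ref{lem:multdualcomult}, together with $e^* = \epsilon$ and $\epsilon^* = e$ from Lemma \ref{lem:unitdualcounit}. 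This yields $\ev_X^* = \delta \circ e = \coev_X$ and $\coev_X^* = \epsilon \circ m = \ev_X$, as required.

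The argument is essentially bookkeeping and there is no real obstacle; the main point to be a bit careful about is that taking $(-)^*$ of a composite morphism truly inverts the order, which in the graphical calculus corresponds to reflecting the diagram across a horizontal axis (and switching caps with cups). Under the self-dual pairing built from $\ev_X$ and $\coev_X$ itself, this reflection is well-defined and compatible with the conventions of Remark \ref{rk:nosym}. Once this compatibility is acknowledged, the identity collapses to a one-line substitution, and there is nothing further to verify.
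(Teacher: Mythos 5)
Your proof is correct and follows exactly the route the paper takes: the paper's proof simply states that the result is immediate from Lemmas \ref{lem:selfdual}, \ref{lem:multdualcomult}, and \ref{lem:unitdualcounit}, and your computation $\ev_X^* = (\epsilon \circ m)^* = m^* \circ \epsilon^* = \delta \circ e = \coev_X$ (and dually for $\coev_X^*$) is precisely the bookkeeping those citations compress.
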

\begin{proof}
Immediate from Lemmas \ref{lem:selfdual}, \ref{lem:multdualcomult} and \ref{lem:unitdualcounit}.
\end{proof}

\begin{lemma} \label{lem:FrobExtra}
The following equalities hold in a Frobenius algebra:
 			\[ \raisebox{-8mm}{
 				\begin{tikzpicture}
 				\draw[blue,in=-90,out=-90,looseness=2] (0,0) to (1,0);
 				\draw[blue,in=90,out=90,looseness=2] (1,0) to (2,0);
 				\draw[blue,in=-90,out=-90,looseness=2] (2,0) to (3,0);
 				\draw[blue] (.5,-.6) to (.5,-1.2);
 				\draw[blue] (0,0) to (0,1.2);
 				\draw[blue] (2.5,-.6) to (2.5,-1.2);
 				\draw[blue] (3,0) to (3,1.2);
  				\end{tikzpicture}} 
  				=
  				 \raisebox{-8mm}{
  				 	\begin{tikzpicture}
  				 	\draw[blue,in=90,out=90,looseness=2] (0,0) to (1,0);
  				 	\draw[blue,in=-90,out=-90,looseness=2] (1,0) to (2,0);
  				 	\draw[blue] (.5,.6) to (.5,1.2);
  				 	\draw[blue] (1.5,-.6) to (1.5,-1.2);
  				 	\draw[blue] (0,0) to (0,-1.2);
  				 	\draw[blue] (2,0) to (2,1.2);
  				 	\end{tikzpicture}} 
  			 		= 
  			 		\raisebox{-8mm}{
  			 			\begin{tikzpicture}
  			 			\draw[blue,in=90,out=90,looseness=2] (0,0) to (1,0);
  			 			\draw[blue,in=-90,out=-90,looseness=2] (1,0) to (2,0);
  			 			\draw[blue,in=90,out=90,looseness=2] (2,0) to (3,0);
  			 			\draw[blue] (.5,.6) to (.5,1.2);
  			 			\draw[blue] (0,0) to (0,-1.2);
  			 			\draw[blue] (3,0) to (3,-1.2);
  			 			\draw[blue] (2.5,.6) to (2.5,1.2);
  			 			\end{tikzpicture}}  \]
\end{lemma}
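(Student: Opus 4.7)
The plan is to deduce both equalities as applications of Lemma \ref{lem:intermultdualcomult} (``weak Frobenius''), which identifies $\delta$ with $(m \otimes \id) \circ (\id \otimes \coev)$ (and its mirror image), and identifies $m$ with $(\ev \otimes \id) \circ (\id \otimes \delta)$ (and its mirror).

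For the first equality, I would apply Lemma \ref{lem:intermultdualcomult} to the left half of the first diagram: the sub-configuration formed by the leftmost input strand, the left multiplication cup, and the left endpoint of the central coevaluation cap realizes exactly the composite $(m \otimes \id) \circ (\id \otimes \coev)$, which collapses into a single $\delta$ whose two outputs become the leftmost bottom output strand and the strand continuing rightward into the other multiplication. The resulting shape is $(\id \otimes m) \circ (\delta \otimes \id)$, which by the Frobenius axiom (Definition \ref{def:Frob}) coincides with $\delta \circ m$, i.e.~the second diagram.

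For the second equality, I would start from the Z-shape $(\id \otimes m) \circ (\delta \otimes \id)$ and expand the right multiplication using the mirror identity $m = (\ev \otimes \id) \circ (\id \otimes \delta)$ from Lemma \ref{lem:intermultdualcomult}. The two comultiplications now appear symmetrically on the two input strands, connected in the middle by an $\ev$ and leaving the outer strands as outputs; after tidying via the bifunctoriality identity $(\id \otimes \id \otimes \delta) \circ (\delta \otimes \id) = \delta \otimes \delta$, this is exactly the third diagram $(\id \otimes \ev \otimes \id) \circ (\delta \otimes \delta)$.

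There is no substantive obstacle: each step is a direct pictorial application of an already established identity, and the only real care needed is matching strand positions correctly when collapsing or expanding sub-diagrams. In fact, both equalities can be handled uniformly in a single pass by simultaneously applying Lemma \ref{lem:intermultdualcomult} to both halves of the first diagram—collapsing each multiplication--coevaluation pair into a $\delta$—which bypasses the intermediate Z-shape and lands directly on the third diagram.
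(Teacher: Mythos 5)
Your two-step argument is correct and is essentially the paper's own proof: both establish the first equality by collapsing the left multiplication--coevaluation pair of the first diagram into a $\delta$ via Lemma \ref{lem:intermultdualcomult}, and both establish the second equality via the identity $m = (\ev_X \otimes \id_X)\circ(\id_X \otimes \delta)$ from the same lemma (the paper collapses this pair inside the third diagram, you expand the $m$ inside the second diagram --- the same step read in the opposite direction).

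Two caveats. First, a minor misreading: the second diagram of the lemma is literally the Z-shape $(\id_X \otimes m)\circ(\delta \otimes \id_X)$, not $\delta \circ m$, so your collapse already lands exactly on it; the detour through the Frobenius axiom of Definition \ref{def:Frob} is superfluous (though harmless, since the axiom does make the two shapes equal). Second, and more substantively, your concluding ``single pass'' claim is false. The first diagram contains only \emph{one} coevaluation, shared by the two multiplications: the left $m$ uses its first leg and the right $m$ uses its second leg. Collapsing the left $(m,\coev)$ pair into $\delta$ consumes the coevaluation, after which the right multiplication has no coevaluation left to pair with --- it is now fed by an output of the new $\delta$. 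So one cannot ``simultaneously collapse each multiplication--coevaluation pair''; the second step is forced to be of the other type (rewriting the remaining $m$ as $(\ev_X \otimes \id_X)\circ(\id_X \otimes \delta)$), exactly as in your main argument and in the paper. The intermediate Z-shape cannot be bypassed, and the two equalities genuinely use the two different identities of Lemma \ref{lem:intermultdualcomult}.
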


\begin{proof}
This can be proven using the following diagram:
  		 	\[ \raisebox{-8mm}{
  		 		\begin{tikzpicture}
  		 		\draw[blue,in=-90,out=-90,looseness=2] (0,0) to (1,0);
  		 		\draw[blue,in=90,out=90,looseness=2] (1,0) to (2,0);
  		 		\draw[blue,in=-90,out=-90,looseness=2] (2,0) to (3,0);
  		 		\draw[blue] (.5,-.6) to (.5,-1.2);
  		 		\draw[blue] (0,0) to (0,1.2);
  		 		\draw[blue] (2.5,-.6) to (2.5,-1.2);
  		 		\draw[blue] (3,0) to (3,1.2);
  		 		\draw[dashed, DarkOrange, thick] (.9,0) circle (1.1);
  		 		\draw[->,orange] (3.8,.4) to (3.8,-.1);
  		 		\node at (3.8,1) [orange] {weak};
  		 		\node at (3.8,.6) [orange] {Frobenius};
  		 		\node at (3.8,-.35) {$=$};
  		 		\end{tikzpicture}}
  		 		\hspace*{-.25cm}
  	 			\raisebox{-8mm}{
  	 				\begin{tikzpicture}
  	 				\draw[blue,in=90,out=90,looseness=2] (0,0) to (1,0);
  	 				\draw[blue,in=-90,out=-90,looseness=2] (1,0) to (2,0);
  	 				\draw[blue] (.5,.6) to (.5,1.2);
  	 				\draw[blue] (1.5,-.6) to (1.5,-1.2);
  	 				\draw[blue] (0,0) to (0,-1.2);
  	 				\draw[blue] (2,0) to (2,1.2);
  	 				\end{tikzpicture}}
   				= 
   				  \raisebox{-8mm}{
   				  	\begin{tikzpicture}
   				  	\draw[blue,in=90,out=90,looseness=2] (0,0) to (1,0);
   				  	\draw[blue,in=-90,out=-90,looseness=2] (1,0) to (2,0);
   				  	\draw[blue,in=90,out=90,looseness=2] (2,0) to (3,0);
   				  	\draw[blue] (.5,.6) to (.5,1.2);
   				  	\draw[blue] (0,0) to (0,-1.2);
   				  	\draw[blue] (3,0) to (3,-1.2);
   				  	\draw[blue] (2.5,.6) to (2.5,1.2);
   				  	\draw[dashed, DarkOrange, thick] (2.1,0) circle (1.1);
   				  	\end{tikzpicture}} \qedhere \]
\end{proof}

\begin{lemma} \label{lem:grap}
Let $(X, m, \delta, e, \epsilon)$ be a Frobenius algebra, and let $\alpha$ be a morphism in $\End_{\mathcal C}(X)$. Then:
$$
m \circ (\alpha \otimes \id_X) \circ \delta = m \circ (( m \circ (\alpha \otimes \id_X) \circ (\delta \circ e)) \otimes \id_X),
$$
which can be depicted as follows:
 			\[ \raisebox{-12mm}{
 				\begin{tikzpicture}
 				\draw[blue,in=90,out=90,looseness=2] (0,0) to (1,0);
 				\draw[blue,in=-90,out=-90,looseness=2] (0,0) to (1,0);
 				\draw[blue] (.5,-.6) to (.5,-1.2);
 				\draw[blue] (.5,.6) to (.5,1.2);
 				\node[draw,thick,rounded corners, fill=white,minimum width=20] at (0,0){$\alpha$};  
 				\end{tikzpicture}} 
 				= 
 				\raisebox{-16mm}{
 					\begin{tikzpicture}
 					\draw[blue,in=90,out=90,looseness=2] (0,0) to (1,0);
 					\draw[blue,in=-90,out=-90,looseness=2] (0,0) to (1,0);
 					\draw[blue,in=-90,out=-90,looseness=2] (.5,-.6) to (1.5,-.6);
 					\draw[blue] (1.5,-.6) to (1.5,1.2);
 					\draw[blue] (.5,.6) to (.5,1.2);
 					\draw[blue] (1,-1.2) to (1,-1.7);
 					\node[draw,thick,rounded corners, fill=white,minimum width=20] at (0,0){$\alpha$};
 					\node at (.5,1.2) {$\color{blue}{\bullet}$};
 					\end{tikzpicture}} \qedhere \]
\end{lemma}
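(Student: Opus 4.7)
The plan is to ``pull apart'' the lens on the LHS into a scalar-like bubble on the left and a bare input strand on the right, joined at the bottom by a single multiplication. The conceptual point is that the right strand of the lens carries no morphism beyond $\id_X$, so the $\alpha$-decorated piece of the lens can be ``pre-evaluated'' on the unit $e\colon \one \to X$ to produce a morphism $\xi = m \circ (\alpha \otimes \id_X) \circ \delta \circ e$ in $\Hom_{\mathcal{C}}(\one,X)$, after which the original composite is just left multiplication by $\xi$.

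I would carry this out in three diagrammatic moves, each invoking an identity already proved in \S\ref{sec:Frob}. First, I would apply Lemma \ref{lem:intermultdualcomult} to rewrite $\delta = (\id_X \otimes m) \circ (\coev_X \otimes \id_X)$ inside the LHS, so that the top cap of the lens becomes a $\coev_X$ on the far left followed by an $m$ in the middle. Second, I would use the interchange law to merge $(\alpha \otimes \id_X) \circ (\id_X \otimes m)$ into $\alpha \otimes m$, and then apply associativity of $m$ in the form $m \circ (\alpha \otimes m) = m \circ (m \otimes \id_X) \circ (\alpha \otimes \id_X \otimes \id_X)$, moving the two multiplications into sequence. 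Third, observing that the rightmost $\id_X$ factor is passive across $\coev_X \otimes \id_X$, $\alpha \otimes \id_X \otimes \id_X$, and $m \otimes \id_X$, I would factor the composition as
\[
m \circ \bigl((m \circ (\alpha \otimes \id_X) \circ \coev_X) \otimes \id_X\bigr),
\]
and finally rewrite $\coev_X = \delta \circ e$ via Lemma \ref{lem:selfdual} to reach the RHS.

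I do not anticipate any serious obstacle: each move is a one-line substitution using a previously established identity, and the only care required is the tensor-factor bookkeeping at each stage. In the final write-up I would present the argument as a chain of three pictures connected by colored arrows labeled ``Lemma \ref{lem:intermultdualcomult}'', ``associativity'', and ``Lemma \ref{lem:selfdual}'', matching the diagrammatic style used earlier in \S\ref{sec:Frob}.
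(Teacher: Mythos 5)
Your proof is correct and is essentially the paper's own argument run in reverse: the paper starts from the right-hand side and applies associativity, then $\coev_X=\delta\circ e$ (Lemma \ref{lem:selfdual}) and the weak Frobenius identity (Lemma \ref{lem:intermultdualcomultalpha}) to reach the lens, while you start from the lens and apply the same three ingredients in the opposite order. Since both directions use the identical chain of diagrams, there is nothing to change beyond matching the paper's presentation.
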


\begin{proof}
Here is a graphical proof where \emph{weak Frobenius} refers to Lemma \ref{lem:intermultdualcomult}:
 			\[ \raisebox{-6.5mm}{
 				\begin{tikzpicture}
 				\draw[blue,in=90,out=90,looseness=2] (0,0) to (1,0);
 				\draw[blue,in=-90,out=-90,looseness=2] (0,0) to (1,0);
 				\draw[blue,in=-90,out=-90,looseness=2] (.5,-.6) to (1.5,-.6);
 				\draw[blue] (1.5,-.6) to (1.5,1.2);
 				\draw[blue] (.5,.6) to (.5,1.2);
 				\draw[blue] (1,-1.2) to (1,-1.7);
 				\node[draw,thick,rounded corners, fill=white,minimum width=20] at (0,0){$\alpha$};
 				\node at (.5,1.2) {$\color{blue}{\bullet}$};
 				\draw[dashed,red,thick] (-.3,-.35) rectangle (1.7,-1.4);
 				\draw[->,red] (2,.7) to (2,.2);
 				\node at (2.5,1) [red] {associativity};
 				\node at (2,0) {$=$}; 
 				\end{tikzpicture}} 
 				\raisebox{-8mm}{
 					\begin{tikzpicture}
 					\draw[blue,in=90,out=90,looseness=2] (0,0) to (1,0);
 					\draw[blue,in=-90,out=-90,looseness=2] (1,0) to (2,0);
 					\draw[blue,in=-90,out=-90,looseness=2] (0,-.6) to (1.5,-.6);
 					\draw[blue] (.5,.6) to (.5,1.2);
 					\draw[blue] (0,0) to (0,-.6);
 					\draw[blue] (2,0) to (2,1.2);
 					\draw[blue] (.75,-1.5) to (.75,-2);
 					\node[draw,thick,rounded corners, fill=white,minimum width=20] at (0,-.6) {$\alpha$};
 					\node at (.5,1.2) {$\color{blue}{\bullet}$};
 					\node at (3.5,.9) [green!70!black] {selfdual};
		 			\node at (3.5,-1) [orange] {weak Frobenius};
		 			\node at (3.5,-.2) {$=$};
					\draw[dashed, DarkGreen!70!black, thick] (.5,.8) circle (.6);
		 			\draw[dashed, DarkOrange, thick] [rotate around={-30:(3.2,-.3)}] (.9,-.7) ellipse (1.7 and 1);
		 			\draw[->,green!70!black] (3.5,.6) to (3.5,0);
		 			\draw[<-,orange] (3.5,-.4) to (3.5,-.8);
 					\end{tikzpicture}}
 					\raisebox{-3mm}{
 						\begin{tikzpicture}
 						\draw[blue,in=90,out=90,looseness=2] (0,0) to (1,0);
 						\draw[blue,in=-90,out=-90,looseness=2] (0,0) to (1,0);
 						\draw[blue] (.5,-.6) to (.5,-1.2);
 						\draw[blue] (.5,.6) to (.5,1.2);
 						\node[draw,thick,rounded corners, fill=white,minimum width=20] at (0,0){$\alpha$}; 
 						\end{tikzpicture}} \qedhere \]
\end{proof}

\begin{definition} \label{def:connected}
An object $X$ in a $\mathbbm{k}$-linear monoidal category is called \emph{connected} (or haploid) if $\Hom_{\mathcal{C}}(\one, X)$ is one-dimensional, so equal to $\mathbbm{k} e$ for some nonzero $e$.
An algebra and coalgebra $(X, m, \delta)$ is termed \emph{separable} if there exists $\lambda \in \mathbbm{k}$ such that $m \circ \delta = \lambda \id_X$, as illustrated below:
 					\[ \hspace*{-1.5cm} (separable) \hspace*{1.5cm}  \raisebox{-10mm}{
 						\begin{tikzpicture}
 						\draw[blue,in=90,out=90,looseness=2] (0,0) to (1,0);
 						\draw[blue,in=-90,out=-90,looseness=2] (0,0) to (1,0);
 						\draw[blue] (.5,-.6) to (.5,-1.2);
 						\draw[blue] (.5,.6) to (.5,1.2);
 						\end{tikzpicture}} = 
 					\lambda \raisebox{-10mm}{
 						\begin{tikzpicture}
 						\draw[blue] (0,0) to (0,2.4);
 						\end{tikzpicture}} \]
Note that our definition of \emph{separable} does not require $\lambda$ to be nonzero.
\end{definition}

\begin{lemma} \label{lem:cose}
A connected Frobenius algebra is separable.
\end{lemma}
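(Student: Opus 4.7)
The plan is to apply Lemma \ref{lem:grap} with $\alpha = \id_X$. In that case, the right-hand side of the lemma rewrites the ``bubble'' $m \circ \delta : X \to X$ as $m \circ (f \otimes \id_X)$, where
$f := m \circ \delta \circ e \in \Hom_{\mathcal{C}}(\one, X)$
is the morphism obtained by precomposing with the unit. The connectedness hypothesis (Definition \ref{def:connected}) now enters: $\Hom_{\mathcal{C}}(\one, X)$ is one-dimensional and equal to $\mathbbm{k}\, e$, so $f = \lambda\, e$ for some scalar $\lambda \in \mathbbm{k}$. Substituting and invoking the unit axiom $m \circ (e \otimes \id_X) = \id_X$ yields $m \circ \delta = \lambda \cdot \id_X$, which is exactly the separability condition from Definition \ref{def:connected}.

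The argument is essentially immediate once Lemma \ref{lem:grap} is available; I do not anticipate any substantive obstacle. Conceptually, Lemma \ref{lem:grap} replaces the bubble $m \circ \delta$ by a strand decorated by an element of $\Hom_{\mathcal{C}}(\one, X)$, and connectedness is precisely the ingredient that collapses this decoration to a scalar multiple of $e$, producing a single separability constant $\lambda$ canonically attached to the connected Frobenius algebra $(X, m, \delta, e, \epsilon)$.
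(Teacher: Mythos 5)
Your proof is correct and follows essentially the same route as the paper's: both apply Lemma \ref{lem:grap} with $\alpha = \id_X$, use connectedness to identify $m \circ \delta \circ e \in \Hom_{\mathcal{C}}(\one, X)$ as a scalar multiple $\lambda e$ of the unit, and conclude $m \circ \delta = \lambda\, \id_X$ via unitality. The only difference is the order of the two steps (the paper invokes connectedness before applying Lemma \ref{lem:grap}, you do it after), which is immaterial.
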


\begin{proof}
Note that $(m \circ \delta) \circ e \in \Hom_{\mathcal{C}}(\one, X)$. By the connectedness condition, there exists $\lambda \in \mathbbm{k}$ such that $(m \circ \delta) \circ e = \lambda e$, as shown below:
 				\[ \raisebox{-10mm}{
 					\begin{tikzpicture}
 					\draw[blue,in=90,out=90,looseness=2] (0,0) to (1,0);
 					\draw[blue,in=-90,out=-90,looseness=2] (0,0) to (1,0);
 					\draw[blue] (.5,-.6) to (.5,-1.2);
 					\draw[blue] (.5,.6) to (.5,1.2);
 					\node at (.5,1.2) {$\color{blue}{\bullet}$};
 					\end{tikzpicture}} = 
 				\lambda \raisebox{-10mm}{
 					\begin{tikzpicture}
 					\draw[blue] (0,0) to (0,2.4);
 					\node at (0,2.4) {$\color{blue}{\bullet}$};
 					\end{tikzpicture}} \]
Applying Lemma \ref{lem:grap} with $\alpha = \id_X$, along with the previous equality and unitality, we obtain:
 				\[ \raisebox{-10mm}{
 					\begin{tikzpicture}
 					\draw[blue,in=90,out=90,looseness=2] (0,0) to (1,0);
 					\draw[blue,in=-90,out=-90,looseness=2] (0,0) to (1,0);
 					\draw[blue] (.5,-.6) to (.5,-1.2);
 					\draw[blue] (.5,.6) to (.5,1.2);
 					\end{tikzpicture}}
 					=
 					\raisebox{-12mm}{
 						\begin{tikzpicture}
 						\draw[blue,in=90,out=90,looseness=2] (0,0) to (1,0);
 						\draw[blue,in=-90,out=-90,looseness=2] (0,0) to (1,0);
 						\draw[blue,in=-90,out=-90,looseness=2] (.5,-.6) to (1.5,-.6);
 						\draw[blue] (1.5,-.6) to (1.5,1.2);
 						\draw[blue] (.5,.6) to (.5,1.2);
 						\draw[blue] (1,-1.2) to (1,-1.7);
 						\node at (.5,1.2) {$\color{blue}{\bullet}$};
 						\end{tikzpicture}}
 						= \lambda \raisebox{-4mm}{
 							\begin{tikzpicture}[rotate=180,transform shape]
 							\draw[blue,in=90,out=90,looseness=2] (0,0) to (1,0);
 							\node at (1,0) {$\textcolor{blue}{\bullet}$};
 							\draw[blue] (.5,.6) to (.5,1.2);
 							\end{tikzpicture}} 
 							=
 							 \lambda \, \raisebox{-4mm}{\begin{tikzpicture}
 							 \draw[blue] (0,0) to (0,1);
 							 \end{tikzpicture}} \]
Thus, the result follows.
\end{proof}

If there is no possibility for confusion, a Frobenius algebra $(X, m, \delta, e, \epsilon)$ may simply be referred to as $X$.

\begin{definition} \label{def:trace}
Let $X$ be an object in a monoidal category $\mathcal{C}$. Assume that $X$ has a left dual and a double dual, denoted $X^*$ and $X^{**}$, respectively. The \emph{trace} of a morphism $\alpha \in \Hom_{\mathcal{C}}(X, X^{**})$, denoted $\tr(\alpha)$, is defined as 
$$
\tr(\alpha) = \ev_{X^*} \circ (\alpha \otimes \id_{X^*}) \circ \coev_X \in \Hom_{\mathcal{C}}(\one, \one),
$$
as illustrated below:
 			\[ \tr(\alpha) := \raisebox{-6mm}{\begin{tikzpicture}
 					\draw[blue,in=90,out=90,looseness=2] (0,0) to (1,0);
 					\draw[blue,in=-90,out=-90,looseness=2] (0,0) to (1,0);
 					\node[draw,thick,rounded corners, fill=white,minimum width=20] at (0,0){$\alpha$};
 					\node[scale=.8] at (-.2,-.4) {$X^{**}$};
 					\node[scale=.8] at (-.2,.4) {$X$};
 					\node[scale=.8] at (1.3,0) {$X^*$};
 			\end{tikzpicture}}\]
If $X^{**}$ also has a left dual, then the following holds:
 		\[\raisebox{-6mm}{\begin{tikzpicture}
 			\draw[blue,in=90,out=90,looseness=2] (0,0) to (1,0);
 			\draw[blue,in=-90,out=-90,looseness=2] (0,0) to (1,0);
 			\node[draw,thick,rounded corners, fill=white,minimum width=20] at (1,0){$\alpha^*$};
 			\draw[dashed,red,thick] (.5,.4) rectangle (1.5,-.4);
 			\draw[->,red] (1.8,.6) to (1.8,.2);
 			\node at (1.8,.8) [red] {definition};
 			\node at (1.8,0) {$=$}; 
 			\end{tikzpicture}} \hspace*{-.5cm}
 					\raisebox{-7mm}{
 					\begin{tikzpicture}
 					\draw[blue,in=-90,out=-90,looseness=2] (0,0) to (.5,0);
 					\draw[blue,in=90,out=90,looseness=2] (.5,0) to (1,0);
 					\draw[blue,in=90,out=90,looseness=2] (0,0) to (-.5,0);
 					\draw[blue,in=-90,out=-90,looseness=2] (-.5,0) to (1,0);
 					\node[draw,thick, rounded corners, fill=white,minimum width = 20] at (.5,0) {$\alpha$}; 
 					\end{tikzpicture}}
 				=
 				 \raisebox{-6mm}{\begin{tikzpicture}
 					\draw[blue,in=90,out=90,looseness=2] (0,0) to (1,0);
 					\draw[blue,in=-90,out=-90,looseness=2] (0,0) to (1,0);
 					\node[draw,thick,rounded corners, fill=white,minimum width=20] at (0,0){$\alpha$};
 					\end{tikzpicture}}
 					= \tr(\alpha) \]
\end{definition}

\begin{remark} \label{rk:LinSimple}
If $\mathcal{C}$ is a $\mathbbm{k}$-linear category and the unit object $\one$ is linear-simple (i.e., $\Hom_{\mathcal{C}}(\one, \one) = \mathbbm{k}$, as in a tensor category), then the elements of $\Hom_{\mathcal{C}}(\one, \one)$ can be interpreted as scalars. If $\mathcal{C}$ is abelian, then by Schur's lemma, if $\one$ is simple, $\Hom_{\mathcal{C}}(\one, \one)$ forms a division algebra. Consequently, if $\mathbbm{k}$ is algebraically closed, $\one$ must be linear-simple.
\end{remark}

\begin{lemma} \label{lem:trfg}
Let \(A\) and \(B\) be objects in a monoidal category \(\mathcal{C}\), and suppose that both admit double duals \(A^{**}\) and \(B^{**}\).
Consider two morphisms $f: A \to B$ and $g: B \to A^{**}$. Then, it holds that 
$$
\tr(g \circ f) = \tr(f^{**} \circ g).
$$
\end{lemma}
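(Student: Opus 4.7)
The plan is to give a diagrammatic proof using cyclicity of the trace loop in a rigid monoidal category. Expanding both sides via Definition~\ref{def:trace} gives
\[
\tr(g \circ f) = \ev_{A^*} \circ ((g \circ f) \otimes \id_{A^*}) \circ \coev_A \qquad \text{and} \qquad \tr(f^{**} \circ g) = \ev_{B^*} \circ ((f^{**} \circ g) \otimes \id_{B^*}) \circ \coev_B,
\]
which we may picture as loops: the first with $f$ below $g$ stacked on the left strand, and the second with $g$ below $f^{**}$ stacked on the left strand.

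First, I would slide $f$ upward and around through the top cap, using the standard naturality identity $(f \otimes \id_{A^*}) \circ \coev_A = (\id_B \otimes f^*) \circ \coev_B$, which is essentially the defining zigzag characterization of $f^*$. After this substitution and using bifunctoriality of $\otimes$, the first expression becomes $\ev_{A^*} \circ (g \otimes f^*) \circ \coev_B$, with $f^*$ now living on the right strand.

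Second, I would slide $f^*$ downward through the bottom cup back onto the left strand, using the analogous naturality identity $\ev_{A^*} \circ (\id_A \otimes f^*) = \ev_{B^*} \circ (f^{**} \otimes \id_{B^*})$, applied under the identifications $A^{**} = A$ and $B^{**} = B$ so that $f^{**}: A \to B$. Factoring $g \otimes f^* = (\id_A \otimes f^*) \circ (g \otimes \id_{B^*})$ and substituting yields $\ev_{B^*} \circ ((f^{**} \circ g) \otimes \id_{B^*}) \circ \coev_B = \tr(f^{**} \circ g)$, as desired.

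The main obstacle is bookkeeping: the argument simultaneously manipulates four \emph{a priori} distinct morphisms $\coev_A, \coev_B, \ev_{A^*}, \ev_{B^*}$, and the hypotheses $A^{**} = A$, $B^{**} = B$ make it tempting to conflate them. Once they are kept straight, the proof reduces to pure graphical yanking, and the appearance of $f^{**}$ (rather than $f$) is the expected manifestation of the fact that dragging a morphism once around a trace loop composes it with the double-dual functor.
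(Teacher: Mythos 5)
Your proof is correct and is essentially the paper's own argument: the paper's diagrammatic chain expands $f^{**}$ via cups and caps and yanks it around the loop, which is precisely your two sliding identities (through the top cap turning $f$ into $f^*$, then through the bottom cup turning $f^*$ into $f^{**}$), just read in the opposite direction, starting from $\tr(f^{**}\circ g)$ instead of $\tr(g\circ f)$. The bookkeeping of $\coev_A,\coev_B,\ev_{A^*},\ev_{B^*}$ and the typing of each step check out under the paper's conventions.
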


\begin{proof}
This can be proven pictorially:
 		\[ \tr(f^{**} \circ g) 
 			=
 			\raisebox{-12mm}{\begin{tikzpicture}
 				\draw[blue,in=90,out=90,looseness=2] (0,0) to (1,0);
 				\draw[blue,in=-90,out=-90,looseness=2] (0,-1) to (1,-1);
 				\draw[blue] (0,-1) to (0,0);
 				\draw[blue] (1,-1) to (1,0);
 				\node[draw,thick,rounded corners, fill=white,minimum width=20] at (0,0){$g$};
 				\node[draw,thick,rounded corners, fill=white,minimum width=20] at (0,-.8){$f^{**}$};
 				\end{tikzpicture}}
 			= 
 			\raisebox{-12mm}{
 			\begin{tikzpicture}
 			\draw[blue,in=90,out=90,looseness=2] (0,.2) to (.5,.2);
 			\draw[blue,in=90,out=90,looseness=2] (-.5,.2) to (.7,.2);
 			\draw[blue,in=-90,out=-90,looseness=2] (0,-.2) to (-.5,-.2);
 			\draw[blue,in=-90,out=-90,looseness=2] (.5,-.2) to (-.85,-.2);
 			\draw[blue,in=-90,out=-90,looseness=2] (.7,.2) to (1.2,.2);
 			\draw[blue] (-.5,.2) to (-.5,-.2);
 			\draw[blue] (.5,.2) to (.5,-.2);
 			\draw[blue,in=90,out=90,looseness=2] (-.85,.2) to (1.2,.2);
 			\node[draw,thick,rounded corners, fill=white,minimum width=20] at (0,0){$f$};
 			\node[draw,thick,rounded corners, fill=white,minimum width=15] at (-.85,0){$g$};
 			\end{tikzpicture}} 
 			=
 			\raisebox{-12mm}{
 				\begin{tikzpicture}
 				\draw[blue,in=-90,out=-90,looseness=2] (0,0) to (.75,0);
 				\draw[blue,in=90,out=90,looseness=2] (.75,0) to (1.5,0);
 				\draw[blue,in=90,out=90,looseness=2] (0,0) to (-.75,0);
 				\draw[blue,in=-90,out=-90,looseness=2] (-.75,0) to (1.5,0);
 				\node[draw,thick, rounded corners, fill=white,minimum width = 20] at (.75,0) {$f$};
 				\node[draw,thick, rounded corners, fill=white,minimum width = 20] at (-.75,0) {$g$}; 
 				\end{tikzpicture}}
 			=
 			\raisebox{-12mm}{\begin{tikzpicture}
 				\draw[blue,in=90,out=90,looseness=2] (0,0) to (1,0);
 				\draw[blue,in=-90,out=-90,looseness=2] (0,-1) to (1,-1);
 				\draw[blue] (0,-1) to (0,0);
 				\draw[blue] (1,-1) to (1,0);
 				\node[draw,thick,rounded corners, fill=white,minimum width=20] at (0,0){$f$};
 				\node[draw,thick,rounded corners, fill=white,minimum width=20] at (0,-.8){$g$};
 				\end{tikzpicture}}
 			= \tr(g \circ f) \qedhere \]
\end{proof}

\begin{definition}[{\cite[Definition 4.7.7]{EGNO15}}] \label{def:pivotal}
A \emph{pivotal structure} on a rigid monoidal category $\mathcal{C}$ is a natural isomorphism of monoidal functors $\phi:\id_{\mathcal{C}} \to ()^{**}$, i.e. a collection of isomorphisms $\phi_X: X \to X^{**}$ such that:
\begin{enumerate}
\item  for all $f$ in $\Hom_{\mathcal{C}}(X,Y)$ then $f = \phi_Y^{-1} \circ f^{**} \circ \phi_X$ (naturality) \label{nat}
\item  $\phi_{X \otimes Y} = \phi_{X} \otimes \phi_{Y} $ (monoidal)
\end{enumerate}
\end{definition}

Let us define a weaker version without the monoidal structure:
\begin{definition} \label{def:quasipivotal}
A \emph{quasi-pivotal structure} on a rigid monoidal category $\mathcal{C}$ is a natural isomorphism $\phi:\id_{\mathcal{C}} \to ()^{**}$, i.e. a collection of isomorphisms $\phi_X: X \to X^{**}$ such that (\ref{nat}) from Definition \ref{def:pivotal} holds.
\end{definition}
 
\noindent If $X=Y$, let us define $\tr_\phi(f):=\tr(\phi_X \circ f)$. 

\begin{lemma} \label{lem:trfgpivotalquasi}
Let $X$ and $Y$ be two objects in a rigid monoidal category $\mathcal{C}$ with quasi-pivotal structure $\phi$. Consider two morphisms $f: X \to Y$ and $g: Y \to X$. Then, it holds that $\tr_\phi(g \circ f) = \tr_\phi(f \circ g)$.
\end{lemma}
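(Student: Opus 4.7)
The plan is to reduce the claimed cyclicity to two ingredients: the naturality condition that comes with the quasi-pivotal structure, and the standard sliding identities for duals in a rigid monoidal category (essentially the same pictorial manipulations already used in Lemma \ref{lem:trfg}).

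First I would unfold the definition to obtain
\[
\tr_\phi(g \circ f) = \tr(\phi_X \circ g \circ f) = \ev_{X^*} \circ \bigl((\phi_X \circ g \circ f) \otimes \id_{X^*}\bigr) \circ \coev_X.
\]
Next, I apply the naturality condition of Definition \ref{def:quasipivotal} to the morphism $g : Y \to X$, which rewrites $\phi_X \circ g = g^{**} \circ \phi_Y$, so
\[
\tr_\phi(g \circ f) = \ev_{X^*} \circ (g^{**} \otimes \id_{X^*}) \circ \bigl((\phi_Y \circ f) \otimes \id_{X^*}\bigr) \circ \coev_X.
\]

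The key step is then to slide $g^{**}$ through the trace loop and re-express it as $g$ on the opposite strand. This uses two standard identities, valid in any rigid monoidal category and provable from the zigzag relations applied to $g^*$:
\[
\ev_{X^*} \circ (g^{**} \otimes \id_{X^*}) = \ev_{Y^*} \circ (\id_{Y^{**}} \otimes g^*),
\]
\[
(\id_X \otimes g^*) \circ \coev_X = (g \otimes \id_{Y^*}) \circ \coev_Y.
\]
Substituting the first, commuting $g^*$ past $\phi_Y \circ f$ (they act on disjoint tensor factors), then substituting the second and recomposing, yields
\[
\tr_\phi(g \circ f) = \ev_{Y^*} \circ \bigl((\phi_Y \circ f \circ g) \otimes \id_{Y^*}\bigr) \circ \coev_Y = \tr(\phi_Y \circ f \circ g) = \tr_\phi(f \circ g).
\]

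The main obstacle is just the bookkeeping of orientations in the sliding identities; there is no genuinely new content beyond what drives Lemma \ref{lem:trfg}. The one conceptually essential step is the naturality application, which is precisely what the quasi-pivotal hypothesis supplies — in particular, no monoidal compatibility of $\phi$ is needed, which explains why quasi-pivotal (rather than pivotal) suffices.
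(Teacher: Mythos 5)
Your proof is correct and takes essentially the same route as the paper, whose entire proof is the chain $\tr(\phi_X \circ g \circ f) = \tr(f^{**} \circ \phi_X \circ g) = \tr(\phi_Y \circ f \circ g)$ — one application of Lemma \ref{lem:trfg} followed by one application of naturality. Your two sliding identities just re-derive Lemma \ref{lem:trfg} inline (applied to $g$ and $\phi_Y \circ f$), and you invoke naturality on $g$ rather than on $f$; both differences are cosmetic.
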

\begin{proof}
By Lemma \ref{lem:trfg} and the quasi-pivotal structure: $$ \tr_\phi(g \circ f) = \tr(\phi_X \circ g \circ f)  = \tr(f^{**} \circ \phi_X \circ g) = \tr(\phi_Y \circ f \circ g) = \tr_\phi(f \circ g). \qedhere$$ 
\end{proof}


\begin{lemma} \label{lem:loop}
Let $(X, m, \delta, e, \epsilon)$ be a Frobenius algebra. Then, $
\tr(\id_X) = \ev_{X} \circ \coev_X = \epsilon \circ m \circ \delta \circ e$.
In the separable case, where $m \circ \delta = \lambda_X \id_X$, and assuming that the unit object $\one$ is linear-simple, let $\mu_X$ denote the scalar defined by $\epsilon \circ e$. Then, we have $\tr(\id_X) = \lambda_X \mu_X$.
\end{lemma}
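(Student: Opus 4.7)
The plan is to unwind Definition~\ref{def:trace} and substitute the explicit descriptions of $\ev_X$ and $\coev_X$ provided by Lemma~\ref{lem:selfdual}. That lemma realizes $X$ as selfdual with $\ev_X = \epsilon \circ m$ and $\coev_X = \delta \circ e$, and by Remark~\ref{rk:nosym} we may assume the ambient rigid structure is chosen compatibly, so that $X^{**} = X$ and $\ev_{X^*}$ agrees with $\ev_X$ under this identification.

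First I would apply Definition~\ref{def:trace} to $\alpha = \id_X : X \to X = X^{**}$, which gives
$$\tr(\id_X) \;=\; \ev_{X^*} \circ (\id_X \otimes \id_{X^*}) \circ \coev_X \;=\; \ev_X \circ \coev_X.$$
Substituting the formulas from Lemma~\ref{lem:selfdual} then yields
$$\tr(\id_X) \;=\; (\epsilon \circ m) \circ (\delta \circ e) \;=\; \epsilon \circ m \circ \delta \circ e,$$
which settles the first assertion.

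For the second claim, I would simply plug in the separability relation $m \circ \delta = \lambda_X \id_X$, obtaining
$$\tr(\id_X) \;=\; \epsilon \circ (\lambda_X \id_X) \circ e \;=\; \lambda_X (\epsilon \circ e) \;=\; \lambda_X \mu_X.$$
Here the final equality uses that $\one$ is linear-simple (Remark~\ref{rk:LinSimple}), so that $\epsilon \circ e \in \Hom_{\mathcal{C}}(\one,\one) = \mathbbm{k}$ is indeed a scalar, namely $\mu_X$.

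The only delicate point is the identification $\ev_{X^*} = \ev_X$ under the Frobenius-induced selfduality, but this is precisely what Remark~\ref{rk:nosym} arranges by adjusting the rigid structure. Everything else is direct substitution, so no substantial obstacle is expected.
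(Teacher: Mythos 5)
Your proposal is correct and follows essentially the same route as the paper: the paper's proof is exactly the pictorial version of your computation, namely $\tr(\id_X) = \ev_X \circ \coev_X$ by the definition of trace, then substitution of $\ev_X = \epsilon \circ m$ and $\coev_X = \delta \circ e$ from Lemma~\ref{lem:selfdual}, then separability to get $\lambda_X \mu_X$. Your care about identifying $\ev_{X^*}$ with $\ev_X$ is handled just as you say, since Lemma~\ref{lem:selfdual} gives strict selfduality $X^* = X$ (with Remark~\ref{rk:nosym} arranging compatibility with the ambient rigid structure), so nothing further is needed.
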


\begin{proof}
This follows immediately from Lemma \ref{lem:selfdual}. The relation is illustrated below:
 		\[ \tr(\id_X) 
 			=
 			 \raisebox{-6mm}{
 			 \begin{tikzpicture}
 			 \draw[blue,in=90,out=90,looseness=2] (0,0) to (1,0);
 			 \draw[blue,in=-90,out=-90,looseness=2] (0,0) to (1,0);
 			 \draw[dashed,green!70!black,thick] (-.1,.1) rectangle (1.1,.7);
 			 \draw[dashed,green!70!black,thick] (-.1,-.1) rectangle (1.1,-.7);
 			 \draw[->,green!70!black] (1.5,.7) to (1.5,.2);
 			 \node at (1.5,1) [green!70!black] {selfdual};
 			 \node at (1.5,-.1) {$=$};
 			 \end{tikzpicture}}
 		 	\raisebox{-10mm}{
 		 		\begin{tikzpicture}
 		 		\draw[blue,in=90,out=90,looseness=2] (0,0) to (1,0);
 		 		\draw[blue,in=-90,out=-90,looseness=2] (0,0) to (1,0);
 		 		\draw[blue] (.5,-.6) to (.5,-1.2);
 		 		\draw[blue] (.5,.6) to (.5,1.2);
 		 		\draw[dashed,violet,thick] (-.2,.7) rectangle (1.2,-.7);
 		 		\draw[->,violet] (1.7,.7) to (1.7,0);
 		 		\node at (1.5,1) [violet] {separability};
 		 		\node at (.5,1.2) {$\color{blue}{\bullet}$};
 		 		\node at (.5,-1.2) {$\color{blue}{\bullet}$};
 		 		\node at (1.7,-.3) {$=$};
 		 		\end{tikzpicture}} 
 		 	\lambda_X \raisebox{-10mm}{
 		 		\begin{tikzpicture}
 		 		\draw[blue] (0,0) to (0,2.4);
 		 		\node at (0,2.4) {$\color{blue}{\bullet}$};
 		 		\node at (0,0) {$\color{blue}{\bullet}$};
 		 		\end{tikzpicture}}
 	 		= \lambda_X \mu_X  \qedhere \]
\end{proof}

\begin{lemma} \label{lem:contr}
Using the notation from Lemma \ref{lem:loop}, let $\alpha \in \End_{\mathcal C}(X)$ and assume the connectedness condition. Then, $
\mu_X m \circ (\alpha \otimes \id_X) \circ \delta = \tr(\alpha) \id_X$, as depicted below:
 	 		\[ \mu_X \raisebox{-11mm}{
 	 			\begin{tikzpicture}
 	 			\draw[blue,in=90,out=90,looseness=2] (0,0) to (1,0);
 	 			\draw[blue,in=-90,out=-90,looseness=2] (0,0) to (1,0);
 	 			\draw[blue] (.5,-.6) to (.5,-1.2);
 	 			\draw[blue] (.5,.6) to (.5,1.2);
 	 			\node[draw,thick,rounded corners, fill=white,minimum width=20] at (0,0){$\alpha$}; 
 	 			\end{tikzpicture}} = \tr(\alpha) \, \raisebox{-4mm}{\begin{tikzpicture}
  			\draw[blue] (0,0) to (0,1.2);
  		\end{tikzpicture}}  \]
\end{lemma}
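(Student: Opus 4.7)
My plan is to derive this identity from Lemma \ref{lem:grap} combined with the connectedness assumption. Applying Lemma \ref{lem:grap} to $\alpha$, I would introduce the element
\[
f := m \circ (\alpha \otimes \id_X) \circ \coev_X \in \Hom_{\mathcal{C}}(\one, X),
\]
using $\coev_X = \delta \circ e$ from Lemma \ref{lem:selfdual}, so that Lemma \ref{lem:grap} rewrites the left-hand side as $m \circ (f \otimes \id_X)$. Connectedness then forces $f = c \cdot e$ for some scalar $c \in \mathbbm{k}$, and unitality collapses $m \circ ((c \cdot e) \otimes \id_X) = c \cdot \id_X$. This already establishes that $m \circ (\alpha \otimes \id_X) \circ \delta$ is a scalar multiple of $\id_X$; it only remains to pin down the scalar.

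To identify $c$, I would compose the identity $f = c \cdot e$ with $\epsilon$ on the left. One side gives $c \cdot (\epsilon \circ e) = c \mu_X$. The other side, using $\ev_X = \epsilon \circ m$ from Lemma \ref{lem:selfdual} together with Definition \ref{def:trace}, gives
\[
\epsilon \circ f = \ev_X \circ (\alpha \otimes \id_X) \circ \coev_X = \tr(\alpha).
\]
Hence $c \mu_X = \tr(\alpha)$, and multiplying the identity $m \circ (\alpha \otimes \id_X) \circ \delta = c \cdot \id_X$ through by $\mu_X$ produces exactly $\tr(\alpha) \cdot \id_X$, as claimed.

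The argument is essentially bookkeeping, with no serious obstacle beyond correctly assembling the pieces produced by Lemma \ref{lem:grap} and Definition \ref{def:trace}. The one conceptual point I would flag is that by multiplying through by $\mu_X$ at the end rather than solving for $c = \tr(\alpha)/\mu_X$ in the middle, the argument remains valid even when $\mu_X = \epsilon \circ e$ vanishes; since the statement of Lemma \ref{lem:contr} makes no nondegeneracy hypothesis on $\mu_X$, this route is the natural one.
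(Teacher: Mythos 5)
Your proposal is correct and follows essentially the same route as the paper's graphical proof: the paper likewise uses connectedness to write $(\,m \circ (\alpha \otimes \id_X) \circ \delta\,) \circ e = c_X\, e$, caps with $\epsilon$ (via selfduality) to get $c_X \mu_X = \tr(\alpha)$, and then invokes Lemma \ref{lem:grap} together with unitality before multiplying through by $\mu_X$. Your closing observation is also consistent with the paper: its proof never divides by $\mu_X$ either, so no nondegeneracy of $\mu_X$ is needed.
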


\begin{proof}
Here is a graphical proof:
  		\[\raisebox{-11mm}{
  			\begin{tikzpicture}
  			\draw[blue,in=90,out=90,looseness=2] (0,0) to (1,0);
  			\draw[blue,in=-90,out=-90,looseness=2] (0,0) to (1,0);
  			\draw[blue] (.5,-.6) to (.5,-1.2);
  			\draw[blue] (.5,.6) to (.5,1.2);
  			\draw[->,orange] (1.5,.5) to (1.5,.1);
  			\node[draw,thick,rounded corners, fill=white,minimum width=20] at (0,0){$\alpha$};
  			\node at (.5,1.2) {$\color{blue}{\bullet}$};
  			\node at (1.5,.7) [orange] {connected}; 
  			\node at (1.5,0) {$=$};
  			\end{tikzpicture}}
  			c_X \raisebox{-4mm}{\begin{tikzpicture}
  			\draw[blue] (0,0) to (0,1);
  			\node at (0,1) {$\color{blue}{\bullet}$};
  			\end{tikzpicture}}
  			\implies 
  			\tr(\alpha) 
  			=
  			\raisebox{-9mm}{
  				\begin{tikzpicture}
  				\draw[blue,in=90,out=90,looseness=2] (0,0) to (1,0);
  				\draw[blue,in=-90,out=-90,looseness=2] (0,0) to (1,0);
  				\draw[dashed,green!70!black,thick] (-.1,.4) rectangle (1.1,.9);
  				\draw[dashed,green!70!black,thick] (-.1,-.4) rectangle (1.1,-.9);
  				\draw[->,green!70!black] (1.5,.7) to (1.5,.2);
  				\node[draw,thick,rounded corners, fill=white,minimum width=20] at (0,0){$\alpha$};
  				\node at (1.5,1.1) [green!70!black] {selfdual};
  				\node at (1.5,0) {$=$};
  				\end{tikzpicture}}
  			\raisebox{-11mm}{
  				\begin{tikzpicture}
  				\draw[blue,in=90,out=90,looseness=2] (0,0) to (1,0);
  				\draw[blue,in=-90,out=-90,looseness=2] (0,0) to (1,0);
  				\draw[blue] (.5,-.6) to (.5,-1.2);
  				\draw[blue] (.5,.6) to (.5,1.2);
  				\node[draw,thick,rounded corners, fill=white,minimum width=20] at (0,0){$\alpha$};
  				\node at (.5,1.2) {$\color{blue}{\bullet}$};
  				\node at (.5,-1.2) {$\color{blue}{\bullet}$};
  				\end{tikzpicture}}
  			=
  			c_X \raisebox{-6mm}{\begin{tikzpicture}
  				\draw[blue] (0,0) to (0,1);
  				\node at (0,1) {$\color{blue}{\bullet}$};
  				\node at (0,0) {$\color{blue}{\bullet}$};
  				\end{tikzpicture}}
  			=
  			c_X \mu_X \]
Thus, we have $c_X \mu_X = \tr(\alpha)$. Next, by applying Lemma \ref{lem:grap} along with the preceding equalities, we obtain:
  			\[ \mu_X \raisebox{-11mm}{
  				\begin{tikzpicture}
  				\draw[blue,in=90,out=90,looseness=2] (0,0) to (1,0);
  				\draw[blue,in=-90,out=-90,looseness=2] (0,0) to (1,0);
  				\draw[blue] (.5,-.6) to (.5,-1.2);
  				\draw[blue] (.5,.6) to (.5,1.2);
  				\node[draw,thick,rounded corners, fill=white,minimum width=20] at (0,0){$\alpha$}; 
  				\end{tikzpicture}} 
  				= 
  				\mu_X \raisebox{-12mm}{
  					\begin{tikzpicture}
  					\draw[blue,in=90,out=90,looseness=2] (0,0) to (1,0);
  					\draw[blue,in=-90,out=-90,looseness=2] (0,0) to (1,0);
  					\draw[blue,in=-90,out=-90,looseness=2] (.5,-.6) to (1.5,-.6);
  					\draw[blue] (1.5,-.6) to (1.5,1.2);
  					\draw[blue] (.5,.6) to (.5,1.2);
  					\draw[blue] (1,-1.2) to (1,-1.7);
  					\node[draw,thick,rounded corners, fill=white,minimum width=20] at (0,0){$\alpha$};
  					\node at (.5,1.2) {$\color{blue}{\bullet}$};
  					\end{tikzpicture}}
  				=  
  				\mu_X c_X \raisebox{-4mm}{
  					\begin{tikzpicture}[rotate=180,transform shape]
  					\draw[blue,in=90,out=90,looseness=2] (0,0) to (1,0);
  					\node at (1,0) {$\textcolor{blue}{\bullet}$};
  					\draw[blue] (.5,.6) to (.5,1.2);
  					\end{tikzpicture}}
  				=
  				\tr(\alpha) \, \raisebox{-4mm}{\begin{tikzpicture}
  					\draw[blue] (0,0) to (0,1.2);
  					\end{tikzpicture}} \qedhere \]
 
\end{proof}

\begin{proposition} \label{prop:YY}
Let $Y$ be an object with left dual $Y^*$ and double dual $Y^{**}$ in a monoidal category. Assume that $Y^{**} = Y$. Then, the object $X := Y \otimes Y^*$ can be equipped with a structure of Frobenius algebra, defined by $m = \id_Y \otimes \ev_Y \otimes \id_{Y^*}$, $\delta = \id_Y \otimes \coev_{Y^*}\otimes \id_{Y^*}$, $e = \coev_Y$, and $\epsilon = \ev_{Y^*}$, as depicted below:
  				\[ m = \raisebox{-6mm}{
  					\begin{tikzpicture}
  					\draw[blue,in=-90,out=-90,looseness=2] (-0.5,0.5) to (-1.5,0.5);
  					\draw[blue] (-1,-.1) to (-1,-.6);
  					\node[left,scale=0.7] at (-1,-.4) {$X$};
  					\node[left,scale=0.7] at (-1.6,0.5) {$X$};
  					\node[right,scale=0.7] at (-.5,.5) {$X$};
  					\end{tikzpicture}}
  					\coloneqq
  					\raisebox{-8mm}{
  					\begin{tikzpicture}
  					\draw[blue] (0,.5) to (0,2);
  					\draw[blue,in=-90,out=-90,looseness=2] (.4,2) to (1.4,2);
  					\draw[blue] (1.8,.5) to (1.8,2);
  					\node[scale=.8] at (0,2.3) {$Y$};
  					\node[scale=.8] at (.5,2.3) {$Y^*$};
  					\node[scale=.8] at (1.3,2.3) {$Y$};
  					\node[scale=.8] at (1.9,2.3) {$Y^*$};
  					\end{tikzpicture}} 
  					=
  					\id_Y \otimes \ev_Y \otimes \id_{Y^*} \]
  					
  					\[\delta 
  						= 
  						\raisebox{-6mm}{
  						\begin{tikzpicture}
  						\draw[blue,in=90,out=90,looseness=2] (-0.5,0.5) to (-1.5,0.5);
  						\draw[blue] (-1,1.1) to (-1,1.6);
  						\node[left,scale=0.7] at (-1,1.6) {$X$};
  						\node[left,scale=0.7] at (-1.6,0.5) {$X$};
  						\node[right,scale=0.7] at (-.5,.5) {$X$};
  						\end{tikzpicture}} 
  						\coloneqq
  						\raisebox{-10mm}{
  						\begin{tikzpicture}
  						\draw[blue] (0,0) to (0,1.5);
  						\draw[blue,in=90,out=90,looseness=2] (.4,0) to (1.4,0);
  						\draw[blue] (1.8,0) to (1.8,1.5);
  						\node[scale=.8] at (0,-.3) {$Y$};
  						\node[scale=.8] at (.5,-.3) {$Y^*$};
  						\node[scale=.8] at (1.3,-.3) {$Y$};
  						\node[scale=.8] at (1.9,-.3) {$Y^*$};
  						\end{tikzpicture}}
  						=
  						\id_Y \otimes \coev_{Y^*} \otimes \id_{Y^*} \]
  						
  						\[ e= \raisebox{-6mm}{
  							\begin{tikzpicture}
  							\draw [blue] (-0.8,-.6) to (-.8,.6);
  							\node at (-.8,.6) {${\color{blue}\bullet}$};
  							\node[left,scale=.8] at (-.8,.9) {$\one$};
  							\node[left,scale=0.7] at (-.8,-.5) {$X$};
  							\end{tikzpicture}}
  							\coloneqq
  							\raisebox{-6mm}{
  							\begin{tikzpicture}
  							\draw[blue,in=90,out=90,looseness=2] (0,0) to (1,0);
  							\node[scale=.8] at (0,-.3) {$Y$};
  							\node[scale=.8] at (1,-.3) {$Y^*$};
  							\end{tikzpicture}}
  							=
  							\coev_Y\]
  							
  								\[ \epsilon = \raisebox{-8mm}{
  								\begin{tikzpicture}
  								\draw [blue] (-0.8,-.6) to (-.8,.6);
  								\node at (-.8,-.6) {${\color{blue}\bullet}$};
  								\node[left,scale=0.7] at (-.8,.6) {$X$};
  								\node[scale=.8] at (-.8,-.9) {$\one$};
  								\end{tikzpicture}}
  								\coloneqq
  								\raisebox{-6mm}{
  								\begin{tikzpicture}
  								\draw[blue,in=-90,out=-90,looseness=2] (0,0) to (1,0);
  								\node[scale=.8] at (0,.3) {$Y$};
  								\node[scale=.8] at (1,.3) {$Y^*$};
  								\end{tikzpicture}}
  								=
  								\ev_{Y^*} \]
Moreover, if $\mathcal{C}$ is a $\mathbbm{k}$-linear category and $Y$ is linear-simple, then $X$ is connected.								
\end{proposition}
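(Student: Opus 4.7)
The plan is to verify each required axiom by direct graphical manipulation, exploiting the zigzag relations of the duality $(Y, Y^*, \ev_Y, \coev_Y)$ together with $Y^{**} = Y$, which also gives the companion duality $(Y^*, Y, \ev_{Y^*}, \coev_{Y^*})$. First I would note that each diagram for the proposed $(m,\delta,e,\epsilon)$ is a picture involving one pair of parallel strands labelled $Y$ and $Y^*$ and a single cup (for $m$ or $\epsilon$) or cap (for $\delta$ or $e$) placed on the two inner strands. This uniformity is what makes the axioms nearly automatic.

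Next I would check the algebra axioms. For associativity, $m \circ (m \otimes \id_X)$ and $m \circ (\id_X \otimes m)$ both draw three parallel $Y$-strands, three parallel $Y^*$-strands, and two evaluation cups joining the adjacent inner pairs; by strictness these two diagrams are literally equal. For unitality, $m \circ (e \otimes \id_X)$ produces the composite of $\coev_Y$ on the left $Y, Y^*$ strands followed by $\ev_Y$ on the middle $Y^*, Y$ pair, which is precisely the zigzag relation and hence equals $\id_X = \id_Y \otimes \id_{Y^*}$; the symmetric identity uses the other zigzag. Coassociativity and counitality follow by the same argument applied to the duality $(Y^*, Y^{**} = Y)$ instead of $(Y, Y^*)$, i.e.\ turning the pictures upside down.

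For the Frobenius condition, I would compute all three expressions in the equation
$(\id_X \otimes m) \circ (\delta \otimes \id_X) = \delta \circ m = (m \otimes \id_X) \circ (\id_X \otimes \delta).$
Expanding the strands, each side becomes a diagram on four $Y$-strands and four $Y^*$-strands reducible, by one zigzag application on the middle $Y$-$Y^*$ pair, to the same normal form consisting of two outer parallel $Y \otimes Y^*$-strands connected in the middle by a single $\coev_{Y^*}$-cap on top of an $\ev_Y$-cup. Hence all three are equal. Equivalently, by Lemma~\ref{lem:weakfull} it suffices to verify the weak Frobenius equation, which is a single zigzag reduction. The analogous $Y^{**} = Y$ hypothesis is what makes $\delta$ and $\epsilon$ well-typed and ensures the zigzags actually close.

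Finally, for connectedness, assuming $\mathcal{C}$ is $\mathbbm{k}$-linear and $Y$ is linear-simple, the adjunction supplied by the duality gives a natural $\mathbbm{k}$-linear isomorphism
\[
\Hom_{\mathcal{C}}(\one, Y \otimes Y^*) \;\xrightarrow{\sim}\; \Hom_{\mathcal{C}}(Y, Y),
\qquad f \mapsto (\id_Y \otimes \ev_{Y^*}) \circ (f \otimes \id_Y),
\]
whose inverse sends $g \in \Hom_{\mathcal{C}}(Y, Y)$ to $(g \otimes \id_{Y^*}) \circ \coev_Y$. Linear-simplicity of $Y$ gives $\Hom_{\mathcal{C}}(Y, Y) = \mathbbm{k} \id_Y$, and the preimage of $\id_Y$ under this isomorphism is exactly $\coev_Y = e$, so $\Hom_{\mathcal{C}}(\one, X) = \mathbbm{k} e$ as required. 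The only place I would need to be careful is in checking that the graphical simplifications use only the two zigzags actually available (those of $Y$ and those of $Y^*$, the latter given by $Y^{**} = Y$); once this bookkeeping is in place, everything reduces to moves in the pivotal-style graphical calculus and no genuine obstacle remains.
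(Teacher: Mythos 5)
Your proposal is correct and follows essentially the same route as the paper's own proof: direct graphical verification of the algebra, coalgebra, and Frobenius axioms using the two dualities supplied by $Y^{**}=Y$, with connectedness via the adjunction $\Hom_{\mathcal{C}}(\one, Y \otimes Y^*) \cong \Hom_{\mathcal{C}}(Y,Y)$ and linear-simplicity of $Y$. Two harmless slips worth fixing: the Frobenius identity here needs no zigzag at all, since the cup of $m$ and the cap of $\delta$ act on disjoint strands and the three diagrams agree already by the interchange law (planar isotopy); and in your adjunction formula the contraction should be $\id_Y \otimes \ev_Y$ rather than $\id_Y \otimes \ev_{Y^*}$, since the strands being closed are $Y^* \otimes Y$.
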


\begin{proof}
We can verify all the assumptions graphically:
  				\[\raisebox{-8mm}{
  					\begin{tikzpicture}
  					\draw[blue,in=90,out=90,looseness=2] (0,0) to (1,0);
  					\draw[blue,in=-90,out=-90,looseness=2] (1,0) to (2,0);
  					\draw[blue] (.5,.6) to (.5,1.2);
  					\draw[blue] (1.5,-.6) to (1.5,-1.2);
  					\draw[blue] (0,0) to (0,-1.2);
  					\draw[blue] (2,0) to (2,1.2);
  					\end{tikzpicture}} 
  					=
  					\raisebox{-8mm}{
  					\begin{tikzpicture}
  					\draw[blue] (0,0) to (0,2);
  					\draw[blue,in=90,out=90,looseness=2] (.4,1.2) to (.9,1.2);
  					\draw[blue,in=-90,out=-90,looseness=2] (1.2,1.2) to (1.7,1.2);
  					\draw[blue] (2,0) to (2,2);
  					\draw[blue] (.4,1.2) to (.4,0);
  					\draw[blue] (.9,1.2) to (.9,0);
  					\draw[blue] (1.2,1.2) to (1.2,2);
  					\draw[blue] (1.7,1.2) to (1.7,2);
  					\end{tikzpicture}}
  					=
  					\raisebox{-8mm}{
  						\begin{tikzpicture}
  						\draw[blue] (0,0) to (0,2);
  						\draw[blue,in=90,out=90,looseness=2] (.6,.5) to (1.1,.5);
  						\draw[blue,in=-90,out=-90,looseness=2] (.6,1.5) to (1.1,1.5);
  						\draw[blue] (1.5,0) to (1.5,2);
  						\draw[blue] (.6,.5) to (.6,0);
  						\draw[blue] (1.1,.5) to (1.1,0);
  						\draw[blue] (.6,1.5) to (.6,2);
  						\draw[blue] (1.1,1.5) to (1.1,2);
  						\end{tikzpicture}}
  					=
  					\raisebox{-8mm}{
  						\begin{tikzpicture}
  						\draw[blue] (0,0) to (0,2);
  						\draw[blue,in=-90,out=-90,looseness=2] (.4,1.2) to (.9,1.2);
  						\draw[blue,in=90,out=90,looseness=2] (1.2,1.2) to (1.7,1.2);
  						\draw[blue] (2,0) to (2,2);
  						\draw[blue] (.4,1.2) to (.4,2);
  						\draw[blue] (.9,1.2) to (.9,2);
  						\draw[blue] (1.2,1.2) to (1.2,0);
  						\draw[blue] (1.7,1.2) to (1.7,0);
  						\end{tikzpicture}}
  						=
  						\raisebox{-8mm}{
  							\begin{tikzpicture}
  							\draw[blue,in=-90,out=-90,looseness=2] (0,0) to (1,0);
  							\draw[blue,in=90,out=90,looseness=2] (1,0) to (2,0);
  							\draw[blue] (.5,-.6) to (.5,-1.2);
  							\draw[blue] (1.5,.6) to (1.5,1.2);
  							\draw[blue] (0,0) to (0,1.2);
  							\draw[blue] (2,0) to (2,-1.2);
  							\end{tikzpicture}}\]
  				
  				\[\raisebox{-6mm}{
  					\begin{tikzpicture}[rotate=180,transform shape]
  					\draw[blue,in=90,out=90,looseness=2] (0,0) to (1,0);
  					\draw[blue,in=90,out=90,looseness=2] (0.5,.6) to (-.5,.6);
  					\draw[blue] (-.5,.6) to (-.5,0);
  					\draw[blue] (0,1.2) to (0,1.6);
  					\end{tikzpicture}}
  				=
  				\raisebox{-8mm}{
  				\begin{tikzpicture}
  				\draw[blue] (0,0) to (0,2);
  				\draw[blue,in=-90,out=-90,looseness=2] (.4,2) to (.9,2);
  				\draw[blue,in=-90,out=-90,looseness=2] (1.2,1) to (1.7,1);
  				\draw[blue] (2,0) to (2,2);
  				\draw[blue] (1.2,1) to (1.2,2);
  				\draw[blue] (1.7,1) to (1.7,2);
  				\end{tikzpicture}}
  				=
  				\raisebox{-8mm}{
  					\begin{tikzpicture}
  					\draw[blue] (0,0) to (0,2);
  					\draw[blue,in=-90,out=-90,looseness=2] (.4,1) to (.9,1);
  					\draw[blue,in=-90,out=-90,looseness=2] (1.2,1) to (1.7,1);
  					\draw[blue] (2,0) to (2,2);
  					\draw[blue] (1.2,1) to (1.2,2);
  					\draw[blue] (1.7,1) to (1.7,2);
  					\draw[blue] (.4,1) to (.4,2);
  					\draw[blue] (.9,1) to (.9,2);
  					\end{tikzpicture}}
  				=
  				\raisebox{-8mm}{
  					\begin{tikzpicture}
  					\draw[blue] (0,0) to (0,2);
  					\draw[blue,in=-90,out=-90,looseness=2] (.4,1) to (.9,1);
  					\draw[blue,in=-90,out=-90,looseness=2] (1.2,2) to (1.7,2);
  					\draw[blue] (2,0) to (2,2);
  					\draw[blue] (.4,1) to (.4,2);
  					\draw[blue] (.9,1) to (.9,2);
  					\end{tikzpicture}}
  				=
  				 \raisebox{-6mm}{\begin{tikzpicture}[rotate=180,transform shape]
  					\draw[blue,in=90,out=90,looseness=2] (0,0) to (1,0);
  					\draw[blue,in=90,out=90,looseness=2] (.5,.6) to (1.5,.6);
  					\draw[blue] (1.5,.6) to (1.5,0);
  					\draw[blue] (1,1.2) to (1,1.6);
  					\end{tikzpicture}} \]
  				
  				
  				\[ \raisebox{-4mm}{
  					\begin{tikzpicture}[rotate=180,transform shape]
  					\draw[blue,in=90,out=90,looseness=2] (0,0) to (1,0);
  					\node at (1,0) {$\textcolor{blue}{\bullet}$};
  					\draw[blue] (.5,.6) to (.5,1.2);
  					\end{tikzpicture}} 
  					=
  					\raisebox{-2mm}{
  					\begin{tikzpicture}
  					\draw[blue,in=90,out=90,looseness=2] (0,0) to (.5,0);
  					\draw[blue,in=-90,out=-90,looseness=2] (.5,0) to (1,0);
  					\draw[blue] (0,0) to (0,-.6);
  					\draw[blue] (1,0) to (1,.3);
  					\draw[blue] (1.3,-.6) to (1.3,.3);
  					\draw[->,red] (1.6,.4) to (1.6,0);
  					\node[scale=.8] at (1.1,.6) [red] {zigzag relation};
  					\node at (1.6,-.2) {$=$};
  					\end{tikzpicture}}
  					\raisebox{-4mm}{
  					\begin{tikzpicture}
  					\draw[blue] (0,0) to (0,1.2);
  					\draw[blue] (.5,0) to (.5,1.2);
  					\node[scale=.8] at (0,1.3) {$Y$};
  					\node[scale=.8] at (.5,1.3) {$Y^*$};
  					\end{tikzpicture}}
  					=
  					\raisebox{-4mm}{
  					\begin{tikzpicture}
  					\draw[blue] (0,0) to (0,1.2);
  					\node[scale=.8,left] at (0,1.3) {$X$};
  					\end{tikzpicture}}\]
The same applies for coassociativity and counitality. 

Finally, if $\mathcal{C}$ is a $\mathbbm{k}$-linear category, then $\Hom_{\mathcal{C}}(\one, X) = \Hom_{\mathcal{C}}(\one, Y \otimes Y^*)$ is isomorphic to $\Hom_{\mathcal{C}}(Y, Y)$ through a zigzag relation and is one-dimensional if $Y$ is linear-simple. \end{proof}

Proposition \ref{prop:YY} extends to $2$-categories as follows, using a similar pictorial approach (see \cite[Lemma 3.4]{Mug03}) that also incorporates a zigzag relation.
\begin{proposition} \label{prop:bimodYY}
Consider a $2$-category $\mathfrak{A}$, with two objects, denoted as $C$ and $D$ (i.e., $\mathfrak{A}_0 = \{C, D\}$). Define $\mathcal{C}$ and $\mathcal{D}$ as the monoidal categories $\End_{\mathfrak{A}}(C)$ and $\End_{\mathfrak{A}}(D)$, respectively. Let $\mathcal{M}$ represent the $(\mathcal{C}, \mathcal{D})$-bimodule category, specifically $\Hom_{\mathfrak{A}}(C, D)$. Suppose $Y$ is an object in $\mathcal{M}$, and let $Y^{\vee}$ in $\Hom_{\mathfrak{A}}(D, C)$ be a two-sided dual of $Y$. Then the object $X := Y \circ Y^{\vee}$ in $\mathcal{D}$ can be structured as a Frobenius algebra. Moreover, if $\mathfrak{A}$ is $\mathbbm{k}$-linear and $Y$ is linear-simple, then $X$ is connected.
\end{proposition}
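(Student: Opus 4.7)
The plan is to mirror the proof of Proposition \ref{prop:YY} inside the 2-category $\mathfrak{A}$, replacing the tensor product in a monoidal category by horizontal composition of 1-morphisms. A two-sided dual $Y^{\vee}$ of $Y$ supplies four structural 2-morphisms
$$
\ev_Y : Y^{\vee} \circ Y \to \id_D, \quad \coev_Y : \id_C \to Y \circ Y^{\vee}, \quad \ev_{Y^{\vee}} : Y \circ Y^{\vee} \to \id_C, \quad \coev_{Y^{\vee}} : \id_D \to Y^{\vee} \circ Y,
$$
each satisfying zigzag relations that make $Y^{\vee}$ both a left and a right dual of $Y$; equivalently, we may identify $Y^{\vee\vee}$ with $Y$ exactly as $Y^{**}=Y$ was assumed in Proposition \ref{prop:YY}.

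I then endow $X := Y \circ Y^{\vee} \in \End_{\mathfrak{A}}(C) = \mathcal{C}$ with the direct analogues of the formulas from Proposition \ref{prop:YY}:
$$
m := \id_Y \circ \ev_Y \circ \id_{Y^{\vee}}, \qquad \delta := \id_Y \circ \coev_{Y^{\vee}} \circ \id_{Y^{\vee}}, \qquad e := \coev_Y, \qquad \epsilon := \ev_{Y^{\vee}}.
$$
Associativity, unitality, coassociativity, counitality, and the Frobenius condition are verified by the identical pictorial computations as in the proof of Proposition \ref{prop:YY}: each diagram reduces to an application of the zigzag relations, now interpreted as string diagrams in the 2-category $\mathfrak{A}$ rather than in a strict monoidal category. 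Since every strand is labelled by $Y$ or $Y^{\vee}$ and the two-sided duality provides all four zigzag identities, the pictures close in exactly the same way.

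For connectedness, under the $\mathbbm{k}$-linearity and linear-simplicity hypotheses, the adjunction furnished by the two-sided duality yields a natural isomorphism
$$
\Hom_{\mathcal{C}}(\id_C,\, Y \circ Y^{\vee}) \;\cong\; \Hom_{\Hom_{\mathfrak{A}}(C,D)}(Y,Y),
$$
implemented by bending a morphism with $\ev_Y$ or $\coev_Y$. Linear-simplicity of $Y$ makes the right-hand side equal to $\mathbbm{k}\cdot \id_Y$, and tracking this generator through the isomorphism identifies its preimage with $e = \coev_Y$, so $\Hom_{\mathcal{C}}(\one, X) = \mathbbm{k}\, e$, as required.

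The main (and essentially only nontrivial) point is bookkeeping: one must check that a single choice of two-sided dual really provides the four zigzag identities needed to run the graphical proof, and that the roles played by $Y^{*}$ and $Y^{**}$ in Proposition \ref{prop:YY} are consistently absorbed into the single object $Y^{\vee}$ here. Once this identification is fixed, every remaining step is a straightforward diagrammatic translation, and no new technical ingredient beyond rigidity of $Y$ within the 2-category is needed.
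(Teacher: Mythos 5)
Your proposal is correct and follows essentially the same route as the paper, which proves Proposition~\ref{prop:bimodYY} by transplanting the structure maps and pictorial verifications of Proposition~\ref{prop:YY} into the $2$-category (citing \cite[Lemma 3.4]{Mug03}), with the zigzag relation again yielding $\Hom_{\mathcal{C}}(\one, X) \cong \Hom_{\mathcal{M}}(Y,Y)$ for connectedness. Your explicit bookkeeping of the four structural $2$-morphisms of the two-sided dual is exactly the content the paper leaves implicit.
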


\begin{remark} \label{rk:MugConverse}
It is worth noting that the converse of Proposition \ref{prop:bimodYY} also holds, as shown in \cite[Proposition 3.8]{Mug03}.
\end{remark}


\begin{example}[\cite{Mug03}] \label{ex:subf} 
Let $(N \subset M)$ be a finite-index inclusion of $\mathrm{II}_1$ factors. Then $X = {}_N M_N$ is a Frobenius algebra in the tensor category $\mathcal{C}$ of $N$-$N$-bimodules. This follows the structure outlined in Proposition \ref{prop:bimodYY}, where $\mathcal{D}$ is the tensor category of $M$-$M$-bimodules, $Y = {}_N M_M$, and $Y^{\vee} = {}_M M_N$, such that $X = Y \otimes_M Y^{\vee}$. Moreover, the subfactor $(N \subset M)$ is irreducible (i.e., $N' \cap M = \mathbb{C}$) if and only if the Frobenius algebra $X$ is connected.
\end{example}

\begin{remark} \label{rk:subf}
It is a well-known fact that every unitary Frobenius algebra in a unitary tensor category can be realized in the form presented in Example~\ref{ex:subf}. Here is a sketch of one possible argument: starting with a unitary Frobenius algebra in a unitary tensor category, one invokes the construction in \cite{Mug03} (see Remark~\ref{rk:MugConverse}) to obtain a 2-category as described in Proposition~\ref{prop:bimodYY}, commonly known as a Morita context. Leveraging the results of \cite{Shami} along with unitarity, this structure gives rise to a subfactor planar algebra. Finally, the Popa–Jones reconstruction theorem \cite{J21} recovers the desired realization. In the irreducible case, this result traces back to \cite{Lon94}; see also the more recent developments in \cite{JP17, JP19}.
\end{remark}

The notion of \emph{unitary} is reviewed in Remark \ref{rk:UnitaryTensor} and Definition \ref{def:UnitFrob}.

\section{Frobenius subalgebras} \label{sec:FrobSub}

\subsection{Basic material} \label{sub:FrobSubPre}

Example \ref{ex:subf} shows the sense in which Theorem \ref{thm:allfinite} extends Watatani's theorem from subfactor theory. To generalize the concept of intermediate subfactors, it is essential to clarify what we mean by Frobenius subalgebras. First, let us recall why a morphism of Frobenius algebras is an isomorphism:

\begin{definition} \label{def:FrobMor}
A morphism $f$ between Frobenius algebras $(X, m, \delta, e, \epsilon)$ and $(X', m', \delta', e', \epsilon')$ in a monoidal category $\mathcal{C}$ is called a Frobenius algebra morphism if it satisfies the following conditions:
\begin{itemize}
\item (Unital algebra morphism) $e' = f \circ e$ and $m' \circ (f \otimes f) = f \circ m$,
\item (Counital coalgebra morphism) $\epsilon = \epsilon' \circ f$ and $ (f \otimes f) \circ \delta = \delta' \circ f$.
\end{itemize}
These conditions can be illustrated as follows:
  				\[ 
  				 \raisebox{-6mm}{\begin{tikzpicture}
  				 \draw[blue] (0,0) to (0,1);
  				 \node at (0,1) {$\textcolor{blue}{\bullet}$};
  				 \node[left,scale=0.7] at (0,-.2) {$X'$};
  				 \end{tikzpicture}}
  			 		= \hspace*{-1.7cm}
  			    \raisebox{-13mm}{\begin{tikzpicture}
  			 			\draw[blue] (0,0) to (0,1.5);
  			 			 \node[draw,thick,rounded corners, fill=white,minimum width=15] at (0,.6){$f$};
  			 			 \node at (0,1.5) {$\textcolor{blue}{\bullet}$};
  			 			\node[left,scale=0.7] at (0,0) {$X'$};
  			 	\node at (-.7,-.5) {\text{(Unital morphism)}};	
  			 			\end{tikzpicture}}  				
  				 \hspace*{2cm} 
  				\raisebox{-8mm}{
  					\begin{tikzpicture}
  					\draw[blue,in=-90,out=-90,looseness=2] (-0.5,0.5) to (-1.5,0.5);
  					\draw[blue] (-1,-.1) to (-1,-.6);
  					\draw[blue] (-.5,.6) to (-.5,1.2);
  					\draw[blue] (-1.5,.6) to (-1.5,1.2);
  					\node[draw,thick,rounded corners, fill=white,minimum width=15] at (-.5,.6){$f$};
  					\node[draw,thick,rounded corners, fill=white,minimum width=15] at (-1.5,.6){$f$};
  					\node[left,scale=0.7] at (-1,-.6) {$X'$};
  					\node[left,scale=0.7] at (-1.5,-.1) {$X'$};
  					\node[right,scale=0.7] at (-.6,0) {$X'$};
  					\node[left,scale=0.7] at (-1.5,1.2) {$X$};
  					\node[right,scale=0.7] at (-.5,1.2) {$X$};
  					\end{tikzpicture}} 
  				= \hspace*{-2.1cm}
  				\raisebox{-14mm}{
  					\begin{tikzpicture}
  					\draw[blue,in=-90,out=-90,looseness=2] (-0.5,0.5) to (-1.5,0.5);
  					\draw[blue] (-1,-.1) to (-1,-1);
  					\node[draw,thick,rounded corners, fill=white,minimum width=15] at (-1,-.5){$f$};
  					\node[left,scale=0.7] at (-1,-1.2) {$X'$};
  					\node[left,scale=0.7] at (-1.6,0.5) {$X$};
  					\node[right,scale=0.7] at (-.5,.5) {$X$};
  			      	\node at (-2.2,-1.6) {\text{(Algebra morphism)}};	
  					\end{tikzpicture}}
  				 \hspace*{0mm}
  			 		\]

				\[
				\raisebox{-6mm}{\begin{tikzpicture}
					\draw[blue] (0,0) to (0,1);
					\node at (0,0) {$\textcolor{blue}{\bullet}$};
					\node[left,scale=0.7] at (0,.4) {$X$};
					\end{tikzpicture}}
				= \hspace*{-1.8cm}
				\raisebox{-13mm}{\begin{tikzpicture}
					\draw[blue] (0,0) to (0,1.5);
					\node[draw,thick,rounded corners, fill=white,minimum width=15] at (0,.6){$f$};
					\node at (0,0) {$\textcolor{blue}{\bullet}$};
					\node[left,scale=.7] at (0,1.6) {$X$};
				\node at (-.5,-.5) {\text{(Counital morphism)}};	
					\end{tikzpicture}}				
		  		\hspace*{2.3cm} 
				\raisebox{-8mm}{
					\begin{tikzpicture}[rotate=180,transform shape]
					\draw[blue,in=-90,out=-90,looseness=2] (-0.5,0.5) to (-1.5,0.5);
					\draw[blue] (-1,-.1) to (-1,-.6);
					\draw[blue] (-.5,.6) to (-.5,1.2);
					\draw[blue] (-1.5,.6) to (-1.5,1.2);
					\node[rotate=180,draw,thick,rounded corners, fill=white,minimum width=15] at (-.5,.6){$f$};
					\node[rotate=180,draw,thick,rounded corners, fill=white,minimum width=15] at (-1.5,.6){$f$};
					\node[rotate=180,right,scale=0.7] at (-.5,-.5) {$X$};
					\node[rotate=180,left,scale=0.7] at (-1.5,1.2) {$X'$};
					\node[rotate=180,right,scale=0.7] at (-.5,1.2) {$X'$};
					\end{tikzpicture}} 
				= \hspace*{-2.1cm}
				\raisebox{-15mm}{
					\begin{tikzpicture}[rotate=180,transform shape]
					\draw[blue,in=-90,out=-90,looseness=2] (-0.5,0.5) to (-1.5,0.5);
					\draw[blue] (-1,-.1) to (-1,-1);
					\node[rotate=180,draw,thick,rounded corners, fill=white,minimum width=15] at (-1,-.5){$f$};
					\node[rotate=180,left,scale=0.7] at (-1,-1) {$X$};
					\node[rotate=180,left,scale=0.7] at (-1.6,0.7) {$X'$};
					\node[rotate=180,right,scale=0.7] at (-.5,.7) {$X'$};
					\node[rotate=180] at (0,1.2) {\text{(Coalgebra morphism)}};	
					\end{tikzpicture}}
				\hspace*{0mm}
				\]  			 				 		
\end{definition}

\begin{lemma} \label{lem:DualgMor}
Let $f$ be a unital algebra morphism between $(X, m, e)$ and $(X', m', e')$ in a monoidal category. If $X$ and $X'$ admit left duals, then the dual map $f^*$ is a counital coalgebra morphism.
\end{lemma}

\begin{proof}
By Lemma~\ref{lem:Dualgebra}, the left duals are counital coalgebras. The verification for $f^*$ is then analogous to that lemma:
\[
(f^* \otimes f^*) \circ m_{X'}^* 
= (m_{X'} \circ (f \otimes f))^* 
= (f \circ m_X)^* 
= m_X^* \circ f^*,
\quad\text{and}\quad
e_X^* \circ f^* = (f \circ e_X)^* = e_{X'}^*. \qedhere
\]
\end{proof}

\begin{lemma} \label{lem:TripleAlg}
Let $(X, m, e)$ be a unital algebra in a monoidal category. If $(X', m', e')$ is a triple along with a monomorphism $f: X' \to X$ that satisfies the conditions of a unital algebra morphism from Definition \ref{def:FrobMor}, then $(X', m', e')$ is also a unital algebra.
\end{lemma}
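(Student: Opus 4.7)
The strategy is to exploit the monomorphism property of $f$: any equation between morphisms with codomain $X'$ can be verified by postcomposing with $f$ and using left cancellation. Since the assumption gives the algebra morphism relations $m \circ (f \otimes f) = f \circ m'$ and $f \circ e' = e$ (the conditions of Definition \ref{def:FrobMor} with the roles of the source and target swapped, as $f: X' \to X$), we can push the axioms we want for $(X', m', e')$ forward along $f$, translate them into statements about $(X, m, e)$, and then apply the known associativity and unitality of $(X, m, e)$.

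For associativity, the plan is to compute
\[
f \circ m' \circ (m' \otimes \id_{X'}) = m \circ (f \otimes f) \circ (m' \otimes \id_{X'}) = m \circ \bigl((f \circ m') \otimes f\bigr) = m \circ (m \otimes \id_X) \circ (f \otimes f \otimes f),
\]
using the algebra morphism relation twice together with the bifunctoriality of $\otimes$. A symmetric calculation gives
\[
f \circ m' \circ (\id_{X'} \otimes m') = m \circ (\id_X \otimes m) \circ (f \otimes f \otimes f).
\]
The associativity of $m$ makes the right-hand sides equal, and left cancellation by the monomorphism $f$ yields $m' \circ (m' \otimes \id_{X'}) = m' \circ (\id_{X'} \otimes m')$.

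For unitality, the plan is the same: postcompose with $f$ and simplify. Using $f \circ e' = e$, bifunctoriality, and the unitality of $m$, we get
\[
f \circ m' \circ (e' \otimes \id_{X'}) = m \circ (f \otimes f) \circ (e' \otimes \id_{X'}) = m \circ (e \otimes f) = m \circ (e \otimes \id_X) \circ f = f = f \circ \id_{X'},
\]
and analogously $f \circ m' \circ (\id_{X'} \otimes e') = f \circ \id_{X'}$. Left cancellation by the monomorphism $f$ then gives both unitality identities for $(X', m', e')$.

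There is no real obstacle here; the argument is essentially a diagram chase. The only subtlety to flag is that one must invoke the bifunctoriality of $\otimes$ (to turn $(f \otimes f) \circ (m' \otimes \id_{X'})$ into $(f \circ m') \otimes f$, and similarly on the other side), and the fact that being a monomorphism is exactly the left-cancellation property we need. In pictorial terms, each step simply says that the $f$-labelled box can be slid through the multiplication using the algebra morphism rule, eventually exposing the associativity or unitality of $m$ at the top of the diagram.
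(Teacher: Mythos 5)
Your proof is correct and follows essentially the same route as the paper's: postcompose with the monomorphism $f$, slide it through the multiplication using the algebra morphism relations $f \circ m' = m \circ (f \otimes f)$ and $f \circ e' = e$, invoke the associativity and unitality of $(X, m, e)$, and then left-cancel $f$. The paper presents this as a graphical calculus computation, but the underlying diagram chase is identical to your algebraic one.
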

\begin{proof}
To prove unitality, we begin with the following illustration:
				\[
				\raisebox{-7mm}{
					\begin{tikzpicture}
					\draw[blue,in=-90,out=-90,looseness=2] (-0.5,0.5) to (-1.5,0.5);
					\draw[blue] (-1,-.1) to (-1,-1);
					\node[draw,thick,rounded corners, fill=white,minimum width=15] at (-1,-.5){$f$};
					\draw[dashed,orange,thick] (-1.6,.2) rectangle (-.4,-1);
					\node[left,scale=0.7] at (-1,-1.2) {$X$};
					\node[left,scale=0.7] at (-1.6,0.5) {$X'$};
					\node at (-0.5,0.5) {$\textcolor{blue}{\bullet}$};
					\draw[->,orange] (.3,-1.2) to (.3,-.9);
					\node[scale = .8] at (.3,-1.4) [orange] {algebra morphism};
					\node at (.3,-.8) {$=$};
					\end{tikzpicture}}
					\hspace*{-.6cm}
					\raisebox{-7mm}{
					\begin{tikzpicture}
					\draw[blue,in=-90,out=-90,looseness=2] (-0.5,0.5) to (-1.5,0.5);
					\draw[blue] (-1,-.1) to (-1,-.6);
					\draw[blue] (-.5,.6) to (-.5,1.2);
					\draw[blue] (-1.5,.6) to (-1.5,1.2);
					\draw[dashed,red,thick] (-.9,1.4) rectangle (-.1,.2);
					\draw[->,red] (.3,-.5) to (.3,-.2);
					\node[scale = .8] at (.3,-.7) [red] {unital morphism};
					\node at (.3,-.1) {$=$};
					\node at (-0.5,1.2) {$\textcolor{blue}{\bullet}$};
					\node[draw,thick,rounded corners, fill=white,minimum width=15] at (-.5,.6){$f$};
					\node[draw,thick,rounded corners, fill=white,minimum width=15] at (-1.5,.6){$f$};
					\end{tikzpicture}}
				    \hspace*{-.6cm}
					\raisebox{-4mm}{
						\begin{tikzpicture}
						\draw[blue,in=-90,out=-90,looseness=2] (-0.5,0.5) to (-1.5,0.5);
						\draw[blue] (-1,-.1) to (-1,-.6);
						\draw[blue] (-.5,.5) to (-.5,1.2);
						\draw[blue] (-1.5,.6) to (-1.5,1.2);
						\node at (-0.5,1.2) {$\textcolor{blue}{\bullet}$};
						\node[draw,thick,rounded corners, fill=white,minimum width=15] at (-1.5,.6){$f$};
						\end{tikzpicture}}
					 = 
					 \raisebox{-5mm}{\begin{tikzpicture}
					 \draw[blue] (0,-1) to (0,1);
					 \node[draw,thick,rounded corners, fill=white,minimum width=15] at (0,0){$f$};
					 \end{tikzpicture}}
				 	=
				 	\cdots
				 	=
				\raisebox{-5mm}{
					\begin{tikzpicture}
					\draw[blue,in=-90,out=-90,looseness=2] (-0.5,0.5) to (-1.5,0.5);
					\draw[blue] (-1,-.1) to (-1,-1);
					\node[draw,thick,rounded corners, fill=white,minimum width=15] at (-1,-.5){$f$};
					\node[left,scale=0.7] at (-1,-1.2) {$X$};
					\node[left,scale=0.7] at (0,0.5) {$X'$};
					\node at (-1.5,0.5) {$\textcolor{blue}{\bullet}$};
					\end{tikzpicture}}
				 		\]
Since the morphism $f$ is left-cancellative (by the definition of a monomorphism), the result follows. The proof for associativity is analogous.
\end{proof}

\begin{lemma} \label{lem:TripleCoalg}
Let $(X, \delta, \epsilon)$ be a counital coalgebra in a monoidal category. If $(X', \delta', \epsilon')$ is a triple along with an epimorphism $f: X \to X'$ that satisfies the conditions of a counital coalgebra morphism from Definition \ref{def:FrobMor}, then $(X', \delta', \epsilon')$ is also a counital coalgebra.
\end{lemma}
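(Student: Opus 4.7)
The plan is to mirror the proof of Lemma \ref{lem:TripleAlg} by a direct dualization, swapping the roles of monomorphism and epimorphism as well as inverting the reading direction of the diagrams. Concretely, where the algebra case exploited left-cancellativity of $f$ to peel a copy of $f$ off the \emph{top} of each diagram, here right-cancellativity of the epimorphism $f \colon X \to X'$ will allow us to peel a copy of $f$ off the \emph{bottom}.

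For counitality, I would start from $(\id_{X'} \otimes \epsilon') \circ \delta' \circ f$. Using the coalgebra-morphism hypothesis $\delta' \circ f = (f \otimes f) \circ \delta$, this rewrites as $(f \otimes \epsilon' \circ f) \circ \delta$; the counital-morphism hypothesis $\epsilon = \epsilon' \circ f$ then gives $(f \otimes \epsilon) \circ \delta = f \circ (\id_X \otimes \epsilon) \circ \delta$. Counitality of $(X, \delta, \epsilon)$ reduces this to $f = \id_{X'} \circ f$. Since $f$ is epi, we conclude $(\id_{X'} \otimes \epsilon') \circ \delta' = \id_{X'}$. The other counitality relation follows by the symmetric argument using $(\epsilon' \otimes \id_{X'}) \circ \delta'$.

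For coassociativity I would precompose with $f$ and push $f$ upward through the comultiplications twice on each side: the coalgebra-morphism condition gives
\[
(\delta' \otimes \id_{X'}) \circ \delta' \circ f = (f \otimes f \otimes f) \circ (\delta \otimes \id_X) \circ \delta,
\]
and symmetrically
\[
(\id_{X'} \otimes \delta') \circ \delta' \circ f = (f \otimes f \otimes f) \circ (\id_X \otimes \delta) \circ \delta.
\]
Coassociativity of $(X, \delta, \epsilon)$ equates the right-hand sides, and cancelling $f$ on the right (epimorphism) yields coassociativity of $(X', \delta', \epsilon')$.

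There is no real obstacle here: the statement is the formal dual of Lemma \ref{lem:TripleAlg}, and each diagrammatic move used there has a mirror image obtained by flipping the pictures top-to-bottom and reversing the arrow of $f$. The only mild subtlety is keeping track of which cancellation property of $f$ is invoked at each stage, which is handled uniformly by noting that in a monoidal category the tensor product of an epimorphism with an identity need not be an epimorphism in general, but here the epimorphism we cancel acts on a single strand placed to the right of $\delta'$, so ordinary right-cancellativity of $f$ suffices.
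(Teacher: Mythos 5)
Your proof is correct and is exactly the argument the paper intends: the paper's own proof of Lemma \ref{lem:TripleCoalg} simply says it is the dual of Lemma \ref{lem:TripleAlg}, using right-cancellativity of the epimorphism in place of left-cancellativity of the monomorphism, which is precisely what you have spelled out. Your closing remark that one only ever cancels a plain $f$ (never $f \otimes f$ or $f \otimes \id$) is a correct and worthwhile precision.
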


\begin{proof}
The proof is similar to that of Lemma \ref{lem:TripleAlg}, as an epimorphism is defined to be right-cancellative.
\end{proof}

The following is a well-known result:

\begin{lemma} \label{lem:FrobMorIso}
A Frobenius algebra morphism is an isomorphism.
\end{lemma}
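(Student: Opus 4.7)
The plan is to construct an explicit two-sided inverse $g \colon X' \to X$ by exploiting the fact that, by Lemma~\ref{lem:selfdual}, both $X$ and $X'$ carry self-dual structures induced by their Frobenius data, namely
$\ev_X = \epsilon \circ m$, $\coev_X = \delta \circ e$, and similarly for $X'$. The morphism $g$ will be the ``dual'' of $f$ taken with respect to these self-dualities, i.e.
\[
g \;:=\; (\ev_{X'} \otimes \id_X) \circ (\id_{X'} \otimes f \otimes \id_X) \circ (\id_{X'} \otimes \coev_X).
\]

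The key observation is that the algebra-morphism and coalgebra-morphism conditions of Definition~\ref{def:FrobMor} together say precisely that $f$ intertwines the self-duality pairings on $X$ and $X'$. First I would establish the two identities
\[
\ev_{X'} \circ (f \otimes f) \;=\; \ev_X, \qquad (f \otimes f) \circ \coev_X \;=\; \coev_{X'}.
\]
The first follows by unfolding $\ev_{X'} = \epsilon' \circ m'$, applying the algebra-morphism identity $m' \circ (f \otimes f) = f \circ m$, and then the counital identity $\epsilon' \circ f = \epsilon$. The second follows dually from $\coev_{X'} = \delta' \circ e'$, the unital identity $f \circ e = e'$, and the coalgebra identity $(f \otimes f) \circ \delta = \delta' \circ f$.

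With these in hand, the computation of $g \circ f$ is a one-line graphical manipulation: using naturality of the tensor product to slide $f$ past $\id_{X'} \otimes \coev_X$, we get
\[
g \circ f \;=\; \bigl((\ev_{X'} \circ (f \otimes f)) \otimes \id_X\bigr) \circ (\id_X \otimes \coev_X) \;=\; (\ev_X \otimes \id_X) \circ (\id_X \otimes \coev_X) \;=\; \id_X,
\]
where the last step is the zigzag relation. The verification $f \circ g = \id_{X'}$ is entirely symmetric: slide $f$ through to get $(f \otimes f) \circ \coev_X = \coev_{X'}$ on the right, and then apply the other zigzag relation.

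I do not anticipate a serious obstacle here; the content of the statement is essentially that the Frobenius morphism axioms are strong enough to make $f$ an isometry for the induced pairings, and any such map between self-dual objects must be invertible via its transpose. The most delicate point is bookkeeping: one must keep track of which copy of $\ev$/$\coev$ belongs to $X$ versus $X'$ and use the correct morphism-axiom at each substitution, which is best handled in graphical calculus exactly as in the preceding lemmas of Section~\ref{sec:Frob}.
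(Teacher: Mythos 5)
Your proposal is correct and is essentially the paper's own proof: the morphism $g$ you construct is precisely the transpose $f^*$ of $f$ with respect to the self-dualities of Lemma~\ref{lem:selfdual}, which is exactly the inverse the paper exhibits. Your verification—unfolding $\ev_{X'} = \epsilon' \circ m'$ and $\coev_X = \delta \circ e$, applying the algebra/counital and coalgebra/unital morphism axioms, and finishing with the zigzag relations—is the same computation as the paper's graphical argument, merely reorganized around the two intertwining identities $\ev_{X'} \circ (f \otimes f) = \ev_X$ and $(f \otimes f) \circ \coev_X = \coev_{X'}$.
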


\begin{proof}
Let $f$ be a morphism as defined in Definition \ref{def:FrobMor}. The following illustration demonstrates that $f^* \circ f = \id_X$:
			 		\[\raisebox{-8mm}{
			 			\begin{tikzpicture}
			 			\draw[blue,in=-90,out=-90,looseness=2] (-0.5,0.5) to (-1.5,0.5);
			 			\draw[blue,in=90,out=90,looseness=2] (-0.5,0.6) to (.5,0.6);
			 			\draw[blue] (-1.5,.5) to (-1.5,1.2);
			 			\draw[blue] (.5,0.6) to (.5,0); 
			 			\draw[dashed,red,thick] (-1.7,.2) rectangle (-.3,-.3);
			 			\draw[->,red] (.8,1.2) to (.8,.6);
			 			\node[draw,thick,rounded corners, fill=white,minimum width=15] at (-.5,.6){$f$};
			 			\node[draw,thick,rounded corners, fill=white,minimum width=15] at (-1.5,.6){$f$};
			 			\node[scale=.8] at (.6,1.4) [red] {selfdual};
			 			\node at (.8,.4) {$=$};
			 			\end{tikzpicture}}
		 				\raisebox{-8mm}{
		 				\begin{tikzpicture}
		 				\draw[blue,in=-90,out=-90,looseness=2] (-0.5,0.5) to (-1.5,0.5);
		 				\draw[blue,in=90,out=90,looseness=2] (-0.5,0.6) to (.5,0.6);
		 				\draw[blue] (-1,-.1) to (-1,-.6);
		 				\draw[blue] (-1.5,.5) to (-1.5,1.2);
		 				\draw[blue] (.5,0.6) to (.5,-.6); 
		 				\draw[->,green!70!black] (.8,1.2) to (.8,.2);
		 				\draw[dashed,green!70!black,thick] (-1.9,1) rectangle (-.1,-.4);
		 				\node at (-1,-.6) {$\textcolor{blue}{\bullet}$};
		 				\node[draw,thick,rounded corners, fill=white,minimum width=15] at (-.5,.6){$f$};
		 				\node[draw,thick,rounded corners, fill=white,minimum width=15] at (-1.5,.6){$f$};
		 				\node[scale=.8] at (.6,1.4) [green!70!black] {algebra morphism};
		 				\node at (.8,0) {$=$};		 				\end{tikzpicture}}
	 					\raisebox{-10mm}{
	 						\begin{tikzpicture}
	 						\draw[blue,in=-90,out=-90,looseness=2] (-0.5,0.5) to (-1.5,0.5);
	 						\draw[blue,in=90,out=90,looseness=2] (-0.5,0.5) to (.5,0.5);
	 						\draw[blue] (-1,-.1) to (-1,-1);
	 						\draw[blue] (.5,0.5) to (.5,-1); 
	 						\draw[blue] (-1.5,.5) to (-1.5,.8);
	 						\node[draw,thick,rounded corners, fill=white,minimum width=15,scale=.8] at (-1,-.6){$f$};
	 						\draw[dashed,orange,thick] (-1.4,-.3) rectangle (-.6,-1.2);
	 						\draw[dashed,violet,thick] (-1.7,1.2) rectangle (.6,-.2);
	 						\draw[->,orange] (1.5,.8) to (1.5,-.0);
	 						\draw[->,violet] (1.5,-.6) to (1.5,-.2);
	 						\node at (-1,-1) {$\textcolor{blue}{\bullet}$};
	 						\node[scale = .8] at (1.85,1) [orange] {counital morphism};
	 						\node[scale=.8] at (1.5,-.8) [violet] {weak Frobenius};
	 						\node at (1.5,-.1) {$=$};
	 						\end{tikzpicture}}
	 						\hspace*{-1cm}
 							\raisebox{-4mm}{
 								\begin{tikzpicture}
 								\draw[blue,in=90,out=90,looseness=2] (0,0) to (1,0);
 								\node at (0,0) {$\textcolor{blue}{\bullet}$};
 								\draw[blue] (.5,.6) to (.5,1.2);
 								\end{tikzpicture}}
 							=
 							\raisebox{-2mm}{ 
 							\begin{tikzpicture}
 							\draw[blue] (0,0) to (0,1);
 							\end{tikzpicture}} \]
Similarly, we can show that $f \circ f^* = \id_{X'}$. Thus, the result follows.
\end{proof}

Lemma \ref{lem:FrobMorIso} indicates that the concept of a Frobenius algebra morphism is too restrictive for defining the notion of a Frobenius subalgebra.

%
\begin{definition} \label{def:sub}
A Frobenius algebra \((X', m', \delta', e', \epsilon')\) in a monoidal category \(\mathcal{C}\) is called a \emph{Frobenius subalgebra} of \((X, m, \delta, e, \epsilon)\) if there exists a unital algebra monomorphism 
\(i: X' \to X\) such that \(i^{**} = i\) and \(i^* \circ i = \mathrm{id}_{X'}\) (equivalently, \(i\) is counital).  
We then define the selfdual unital idempotent $b_{X'} := i \circ i^* \in \End_{\mathcal C}(X)$.
\end{definition}

Let us explain why the equivalence in Definition~\ref{def:sub} holds: 
\begin{itemize}
\item If \(i\) is counital, then by definition of \(i^*\), Remark \ref{rk:FrobSubAmbiently} and zigzag relation:
\[i^* \circ i =  \raisebox{-6.25mm}{
			 			\begin{tikzpicture}
			 			\draw[blue,in=-90,out=-90,looseness=2] (-0.5,0.5) to (-1.5,0.5);
			 			\draw[blue,in=90,out=90,looseness=2] (-0.5,0.6) to (.5,0.6);
			 			\draw[blue] (-1.5,.5) to (-1.5,1.2);
			 			\draw[blue] (.5,0.6) to (.5,0); 
			 			\node[draw,thick,rounded corners, fill=white,minimum width=15] at (-.5,.6){$i$};
			 			\node[draw,thick,rounded corners, fill=white,minimum width=15] at (-1.5,.6){$i$};
			 			\node at (.8,.4) {$=$};
			 			\end{tikzpicture}
			 			\begin{tikzpicture}
			 			\draw[blue,in=-90,out=-90,looseness=2] (-0.5,0.5) to (-1.5,0.5);
			 			\draw[blue,in=90,out=90,looseness=2] (-0.5,0.5) to (.5,0.5);
			 			\draw[blue] (-1.5,.5) to (-1.5,1);
			 			\draw[blue] (.5,0.5) to (.5,0); 
			 			\node at (.8,.4) {$=$};
			 			\end{tikzpicture}}
\id_{X'}.			 			
\]

\item Conversely, since \(i\) is unital, we have $i \circ e' = e$, and applying $i^*$ to both sides yields $i^* \circ i \circ e' = i^* \circ e$, so $e' = i^* \circ e$, since $i^* \circ i = \mathrm{id}_{X'}$. Using Lemma \ref{lem:unitdualcounit} and the fact that $i^{**} = i$, we conclude that $\epsilon \circ i = \epsilon'$. 
\end{itemize}

\begin{remark} \label{rk:FrobSubAmbiently}
Following Definition \ref{def:sub}, since $i$ is a counital algebra morphism, Lemma \ref{lem:selfdual} implies that:
\[
\ev_X \circ (i \otimes i) = \epsilon \circ m \circ (i \otimes i) = \epsilon \circ i \circ m' = \epsilon' \circ m' = \ev_{X'}.
\]
A similar argument establishes an analogous identity for the coevaluation morphisms. Consequently, we have:
\[
\ev_X \circ (i \otimes i) = \ev_{X'} \quad \text{and} \quad (i^* \otimes i^*) \circ \coev_X = \coev_{X'}.
\]
\end{remark}

Let us explain why the morphism $b_{X'}$ in Definition \ref{def:sub} is a unital selfdual idempotent. Since $i$ is both unital and counital, so is its dual $i^*$; consequently, their composition $b_{X'} = i \circ i^*$ is likewise unital and counital. The remaining properties are established as follows:
\[
b_{X'}^* = (i \circ i^*)^* = i^{**} \circ i^* = i \circ i^* = b_{X'} \quad \text{and} \quad b_{X'} \circ b_{X'} = (i \circ i^*) \circ (i \circ i^*) = i \circ (i^* \circ i) \circ i^* = i \circ \id_{X'} \circ i^* = b_{X'}.
\]
It generalizes the concept of a biprojection in subfactor planar algebras (see \cite[\S 4]{Liu16}, Proposition \ref{prop:biprojection}, and Statement \ref{sta:liu}). In Definition \ref{def:sub}, since $ i^* \circ i = \id_{X'} $, it follows that $ i: X' \to X $ is a split unital algebra monomorphism, and $ i^*: X \to X' $ is a split counital coalgebra epimorphism. Recall from Lemma \ref{lem:selfdual} that both $ X $ and $ X' $ are selfdual. Note that $ X $ is trivially a Frobenius subalgebra of itself with $ i = i^* = \id_X $, hence $ b_X = \id_X $.

\begin{lemma} \label{lem:b1}
Let $(X, m, \delta, e, \epsilon)$ be a Frobenius algebra in a $\mathbbm{k}$-linear monoidal category $\mathcal{C}$, and suppose the unit object $\one$ is linear-simple. If the scalar $\mu_X := \epsilon \circ e$ is nonzero, then $\one$ forms a Frobenius subalgebra of $X$, and
$
b_{\one} = \mu_X^{-1} \, e \circ \epsilon.
$
\end{lemma}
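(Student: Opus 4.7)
The plan is to treat the two claims separately. For the first claim, the natural candidate for the Frobenius subalgebra is $\one$ itself, and the idempotent $b_{\one}$ is forced by the formula to be $\mu_X^{-1}\, e\circ\epsilon$, so I first verify that this morphism is a selfdual idempotent, and then realize it as $i\circ i^{*}$ for a suitable isometric embedding $i\colon \one\to X$. Concretely, since $\epsilon\circ e=\mu_X\id_{\one}$, one computes
$(e\circ\epsilon)\circ(e\circ\epsilon)=\mu_X(e\circ\epsilon)$, so $\mu_X^{-1}\,e\circ\epsilon$ is idempotent; selfduality follows from Lemma~\ref{lem:unitdualcounit}, which gives $e^{*}=\epsilon$ and $\epsilon^{*}=e$.

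To realize it as coming from a Frobenius subalgebra in the sense of Definition~\ref{def:sub}, I endow $\one$ with the rescaled Frobenius structure $e':=\mu_X^{1/2}\id_{\one}$, $m':=\mu_X^{-1/2}\id_{\one}$, $\epsilon':=\mu_X^{1/2}\id_{\one}$, $\delta':=\mu_X^{-1/2}\id_{\one}$ (the axioms reduce to scalar identities that are immediate) and set $i:=\mu_X^{-1/2}\,e\in\Hom_{\mathcal{C}}(\one,X)$. The unital algebra morphism conditions $i\circ e'=e$ and $m\circ(i\otimes i)=i\circ m'$ follow from the unitality of $X$, and $i^{**}=i$ follows from the compatibility with the rigid structure discussed in Remark~\ref{rk:nosym}. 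Using Lemma~\ref{lem:unitdualcounit} one obtains $i^{*}=\mu_X^{-1/2}\,\epsilon$, whence $i^{*}\circ i=\mu_X^{-1}\,\epsilon\circ e=\id_{\one}$ (so in particular $i$ is a split monomorphism) and $b_{\one}=i\circ i^{*}=\mu_X^{-1}\,e\circ\epsilon$, as required.

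For the second claim, the idea is to use Schur's lemma together with the decomposition induced by semisimplicity. Under the assumption that $\mathcal{C}$ is abelian, $\one$ is simple (a mild strengthening of linear-simplicity which I will explicitly invoke), and $X$ is semisimple of finite length, I would write $X\cong \one^{\oplus n}\oplus Y$ with $Y$ having no $\one$-isotypic component. The connectedness hypothesis $\dim\Hom_{\mathcal{C}}(\one,X)=1$ forces $n=1$, so $X\cong \one\oplus Y$. With respect to this decomposition, the unit $e\colon\one\to X$ is a nonzero scalar multiple of the canonical inclusion (nonzero by unitality, which forbids $e=0$), and dually the counit $\epsilon\colon X\to\one$ is a nonzero scalar multiple of the canonical projection (nonzero by counitality, and with trivial restriction to $Y$ by Schur's lemma applied to $\Hom_{\mathcal{C}}(Y,\one)=0$). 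Composing these two nonzero scalar multiples of projection and inclusion yields a nonzero scalar in $\Hom_{\mathcal{C}}(\one,\one)=\mathbbm{k}$, i.e.\ $\mu_X=\epsilon\circ e\neq 0$.

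The main potential obstacle is the implicit use of a square root of $\mu_X$ in defining $i=\mu_X^{-1/2}\,e$; this is harmless over $\mathbb{C}$ (the principal setting of the paper), but in a general $\mathbbm{k}$-linear context one would have to either pass to a quadratic extension of $\mathbbm{k}$ or reformulate Definition~\ref{def:sub} so that $i$ and $i^{*}$ are allowed to differ by a unit with $i^{*}\circ i=\id_{X'}$ holding up to rescaling the Frobenius structure on $X'$. A secondary (minor) subtlety is invoking $\one$ simple rather than merely linear-simple in the second part; this is standard in the tensor-categorical setting at hand and is used exclusively for the Schur-lemma step isolating the $\one$-isotypic component of $X$.
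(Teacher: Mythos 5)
Your proof is correct, and its skeleton matches the paper's own: for the first claim the paper likewise sets $i := c_X e$ with $c_X^2 \mu_X = 1$, obtains $i^* = c_X \epsilon$ from Lemma~\ref{lem:unitdualcounit}, and concludes $i^* \circ i = \id_{\one}$ and $b_{\one} = \mu_X^{-1} e \circ \epsilon$; for the second it also decomposes $X = \one \oplus Y$ using semisimplicity and connectedness, and writes $e$ and $\epsilon$ as nonzero scalar multiples of the canonical inclusion and projection. Two of your steps genuinely diverge, and both add care. First, the paper never specifies which Frobenius structure on $\one$ makes its $i = c_X e$ a \emph{unital} algebra morphism, as Definition~\ref{def:sub} demands: with the standard structure on $\one$ it is not, since $i \circ \id_{\one} = c_X e \neq e$ when $c_X \neq 1$. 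Your rescaled structure $e' = \mu_X^{1/2}\id_{\one}$, $m' = \mu_X^{-1/2}\id_{\one}$, $\epsilon' = \mu_X^{1/2}\id_{\one}$, $\delta' = \mu_X^{-1/2}\id_{\one}$ supplies exactly the missing datum, and your verification that $i$ is then unital, multiplicative, and counital is consistent with the equivalence stated after Definition~\ref{def:sub}. Second, to see that $\epsilon$ is proportional to the projection, the paper invokes the adjunction isomorphism of \cite[Proposition 2.10.8]{EGNO15} together with selfduality $X^* = X$ to get that $\Hom_{\mathcal{C}}(X, \one)$ is one-dimensional, whereas you obtain the same one-dimensionality directly from Schur's lemma via $\Hom_{\mathcal{C}}(Y, \one) = 0$; your route is more elementary, while the paper's defers the simplicity issue to the adjunction — though both arguments, like the decomposition $X = \one \oplus Y$ itself, do require $\one$ to be simple rather than merely linear-simple, a point you rightly flag and the paper leaves tacit (cf.\ Remark~\ref{rk:LinSimple}). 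Finally, the square root of $\mu_X$ you worry about is equally present in the paper, which asserts without comment that $\mu_X x^2 = 1$ is solvable in $\mathbbm{k}$; this is harmless over $\mathbb{C}$ or any algebraically closed field, but is a genuine hypothesis over a general $\mathbbm{k}$, so your caveat is apt rather than a defect of your argument.
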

\begin{proof}
Assume $\mu_X \neq 0$. Then the equation $\mu_X x^2 = 1$ has a solution in $\mathbbm{k}$; let $c_X \in \mathbbm{k}$ be such a solution. Define $i := c_X e$. By Lemma \ref{lem:unitdualcounit}, we have $i^* = c_X \epsilon$, so
$
i^* \circ i = c_X \epsilon \circ c_X e = c_X^2 \mu_X = 1,
$
as required. It follows that
$
b_{\one} = i \circ i^* = \mu_X^{-1} e \circ \epsilon.
$
\end{proof}
\begin{lemma} \label{lem:b1nonzero}
Let $(X, m, \delta, e, \epsilon)$ be a Frobenius algebra that is also a semisimple object in a $\mathbbm{k}$-linear abelian monoidal category $\mathcal{C}$. Suppose that the unit object $\mathbf{1}$ is simple.  
Then the scalar $\mu_X := \epsilon \circ e$ is nonzero. Consequently, $\mathbf{1}$ forms a Frobenius subalgebra of $X$, with  
\(
b_{\mathbf{1}} = \mu_X^{-1} \, e \circ \epsilon.
\)
\end{lemma}
\begin{proof}
Let $A$ and $B$ be selfdual objects in $\mathcal{C}$, and consider the split monomorphism $i = \id_A \oplus 0 \in \Hom_{\mathcal{C}}(A, A \oplus B)$, and the split epimorphism $p = \id_A \oplus 0 \in \Hom_{\mathcal{C}}(A \oplus B, A)$. They satisfy $p \circ i = \id_A$.

Since $\one$ is simple and $X$ is unital and semisimple, it follows that $X = \one \oplus Y$ for some object $Y$, such that the morphism $i \in \Hom_{\mathcal{C}}(\one, X)$ defined above, with $A = \one$ and $B = Y$, is a scalar multiple of the unit morphism $e$; that is, there exists a nonzero scalar $k \in \mathbbm{k}$ such that $e = k i$. 
By the natural adjunction isomorphism \cite[Proposition 2.10.8]{EGNO15}, the space $\Hom_{\mathcal{C}}(X^*, \one)$ is also one-dimensional. Since $X^* = X$, it follows that $\epsilon = k' p$ for some nonzero scalar $k' \in \mathbbm{k}$. Thus,
$$
\mu_X = \epsilon \circ e = (k' p) \circ (k i) = k k' (p \circ i) = k k' \id_{\one} \neq 0.
$$
The last assertion is an immediate consequence of Lemma \ref{lem:b1}.
\end{proof}

\subsection{Ambient dual} \label{sub:Ambient}
This subsection introduces the notion of an ambient dual for a subobject (Definition~\ref{def:InducedDual}), leading to the notion of ambient selfduality (Definition~\ref{def:AmbientlySelfdual}). This extends the classical notion of a nondegenerate subspace (see \S\ref{sub:RigidVec}), a property that notably fails for the intersection in Example~\ref{ex:Ben02}. In an abelian monoidal category, Proposition \ref{prop:AmbientFrobSub} shows that the Frobenius subalgebras are precisely the ambiently selfdual unital subalgebras.

For an object in a monoidal category, a left dual (when it exists) is defined only up to isomorphism. For a subobject, however, it is sometimes necessary to consider a specific left dual induced by the ambient object.

\begin{definition} \label{def:InducedDual}
Let $C$ be an object of an abelian monoidal category $\mathcal{C}$ admitting a left dual $(C^*, \ev_C, \coev_C)$. Let $(B,i,p)$ be a split subobject of $C$, where $i \in \Hom_{\mathcal{C}}(B,C)$, $p \in \Hom_{\mathcal{C}}(C,B),$ satisfy $p \circ i = \id_B$. Denote by $b := i \circ p \in \End_{\mathcal{C}}(C)$ the associated idempotent.
Since $\mathcal{C}$ is abelian, it is also Karoubian (see Remark \ref{rk:Karou}), so the dual idempotent $b^* \in \End_{\mathcal{C}}(C^*)$ likewise splits. That is, there exists a triple $(B', i', p')$, with $i' \colon B' \to C^*$ and $p' \colon C^* \to B'$, such that $i' \circ p' = b^*$, $p' \circ i' = \id_{B'}$. 
An \emph{ambient dual} of $B$ (relative to $C$) is the triple $(B^*,\ev_B,\coev_B):=\bigl(B',\, \ev_C \circ (i' \otimes i),\, (p \otimes p') \circ \coev_C \bigr).$ The zigzag identities for these morphisms follow automatically. Moreover, we immediately verify that $i^* = p'$ and $p^* = i'$. \end{definition}

\begin{lemma} \label{lem:InducedRefor}
Let $B$ and $C$ be objects in an abelian monoidal category $\mathcal{C}$, each admitting a left dual $(B^*, \ev_B, \coev_B)$ and $(C^*, \ev_C, \coev_C)$, respectively. Then the dual of $B$ is ambient relative to $C$ if and only if there exist morphisms $i \in \Hom_{\mathcal{C}}(B, C)$ and $p \in \Hom_{\mathcal{C}}(C, B)$ such that
\(
\ev_B = \ev_C \circ (p^* \otimes i).
\)
\end{lemma}

\begin{proof}
The forward direction is immediate from Definition \ref{def:InducedDual}. Conversely, suppose that $\ev_B = \ev_C \circ (p^* \otimes i)$. Then, by the zigzag relations, we have $i^* \circ p^* = \id_{B^*}$, which implies $p \circ i = \id_B$. Hence, $(B, i, p)$ forms a split subobject of $C$.  
Next, for the coevaluation, we compute:
\[
\coev_B = (\id_B \otimes \id_{B^*}) \circ \coev_B = (\id_B \otimes (i^* \circ p^*)) \circ \coev_B = (p \otimes i^*) \circ \coev_C,
\] 
where the last equality again follows from the zigzag relation. The lemma follows.
\end{proof}

\begin{remark} \label{rk:InducedDual}
An object \(X\) is called \emph{selfdual} if the equality \(X^* = X\) holds strictly, not merely up to isomorphism.
\end{remark}

\begin{definition} \label{def:AmbientlySelfdual}
Following Definition \ref{def:InducedDual}, let $C$ be a selfdual object in an abelian monoidal category. A split subobject $(B, i, p)$ of $C$ is said to be \emph{ambiently selfdual} if it coincides with its ambient dual. In this case, $i^* = p$ and $i^{**} = p^* = i$, which implies that $i \circ i^* = b$ and $i^* \circ i = \id_B$.
\end{definition}

\begin{lemma} \label{lem:AmbientRefor}
Let $C$ be a selfdual object in an abelian monoidal category $\mathcal{C}$. A selfdual subobject $(B,i)$ is ambiently selfdual (with $p=i^*$) if and only if \(
\ev_B = \ev_C \circ (i \otimes i).
\)
\end{lemma}
\begin{proof}
Immediate from Lemma \ref{lem:InducedRefor} and Definition \ref{def:AmbientlySelfdual}.
\end{proof}

\begin{lemma} \label{lem:AmbientlyInter}
Following Definition \ref{def:AmbientlySelfdual}, let $C$ be a selfdual object in an abelian monoidal category, and let $(B, i, i^*)$ be an ambiently selfdual split subobject of $C$. Let $(A, j)$ be a subobject of $B$. Then $(A, j, j^*)$ is an ambiently selfdual split subobject of $B$ if and only if $(A, k, k^*)$ is an ambiently selfdual split subobject of $C$, where $k = i \circ j$.  
\end{lemma}
\begin{proof}
The result follows from the equalities $k^* \circ k = (i \circ j)^* \circ (i \circ j) = j^* \circ (i^* \circ i) \circ j = j^* \circ j$ and
\[
(A, \ev_B \circ (j \otimes j), (j^* \otimes j^*) \circ \coev_B) = (A, \ev_C \circ (k \otimes k), (k^* \otimes k^*) \circ \coev_C). \qedhere
\]
\end{proof}

The following is an immediate consequence of Remark \ref{rk:FrobSubAmbiently}:
\begin{lemma} \label{lem:FrobSubAmbiently}
In an abelian monoidal category, a Frobenius subalgebra is ambiently selfdual.
\end{lemma}

\begin{proposition}\label{prop:AmbientFrobSub}
Let $C$ be a Frobenius algebra in an abelian monoidal category. Let $(B, m_B, e_B)$ be a unital subalgebra of $C$, with unital algebra monomorphism $i_B$, and with a left dual. Then $(B, i_B, i_B^*)$ is an ambiently selfdual subobject of $C$ if and only if $(B, m_B, e_B, m_B^*, e_B^*)$ is a Frobenius subalgebra of $C$.
\end{proposition}
\begin{proof}
One implication is exactly Lemma~\ref{lem:FrobSubAmbiently}. Conversely, if $(B, i_B, i_B^*)$ is an ambiently selfdual subobject of $C$, we have $i_B^*\circ i_B =\id_{B}$ and $i_B^{**} = i_B$, by Definition~\ref{def:AmbientlySelfdual}. Since $(B, m_B, e_B)$ is a unital algebra and $i_B$ is a unital algebra monomorphism, then $(B, m_B^*, e_B^*)$ is a counital coalgebra and $i_B^*$ is a counital coalgebra epimorphism, by Lemmas \ref{lem:Dualgebra} and \ref{lem:DualgMor}. We deduce the Frobenius structure pictorially: 
\[\raisebox{-5mm}{
\begin{tikzpicture}
\draw[blue,in=90,out=90,looseness=2] (0,0) to (1,0);
\draw[blue,in=-90,out=-90,looseness=2] (1,0) to (2,0);
\draw[blue] (.5,.6) to (.5,1.2);
\draw[blue] (1.5,-.6) to (1.5,-1.2);
\draw[blue] (0,0) to (0,-1.2);
\draw[blue] (2,0) to (2,1.2);
\node at (2.2,0) {$=$};
\end{tikzpicture}} 
\hspace*{-.2cm}
\raisebox{-16mm}{
\begin{tikzpicture}
\draw[blue,in=90,out=90,looseness=2] (0,0) to (1,0);
\draw[blue,in=-90,out=-90,looseness=2] (1,0) to (2,0);
\draw[blue] (.5,.6) to (.5,2);
\draw[blue] (1.5,-.6) to (1.5,-2);
\draw[blue] (0,0) to (0,-1.2);
\draw[blue] (2,0) to (2,1.2);
\draw[dashed,orange,thick] (-.1,1.3) rectangle (1,0);
\draw[dashed,violet,thick] (.9,-.1) rectangle (2,-1.3);
\draw[->,orange] (2.2,1.2) to (2.2,.2);
\draw[->,violet] (2.2,-1.2) to (2.2,-.2);
\node at (2.2,1.8) [orange, align=center] {\small coalgebra\\\small morphism};
\node at (2.4,-1.8) [violet, align=center] {\small algebra\\\small  morphism};
\node[draw,thick,rounded corners, fill=white,minimum width=15,scale=.7] at (.5,1) {$i_B^*$};
\node[draw,thick,rounded corners, fill=white,minimum width=15,scale=.7] at (.5,1.6) {$i_B$};
\node[draw,thick,rounded corners, fill=white,minimum width=15,scale=.7] at (1.5,-1) {$i_B$};
\node[draw,thick,rounded corners, fill=white,minimum width=15,scale=.7] at (1.5,-1.6) {$i_B^*$};
\node at (2.2,0) {$=$};
\end{tikzpicture}}
\hspace*{-1cm}
\raisebox{-11mm}{
\begin{tikzpicture}
\draw[blue,in=90,out=90,looseness=2] (0,0) to (1,0);
\draw[blue,in=-90,out=-90,looseness=2] (1,0) to (2,0);
\draw[blue] (.5,.6) to (.5,1.7);
\draw[blue] (1.5,-.6) to (1.5,-1.7);
\draw[blue] (0,0) to (0,-1.2);
\draw[blue] (2,0) to (2,1.2);
\draw[dashed,red,thick] (-.1,.8) rectangle (1.3,-.05);
\draw[->,red] (2.3,1.2) to (2.3,.2);
\node at (2.3,1.6) [red] {\small Lemma \ref{lem:intermultdualcomultalpha}};
\node[draw,thick,rounded corners, fill=white,minimum width=15,scale=.7] at (1,.2) {$i_B^*$};
\node[draw,thick,rounded corners, fill=white,minimum width=15,scale=.7] at (0,-.6) {$i_B^*$};
\node[draw,thick,rounded corners, fill=white,minimum width=15,scale=.7] at (.5,1.2) {$i_B$};
\node[draw,thick,rounded corners, fill=white,minimum width=15,scale=.7] at (1,-.3) {$i_B$};
\node[draw,thick,rounded corners, fill=white,minimum width=15,scale=.7] at (2,.6) {$i_B$};
\node[draw,thick,rounded corners, fill=white,minimum width=15,scale=.7] at (1.5,-1.2) {$i_B^*$};
\node at (2.3,0) {$=$};
\end{tikzpicture}} 
\hspace*{-.8cm}
\raisebox{-8mm}{
\begin{tikzpicture}
\draw[blue,in=-90,out=-90,looseness=2] (0,0) to (1,0);
\draw[blue,in=90,out=90,looseness=2] (1,0) to (2,0);
\draw[blue,in=-90,out=-90,looseness=2] (2,0) to (3,0);
\draw[blue] (.5,-.6) to (.5,-1.4);
\draw[blue] (0,0) to (0,1);
\draw[blue] (2.5,-.6) to (2.5,-1.4);
\draw[blue] (3,0) to (3,1);
\node[draw,thick,rounded corners, fill=white,minimum width=15,scale=.7] at (0,0) {$i_B$};
\node[draw,thick,rounded corners, fill=white,minimum width=15,scale=.7] at (1,0) {$i_B$};
\node[draw,thick,rounded corners, fill=white,minimum width=15,scale=.7] at (2,0) {$i_B$};
\node[draw,thick,rounded corners, fill=white,minimum width=15,scale=.7] at (3,0) {$i_B$};
\node[draw,thick,rounded corners, fill=white,minimum width=15,scale=.7] at (.5,-1) {$i_B^*$};
\node[draw,thick,rounded corners, fill=white,minimum width=15,scale=.7] at (2.5,-1) {$i_B^*$};
\draw[dashed, DarkGreen!70!black, thick] [rotate around={-55:(1.9,.1)}] (2,.2) ellipse (.9 and .6);
\draw[->,green!70!black] (3.5,1.2) to (3.5,.2);
\node at (3.5,1.7) [green!70!black, align=center] {\small $i_B = i_B^{**}$ and\\\small Lemma \ref{lem:intermultdualcomultalpha}};
\node at (3.5,0) {$=$}; 
\end{tikzpicture}}
\hspace*{-.85cm}
\raisebox{-8mm}{
\begin{tikzpicture}
\draw[blue,in=-90,out=-90,looseness=2] (0,0) to (1,0);
\draw[blue,in=90,out=90,looseness=2] (1,0) to (2,0);
\draw[blue] (.5,-.6) to (.5,-1.4);
\draw[blue] (1.5,.6) to (1.5,1.4);
\draw[blue] (0,0) to (0,1.2);
\draw[blue] (2,0) to (2,-1.2);
\node[draw,thick,rounded corners, fill=white,minimum width=15,scale=.7] at (0,-.1) {$i_B$};
\node[draw,thick,rounded corners, fill=white,minimum width=15,scale=.7] at (1.5,1) {$i_B$};
\node[draw,thick,rounded corners, fill=white,minimum width=15,scale=.7] at (1,-.25) {$i_B$};
\node[draw,thick,rounded corners, fill=white,minimum width=15,scale=.7] at (1,.25) {$i_B^*$};
\node[draw,thick,rounded corners, fill=white,minimum width=15,scale=.7] at (.5,-1) {$i_B^*$};
\node[draw,thick,rounded corners, fill=white,minimum width=15,scale=.7] at (2,.1) {$i_B^*$};
\node at (2.45,0) {$=$};
\end{tikzpicture}}
\hspace*{-.15cm}
\raisebox{-8mm}{
\begin{tikzpicture}
\draw[blue,in=-90,out=-90,looseness=2] (0,0) to (1,0);
\draw[blue,in=90,out=90,looseness=2] (1,0) to (2,0);
\draw[blue] (.5,-.6) to (.5,-1.4);
\draw[blue] (1.5,.6) to (1.5,1.4);
\draw[blue] (0,0) to (0,1.4);
\draw[blue] (2,0) to (2,-1.4);
\end{tikzpicture}}
\] 
Thus, the Frobenius property holds for $B$ (recall Lemma \ref{lem:weakfull}). Finally, since $i_B$ is a unital algebra monomorphism satisfying $i_B^* \circ i_B = \id_{B}$ and $i_B^{**} = i_B$, it follows that $B$ is a Frobenius subalgebra of $C$ (Definition \ref{def:sub}).
\end{proof}

The notion of a \emph{coherent pair} of subobjects, defined below, will be used in the definition of the formal angle in \S\ref{sec:Angle}. It involves the notions of intersection and sum, defined in general in \S\ref{sec:lattice} (see Remarks~\ref{rk:intersection} and~\ref{rk:union}).

\begin{definition} \label{def:CoherentPair}
A pair $(A,B)$ of subobjects of a selfdual object $C$ in an abelian monoidal category is called \emph{coherent} if $A$, $B$, $A \cap B$, and $A + B$ are ambiently selfdual.
\end{definition}

A broad class of coherent pairs of Frobenius subalgebras is furnished by Theorem~\ref{thm:CoherentPairB}.

\subsection{Exchange relations} \label{sub:exchange}
This subsection presents a monoidal category version of the \emph{exchange relation} from planar algebras \cite{Bi94, BJ00, Lan02, Liu16}. For further developments, see \cite{GP25}.
\begin{theorem}[Exchange Relations] \label{thm:exchange}
Let $X$ and $X'$ be two Frobenius algebras in a monoidal category $\mathcal{C}$. Let $f \in \Hom_{\mathcal{C}}(X', X)$ be an algebra morphism such that $f^{**} = f$, and define $g := f \circ f^*$. The following relation holds:
 			\[\raisebox{-6mm}{
 				\begin{tikzpicture}
 				\draw[blue,in=90,out=90,looseness=2] (-0.5,0.5) to (-1.5,0.5);
 				\draw[blue] (-1,1.1) to (-1,2);
 				\draw[blue] (-.5,.5) to (-.5,-.2);
 				\draw[blue] (-1.5,.5) to (-1.5,-.2);
 				\node[draw,thick,rounded corners, fill=white,minimum width=15] at (-1.5,.4){$g$};
 				\node[draw,thick,rounded corners, fill=white,minimum width=15] at (-1,1.6){$g$};
 				\end{tikzpicture}}
 				=
 				\raisebox{-6mm}{
 				\begin{tikzpicture}
 				\draw[blue,in=90,out=90,looseness=2] (-0.5,0.5) to (-1.5,0.5);
 				\draw[blue] (-1,1.1) to (-1,2);
 				\draw[blue] (-.5,.5) to (-.5,-.2);
 				\draw[blue] (-1.5,.5) to (-1.5,-.2);
 				\node[draw,thick,rounded corners, fill=white,minimum width=15] at (-1.5,.4){$g$};
 				\node[draw,thick,rounded corners, fill=white,minimum width=15] at (-.5,.4){$g$};
 				\end{tikzpicture}}
 				=
 				\raisebox{-6mm}{
 					\begin{tikzpicture}
 					\draw[blue,in=90,out=90,looseness=2] (-0.5,0.5) to (-1.5,0.5);
 					\draw[blue] (-1,1.1) to (-1,2);
 					\draw[blue] (-.5,.5) to (-.5,-.2);
 					\draw[blue] (-1.5,.5) to (-1.5,-.2);
 					\node[draw,thick,rounded corners, fill=white,minimum width=15] at (-.5,.4){$g$};
 					\node[draw,thick,rounded corners, fill=white,minimum width=15] at (-1,1.6){$g$};
 					\end{tikzpicture}} \]
\end{theorem}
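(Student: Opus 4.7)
The strategy is to reduce the three-fold equality to two auxiliary ingredients: the fact that $f^{*}$ is a counital coalgebra morphism, and two symmetric ``slide'' identities that shuffle $f$ across the comultiplication of $X$.

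First I would observe that $f^{*}\colon X\to X'$ is a counital coalgebra morphism. Dualizing $m\circ(f\otimes f)=f\circ m'$ via Lemma~\ref{lem:multdualcomult} and using $f^{**}=f$ gives $\delta'\circ f^{*}=(f^{*}\otimes f^{*})\circ\delta$, while dualizing $f\circ e'=e$ via Lemma~\ref{lem:unitdualcounit} gives $\epsilon'\circ f^{*}=\epsilon$.

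Next I would establish the two slide identities
\[
(f^{*}\otimes\id_{X})\circ\delta\circ f=(\id_{X'}\otimes f)\circ\delta',\qquad (\id_{X}\otimes f^{*})\circ\delta\circ f=(f\otimes\id_{X'})\circ\delta'.
\]
For the second, I would use the weak-Frobenius relation of Lemma~\ref{lem:intermultdualcomultalpha} with $\alpha=f$ to rewrite $(\id_{X}\otimes f^{*})\circ\delta$ as $(m\otimes\id_{X'})\circ(\id_{X}\otimes f\otimes\id_{X'})\circ(\id_{X}\otimes\coev_{X'})$. Precomposing with $f$, the two adjacent factors of $f$ now sit under a single $m$ on the $X$-side, so the algebra-morphism hypothesis collapses them via $m\circ(f\otimes f)=f\circ m'$; Lemma~\ref{lem:intermultdualcomult} applied to $X'$ then rebuilds $\delta'$ and yields the desired right-hand side. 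The first identity is obtained by the mirror manipulation using the left-handed versions of Lemmas~\ref{lem:intermultdualcomult} and~\ref{lem:intermultdualcomultalpha}.

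With the slide identities in hand, the theorem follows by direct substitution:
\[
(g\otimes\id_{X})\circ\delta\circ g = (f\otimes\id_{X})\bigl[(f^{*}\otimes\id_{X})\circ\delta\circ f\bigr]\circ f^{*} = (f\otimes f)\circ\delta'\circ f^{*} = (f\otimes f)\circ(f^{*}\otimes f^{*})\circ\delta = (g\otimes g)\circ\delta,
\]
applying the first slide identity and then the coalgebra-morphism property of $f^{*}$; the third diagram is obtained identically from the mirror slide identity. The main obstacle is the slide identity itself: since $f$ need not be a coalgebra morphism (as already fails for inclusions of group algebras), the naive equality $\delta\circ f=(f\otimes f)\circ\delta'$ does not hold, and one must absorb exactly one $f$ on the $X$-side while leaving a residual $f^{*}$ on the source, which is precisely what the weak-Frobenius rewriting accomplishes.
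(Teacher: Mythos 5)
Your proof is correct and takes essentially the same route as the paper's: both establish that $f^{*}$ is a coalgebra morphism by dualizing the algebra-morphism property, then combine Lemma~\ref{lem:intermultdualcomultalpha} (the weak-Frobenius rewriting), the identity $m\circ(f\otimes f)=f\circ m'$ to merge the two copies of $f$ under a single multiplication, Lemma~\ref{lem:intermultdualcomult} for $X'$ to rebuild $\delta'$, and finally the coalgebra-morphism property of $f^{*}$. The only difference is organizational: the paper applies the rewriting directly with $\alpha=g$ (after noting $g^{*}=g$, which uses $f^{**}=f$), whereas you apply it with $\alpha=f$ and package the result as slide identities—note that the hypothesis $f^{**}=f$ still enters your argument, hidden in the mirrored (left-handed) slide identity, where sliding $f^{*}$ across $\coev_X$ produces $f^{**}$.
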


\begin{proof}
First, we show that $f^* \in \Hom_{\mathcal{C}}(X, X')$ is a coalgebra morphism. Since $f$ is an algebra morphism, meaning $f \circ m = m \circ (f \otimes f)$, we have $(f \circ m)^* = (m \circ (f \otimes f))^*$, which can be reformulated as $\delta \circ f^* = (f^* \otimes f^*) \circ \delta$ (noting that $m^* = \delta$ by Lemma \ref{lem:multdualcomult}). Thus, $f^*$ is confirmed as a coalgebra morphism. Next, we note that $g^* = g$ because $(f \circ f^*)^* = f^{**} \circ f^*$ and $f^{**} = f$. The rest of the proof follows from the illustration:
 			\[ \raisebox{-6mm}{
 				\begin{tikzpicture}
 				\draw[blue,in=90,out=90,looseness=2] (-0.5,0.5) to (-1.5,0.5);
 				\draw[blue] (-1,1.1) to (-1,2);
 				\draw[blue] (-.5,.5) to (-.5,-.2);
 				\draw[blue] (-1.5,.5) to (-1.5,-.2);
 				\node[draw,thick,rounded corners, fill=white,minimum width=15] at (-1.5,.4){$g$};
 				\node[draw,thick,rounded corners, fill=white,minimum width=15] at (-1,1.6){$g$};
 				\node at (.2,1.9) {Lemma \ref{lem:intermultdualcomultalpha}};
 				\draw [->] (.2,1.7) to (.2,.55);
 				\node at (.2,.4) {$=$};
 				\end{tikzpicture}}
 				\hspace*{-.6cm}
 				\raisebox{-6mm}{
 					\begin{tikzpicture}
 					\draw[blue,in=90,out=90,looseness=2] (0,0) to (1,0);
 					\draw[blue,in=-90,out=-90,looseness=2] (1,0) to (2,0);
 					\draw[blue] (1.5,-.6) to (1.5,-1.2);
 					\draw[blue] (0,0) to (0,-1.2);
 					\draw[blue] (2,0) to (2,.7);
 					\node[draw,thick,rounded corners, fill=white,minimum width=15] at (1,0) {$g$};
 					\node[draw,thick,rounded corners, fill=white,minimum width=15] at (2,0) {$g$};
 					\end{tikzpicture}}
 				 =
 				  	\raisebox{-6mm}{
 				  	\begin{tikzpicture}
 				  	\draw[blue,in=90,out=90,looseness=2] (0,0) to (1,0);
 				  	\draw[blue,in=-90,out=-90,looseness=2] (1,0) to (2,0);
 				  	\draw[blue] (1.5,-.6) to (1.5,-1.2);
 				  	\draw[blue] (0,0) to (0,-1.2);
 				  	\draw[blue] (2,0) to (2,.7);
 				  	\draw[dashed,green!70!black,thick] (.65,-0.05) rectangle (2.3,-1.1);
 				  	\draw[->,green!70!black] (2.6,.8) to (2.6,-.3);
 				  	\node[draw,thick,rounded corners, fill=white,minimum width=15,scale=.7] at (1,.2) {$f^*$};
 				  	\node[draw,thick,rounded corners, fill=white,minimum width=15,scale=.7] at (1,-.3) {$f$};
 				  	\node[draw,thick,rounded corners, fill=white,minimum width=15,scale=.7] at (2,.2) {$f^*$};
 				  	\node[draw,thick,rounded corners, fill=white,minimum width=15,scale=.7] at (2,-.3) {$f$};
 				  	\node[scale=.8] at (2.8,1) [green!70!black] {algebra morphism};
 				  	\node at (2.6,-.5) {$=$};
 				  	\end{tikzpicture}}
 				  	\hspace*{-.75cm}
 			  		\raisebox{-6mm}{
 			  			\begin{tikzpicture}
 			  			\draw[blue,in=90,out=90,looseness=2] (0,0) to (1,0);
 			  			\draw[blue,in=-90,out=-90,looseness=2] (1,0) to (2,0);
 			  			\draw[blue] (1.5,-.6) to (1.5,-1.4);
 			  			\draw[blue] (0,0) to (0,-1.2);
 			  			\draw[blue] (2,0) to (2,.7);
 			  			\node[draw,thick,rounded corners, fill=white,minimum width=15,scale=.8] at (1,0) {$f^*$};
 			  			\node[draw,thick,rounded corners, fill=white,minimum width=15,scale=.8] at (2,0) {$f^*$};
 			  			\node[draw,thick,rounded corners, fill=white,minimum width=15,scale=.8] at (1.5,-1) {$f$};
 			  			\end{tikzpicture}}
 		  				=
 		  				 \raisebox{-6mm}{
 		  				 	\begin{tikzpicture}
 		  				 	\draw[blue,in=90,out=90,looseness=2] (-0.5,0.5) to (-1.5,0.5);
 		  				 	\draw[blue] (-1,1.1) to (-1,2);
 		  				 	\draw[blue] (-.5,.5) to (-.5,-.2);
 		  				 	\draw[blue] (-1.5,.5) to (-1.5,-.2);
 		  				 	\draw[dashed,orange,thick] (-1.6,2) rectangle (-.5,.8);
 		  				 	\draw[->,orange] (0,2) to (0,.7);
 		  				 	\node[draw,thick,rounded corners, fill=white,minimum width=15] at (-.5,.4){$f$};
 		  				 	\node[draw,thick,rounded corners, fill=white,minimum width=15] at (-1.5,.4){$f$};
 		  				 	\node[draw,thick,rounded corners, fill=white,minimum width=15] at (-1,1.6){$f^*$};
 		  				 	\node[scale=.8] at (0,2.2) [orange] {coalgebra morphism};
 		  				 	\node at (0,.5) {$=$};
 		  				 	\end{tikzpicture}}
 	  				 	\hspace*{-.75cm}
 	  				 	\raisebox{-6mm}{
 	  				 		\begin{tikzpicture}
 	  				 		\draw[blue,in=90,out=90,looseness=2] (-0.5,0.5) to (-1.5,0.5);
 	  				 		\draw[blue] (-1,1.1) to (-1,2);
 	  				 		\draw[blue] (-.5,.5) to (-.5,-.2);
 	  				 		\draw[blue] (-1.5,.5) to (-1.5,-.2);
 	  						\node[draw,thick,rounded corners, fill=white,minimum width=15,scale=.7] at (-1.5,.65) {$f^*$};
 				  			\node[draw,thick,rounded corners, fill=white,minimum width=15,scale=.7] at (-1.5,.15) {$f$};
 				  			\node[draw,thick,rounded corners, fill=white,minimum width=15,scale=.7] at (-.5,.65) {$f^*$};
 				  			\node[draw,thick,rounded corners, fill=white,minimum width=15,scale=.7] at (-.5,.15) {$f$};
 	  				 		\end{tikzpicture}}
 	  				 	=
 	  				 	\raisebox{-6mm}{
 	  				 		\begin{tikzpicture}
 	  				 		\draw[blue,in=90,out=90,looseness=2] (-0.5,0.5) to (-1.5,0.5);
 	  				 		\draw[blue] (-1,1.1) to (-1,2);
 	  				 		\draw[blue] (-.5,.5) to (-.5,-.2);
 	  				 		\draw[blue] (-1.5,.5) to (-1.5,-.2);
 	  				 		\node[draw,thick,rounded corners, fill=white,minimum width=15] at (-1.5,.4){$g$};
 	  				 		\node[draw,thick,rounded corners, fill=white,minimum width=15] at (-.5,.4){$g$};
 	  				 		\end{tikzpicture}}  \]
The second equality follows a similar reasoning. \qedhere
\end{proof}

We need to expand the concept of the convolution product.

\begin{definition}[Convolution Product] \label{def:conv}
Let $(X, m, \delta)$ be an algebra and coalgebra in a monoidal category $\mathcal{C}$. For two morphisms $a, b \in \End_{\mathcal C}(X)$, we define the \emph{convolution product} as $ a * b := \delta \circ (a \otimes b) \circ m$, illustrated as follows:
   				\[ a*b \coloneqq \delta \circ (a \otimes b) \circ m  = \raisebox{-11mm}{
   					\begin{tikzpicture}
   					\draw[blue,in=90,out=90,looseness=2] (0,0) to (1,0);
   					\draw[blue,in=-90,out=-90,looseness=2] (0,0) to (1,0);
   					\draw[blue] (.5,-.6) to (.5,-1.2);
   					\draw[blue] (.5,.6) to (.5,1.2);
   					\node[draw,thick,rounded corners, fill=white,minimum width=20] at (0,0){$a$};
   					\node[draw,thick,rounded corners, fill=white,minimum width=20] at (1,0){$b$}; 
   					\end{tikzpicture}} \]
\end{definition}

\begin{remark} \label{rk:ConvId}
Note that Lemma \ref{lem:contr} can be restated as $\mu_X (\alpha * \id_X) = \tr(\alpha) \id_X$.
\end{remark}

\begin{remark}[Fourier Transform] \label{rk:fourier}
Although it may not be utilized in this paper, it is noteworthy that the convolution product defined in Definition \ref{def:conv} can be derived from the Fourier transform:
$$ 
\begin{array}{cccc}
\mathcal{F}:& \End_{\mathcal C}(X) & \to & \Hom_{\mathcal{C}}(X \otimes X, X \otimes X) \\
& a & \mapsto & (\id_X \otimes m) \circ (\id_X \otimes a \otimes \id_X) \circ (\delta \otimes \id_X)
\end{array}
$$
with a left inverse given by $\mathcal{F}^{-1}(x) = (\epsilon \otimes \id_X) \circ x \circ (\id_X \otimes e)$. These transformations are depicted below:
   				\[ \mathcal F (a) = \raisebox{-8mm}{
   					\begin{tikzpicture}
   					\draw[blue,in=90,out=90,looseness=2] (0,0) to (1,0);
   					\draw[blue,in=-90,out=-90,looseness=2] (1,0) to (2,0);
   					\draw[blue] (.5,.6) to (.5,1.2);
   					\draw[blue] (1.5,-.6) to (1.5,-1.2);
   					\draw[blue] (0,0) to (0,-1.2);
   					\draw[blue] (2,0) to (2,1.2);
   					\node[draw,thick,rounded corners, fill=white,minimum width=20] at (1,0) {$a$};
   					\end{tikzpicture}}
   				\hspace*{.5cm} \text{ and } \hspace*{.5cm} 
   				\mathcal F^{-1} (x) = \raisebox{-4mm}{\begin{tikzpicture}
   				\draw[blue] (0,0) to (0,1);
   				\draw[blue] (.5,0) to (.5,1);
   				\node[draw,thick,rounded corners, fill=white,minimum width=22] at (.25,.5) {$x$};
   				\node at (0,0) {$\textcolor{blue}{\bullet}$};
   				\node at (.5,1) {$\textcolor{blue}{\bullet}$};
   				\end{tikzpicture}}  \]
It can be shown pictorially that $ a * b = \mathcal{F}^{-1} (\mathcal{F}(a) \circ \mathcal{F}(b))$; refer to \cite[Proposition 1]{BD21} for further details.
\end{remark}

\begin{corollary} \label{cor:exchangeconnected}
Let $X$, $X_1$, and $X_2$ be three Frobenius algebras in a $\mathbbm{k}$-linear monoidal category $\mathcal{C}$, where $\one$ is linear-simple. Let $f_i \in \Hom_{\mathcal{C}}(X_i, X)$ be an algebra morphism such that $f_i^{**} = f_i$, and define $g_i := f_i \circ f_i^*$. If $X$ is connected, then the following relation holds:
$$ \mu_X (g_1 * g_2) \circ g_i = \tr(g_1 \circ g_2) g_i = \mu_X g_i \circ (g_1 * g_2). $$
\end{corollary}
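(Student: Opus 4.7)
I plan to establish $\mu_X (g_1 * g_2) \circ g_1 = \tr(g_1 \circ g_2) \, g_1$; the case $i = 2$ is symmetric (swap $g_1$ and $g_2$), and the second equality $\tr(g_1 \circ g_2) \, g_i = \mu_X g_i \circ (g_1 * g_2)$ follows by applying $^*$ to the first (with $g_1, g_2$ possibly swapped), using Lemmas~\ref{lem:multdualcomult} and \ref{lem:unitdualcounit}, together with $g_i^* = g_i$, $(g_1 * g_2)^* = g_2 * g_1$, and cyclicity of the trace (Lemma~\ref{lem:trfg}).

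First I apply Theorem~\ref{thm:exchange} in the form $(g_1 \otimes \id) \circ \delta \circ g_1 = (g_1 \otimes g_1) \circ \delta$ to reduce
\[
(g_1 * g_2) \circ g_1 \;=\; m \circ (g_1 \otimes g_2) \circ \delta \circ g_1 \;=\; m \circ \bigl(g_1 \otimes (g_2 \circ g_1)\bigr) \circ \delta.
\]
I then rewrite $(\id \otimes g_1) \circ \delta$ via Lemma~\ref{lem:intermultdualcomultalpha} (first identity, with $\alpha = g_1 = g_1^*$) as $(m \otimes \id) \circ (\id \otimes g_1 \otimes \id) \circ (\id \otimes \coev_X)$, and absorb the inner $g_1$ into the adjacent multiplication using the dual exchange relation $g_1 \circ m \circ (\id \otimes g_1) = m \circ (g_1 \otimes g_1)$ (which follows from Theorem~\ref{thm:exchange} by taking $^*$, since $g_1^* = g_1$). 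The outcome is
\[
m \circ \bigl(g_1 \otimes (g_2 \circ g_1)\bigr) \circ \delta \;=\; m \circ (g_1 \otimes \eta), \qquad \eta := m \circ (g_1 \otimes g_2) \circ \coev_X \in \Hom_{\mathcal{C}}(\one, X).
\]

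Connectedness of $X$ forces $\eta = k \cdot e$ for some $k \in \mathbbm{k}$, and then unitality reduces $m \circ (g_1 \otimes \eta)$ to $k \cdot g_1$. To identify $k$, apply $\epsilon$: $k \mu_X = \ev_X \circ (g_1 \otimes g_2) \circ \coev_X$. Using the adjunction property $\ev_X \circ (\id \otimes f) = \ev_X \circ (f^* \otimes \id)$ (valid in the selfdual setting because $\ev_X = \epsilon \circ m$ by Lemma~\ref{lem:selfdual}), together with $g_i^* = g_i$, this expression rewrites as $\tr\bigl((g_1 g_2)^*\bigr) = \tr(g_2 \circ g_1)$, which by cyclicity (Lemma~\ref{lem:trfg}) equals $\tr(g_1 \circ g_2)$. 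Multiplying through by $\mu_X$ delivers the identity. The main technical obstacle is the middle step combining Lemma~\ref{lem:intermultdualcomultalpha} with the dual exchange: one must verify that the $g_1$ is slid through the Frobenius handle exactly once—with no duplication or omission—which is easiest to check graphically before translating back to algebraic notation.
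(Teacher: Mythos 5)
Your derivation of the single identity $\mu_X\,(g_1 * g_2) \circ g_1 = \tr(g_1 \circ g_2)\, g_1$ is correct, and all its technical steps check out. It differs from the paper in execution: the paper applies the exchange relation in its ``sliding'' form, so that one leg of the convolution becomes the identity, and then finishes in one stroke with Lemma~\ref{lem:contr}; you instead apply the ``duplicating'' form $(g_1 \otimes \id)\circ\delta\circ g_1 = (g_1\otimes g_1)\circ\delta$, after which Lemma~\ref{lem:contr} is not directly applicable (neither leg carries the identity), and you compensate by re-deriving its content by hand from Lemma~\ref{lem:intermultdualcomultalpha}, the dualized exchange relation, associativity, connectedness, unitality, and an explicit trace computation. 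That longer route is sound, and your use of duality to obtain a right-multiplication identity from a left-multiplication one is also legitimate.

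The genuine gap is the claim that ``the case $i=2$ is symmetric (swap $g_1$ and $g_2$).'' Swapping labels in what you proved gives $\mu_X\,(g_2*g_1)\circ g_2 = \tr(g_2\circ g_1)\,g_2$, a statement about the convolution $g_2 * g_1$; since $*$ is not commutative, this is \emph{not} the case $i=2$, which concerns $(g_1*g_2)\circ g_2$. The error propagates through your dualization step: the only identities reachable from the one you proved by relabelling and applying $(\,\cdot\,)^*$ are $(g_1*g_2)\circ g_1$, $(g_2*g_1)\circ g_2$, $g_2\circ(g_1*g_2)$, and $g_1\circ(g_2*g_1)$. So your plan delivers $(g_1*g_2)\circ g_1$ and $g_2\circ(g_1*g_2)$, but neither $(g_1*g_2)\circ g_2$ nor $g_1\circ(g_1*g_2)$, both of which the corollary asserts. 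What is actually needed is a second, left--right \emph{mirrored} run of your argument, not a relabelling: duplicate $g_2$ via the exchange relation to get $m\circ((g_1\circ g_2)\otimes g_2)\circ\delta$; peel the inner $g_2$ off the \emph{left} leg using the second bullet of Lemma~\ref{lem:intermultdualcomultalpha} with $X'=X$ and $\alpha=g_2$, namely $(g_2\otimes\id)\circ\delta = (\id\otimes m)\circ(\id\otimes g_2\otimes\id)\circ(\coev_X\otimes\id)$; absorb it with the other dualized exchange relation $g_2\circ m\circ(g_2\otimes\id) = m\circ(g_2\otimes g_2)$; associativity then yields $m\circ(\eta\otimes g_2)$ with the same $\eta = m\circ(g_1\otimes g_2)\circ\coev_X$, and connectedness, unitality, and your trace computation finish as before. (The paper's own gloss for its unproved case, ``the reasoning is similar,'' refers precisely to such a mirror argument—mirroring works because every lemma used has a mirrored form—whereas ``swap $g_1$ and $g_2$'' does not.) With the mirrored case added, your dualization then correctly supplies the last remaining identity $g_1\circ(g_1*g_2)$.
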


\begin{proof}
We first prove the left-hand side for $i = 2$ using the following illustration, applying the exchange relation (Theorem \ref{thm:exchange}) and the connectedness condition (Lemma \ref{lem:contr}):
  			\[ \mu_X \raisebox{-11mm}{
   				\begin{tikzpicture}
   				\draw[blue,in=90,out=90,looseness=2] (0,0) to (1,0);
   				\draw[blue,in=-90,out=-90,looseness=2] (0,0) to (1,0);
   				\draw[blue] (.5,-.6) to (.5,-1.2);
   				\draw[blue] (.5,.6) to (.5,1.5);
   				\node[draw,thick,rounded corners, fill=white,minimum width=20] at (0,0){$g_1$};
   				\node[draw,thick,rounded corners, fill=white,minimum width=20] at (1,0){$g_2$};
   				\node[draw,thick,rounded corners, fill=white,minimum width=20] at (.5,1.1){$g_2$}; 
   				\draw[dashed, DarkOrange, thick] [rotate around={-60:(1.9,.1)}] (.9,-.65) ellipse (1.05 and .55);
   				\node at (2,1.6)[orange]{exchange relation};
   				\draw[->, DarkOrange] (2,1.4) to (2,.2);
   				\node at (2,0) {$=$};
   				\end{tikzpicture}}
   				\hspace*{-1cm}
   				\mu_X \raisebox{-11mm}{
   					\begin{tikzpicture}
   					\draw[blue,in=90,out=90,looseness=2] (0,0) to (1,0);
   					\draw[blue,in=-90,out=-90,looseness=2] (0,0) to (1,0);
   					\draw[blue] (.5,-.6) to (.5,-1.2);
   					\draw[blue] (.5,.6) to (.5,1.5);
   					\draw[->,violet] (1.55,1.4) to (1.55,.2);
   					\draw[violet,dashed,thick] (-.4,.7) rectangle (1.2,-.8);
   					\node[draw,thick,rounded corners, fill=white,minimum width=20,scale=.8] at (0,-.3){$g_1$};
   					\node[draw,thick,rounded corners, fill=white,minimum width=20,scale=.8] at (0,.2){$g_2$};
   					\node[draw,thick,rounded corners, fill=white,minimum width=20] at (.5,1.1){$g_2$};
   					\node at (1.55,1.6) [violet] {connected};
   					\node at (1.55,0) {$=$};
   					\end{tikzpicture}}
   					\hspace*{-.4cm}
   					\text{tr}(g_1 \circ g_2) \, \raisebox{-8mm}{\begin{tikzpicture}
   					\draw[blue] (0,-1) to (0,1);
   				\node[draw,thick,rounded corners, fill=white,minimum width=20] at (0,0) {$g_2$};
   					\end{tikzpicture}} \]
For the case when $i = 1$, the reasoning is similar. 
The second equality follows by a similar argument, concluding with $ \tr(g_2 \circ g_1) = \tr(g_1 \circ g_2) $ by Lemma~\ref{lem:trfg}, using the fact that $ g_i^{**} = g_i $.
\end{proof}

\begin{corollary} \label{cor:exchangesub}
Following the notations of Definition \ref{def:sub}, the morphism $b_{X'}$ satisfies the exchange relations. Moreover, if $X_1$ and $X_2$ are Frobenius subalgebras of $X$ in a $\mathbbm{k}$-linear monoidal category $\mathcal{C}$ where $\one$ is linear-simple, then:
$$ \mu_X (b_{X_1} * b_{X_2}) \circ b_{X_i} = \tr(b_{X_1} \circ b_{X_2}) b_{X_i}.$$
\end{corollary}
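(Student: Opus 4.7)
The plan is to derive both claims as direct specializations of the already-established results, with essentially no new computation required.

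For the first assertion, the plan is to invoke Theorem \ref{thm:exchange} with the algebra morphism $f := i : X' \to X$ supplied by Definition \ref{def:sub}. That definition explicitly gives $i$ as a unital algebra monomorphism with $i^{**} = i$ and $i^* \circ i = \id_{X'}$, so the hypotheses of Theorem \ref{thm:exchange} are literally satisfied (unital algebra morphism with self-dual double dual). The composite $g := f \circ f^* = i \circ i^*$ is by definition exactly $b_{X'}$, and the three-fold equality produced by Theorem \ref{thm:exchange} is then the assertion that $b_{X'}$ satisfies the exchange relations. No appeal to connectedness of $X$ is needed at this stage.

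For the second assertion, the plan is to apply Corollary \ref{cor:exchangeconnected} with $f_j := i_j : X_j \to X$ the inclusion of the Frobenius subalgebra $X_j$, for $j = 1, 2$. Each $i_j$ is again a unital algebra morphism satisfying $i_j^{**} = i_j$, and the associated idempotent $g_j := i_j \circ i_j^*$ coincides with $b_{X_j}$. Corollary \ref{cor:exchangeconnected} then yields directly $\mu_X (g_1 * g_2) \circ g_i = \tr(g_1 \circ g_2)\, g_i$, which is the displayed formula upon substitution $g_j = b_{X_j}$.

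The only subtlety to note, rather than a genuine obstacle, is that Corollary \ref{cor:exchangeconnected} was proved under the connectedness hypothesis on $X$ (used through Lemma \ref{lem:contr}); this hypothesis should be understood as implicitly present in the second part of the statement, since otherwise the identity $\mu_X (b_{X_1} * b_{X_2}) \circ b_{X_i} = \tr(b_{X_1} \circ b_{X_2})\, b_{X_i}$ cannot be extracted from the exchange relation alone. Apart from this remark, both assertions are immediate corollaries obtained by matching the hypotheses of Theorem \ref{thm:exchange} and Corollary \ref{cor:exchangeconnected} to the data of a Frobenius subalgebra inclusion.
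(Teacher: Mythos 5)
Your proposal is correct and coincides with the paper's own proof, which likewise obtains the first assertion from Theorem \ref{thm:exchange} with $f = i$ and the second from Corollary \ref{cor:exchangeconnected} with $g_i = b_{X_i}$. Your remark that the connectedness of $X$ must be understood as implicit in the second assertion is a fair observation, since the paper's statement omits it while its proof silently relies on it through Corollary \ref{cor:exchangeconnected}.
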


\begin{proof}
The first part is immediate from Theorem \ref{thm:exchange} with $f = i$. For the second part, we can apply Corollary \ref{cor:exchangeconnected} with $g_i = b_{X_i}$.
\end{proof}

\begin{proposition} \label{prop:coprod}
Let $X$ and $X'$ be two algebras-coalgebras in a $\mathbbm{k}$-linear monoidal category $\mathcal{C}$, with the assumption that $X'$ is separable. Let $f \in \Hom_{\mathcal{C}}(X', X)$ be an algebra morphism and $h \in \Hom_{\mathcal{C}}(X, X')$ a coalgebra morphism. Define $g := f \circ h$. Then, it holds that $g * g = \lambda_{X'} g$, where $\lambda_{X'}$ is the constant associated with the separability of $X'$ (see Definition \ref{def:connected}). Moreover, if both $X$ and $X'$ are Frobenius, $X$ is connected, and $\one$ is linear-simple, with $h = f^*$, $h^{*} = f$, and $g = f \circ f^*$ being a nonzero idempotent (e.g., $f^* \circ f = \id_{X'}$), then $\tr(g) = \mu_X \lambda_{X'}$, where $\mu_X$ is the scalar defined by $\epsilon \circ e$. Consequently, we have $\mu_X (g * g) = \tr(g) g$.
\end{proposition}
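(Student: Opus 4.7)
The plan is to establish the two displayed identities in sequence, using only the given hypotheses and Corollary~\ref{cor:exchangeconnected}.

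First, for $g * g = \lambda_{X'} g$, the approach is a direct telescoping. Unfolding the convolution via Definition~\ref{def:conv} gives $g * g = m \circ (f \otimes f) \circ (h \otimes h) \circ \delta$; the coalgebra morphism property of $h$ collapses $(h \otimes h) \circ \delta$ to $\delta' \circ h$, the algebra morphism property of $f$ collapses $m \circ (f \otimes f)$ to $f \circ m'$, and the separability of $X'$ rewrites $m' \circ \delta'$ as $\lambda_{X'} \id_{X'}$. The composite then becomes $\lambda_{X'} \, f \circ h = \lambda_{X'} g$. This first step uses none of the Frobenius structure on $X$, the connectedness, or the identifications $h = f^*$ and $h^* = f$.

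Second, for $\tr(g) = \mu_X \lambda_{X'}$, the idea is to invoke Corollary~\ref{cor:exchangeconnected} with $X_1 = X_2 = X'$ and $f_1 = f_2 = f$, so that $g_1 = g_2 = g$. The hypotheses of that corollary are all present: $X'$ is Frobenius, $f$ is an algebra morphism, $X$ is connected, $\one$ is linear-simple, and $f^{**} = f$ follows from $h = f^*$ combined with $h^* = f$. The corollary then yields $\mu_X (g * g) \circ g = \tr(g \circ g) \, g$. Idempotency of $g$ collapses both sides: $g \circ g = g$ gives $\tr(g \circ g) = \tr(g)$, while $(g * g) \circ g = \lambda_{X'} g$ by the first step. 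The resulting equation is $(\mu_X \lambda_{X'} - \tr(g)) \cdot g = 0$ in $\End_{\mathcal{C}}(X)$, and scalar cancellation, justified because $\End_{\mathcal{C}}(X)$ is a $\mathbbm{k}$-vector space on which $\End_{\mathcal{C}}(\one) = \mathbbm{k}$ acts freely on nonzero morphisms and $g \neq 0$, delivers $\tr(g) = \mu_X \lambda_{X'}$. Combining this with the first identity gives the final equality $\mu_X (g * g) = \mu_X \lambda_{X'} \, g = \tr(g) \, g$.

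I do not anticipate a serious obstacle in this plan. The scalar cancellation is the only non-routine move and it rests straightforwardly on $\one$ being linear-simple together with $g \neq 0$. The main conceptual choice is recognizing that Corollary~\ref{cor:exchangeconnected} is the right tool for extracting the trace; once that is made, everything reduces to bookkeeping with the algebra/coalgebra morphism axioms and separability.
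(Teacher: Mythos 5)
Your proof is correct. The first identity $g*g=\lambda_{X'}g$ is established exactly as in the paper: unfold the convolution, use that $h$ is a coalgebra morphism and $f$ an algebra morphism, and apply separability of $X'$. For the trace identity, however, you take a genuinely different (and more economical) route. The paper introduces the morphism $A\in\Hom_{\mathcal C}(X,X\otimes X)$ given by the difference of the two exchange-relation expressions $(g\otimes\id_X)\circ\delta\circ g$ and $(\id_X\otimes g)\circ\delta\circ g$, computes $A^*\circ A$ diagrammatically, uses the first part together with Lemma~\ref{lem:contr} to get $\mu_X A^*A=\pm 2(\mu_X\lambda_{X'}-\tr(g))\,g$, and then concludes from $A=0$ (Theorem~\ref{thm:exchange}). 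You instead apply Corollary~\ref{cor:exchangeconnected} with $X_1=X_2=X'$ and $f_1=f_2=f$ --- whose hypotheses you correctly verify, in particular $f^{**}=h^*=f$ --- to get $\mu_X(g*g)\circ g=\tr(g\circ g)\,g$, and then collapse both sides using idempotency and the first identity. Both arguments rest on the same two ingredients (the exchange relation and the connectedness contraction of Lemma~\ref{lem:contr}), but yours reuses an already-proved corollary rather than redoing a diagrammatic $A^*A$ computation, which makes it shorter; the paper's $A^*A$ formulation has the side benefit of exhibiting the quantity $\mu_X\lambda_{X'}-\tr(g)$ as (half of) $\mu_X\tr$-of-a-square, a positivity-flavored presentation in the spirit of the planar-algebra literature it generalizes. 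Your final scalar cancellation from $(\mu_X\lambda_{X'}-\tr(g))\,g=0$ with $g\neq 0$ is the same step the paper relies on implicitly, and is justified as you say by $\Hom_{\mathcal C}(\one,\one)=\mathbbm{k}$.
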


\begin{proof}
The first part is illustrated by the following diagram:
   			\[ g*g = \raisebox{-11mm}{
   				\begin{tikzpicture}
   				\draw[blue,in=90,out=90,looseness=2] (0,0) to (1,0);
   				\draw[blue,in=-90,out=-90,looseness=2] (0,0) to (1,0);
   				\draw[blue] (.5,-.6) to (.5,-1.2);
   				\draw[blue] (.5,.6) to (.5,1.2);
   				\node[draw,thick,rounded corners, fill=white,minimum width=20] at (0,0){$g$};
   				\node[draw,thick,rounded corners, fill=white,minimum width=20] at (1,0){$g$}; 
   				\end{tikzpicture}}
   				=
   				\raisebox{-13mm}{
   					\begin{tikzpicture}
   					\draw[blue,in=90,out=90,looseness=2] (0,0) to (1,0);
   					\draw[blue,in=-90,out=-90,looseness=2] (0,0) to (1,0);
   					\draw[blue] (.5,-.6) to (.5,-1.2);
   					\draw[blue] (.5,.6) to (.5,1.2);
   					\draw[dashed,orange,thick] (-.3,.7) rectangle (1.3,-.03);
   					\draw[dashed,green!70!black,thick] (-.3,-.07)rectangle (1.3,-.7);
   					\draw[->,green!70!black] (2,-1) to (2,-.2);
   					\draw[->,orange] (2,1) to (2,.2);
   					\node[scale=.8] at (1.8,1.1) [orange] {coalgebra morphism};
   					\node[scale=.8] at (1.8,-1.1) [green!70!black] {algebra morphism}; 
   					 	\node[draw,thick,rounded corners, fill=white,minimum width=20,scale=.7] at (0,.2){$h$};
   					\node[draw,thick,rounded corners, fill=white,minimum width=20,scale=.7] at (1,.2){$h$};
   					\node[draw,thick,rounded corners, fill=white,minimum width=20,scale=.7] at (1,-.3) {$f$};
   					\node[draw,thick,rounded corners, fill=white,minimum width=20,scale=.7] at (0,-.3) {$f$};
   				\node at (2,0) {$=$};
   					\end{tikzpicture}}
   					\hspace{-.5cm}
   				\raisebox{-13mm}{
   					\begin{tikzpicture}
   					\draw[blue,in=90,out=90,looseness=2] (0,0) to (1,0);
   					\draw[blue,in=-90,out=-90,looseness=2] (0,0) to (1,0);
   					\draw[blue] (.5,-.6) to (.5,-1.4);
   					\draw[blue] (.5,.6) to (.5,1.4);
   					\draw[dashed,violet,thick] (-.2,.7) rectangle (1.2,-.7);
   					\draw[->,violet] (1.5,1.1) to (1.5,.1);
   					\node[scale=.8] at (1.6,1.2) [violet] {separability};
   					\node[draw,thick,rounded corners, fill=white,minimum width=20,scale=.8] at (.5,1) {$h$};
   					\node[draw,thick,rounded corners, fill=white,minimum width=20,scale=.8] at (.5,-1) {$f$};
   					\node at (1.5,-.05) {$=$};
   					\end{tikzpicture}}
   					\hspace*{-.5cm}
   				\lambda_{X'} f \circ h = \lambda_{X'} g \]
   			
For the second part, define $A \in \Hom_{\mathcal{C}}(X, X \otimes X)$ as follows:
   				\[ A \coloneqq \raisebox{-6mm}{
   					\begin{tikzpicture}
   					\draw[blue,in=90,out=90,looseness=2] (-0.5,0.5) to (-1.5,0.5);
   					\draw[blue] (-1,1.1) to (-1,2);
   					\draw[blue] (-.5,.5) to (-.5,-.2);
   					\draw[blue] (-1.5,.5) to (-1.5,-.2);
   					\node[draw,thick,rounded corners, fill=white,minimum width=15] at (-1.5,.4){$g$};
   					\node[draw,thick,rounded corners, fill=white,minimum width=15] at (-1,1.6){$g$};
   					\end{tikzpicture}} - \raisebox{-6mm}{
   					\begin{tikzpicture}
   					\draw[blue,in=90,out=90,looseness=2] (-0.5,0.5) to (-1.5,0.5);
   					\draw[blue] (-1,1.1) to (-1,2);
   					\draw[blue] (-.5,.5) to (-.5,-.2);
   					\draw[blue] (-1.5,.5) to (-1.5,-.2);
   					\node[draw,thick,rounded corners, fill=white,minimum width=15] at (-.5,.4){$g$};
   					\node[draw,thick,rounded corners, fill=white,minimum width=15] at (-1,1.6){$g$};
   					\end{tikzpicture}}  \]
We observe that
   				\[ A^* \circ A = 2 \raisebox{-11mm}{
   					\begin{tikzpicture}
   					\draw[blue,in=90,out=90,looseness=2] (0,0) to (1,0);
   					\draw[blue,in=-90,out=-90,looseness=2] (0,0) to (1,0);
   					\draw[blue] (.5,-.6) to (.5,-1.2);
   					\draw[blue] (.5,.6) to (.5,1.2);
   					\node[draw,thick,rounded corners, fill=white,minimum width=15,scale=.8] at (0,0){$g$};
   					\node[draw,thick,rounded corners, fill=white,minimum width=15,scale=.8] at (1,0){$g$};
   					\node[draw,thick,rounded corners, fill=white,minimum width=15,scale=.8] at (.5,.85){$g$};
   					\node[draw,thick,rounded corners, fill=white,minimum width=15,scale=.8] at (.5,-.85){$g$}; 
   					\end{tikzpicture}}
   				-
   				 \raisebox{-11mm}{
   				 	\begin{tikzpicture}
   				 	\draw[blue,in=90,out=90,looseness=2] (0,0) to (1,0);
   				 	\draw[blue,in=-90,out=-90,looseness=2] (0,0) to (1,0);
   				 	\draw[blue] (.5,-.6) to (.5,-1.2);
   				 	\draw[blue] (.5,.6) to (.5,1.2);
   				 	\node[draw,thick,rounded corners, fill=white,minimum width=15,scale=.8] at (0,0){$g$};
   				 	\node[draw,thick,rounded corners, fill=white,minimum width=15,scale=.8] at (.5,.85){$g$};
   				 	\node[draw,thick,rounded corners, fill=white,minimum width=15,scale=.8] at (.5,-.85){$g$}; 
   				 	\end{tikzpicture}}
   			 	-
   			 	\raisebox{-11mm}{
   			 		\begin{tikzpicture}
   			 		\draw[blue,in=90,out=90,looseness=2] (0,0) to (1,0);
   			 		\draw[blue,in=-90,out=-90,looseness=2] (0,0) to (1,0);
   			 		\draw[blue] (.5,-.6) to (.5,-1.2);
   			 		\draw[blue] (.5,.6) to (.5,1.2);
   			 		\node[draw,thick,rounded corners, fill=white,minimum width=15,scale=.8] at (1,0){$g$};
   			 		\node[draw,thick,rounded corners, fill=white,minimum width=15,scale=.8] at (.5,.85){$g$};
   			 		\node[draw,thick,rounded corners, fill=white,minimum width=15,scale=.8] at (.5,-.85){$g$}; 
   			 		\end{tikzpicture}} \]
By the first part and Lemma \ref{lem:contr}, we obtain 
$$
\mu_X A^* \circ A = 2(\mu_X \lambda_{X'} - \tr(g))g,
$$
but $A = 0$ by Theorem \ref{thm:exchange}. The result follows.
\end{proof}

\begin{corollary} \label{cor:coprod}
Using the notation from Definition \ref{def:sub}, the following holds in the connected case if $\one$ is linear-simple:
$$
\tr(b_{X'}) = \mu_X \lambda_{X'} \quad \text{and} \quad \mu_X (b_{X'} * b_{X'}) = \tr(b_{X'}) b_{X'}.
$$
\end{corollary}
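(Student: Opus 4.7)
The plan is to recognize this corollary as a direct instance of Proposition~\ref{prop:coprod}, applied with $f := i$ and $h := i^*$, so that $g := f \circ h = i \circ i^* = b_{X'}$. First I would verify the hypotheses of the proposition. From Definition~\ref{def:sub}, the map $i$ is a unital algebra morphism with $i^{**} = i$; dualizing (via Lemma~\ref{lem:multdualcomult}, and as already exploited in the proof of Theorem~\ref{thm:exchange}) yields that $i^*$ is a counital coalgebra morphism. The identities $h = f^*$ and $h^* = f$ are then immediate from $i^{**} = i$, and $f^* \circ f = i^* \circ i = \id_{X'}$ by Definition~\ref{def:sub}, so $g = b_{X'}$ is in particular a nonzero idempotent.

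The only hypothesis of Proposition~\ref{prop:coprod} not supplied verbatim by Definition~\ref{def:sub} is the separability of $X'$. Here the main observation—and the only nontrivial step of the argument—is that connectedness passes to Frobenius subalgebras: given $X$ connected, for any $\alpha \in \Hom_{\mathcal{C}}(\one, X')$ the composite $i \circ \alpha \in \Hom_{\mathcal{C}}(\one, X) = \mathbbm{k} e$ equals $c e = c (i \circ e') = i \circ (c e')$ for some scalar $c$, and since $i$ is a monomorphism we conclude $\alpha = c e'$, so $X'$ is connected. Then Lemma~\ref{lem:cose} gives separability of $X'$, with some scalar $\lambda_{X'}$.

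With all hypotheses in place, Proposition~\ref{prop:coprod} yields both $\tr(b_{X'}) = \mu_X \lambda_{X'}$ and $\mu_X (b_{X'} * b_{X'}) = \tr(b_{X'}) b_{X'}$, completing the proof. I do not expect any real obstacle here: the content of the corollary is merely a packaging of the proposition using the data of a Frobenius subalgebra, once the inheritance of connectedness (and hence separability) by $X'$ is observed.
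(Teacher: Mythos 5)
Your proof is correct and takes essentially the same route as the paper, which states the corollary without proof precisely because it is the specialization of Proposition~\ref{prop:coprod} to $f = i$, $h = i^*$, $g = b_{X'}$, with all hypotheses supplied by Definition~\ref{def:sub}. Your argument that $X'$ inherits connectedness from $X$ via the monomorphism $i$ (hence is separable by Lemma~\ref{lem:cose}) correctly fills in the one hypothesis the paper leaves implicit, namely the separability of $X'$ needed both for the proposition and for $\lambda_{X'}$ to be defined.
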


\begin{remark}[\cite{Bi94, BJ00, Lan02, Liu16}]
Theorem \ref{thm:exchange} and Proposition \ref{prop:coprod} align with the theory of irreducible finite index subfactors $(N \subset P \subset M)$, where $X = {}_N M_N$, $X' = {}_N P_N$, $g = b_{X'} = e^M_P$, and $\lambda_X = \mu_X = |M:N|^{1/2}$. Note that the trace $\tr$ here is not normalized, in contrast to its typical normalization within the subfactor framework.
\end{remark}

\subsection{Landau's theorem} \label{sub:landau}

The following result generalizes Landau's theorem (e.g., see \cite[Theorem 3.10]{GJ06}):

\begin{theorem}[Landau's theorem] \label{thm:landau}
Let $X$, $X_1$, and $X_2$ be three Frobenius algebras in a $\mathbbm{k}$-linear monoidal category $\mathcal{C}$ with $\one$ being linear-simple. Assume that $X$ is connected. For $i=1,2$, let $f_i \in \Hom_{\mathcal{C}}(X_i, X)$ be an algebra morphism such that $f_i^{**} = f_i$, and let $g_i := f_i \circ f_i^*$. Then 
$$
\mu_X (g_1 * g_2) \circ (g_1 * g_2) = \tr(g_1 \circ g_2)(g_1 * g_2).
$$
Moreover, for every algebra morphism $\alpha \in \End_{\mathcal C}(X)$—for instance, $\alpha = \id_X$—
$$
\mu_X \tr_{\alpha}(g_1 * g_2) = \tr_{\alpha}(g_1) \tr_{\alpha}(g_2),
$$
where $\tr_{\alpha}(h):=\tr(\alpha \circ h)$. As a result, if $\tr(g_1 \circ g_2)$ is nonzero, then 
$$
g_{12} := \frac{\mu_X}{\tr(g_1 \circ g_2)} g_1 * g_2
$$
is an idempotent, and we have\footnote{The trace in the denominator is \emph{not} indexed by $\alpha$; this is correct and not a typo} 
$$
\tr_{\alpha}(g_{12}) = \frac{\tr_{\alpha}(g_1) \tr_{\alpha}(g_2)}{\tr(g_1 \circ g_2)}.
$$
Finally, $g_{12} \circ g_i = g_i = g_i \circ g_{12}$ for $i \in \{1, 2\}$.
\end{theorem}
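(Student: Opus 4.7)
The plan is to prove the four assertions in the order (d), (b), (a), (c), using the shorthand $h := g_1 * g_2 = m \circ (g_1 \otimes g_2) \circ \delta$ throughout. Assertion (d) is immediate from Corollary~\ref{cor:exchangeconnected}, which supplies $\mu_X h \circ g_i = \tr(g_1 \circ g_2)\, g_i = \mu_X g_i \circ h$; dividing by the nonzero scalar $\tr(g_1 \circ g_2)$ yields $g_{12} \circ g_i = g_i = g_i \circ g_{12}$.

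For (b), the algebra-morphism identity $\alpha \circ m = m \circ (\alpha \otimes \alpha)$ gives $\alpha \circ h = m \circ ((\alpha g_1) \otimes (\alpha g_2)) \circ \delta = (\alpha g_1) * (\alpha g_2)$. Unfolding $\tr_\alpha(h) = \epsilon \circ m \circ ((\alpha h) \otimes \id_X) \circ \delta \circ e$ and reorganising via the associativity of $m$ and the coassociativity of $\delta$ brings the expression into $\epsilon \circ m \circ ((\alpha g_1) \otimes ((\alpha g_2) * \id_X)) \circ \delta \circ e$. Lemma~\ref{lem:contr} (via Remark~\ref{rk:ConvId}) contracts $\mu_X \cdot ((\alpha g_2) * \id_X) = \tr(\alpha g_2)\, \id_X$, after which the remaining outer structure evaluates to $\tr(\alpha g_1)$, yielding $\mu_X \tr_\alpha(h) = \tr_\alpha(g_1)\, \tr_\alpha(g_2)$.

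For (a), I compute $h \circ h$ by three successive rewrites. First, the Frobenius axiom $\delta \circ m = (\id_X \otimes m) \circ (\delta \otimes \id_X)$ reshapes the middle junction between the two copies of $h$, giving $h \circ h = m \circ (g_1 \otimes (g_2 \circ m)) \circ ((\delta \circ g_1) \otimes g_2) \circ \delta$. Second, the exchange relation (Theorem~\ref{thm:exchange}), $(g_1 \otimes \id_X) \circ \delta \circ g_1 = (g_1 \otimes g_1) \circ \delta$, redistributes the $g_1$'s on the first strand and rewrites the expression as $m \circ (\id_X \otimes (g_2 \circ m)) \circ (((g_1 \otimes g_1) \circ \delta) \otimes g_2) \circ \delta$. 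Third, coassociativity $(\delta \otimes \id_X) \circ \delta = (\id_X \otimes \delta) \circ \delta$ together with the interchange law rearranges the outer structure into $m \circ (g_1 \otimes (g_2 \circ h)) \circ \delta = g_1 * (g_2 \circ h)$. Corollary~\ref{cor:exchangeconnected} now supplies $\mu_X \cdot g_2 \circ h = \tr(g_1 \circ g_2)\, g_2$, and the bilinearity of convolution gives $\mu_X h \circ h = \mu_X \cdot g_1 * (g_2 \circ h) = g_1 * (\mu_X \cdot g_2 \circ h) = \tr(g_1 \circ g_2) \cdot (g_1 * g_2) = \tr(g_1 \circ g_2)\, h$.

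Finally, (c) is a substitution: from (a), $g_{12}^2 = (\mu_X / \tr(g_1 \circ g_2))^2 \cdot h^2 = (\mu_X / \tr(g_1 \circ g_2)) \cdot h = g_{12}$, establishing idempotency; from (b), $\tr_\alpha(g_{12}) = (\mu_X / \tr(g_1 \circ g_2)) \cdot \tr_\alpha(h) = \tr_\alpha(g_1)\, \tr_\alpha(g_2) / \tr(g_1 \circ g_2)$. The principal obstacle is step (a): the clean collapse hinges on the Frobenius, exchange, and coassociativity rewrites combining to produce the elegant intermediate form $g_1 * (g_2 \circ h)$, whereupon Corollary~\ref{cor:exchangeconnected} together with the bilinearity of convolution close the argument at once.
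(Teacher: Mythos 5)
Your proof is correct and follows essentially the same route as the paper's: part (a) rests on the Frobenius axiom, the exchange relation, coassociativity, and the connectedness contraction; part (b) pushes $\alpha$ through the multiplication and contracts via Lemma \ref{lem:contr}; and the remaining claims follow by substitution and Corollary \ref{cor:exchangeconnected}, exactly as in the paper. The only cosmetic difference is that you package part (a) through the intermediate identity $(g_1 * g_2) \circ (g_1 * g_2) = g_1 * \bigl(g_2 \circ (g_1 * g_2)\bigr)$ and then invoke Corollary \ref{cor:exchangeconnected}, whereas the paper collapses the diagram directly and applies Lemma \ref{lem:contr} together with Lemma \ref{lem:trfg}---which is precisely the machinery underlying that corollary.
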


\begin{proof}
The proof begins with the diagram below, utilizing Lemmas \ref{lem:intermultdualcomultalpha}, \ref{lem:FrobExtra}, and Theorem \ref{thm:exchange}:
 			\[\raisebox{-6mm}{
 				\begin{tikzpicture}
 				\draw[blue,in=90,out=90,looseness=2] (0,0) to (1,0);
 				\draw[blue] (.5,.6) to (.5,1.2);
 				\draw[blue,in=-90,out=-90,looseness=2] (0,1.8) to (1,1.8);
 				\draw[blue,in=90,out=90,looseness=2] (0,2) to (1,2);
 				\draw[blue,in=-90,out=-90,looseness=2] (0,-.2) to (1,-.2);
 				\draw[blue] (.5,2.6) to (.5,2.9);
 				\draw[blue] (.5,-.8) to (.5,-1.2);
 				\draw[dashed,green!70!black,thick] (-.1,1.5) rectangle (1.1,.3);
 				\draw[->,green!70!black] (1.6,2.5) to (1.6,1);
 				\node at (1.6,2.7) [green!70!black] {Frobenius};
 				\node[draw,thick,rounded corners, fill=white,minimum width=15,scale=.8] at (0,1.8) {$g_1$};
 				\node[draw,thick,rounded corners, fill=white,minimum width=15,scale=.8] at (0,0) {$g_1$};
 				\node[draw,thick,rounded corners, fill=white,minimum width=15,scale=.8] at (1,1.8) {$g_2$};
 				\node[draw,thick,rounded corners, fill=white,minimum width=15,scale=.8] at (1,0) {$g_2$};
 				\node at (1.6,.8) {$=$};
 				\end{tikzpicture}} 
 				\raisebox{-8mm}{\begin{tikzpicture}
 				\draw[blue,in=-90,out=-90,looseness=2] (0,0) to (1,0);
 				\draw[blue,in=90,out=90,looseness=2] (1,0) to (2,0);
 				\draw[blue,in=-90,out=-90,looseness=2] (2,0) to (3,0);
 				\draw[blue,in=90,out=90,looseness=2] (0,0) to (3,0);
 				\draw[blue,in=-90,out=-90,looseness=2] (.5,-.6) to (2.5,-.6);
 				\draw[blue] (1.5,-1.77) to (1.5,-2.2);
 				\draw[blue] (1.5,1.75) to (1.5,2.2);
 				\draw[dashed,red,thick] (-.1,1.3) rectangle (1,-1.3);
 				\draw[dashed,red,thick] (2,1.3) rectangle (3.1,-1.3);
 				\draw[->,red] (3.3,1.5) to (3.3,.2);
 				\node at (3.3,1.8) [red] {exchange relation};
 				\node[draw,thick,rounded corners, fill=white,minimum width=15,scale=.8] at (.2,1) {$g_1$};
 				\node[draw,thick,rounded corners, fill=white,minimum width=15,scale=.8] at (2.7,1) {$g_2$};
 				\node[draw,thick,rounded corners, fill=white,minimum width=15,scale=.8] at (.6,-1) {$g_1$};
 				\node[draw,thick,rounded corners, fill=white,minimum width=15,scale=.8] at (2.4,-1) {$g_2$};
 				\node at (3.3,0) {$=$};
 				\end{tikzpicture}} 
 				\hspace*{-.6cm}
 				\raisebox{-6mm}{\begin{tikzpicture}
 					\draw[blue,in=-90,out=-90,looseness=2] (0,0) to (1,0);
 					\draw[blue,in=90,out=90,looseness=2] (1,0) to (2,0);
 					\draw[blue,in=-90,out=-90,looseness=2] (2,0) to (3,0);
 					\draw[blue,in=90,out=90,looseness=2] (0,0) to (3,0);
 					\draw[blue,in=-90,out=-90,looseness=2] (.5,-.6) to (2.5,-.6);
 					\draw[blue] (1.5,-1.77) to (1.5,-2.2);
 					\draw[blue] (1.5,1.75) to (1.5,2.2);
 					\node[draw,thick,rounded corners, fill=white,minimum width=15,scale=.8] at (1,0) {$g_1$};
 					\node[draw,thick,rounded corners, fill=white,minimum width=15,scale=.8] at (2,0) {$g_2$};
 					\node[draw,thick,rounded corners, fill=white,minimum width=15,scale=.8] at (.6,-1) {$g_1$};
 					\node[draw,thick,rounded corners, fill=white,minimum width=15,scale=.8] at (2.4,-1) {$g_2$};
 					\draw[dashed, pink!70!black, thick] (.65,.4) circle (1.1);
 					\node at (3.3,0) {$=$};
 					\end{tikzpicture}}
 					\raisebox{1mm}{\begin{tikzpicture}
 						\draw[blue,in=90,out=90,looseness=2] (0,0) to (1,0);
 						\draw[blue,in=90,out=90,looseness=2] (.5,.6) to (1.5,.6);
 						\draw[blue] (1.5,.6) to (1.5,0);
 						\draw[blue] (1,1.2) to (1,1.6);
 						\draw[blue,in=-90,out=-90,looseness=2] (1,0) to (1.5,0);
 						\draw[dashed,orange,thick] (0,1.7) rectangle (1.6,.35);
 						\draw[blue,in=-90,out=-90,looseness=2] (0,-.3) to (1.25,-.3);
 						\draw[blue] (0,0) to (0,-.3);
 						\draw[blue] (.6,-1.03) to (.6,-1.5);
 						\draw[->,orange] (1.8,1.5) to (1.8,.2);
 						\node at (1.8,1.9) [orange] {coassociativity};
 					\node[draw,thick,rounded corners, fill=white,minimum width=15,scale=.8] at (.8,0) {$g_2 \circ g_1$};
 						\node[draw,thick,rounded corners, fill=white,minimum width=15,scale=.8] at (.1,-.7) {$g_1$};
 						\node[draw,thick,rounded corners, fill=white,minimum width=15,scale=.8] at (1.15,-.7) {$g_2$};
 						\node at (1.8,0) {$=$};
 						\end{tikzpicture}}
 						\hspace*{-.6cm}
 						\raisebox{-2mm}{
 							\begin{tikzpicture}
 							\draw[blue,in=90,out=90,looseness=2] (.5,0) to (1.5,0);
 							\draw[blue,in=90,out=90,looseness=2] (1,.6) to (-.2,.6);
 							\draw[blue] (-.2,.6) to (-.2,-1);
 							\draw[blue] (.4,1.3) to (.4,1.7);
 							\draw[blue,in=-90,out=-90,looseness=2] (.5,0) to (1.5,0);
 							 \draw[blue,in=-90,out=-90,looseness=2] (-.2,-1) to (1,-1);
 							\draw[blue] (.4,-1.7) to (.4,-2.2);
 							 \draw[blue] (1,-1) to(1,-.6);
 					\node[draw,thick,rounded corners, fill=white,minimum width=15,scale=.8] at (.5,0) {$g_2 \circ g_1$};
 							\node[draw,thick,rounded corners, fill=white,minimum width=15,scale=.8] at (-.2,-1) {$g_1$};
 							\node[draw,thick,rounded corners, fill=white,minimum width=15,scale=.8] at (1,-1) {$g_2$};
 							\end{tikzpicture}}\]
Because $X$ is connected, Lemma~\ref{lem:contr} applies; in view of Lemma~\ref{lem:trfg} and the fact that $g_i^{**} = g_i$, it follows that
$$
\mu_X (g_1 * g_2) \circ (g_1 * g_2) = \tr(g_1 \circ g_2)(g_1 * g_2).
$$
The following figure proves the second equality, applying the connectedness of $X$ via Lemma~\ref{lem:contr}:
 			\[ \mu_X \, \tr_{\alpha}(g_1 * g_2) = \mu_X \, \raisebox{-20mm}{
 				\begin{tikzpicture}
 				\draw[blue,in=90,out=90,looseness=2] (0,0) to (1,0);
 				\draw[blue,in=-90,out=-90,looseness=2] (0,0) to (1,0);
 				\draw[blue,in=90,out=90,looseness=2] (.5,.6) to (1.5,.6);
 				\draw[blue,in=-90,out=-90,looseness=2] (.5,-1) to (1.5,-1);
 				\draw[blue] (.5,-.6) to (.5,-1);
 				\draw[blue] (1.5,.6) to (1.5,-1);
 				\draw[dashed,red,thick] (.4,1.4) rectangle (1.6,.7);
 				\draw[dashed,red,thick] (.4,-1.25) rectangle (1.6,-1.75);
 				\draw[->,red] (1.8,1.5) to (1.8,0);
 				\node at (1.8,1.7) [red] {selfdual};
 				\draw[->, DarkOrange] (1.8,-1.75) to (1.8,-.45);
 				\node at (2,-1.95)[orange] {algebra morphism};
 				\draw[dashed,orange,thick] (.1,-1.15) rectangle (.9,-.475);
 				\node[draw,thick,rounded corners, fill=white,minimum width=15] at (0,0){$g_1$};
 				\node[draw,thick,rounded corners, fill=white,minimum width=15] at (1,0){$g_2$};
 				\node[draw,thick,rounded corners, fill=white,minimum width=15] at (.5,-.875){$\alpha$};
 				\node at (1.8,-.25) {$=$}; 
 				\end{tikzpicture}} 
 				\hspace*{-1.4cm}
 				\mu_X \, \raisebox{-20mm}{
 					\begin{tikzpicture}
 					\draw[blue,in=90,out=90,looseness=2] (0,0) to (1,0);
 					\draw[blue,in=-90,out=-90,looseness=2] (0,-.4) to (1,-.4);
 					\draw[blue,in=90,out=90,looseness=2] (.5,.6) to (1.5,.6);
 					\draw[blue,in=-90,out=-90,looseness=2] (.5,-1) to (1.5,-1);
 					\draw[blue] (0,0) to (0,-.4);
 					\draw[blue] (1,0) to (1,-.4);
 					\draw[blue] (1.5,.6) to (1.5,-1);
 					\draw[dashed,green!70!black,thick] (0,1.5) rectangle (1.6,.3);
 					\draw[dashed,green!70!black,thick] (0,-.8) rectangle (1.6,-1.8);
 					\draw[->,green!70!black] (2,1.8) to (2,0);
 					\node at (1.5,2) [green!70!black] {(co)associativity};
 					\node[draw,thick,rounded corners, fill=white,minimum width=15] at (0,0){$g_1$};
 					\node[draw,thick,rounded corners, fill=white,minimum width=15] at (1,0){$g_2$};
 					 \node[draw,thick,rounded corners, fill=white,minimum width=15] at (0,-.5){$\alpha$};
 					\node[draw,thick,rounded corners, fill=white,minimum width=15] at (1,-.5){$\alpha$};
 					\node at (1,1.7) {$\color{blue}{\bullet}$};
 					\node at (1,-2.1) {$\color{blue}{\bullet}$};
 					\draw[blue] (1,1.2) to (1,1.7);
 					\draw[blue] (1,-1.6) to (1,-2.1);
 					\node at (2,-.2) {$=$}; 
 					\end{tikzpicture}}
 					\hspace*{-.7cm}
 					\mu_X \, \raisebox{-20mm}{
 					\begin{tikzpicture}
 					\draw[blue,in=90,out=90,looseness=2] (1,0) to (2,0);
 					\draw[blue,in=-90,out=-90,looseness=2] (1,-.4) to (2,-.4);
 					\draw[blue,in=90,out=90,looseness=2] (0.3,.6) to (1.5,.6);
 					\draw[blue,in=-90,out=-90,looseness=2] (0.3,-1) to (1.5,-1);
 					\draw[blue] (2,0) to (2,-.4);
 					\draw[blue] (0.3,0) to (0.3,-.4);
 					\draw[blue] (1,0) to (1,-.4);
 					\draw[blue] (0.3,.6) to (0.3,-1);
 					\node[draw,thick,rounded corners, fill=white,minimum width=15] at (0.3,0){$g_1$};
 					\node[draw,thick,rounded corners, fill=white,minimum width=15] at (1,0){$g_2$};
 					\node[draw,thick,rounded corners, fill=white,minimum width=15] at (0.3,-.5){$\alpha$};
 					\node[draw,thick,rounded corners, fill=white,minimum width=15] at (1,-.5){$\alpha$};
 					\node at (.9,1.8) {$\color{blue}{\bullet}$};
 					\node at (.9,-2.2) {$\color{blue}{\bullet}$};
 					\draw[blue] (.9,1.3) to (.9,1.8);
 					\draw[blue] (.9,-1.7) to (.9,-2.2);
 					\draw[dashed,violet,thick] (.65,.75) rectangle (2.1,-1.1);
 					\draw[->,violet] (2.3,1.3) to (2.3,-.1);
 					\draw[->,red] (2.3,-1.3) to (2.3,-.5);
 					\node at (2.7,1.5) [violet] {connectedness};
 					\node at (2.3,-1.5) [red] {selfdual};
 					\draw[dashed,red,thick] (.4,2) rectangle (1.4,1);
 					\draw[dashed,red,thick] (.4,-1.45) rectangle (1.4,-2.35);
 					\node at (2.3,-.35) {$=$}; 
 					\end{tikzpicture}}		
 					\hspace*{-1.3cm}	
 						\raisebox{-9mm}{\begin{tikzpicture}
 					\draw[blue,in=90,out=90,looseness=2] (0,0) to (1,0);
 					\draw[blue,in=-90,out=-90,looseness=2] (0,-.4) to (1,-.4);
 					\draw[blue] (1,0) to (1,-.4);
 					\draw[blue] (0,0) to (0,-.4);		
 					\node[draw,thick,rounded corners, fill=white,minimum width=15] at (0,0){$g_1$};
 					\node[draw,thick,rounded corners, fill=white,minimum width=15] at (0,-.5){$\alpha$};
 							\end{tikzpicture}} \hspace*{.1cm}
 						\tr_{\alpha}(g_2) =
 						\tr_{\alpha}(g_1) \tr_{\alpha}(g_2) \]
The final equalities follows from Corollary \ref{cor:exchangeconnected}.
\end{proof}

\begin{corollary} \label{cor:landau}
Let $ X $ be a connected Frobenius algebra in a $ \mathbbm{k} $-linear monoidal category $ \mathcal{C} $ with $ \one $ being linear-simple. Let $ A $ and $ B $ be two Frobenius subalgebras of $ X $. Using the notation from Definition \ref{def:sub}, we have 
$$
\mu_X (b_{A} * b_{B}) \circ (b_{A} * b_{B}) = \tr(b_{A} \circ b_{B})(b_{A} * b_{B}).
$$
Moreover, for every algebra morphism $\alpha \in \End_{\mathcal C}(X)$—for instance, $\alpha = \id_X$—, 
$$
\mu_X \tr_{\alpha}(b_{A} * b_{B}) = \tr_{\alpha}(b_{A}) \tr_{\alpha}(b_{B}).
$$
Consequently, if $ \tr(b_{A} \circ b_{B}) $ is nonzero, then 
$$
b_{AB} := \frac{\mu_X}{\tr(b_{A} \circ b_{B})} b_{A} * b_{B}
$$
is an idempotent, and therefore 
$$
\tr_{\alpha}(b_{AB}) = \frac{\tr_{\alpha}(b_{A}) \tr_{\alpha}(b_{B})}{\tr(b_{A} \circ b_{B})},
$$
and, $ b_{AB} \circ b_Y = b_Y = b_Y \circ b_{AB} $ for all $ Y \in \{A, B\} $. Lastly, if $\tr_{\alpha}(\id_X)$ is nonzero, we have  $$\tr_{\alpha}(b_A) = \tr(b_A), \ \tr_{\alpha}(b_B) = \tr(b_B), \ \text{ and } \ \tr_{\alpha}(b_{AB}) = \tr(b_{AB}).$$
\end{corollary}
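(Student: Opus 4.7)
The corollary is essentially an application of Theorem \ref{thm:landau} tailored to the case where the $X_i$'s are Frobenius subalgebras of $X$. My plan is to argue first that the hypotheses of Theorem \ref{thm:landau} are met with $X_1 = A$, $X_2 = B$, and $f_i$ taken to be the defining inclusion morphisms from Definition \ref{def:sub}: these are unital algebra monomorphisms satisfying $f_i^{**} = f_i$, so $g_i = f_i \circ f_i^* = b_{X_i}$ in the notation of the theorem. Consequently, everything through the statement ``$b_{AB} \circ b_Y = b_Y = b_Y \circ b_{AB}$ for $Y \in \{A,B\}$'' is an immediate transcription of the corresponding conclusions of Theorem \ref{thm:landau}; this part of the proof will just be a short bookkeeping paragraph.

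The only genuinely new claim is the last assertion, namely $\tr_{\alpha}(b_A) = \tr(b_A)$ (and the analogue for $b_{AB}$) whenever $\tr_{\alpha}(\id_X) \neq 0$. The plan is to observe that $X$ is itself (trivially) a Frobenius subalgebra of $X$ via $f = \id_X$, so that $b_X = \id_X$. Applying the second formula of Theorem \ref{thm:landau} with $g_1 = b_A$ and $g_2 = \id_X$ gives
\[
\mu_X \, \tr_{\alpha}(b_A * \id_X) = \tr_{\alpha}(b_A)\, \tr_{\alpha}(\id_X).
\]
On the other hand, Remark \ref{rk:ConvId} (a restatement of Lemma \ref{lem:contr} applied to $\alpha = b_A$) yields $\mu_X(b_A * \id_X) = \tr(b_A)\, \id_X$, so by $\mathbbm{k}$-linearity of $\tr_{\alpha}$,
\[
\mu_X \, \tr_{\alpha}(b_A * \id_X) = \tr(b_A)\, \tr_{\alpha}(\id_X).
\]
Comparing and dividing by the nonzero scalar $\tr_{\alpha}(\id_X)$ gives $\tr_{\alpha}(b_A) = \tr(b_A)$; the same argument with $B$ in place of $A$ handles $b_B$.

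Finally, for $\tr_{\alpha}(b_{AB}) = \tr(b_{AB})$, I plan to use the explicit formula already proven earlier in the corollary, namely $\tr_{\alpha}(b_{AB}) = \tr_{\alpha}(b_A)\tr_{\alpha}(b_B)/\tr(b_A \circ b_B)$, together with its $\alpha = \id_X$ version $\tr(b_{AB}) = \tr(b_A)\tr(b_B)/\tr(b_A \circ b_B)$; the equality of numerators—established in the previous step—forces the equality of the two expressions. I do not foresee a serious obstacle: the substantive analytic and diagrammatic work (the exchange relation, the separability identity, and the key calculation $\mu_X \tr_{\alpha}(g_1 * g_2) = \tr_{\alpha}(g_1)\tr_{\alpha}(g_2)$) has already been carried out in Theorem \ref{thm:landau}, and the only care needed here is to verify that the inclusion of a Frobenius subalgebra indeed satisfies the slightly stronger hypotheses ($f^{**} = f$ and $f$ a unital algebra morphism) required by that theorem, which is exactly the content of Definition \ref{def:sub}.
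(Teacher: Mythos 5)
Your proposal is correct and follows essentially the same route as the paper: everything up to the last claim is a direct specialization of Theorem \ref{thm:landau} via Definition \ref{def:sub}, and the final identity $\tr_{\alpha}(b_A)=\tr(b_A)$ is obtained by pairing $b_A$ with $b_X=\id_X$ and using Remark \ref{rk:ConvId}. The paper packages this step as the observation that $b_{AX}=\id_X$ combined with the idempotent trace formula, whereas you use the raw identity $\mu_X\tr_{\alpha}(g_1*g_2)=\tr_{\alpha}(g_1)\tr_{\alpha}(g_2)$ plus linearity of $\tr_{\alpha}$; these are trivially equivalent.
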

\begin{proof}
The result follows immediately from Theorem~\ref{thm:landau}, except for the final claim, which we prove now.

Define $b'_{AB} := \mu_X (b_A * b_B)$. Then $\tr_{\alpha}(b'_{AB}) = \mu_X \tr_{\alpha}(b_A * b_B) = \tr_{\alpha}(b_A)\tr_{\alpha}(b_B)$. In particular, since $b_X = \id_X$, we have $\tr_{\alpha}(b'_{AX}) = \tr_{\alpha}(b_A)\tr_{\alpha}(\id_X)$. On the other hand, applying Remark~\ref{rk:ConvId} yields $b'_{AX} = \mu_X (b_A * \id_X) = \tr(b_A) \id_X$, which implies $\tr_{\alpha}(b'_{AX}) = \tr(b_A)\tr_{\alpha}(\id_X)$. Given that $\tr_{\alpha}(\id_X) \neq 0$, it follows that $\tr_{\alpha}(b_A) = \tr(b_A)$. A similar argument shows that $\tr_{\alpha}(b_B) = \tr(b_B)$.
Finally, we obtain the desired equality:
\[
\tr_{\alpha}(b_{AB}) = \frac{\tr_{\alpha}(b_{A}) \tr_{\alpha}(b_{B})}{\tr(b_{A} \circ b_{B})} = \frac{\tr(b_{A}) \tr(b_{B})}{\tr(b_{A} \circ b_{B})} = \tr(b_{AB}). \qedhere
\]
\end{proof}

\begin{remark} \label{rk:AB}
The notation $ b_{AB} $ is inspired by \cite[Theorem 3.10]{GJ06}. However, it is used here solely as a notation; there is no requirement to show that $ b_{AB} $ corresponds to the idempotent associated with the image of $ m \circ (i_A \otimes i_B) $.
\end{remark}

\section{Rigidity invariance} \label{sec:RigidInvariant}

The notion of a Frobenius subalgebra (Definition~\ref{def:sub}) is not stable under intersection (see Example~\ref{ex:Ben02}).
To remedy this, we introduce the notion of \emph{rigidity invariance}, first in the category $\VVec$ (Definition \ref{def:RigidInvVec}). Although this notion depends on a choice of basis, Lemma~\ref{lem:FixedBasis} guarantees that every Frobenius subalgebra is rigid invariant for at least one such basis. We then extend this notion in the semisimple case (Definition \ref{def:RigidInvariance}), under which the failure mentioned above is resolved by ensuring that ambient selfduality is preserved under intersection (see Lemma~\ref{lem:RigidInvInterSum}). 
\begin{notation} \label{not:B-sub}
A $\mathcal{B}$-rigid invariant Frobenius subalgebra will be denoted a Frobenius $\mathcal{B}$-subalgebra.
\end{notation}
The choice of a basis~$\mathcal{B}$ can be viewed as a \emph{perspective} under which the Frobenius subalgebra poset \emph{collapses} to the Frobenius $\mathcal{B}$-subalgebra sublattice (see \S \ref{sec:lattice}). Furthermore, every Frobenius subalgebra belongs to one of these sublattices. 
Within a unitary tensor category, every unitary Frobenius subalgebra is rigid invariant with respect to \emph{every} choice of basis (see Proposition~\ref{prop:UnitFrobSub}). In particular, the notion of Frobenius $\mathcal{B}$-subalgebra is a proper generalization of the notion of unitary Frobenius subalgebra. This permits to fully recover Watatani's theorem in the unitary setting and suggests two natural directions for extending it beyond the unitary case: one considers Frobenius subalgebras that are rigid-invariant with respect to \emph{every} basis, and the other focuses on those invariant under a \emph{fixed} basis. The first approach is inherently basis-independent, while the second offers a broader generalization.

Finally, note that rigidity invariance appears to be unrelated to the Nakayama automorphism: in Example~\ref{ex:Ben02}, the bilinear form~$\kappa$ is symmetric.

\begin{remark} \label{rk:anisotropic}
From now on, we require a field equipped with an involution $\sigma$ such that the standard $\sigma$-Hermitian form is \emph{anisotropic}. The real field with the identity involution satisfies this condition, as does the complex field with complex conjugation. Certain real-like fields and some degree-two field extensions also meet this requirement. However, for simplicity, we will henceforth assume that we are working over the complex field.
\end{remark}

\subsection{Vector spaces} \label{sub:RigidVec}

In \(\VVec\), a Frobenius algebra \(U\) is a unital algebra with a nondegenerate, associative bilinear form \(\kappa: U \times U \to \mathbb{C}\). A Frobenius subalgebra \(V \subseteq U\) is unital subalgebra where \(\kappa|_{V \times V}\) remains nondegenerate.

More generally, let \(U\) be a finite-dimensional \(\mathbb{C}\)-vector space, and let \(\kappa : U \times U \to \mathbb{C}\) be a nondegenerate bilinear form, meaning that \(U^{\perp_\kappa} = \{0\}\). For any subspace \(V \subseteq U\), define
\[
V^{\perp_\kappa} := \{u \in U \mid \kappa(u, v) = 0 \text{ for all } v \in V\}.
\]

Then \(\kappa\) is completely determined by a conjugate-linear automorphism \(M : U \to U\), defined with respect to a chosen basis \((e_i)\) of \(U\), by the relation
\[
\langle e_i, M e_j \rangle = \kappa(e_i, e_j),
\]
where \(\langle \cdot, \cdot \rangle\) denotes the Hermitian form, conjugate-linear in the \emph{second} argument, given by \(\langle e_i, e_j \rangle = \delta_{i,j}\). 

By an abuse of notation, the conjugate-linear automorphism $M$ will be represented as a matrix in several subsequent examples, leaving its conjugate-linearity implicit.

\begin{definition} \label{def:RigidInvVec}
A subspace \( V \subseteq U \) is called:
\begin{itemize}
  \item \emph{nondegenerate} if the restriction \(\kappa|_{V \times V}\) is nondegenerate;
  \item \((e_i)\)-\emph{rigid invariant} (or \emph{rigid invariant with respect to \((e_i)\)}) if \( M(V) = V \);
  \item \emph{rigid invariant} if it is \((e_i)\)-rigid invariant for \emph{every} basis~\((e_i)\).
\end{itemize}
\end{definition}
  

\begin{lemma} \label{lem:two}
Let $(e_i)$ and $(f_i)$ be two bases of a complex vector space $V$. Let $\kappa$ be a bilinear form on $V$. Let $\langle \cdot, \cdot \rangle_e$ and $\langle \cdot, \cdot \rangle_f$ be Hermitian forms on $V$ for which the bases $(e_i)$ and $(f_i)$ are orthonormal, respectively. 
Define conjugate-linear transformations $M$ and $N$ of $V$ by
$
\kappa(e_i, e_j) = \langle e_i, M e_j \rangle_e$, and $\kappa(f_i, f_j) = \langle f_i, N f_j \rangle_f.
$
Let $P$ be the change-of-basis matrix defined by $f_i = P e_i$. Then
\(
N = P^* P M,
\)
where $P^*$ denotes the Hermitian adjoint of $P$ with respect to $\langle \cdot, \cdot \rangle_f$.
\end{lemma}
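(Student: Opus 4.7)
The plan is to reduce the identity to a nondegeneracy argument by first extending the defining relations of $M$ and $N$ from basis pairs to all of $V$, and then expressing $\langle\cdot,\cdot\rangle_e$ in terms of $\langle\cdot,\cdot\rangle_f$ via $P$. First I would observe that both $\kappa$ and $\langle\cdot, M\cdot\rangle_e$ are $\mathbb{C}$-bilinear on $V$: the second is linear in its first argument by the Hermitian form, and linear in its second argument because a conjugate-linear form composed with the conjugate-linear map $M$ is linear. Since these two bilinear forms agree on the basis $(e_i)$, they coincide on all of $V \times V$, and likewise for $N$ with respect to $(f_i)$. Thus for all $x,y \in V$,
\[
\langle x, Ny\rangle_f \;=\; \kappa(x,y) \;=\; \langle x, My\rangle_e.
\]

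The key observation would then be that $P$ is a ``unitary change of basis'' from $(e_i)$ to $(f_i)$, in the sense that it intertwines the two Hermitian forms:
\[
\langle u, v\rangle_e \;=\; \langle Pu, Pv\rangle_f \qquad \text{for all } u,v \in V.
\]
Indeed, if $u = \sum u_i e_i$ and $v = \sum v_i e_i$, then $Pu = \sum u_i f_i$ and $Pv = \sum v_i f_i$, so both sides evaluate to $\sum u_i \overline{v_i}$ using the orthonormality of $(e_i)$ and $(f_i)$ for their respective forms. By the defining property of the adjoint $P^*$ with respect to $\langle\cdot,\cdot\rangle_f$, this rewrites as $\langle u, v\rangle_e = \langle u, P^*Pv\rangle_f$.

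Combining the two ingredients with the choice $v = My$ gives
\[
\langle x, Ny\rangle_f \;=\; \langle x, My\rangle_e \;=\; \langle x, P^*PMy\rangle_f
\]
for all $x,y \in V$, and the nondegeneracy of $\langle\cdot,\cdot\rangle_f$ forces $Ny = P^*PMy$, yielding $N = P^*PM$ as maps. The main subtlety to be careful about is the extension step in the first paragraph: one must notice that $\langle\cdot, M\cdot\rangle_e$ is genuinely $\mathbb{C}$-bilinear (rather than sesquilinear) precisely because the two conjugate-linearities cancel; once this is in place the rest is mechanical, and there is no essential obstacle.
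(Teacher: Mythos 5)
Your proof is correct and follows essentially the same route as the paper's: establish $\langle u,v\rangle_e = \langle Pu,Pv\rangle_f$, combine it with $\langle u,Mv\rangle_e = \kappa(u,v) = \langle u,Nv\rangle_f$, and conclude by nondegeneracy of $\langle\cdot,\cdot\rangle_f$. The only difference is that you explicitly justify extending the defining identities from basis pairs to all of $V$ via the bilinearity of $\langle\cdot,M\cdot\rangle_e$, a step the paper leaves implicit.
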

\begin{proof}
Observe that for all $u, v \in V$, we have
\(
\langle u, v \rangle_e = \langle Pu, Pv \rangle_f,
\)
and
\(
\langle u, Mv \rangle_e = \kappa(u, v) = \langle u, Nv \rangle_f.
\)
We obtain
\[
\langle u, Nv \rangle_f = \langle u, Mv \rangle_e = \langle Pu, P Mv \rangle_f = \langle u, P^* P Mv \rangle_f.
\]
Since this holds for all $u, v \in V$ and the Hermitian form $\langle \cdot, \cdot \rangle_f$ is nondegenerate, it follows that
\(
N = P^* P M.
\)
\end{proof}


\begin{remark} \label{rk:UnitaryChange}
Note that if a subspace $V$ is $(e_i)$-rigid invariant, then using the notation of Lemma~\ref{lem:two}, we have $MV = V$. Consequently,
\(
NV = P^* P MV = P^* P V.
\)
In particular, if $P$ is unitary, then $V$ is $(e_i)$-rigid invariant if and only if it is $(f_i)$-rigid invariant. 
\end{remark}

Finally, observe that the bilinear form $\kappa$ is symmetric if and only if the matrix $M$ is symmetric, i.e., $M = M^T$.
The following lemma is classical; see, for instance, \cite[Theorem~3.12]{Con12}.

\begin{lemma} \label{lem:EquivND}
A subspace \(V \subseteq U\) is nondegenerate if and only if
\(
U = V \oplus V^{\perp_\kappa}.
\)
\end{lemma}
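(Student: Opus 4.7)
The plan is to prove the equivalence by combining two standard ingredients: the triviality of the intersection $V \cap V^{\perp_\kappa}$ (which encodes nondegeneracy of $\kappa|_{V \times V}$) and a dimension count for $V^{\perp_\kappa}$ (which uses nondegeneracy of $\kappa$ on all of $U$). Since $\kappa$ is not assumed symmetric, I will be careful about left versus right radicals, but in finite dimensions these have equal dimensions, so vanishing of one is equivalent to vanishing of the other.

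For the forward direction, assume $V$ is nondegenerate. First, I would show that $V \cap V^{\perp_\kappa} = 0$: indeed, any $v$ in this intersection satisfies $\kappa(v,w) = 0$ for all $w \in V$, hence $v = 0$ by nondegeneracy of $\kappa|_{V \times V}$. Next, I compute $\dim V^{\perp_\kappa}$ using the linear map
\[
\Phi: U \longrightarrow V^*, \qquad u \longmapsto \kappa(u, \cdot)\big|_V.
\]
Its kernel is exactly $V^{\perp_\kappa}$. Surjectivity follows from the fact that $\kappa$ is nondegenerate on $U$: the map $U \to U^*$, $u \mapsto \kappa(u,\cdot)$, is injective, hence bijective by equal finite dimension, and composing with the surjective restriction $U^* \twoheadrightarrow V^*$ yields a surjection. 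Therefore $\dim V^{\perp_\kappa} = \dim U - \dim V$, and combined with $V \cap V^{\perp_\kappa} = 0$ this gives $U = V \oplus V^{\perp_\kappa}$.

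For the backward direction, assume $U = V \oplus V^{\perp_\kappa}$, and take any $v \in V$ with $\kappa(v,w) = 0$ for all $w \in V$. Then $v \in V^{\perp_\kappa}$, and the direct sum forces $v \in V \cap V^{\perp_\kappa} = 0$. This shows the left radical of $\kappa|_{V \times V}$ is trivial; by the equality of left and right radical dimensions in finite dimensions, $\kappa|_{V \times V}$ is nondegenerate, i.e., $V$ is nondegenerate.

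There is no real obstacle here; the only point requiring care is the surjectivity of $\Phi$, which relies essentially on the global nondegeneracy of $\kappa$, and the mild verification that left-nondegeneracy and right-nondegeneracy of $\kappa|_{V \times V}$ coincide in finite dimensions.
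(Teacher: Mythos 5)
Your proof is correct. The paper itself does not prove this lemma; it simply cites it as classical (Conrad's notes on bilinear forms, Theorem 3.12), and your argument is precisely the standard one underlying that reference: trivial intersection from nondegeneracy of the restriction, plus the dimension count $\dim V^{\perp_\kappa} = \dim U - \dim V$ obtained from surjectivity of $u \mapsto \kappa(u,\cdot)|_V$, which in turn uses global nondegeneracy of $\kappa$. One minor observation: under the paper's conventions, nondegeneracy is defined throughout via vanishing of the \emph{left} radical (e.g.\ $U^{\perp_\kappa} = \{0\}$, and in the proof of Lemma~\ref{lem:FixedBasis} the condition $V^{\perp_\kappa} \cap V = \{0\}$ is taken as the definition of nondegeneracy of $\kappa|_{V\times V}$), so your final step invoking the equality of left and right radical dimensions, while correct and a reasonable precaution for a non-symmetric form, is not strictly required to match the paper's usage.
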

Recall that a direct sum \(A = B \oplus C\) just means that \(B \cap C = \{0\}\) and \(B + C = A\).

\begin{lemma} \label{lem:FixedBasis}
A subspace \(V \subseteq U\) is nondegenerate if and only if it is \((e_i)\)-rigid invariant for some basis \((e_i)\) of \(U\).
\end{lemma}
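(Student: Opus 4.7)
The plan is to use the identity relating the two orthogonal complements as the central bridge. Given a basis $(e_i)$ of $U$ and the induced Hermitian form $\langle\cdot,\cdot\rangle$ for which $(e_i)$ is orthonormal, let $V^{\perp}$ denote the Hermitian orthogonal complement. From the defining relation $\kappa(u,v)=\langle u, Mv\rangle$ one computes directly that
\[
V^{\perp_\kappa} = \{u \in U \mid \langle u, Mv\rangle = 0 \text{ for all } v \in V\} = (M(V))^{\perp}.
\]
Both directions of the lemma will flow from this identity together with the fact that the Hermitian form always yields the splitting $U = W \oplus W^{\perp}$ for any subspace $W$.

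For the forward direction, assume $V$ is $(e_i)$-rigid invariant, i.e.\ $M(V)=V$. Then the identity above gives $V^{\perp_\kappa} = V^{\perp}$, and since $U = V \oplus V^{\perp}$, we obtain $U = V \oplus V^{\perp_\kappa}$. By Lemma~\ref{lem:EquivND}, $V$ is nondegenerate.

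For the reverse direction, assume $V$ is nondegenerate. By Lemma~\ref{lem:EquivND}, $U = V \oplus V^{\perp_\kappa}$. Pick any basis $(v_j)$ of $V$ and any basis $(w_k)$ of $V^{\perp_\kappa}$, and take $(e_i)$ to be their concatenation; this is a basis of $U$. The Hermitian form it induces makes $V$ and $V^{\perp_\kappa}$ orthogonal, so the Hermitian orthogonal complement of $V$ is precisely $V^{\perp}=V^{\perp_\kappa}$. Combining this with the identity $V^{\perp_\kappa}=(M(V))^{\perp}$ yields $V^{\perp}=(M(V))^{\perp}$. Since taking the Hermitian orthogonal complement is an involution on subspaces (and $M$ is an automorphism, so $\dim M(V)=\dim V$), we conclude $M(V)=V$, i.e.\ $V$ is $(e_i)$-rigid invariant.

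No step looks technically difficult: once the identification $V^{\perp_\kappa}=(M(V))^{\perp}$ is in hand, both implications are short. The only mild care needed is in the reverse direction, where one must \emph{choose} the basis adapted to the decomposition $U = V \oplus V^{\perp_\kappa}$ so that the Hermitian orthogonal complement of $V$ coincides with $V^{\perp_\kappa}$; this is the one place where the freedom of basis is essential, and it is where the ``for some basis'' in the statement is decisive.
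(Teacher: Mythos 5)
Your proof is correct and takes essentially the same route as the paper: both arguments hinge on choosing a basis adapted to the splitting $U = V \oplus V^{\perp_\kappa}$ from Lemma~\ref{lem:EquivND} in one direction, and on the anisotropy (positive-definiteness) of the Hermitian form in the other. Your packaging of the computation around the single identity $V^{\perp_\kappa} = (M(V))^{\perp}$ is a slightly tidier way of organizing what the paper does by directly checking $\langle V^{\perp_\kappa}, MV\rangle = 0$ and computing the radical of $\kappa|_{V\times V}$, but the mathematical content is the same.
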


\begin{proof}
Assume first that \(V \subseteq U\) is nondegenerate. Then
\[
V^{\perp_{\kappa|_{V \times V}}} = V^{\perp_\kappa} \cap V = \{u \in V \mid \kappa(u, v) = 0 \text{ for all } v \in V\} = \{0\}.
\]
By Lemma~\ref{lem:EquivND}, we have \(U = V \oplus V^{\perp_\kappa}\), so we can choose a basis \((f_i)\) of \(U\) such that the first vectors span \(V\) and the remaining ones span \(V^{\perp_\kappa}\). The matrix \(M\) defined with respect to this basis satisfies \(M(V) = V\), since
\[
\langle V^{\perp_\kappa}, M V \rangle = \kappa(V^{\perp_\kappa}, V) = \{0\}.
\]
Hence, \(V\) is \((f_i)\)-rigid invariant.

Conversely, suppose that \(V\) is \((e_i)\)-rigid invariant for some basis \((e_i)\) of \(U\), meaning \(M(V) = V\). Thus,
\[
V^{\perp_{\kappa|_{V \times V}}} = \{u \in V \mid \langle u, Mv \rangle = 0 \text{ for all } v \in V\} = \{u \in V \mid \langle u, v \rangle = 0 \text{ for all } v \in V\} = \{0\}.
\]
The final equality holds because the Hermitian form is anisotropic. Therefore, \(V\) is nondegenerate.
\end{proof}

It is important to emphasize that a nondegenerate subspace need not be rigid invariant—that is, \((e_i)\)-\emph{rigid invariant} for \emph{every} basis~\((e_i)\). For instance, consider:
\[
U = \mathbb{C}^2, \quad M = \begin{pmatrix} 1 & 1 \\ 0 & 1 \end{pmatrix}, \quad V = \mathbb{C}v \quad \text{with} \quad v = \begin{pmatrix} 1 \\ 1 \end{pmatrix}.
\]
The matrix \(M\) is invertible, so the bilinear form \(\kappa(u, v) = \langle u, Mv \rangle\) is nondegenerate. Its restriction to \(V\) is also nondegenerate, since
\[
\kappa(v, v) = \langle v, Mv \rangle = \left\langle \begin{pmatrix} 1 \\ 1 \end{pmatrix}, \begin{pmatrix} 2 \\ 1 \end{pmatrix} \right\rangle = 3 \neq 0.
\]
However, \(Mv = \begin{pmatrix} 2 \\ 1 \end{pmatrix} \notin V\), so \(V\) is not rigid invariant with respect to the standard basis \(\left\{ \begin{pmatrix} 1 \\ 0 \end{pmatrix}, \begin{pmatrix} 0 \\ 1 \end{pmatrix} \right\}\). On the other hand, \(V\) is rigid invariant with respect to the basis \(\left\{ \begin{pmatrix} 1 \\ 1 \end{pmatrix}, \begin{pmatrix} 1 \\ -2 \end{pmatrix} \right\}\).

\begin{lemma} \label{lem:InterRigidVec}
If the subspaces $V, W \subseteq U$ are $(e_i)$-rigid invariant, then their intersection $V \cap W$ and their sum $V + W$ are also $(e_i)$-rigid invariant.
\end{lemma}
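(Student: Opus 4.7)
The plan is to unwind the definition of $(e_i)$-rigid invariance, namely that the conjugate-linear automorphism $M: U \to U$ associated with $\kappa$ and the basis $(e_i)$ satisfies $M(V_j) = V_j$ for $j = 1, 2$, and then show that the same holds for $V_1 \cap V_2$ by establishing both inclusions.

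For the forward inclusion $M(V_1 \cap V_2) \subseteq V_1 \cap V_2$, I would proceed elementwise: if $v \in V_1 \cap V_2$, then $v \in V_j$ for each $j$, so $M(v) \in M(V_j) = V_j$ by the rigid invariance hypothesis; hence $M(v) \in V_1 \cap V_2$. Note that $M(V_1 \cap V_2)$ is indeed a $\mathbb{C}$-subspace, since if $w = M(v)$ and $\lambda \in \mathbb{C}$, then $\lambda w = M(\overline{\lambda} v)$, and $\overline{\lambda} v \in V_1 \cap V_2$.

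For the reverse inclusion $V_1 \cap V_2 \subseteq M(V_1 \cap V_2)$, I would use that $M$ is an automorphism of $U$, hence invertible. Applying $M^{-1}$ to the identity $M(V_j) = V_j$ yields $M^{-1}(V_j) = V_j$ for each $j$. Repeating the elementwise argument with $M^{-1}$ in place of $M$ gives $M^{-1}(V_1 \cap V_2) \subseteq V_1 \cap V_2$, which is equivalent to $V_1 \cap V_2 \subseteq M(V_1 \cap V_2)$.

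Combining both inclusions yields $M(V_1 \cap V_2) = V_1 \cap V_2$, i.e., $V_1 \cap V_2$ is $(e_i)$-rigid invariant. There is no real obstacle here; the only subtlety to flag is that $M$ is conjugate-linear rather than linear, so one should check (as above) that images of subspaces under $M$ and $M^{-1}$ are again $\mathbb{C}$-subspaces, but this is routine.
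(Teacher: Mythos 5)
Your proof is correct and follows essentially the same route as the paper: the forward inclusion via $M(V_1 \cap V_2) \subseteq M(V_1) \cap M(V_2) = V_1 \cap V_2$, and equality from the fact that $M$ is an automorphism (the paper invokes this directly, you make it explicit by running the argument with $M^{-1}$). The remark about conjugate-linearity still sending subspaces to subspaces is a reasonable point to flag, though the paper treats it as implicit.
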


\begin{proof}
Let $M$ be the matrix representing the bilinear form $\kappa$ with respect to the basis $(e_i)$. By hypothesis, $M(V) = V$ and $M(W) = W$. It follows that 
\[
M(V \cap W) \subseteq M(V) \cap M(W) = V \cap W.
\]
Because $M$ is an automorphism, the containment is in fact an equality, ensuring that $V \cap W$ is $(e_i)$-rigid invariant. A similar argument applies to the sum $V + W$.
\end{proof}

The following example, due to Dave Benson~\cite{BenMO2}, exhibits two nondegenerate subspaces whose intersection is degenerate. In particular, by Lemmas~\ref{lem:FixedBasis} and~\ref{lem:InterRigidVec}, these two subspaces cannot both be rigid invariant with respect to the same basis. Moreover, although both are Frobenius subalgebras, their intersection is not a Frobenius subalgebra, since it is degenerate.

\begin{example} \label{ex:Ben02} 
Consider the commutative \( \mathbb{C} \)-algebra
\(
U = \mathbb{C}[x, y, z]/(x^2, y^2, xz, yz, xy - z^2).
\)
This is a five-dimensional \( \mathbb{C} \)-algebra with basis \( \mathcal{B} = \{1, x, y, z, u\} \), where \( u = xy = z^2 \). It is a Frobenius algebra with respect to the nondegenerate associative bilinear form \( \kappa(a, b) = \tau_u(ab) \), where \( \tau_u(\cdot) \) denotes the coefficient of \( u \).

Let \( V \subseteq U \) be the unital subalgebra generated by \( x \) and \( y \), which has basis \( \{1, x, y, u\} \). Similarly, let \( W \subseteq U \) be the unital subalgebra generated by \( x \) and \( y + z \), which has basis \( \{1, x, y + z, u\} \). The intersection \( V \cap W \) has basis \( \{1, x, u\} \), with the relations \( x^2 = u^2 = xu = 0 \).
Although \( V \) and \( W \) are nondegenerate, their intersection \( V \cap W \) is degenerate. In fact, \( V \) and \( W \) are selfdual, but only \( V \) is \( \mathcal{B} \)-rigid invariant. The corresponding (conjugate-linear) automorphism \( M \) of \( U \), defined by
\[
M(1) = u, \quad M(x) = y, \quad M(y) = x, \quad M(z) = z, \quad M(u) = 1,
\]
satisfies \( MV = V \), while \( MW = \langle y, x + z \rangle \neq \langle x, y + z \rangle = W \).
\end{example}

%

The following example demonstrates that even if, for every basis \(\mathcal{B}\), the number of Frobenius \(\mathcal{B}\)-subalgebras (Notation \ref{not:B-sub}) is finite, the total number of Frobenius subalgebras can still be infinite.

\begin{example} \label{ex:NonFinite}
Let \( A = \mathbb{C}[x]/(x^4) \) be endowed with the Frobenius structure defined by the nondegenerate associative bilinear form
\(
\kappa(a,b) = \tau_{x^3}(ab),
\)
where \(\tau_{x^3}\) extracts the coefficient of \(x^3\).
The proper nontrivial Frobenius subalgebras of \(A\) are parametrized by \(\lambda \in \mathbb{C}\) and explicitly given by
\(
B_\lambda = \langle 1, \lambda x^2 + x^3 \rangle.
\)
With respect to the standard basis \(\mathcal{B}=\{1, x, x^2, x^3\}\), then $B_\lambda$ is \(\mathcal{B}\)-rigid invariant if and only if $\lambda=0$. For any other basis, this condition imposes a polynomial constraint on \(\lambda\) of degree at least one. Hence, only finitely many values of \(\lambda\) are possible.
%
\end{example}

%

Finally, an example of nondegenerate subspaces with nondegenerate intersection and degenerate sum.


\begin{example} \label{ex:NonDegSum}
Let $C = \mathbb{C}^4$ with basis $\{e_1, e_2, e_3, e_4\}$, and let $\kappa$ be the bilinear form defined by the matrix
\[
M = \begin{pmatrix}
1 & 0 & 0 & 0 \\
0 & 1 & 1 & 1 \\
0 & 1 & 1 & 0 \\
0 & 1 & 0 & 0
\end{pmatrix}
\]
Then $\kappa$ is nondegenerate since $\det M = -1 \neq 0$. Define the 2-dimensional subspaces $A = \langle e_1, e_2 \rangle$ and $B = \langle e_1, e_3 \rangle$.
\begin{itemize}
    \item $A$ and $B$ are nondegenerate since the restriction of $\kappa$ to both $A$ and $B$ has the identity matrix
    \(
    \begin{pmatrix}
    1 & 0 \\
    0 & 1
    \end{pmatrix}
    \),
    \item $A \cap B = \langle e_1 \rangle$ is nondegenerate since $\kappa(e_1, e_1) = 1$,
    \item $A + B = \langle e_1, e_2, e_3 \rangle$ is degenerate since the matrix below for the restriction of $\kappa$ to $A + B$ has determinant $0$.
    \[
    \begin{pmatrix}
    1 & 0 & 0 \\
    0 & 1 & 1 \\
    0 & 1 & 1
    \end{pmatrix}.
    \]
\end{itemize}
\noindent
Note that $A$, $B$, and $A \cap B$ are Frobenius algebras, whereas $A + B$ is not.
\end{example}


\subsection{Semisimple tensor categories} \label{sub:RigidSemisimple}

In a tensor category~$\mathcal{C}$, the notion of a basis~$\mathcal{B}$ for a vector space~$U$ naturally generalizes to a family of bases $\mathcal{B} = (\mathcal{B}_X)_{X \in \mathcal{O}(\mathcal{C})}$, where each~$\mathcal{B}_X$ is a basis of the multiplicity space~$\Hom_{\mathcal{C}}(X, C)$, for a fixed semisimple object~$C$ and each simple object~$X \in \mathcal{O}(\mathcal{C})$.

\begin{notation} \label{not:basis} 
We refer to such a family~$\mathcal{B}$ as a \emph{basis} of the semisimple object~$C$. In certain contexts, it may be more appropriate to consider the spaces $\Hom_{\mathcal{C}}(C, X)$ rather than $\Hom_{\mathcal{C}}(X, C)$. In such cases, we continue to write $\mathcal{B}$ when no confusion arises; otherwise, $\mathcal{B}'$.
\end{notation}


Let $C$ be an object in a $\mathbb{C}$-linear abelian rigid monoidal category $\mathcal{C}$, and let $(C^*, \ev_C, \coev_C)$ denote a (left) dual of $C$. Let $X \in \mathcal{O}(\mathcal{C})$ be a simple object. We define a bilinear form
$$
\kappa_{C,X} : \Hom_{\mathcal{C}}(X^*, C^*) \times \Hom_{\mathcal{C}}(X, C) \to \mathbb{C}
$$
by the relation
$$
\ev_C \circ (f \otimes g) = \kappa_{C,X}(f, g) \ev_X.
$$
Note that $\ev_C \circ (f \otimes g) \in \Hom_{\mathcal{C}}(X^* \otimes X, \one) = \mathbb{C}  \ev_X$, since
$$
\Hom_{\mathcal{C}}(X^* \otimes X, \one) \cong \End_{\mathcal C}(X) \cong \mathbb{C},
$$
by the natural adjunction isomorphism (see \cite[Proposition 2.10.8]{EGNO15}) and Schur's lemma, since $X$ is simple and $\mathbb{C}$ is algebraically closed.

\begin{lemma} \label{lem:BilToEv}
If $C$ is semisimple of finite length, then the collections $(\kappa_{C,X})_{X \in \mathcal{O}(\mathcal{C})}$ and $(\ev_X)_{X \in \mathcal{O}(\mathcal{C})}$ uniquely determine $\ev_C$.
\end{lemma}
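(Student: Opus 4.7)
The plan is to use the semisimplicity and finite length of $C$ to reduce the determination of $\ev_C$ to its ``matrix coefficients'' on simple components, which by Schur's lemma are precisely captured by the bilinear forms $\kappa_{C,X}$ and the $\ev_X$.

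Concretely, I would first fix a decomposition $C \cong \bigoplus_{i=1}^n X_i$ with each $X_i \in \mathcal{O}(\mathcal{C})$, together with inclusions $\iota_i \in \Hom_{\mathcal{C}}(X_i,C)$ and projections $\pi_i \in \Hom_{\mathcal{C}}(C,X_i)$ satisfying $\pi_i \circ \iota_j = \delta_{ij}\id_{X_i}$ and $\sum_i \iota_i \circ \pi_i = \id_C$. Dualizing produces the corresponding decomposition $C^* = \bigoplus_i X_i^*$ with $\id_{C^*} = \sum_j \pi_j^* \circ \iota_j^*$. Using these two resolutions of the identity, any candidate $\ev_C$ admits the expansion
\begin{equation*}
\ev_C = \ev_C \circ (\id_{C^*} \otimes \id_C) = \sum_{i,j} \ev_C \circ (\pi_j^* \otimes \iota_i) \circ (\iota_j^* \otimes \pi_i).
\end{equation*}

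The second step is the uniqueness argument. Suppose $\ev_C$ and $\ev_C'$ are two evaluation morphisms for $C$ that induce, for every simple $X$, the same bilinear form $\kappa_{C,X}$ (with the $\ev_X$ fixed in advance). By the defining relation $\ev_C \circ (f \otimes g) = \kappa_{C,X}(f,g)\,\ev_X$, we obtain $(\ev_C - \ev_C') \circ (f \otimes g) = 0$ for all $f \in \Hom_{\mathcal{C}}(X^*,C^*)$, $g \in \Hom_{\mathcal{C}}(X,C)$ and all simple $X$. Specializing to $(f,g) = (\pi_j^*, \iota_i)$ and substituting into the expansion above yields $\ev_C - \ev_C' = 0$, proving that $\ev_C$ is uniquely determined by the collections $(\kappa_{C,X})$ and $(\ev_X)$.

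The main subtlety to check carefully is that $\Hom_{\mathcal{C}}(X_j^* \otimes X_i, \one)$ is one-dimensional (spanned by a chosen $\ev_{X_i}$) when $X_i \cong X_j$ and vanishes otherwise; this is where Schur's lemma, the algebraic closure of $\mathbb{C}$, and the linear-simplicity of $\one$ enter. Finite length is needed to ensure the sum over $(i,j)$ is finite, so the expansion of $\ev_C$ actually makes sense in the abelian category. Once these points are in place, the argument is essentially bookkeeping, so I expect no serious obstacle beyond a clean verification that the expansion is independent of the chosen decomposition (which follows because two choices differ by an invertible change of basis on each multiplicity space, and $\kappa_{C,X}$ transforms as a bilinear form under such a change).
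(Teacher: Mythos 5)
Your proposal is correct and follows essentially the same route as the paper: both expand $\ev_C = \ev_C \circ (\id_{C^*} \otimes \id_C)$ using a resolution of the identity $\id_C = \sum_i \iota_i \circ \pi_i$ coming from semisimplicity, dualize it to resolve $\id_{C^*}$, and then apply the defining relation $\ev_C \circ (f \otimes g) = \kappa_{C,X}(f,g)\,\ev_X$ term by term (the paper merely groups the summands by isomorphism class of simple object with explicit multiplicity indices $f_{X,i}, g_{X,i}$, whereas you index the summands individually). The resulting explicit formula already exhibits $\ev_C$ as determined by the $\kappa_{C,X}$ and $\ev_X$, so your separate uniqueness step is a harmless repackaging of the same computation.
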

\begin{proof}
Since $C$ is semisimple, there is a basis $(f_{X,i})$ of $\Hom_{\mathcal{C}}(X, C)$ and a basis $(g_{X,i})$ of $\Hom_{\mathcal{C}}(C, X)$ such that $$ g_{X,i} \circ f_{X,j} = \delta_{i,j} \id_X \text{ and }   \id_C = \sum_{X \in \mathcal{O}(\mathcal{C})} \sum_i f_{X,i} \circ g_{X,i}. $$
Thus, 
\begin{align*}
\ev_C =  \ev_C \circ (\id_{C^*} \otimes \id_C) &= \sum_{X,Y \in \mathcal{O}(\mathcal{C})} \sum_{i,j} \ev_C \circ (g_{X,i}^* \otimes f_{Y,j}) \circ (f_{X,i}^* \otimes g_{Y,j}) \\
&=    \sum_{X \in \mathcal{O}(\mathcal{C})} \sum_{i,j} \kappa_{C,X}(g_{X,i}^*, f_{X,j}) \ev_X \circ (f_{X,i}^* \otimes g_{X,j}). \qedhere
\end{align*}
\end{proof}
%
\begin{lemma} \label{lem:BilNonDeg}
If $C$ is semisimple of finite length, then each bilinear form $\kappa_{C,X}$ is nondegenerate.
\end{lemma}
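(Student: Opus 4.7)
The plan is to exhibit explicit mutually dual bases of $\Hom_{\mathcal{C}}(X^*, C^*)$ and $\Hom_{\mathcal{C}}(X, C)$, so that the matrix of $\kappa_{C,X}$ in these bases is the identity.

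Since $C$ is semisimple of finite length, it decomposes as a direct sum of its simple summands, yielding a split system: for the simple object $X$, there exist morphisms $i_{X,j}\colon X\to C$ and $p_{X,j}\colon C\to X$, for $j=1,\dots,n_X$, satisfying
$$
p_{X,j}\circ i_{X,k}=\delta_{j,k}\,\id_X,\qquad \sum_{Y\in\mathcal{O}(\mathcal{C})}\sum_j i_{Y,j}\circ p_{Y,j}=\id_C.
$$
By Schur's lemma, the family $(i_{X,j})_j$ is a basis of $\Hom_{\mathcal{C}}(X,C)$, and since the dual functor is an anti-equivalence on $\mathcal{C}$, the family $(p_{X,j}^{*})_j$ is a basis of $\Hom_{\mathcal{C}}(X^*,C^*)$; both spaces have dimension $n_X$.

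Next, I would compute $\kappa_{C,X}(p_{X,j}^{*},i_{X,k})$ using the standard rigid-category identity
$$
\ev_C\circ (p_{X,j}^{*}\otimes \id_C)=\ev_X\circ (\id_{X^*}\otimes p_{X,j}),
$$
which follows directly from the definition of $p_{X,j}^{*}$ as the transpose of $p_{X,j}$. Post-composing with $\id_{X^*}\otimes i_{X,k}$ and using $p_{X,j}\circ i_{X,k}=\delta_{j,k}\id_X$ gives
$$
\ev_C\circ (p_{X,j}^{*}\otimes i_{X,k})=\ev_X\circ (\id_{X^*}\otimes (p_{X,j}\circ i_{X,k}))=\delta_{j,k}\,\ev_X,
$$
so $\kappa_{C,X}(p_{X,j}^{*},i_{X,k})=\delta_{j,k}$.

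Thus the bases $(p_{X,j}^{*})_j$ and $(i_{X,k})_k$ are mutually dual with respect to $\kappa_{C,X}$, so its matrix is the identity and the form is nondegenerate. The proof is essentially a one-line calculation; the only care needed is in correctly invoking the rigidity identity relating $\ev_C$ and $\ev_X$, so no real obstacle is anticipated.
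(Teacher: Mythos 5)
Your proof is correct. The key identity $\ev_C\circ(p_{X,j}^{*}\otimes\id_C)=\ev_X\circ(\id_{X^*}\otimes p_{X,j})$ is the standard transpose relation for the morphism $p_{X,j}\colon C\to X$, the families $(i_{X,j})_j$ and $(p_{X,j})_j$ do form bases of $\Hom_{\mathcal{C}}(X,C)$ and $\Hom_{\mathcal{C}}(C,X)$ by semisimplicity and Schur's lemma, and dualization carries the latter to a basis of $\Hom_{\mathcal{C}}(X^*,C^*)$ since $(-)^*$ is fully faithful; so the Gram matrix being the identity does settle nondegeneracy. Your route differs from the paper's: the paper takes $f$ in the left radical, expands $\id_C$ through the same split system $g_{X,i}\circ f_{X,j}=\delta_{i,j}\id_X$ to show $\ev_C\circ(f\otimes\id_C)=0$, and then recovers $f=0$ from the zigzag relation — i.e.\ it proves injectivity of the induced map into the dual space rather than exhibiting dual bases. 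The two arguments use the same semisimple decomposition and essentially the same transpose/zigzag manipulation, but yours buys a little more for the same price: it gives nondegeneracy on both sides simultaneously (the paper's one-sided argument implicitly relies on the equality $\dim\Hom_{\mathcal{C}}(X^*,C^*)=\dim\Hom_{\mathcal{C}}(X,C)$ to conclude), and it makes the inverse Gram matrix explicit, which is in the spirit of what the paper later extracts in Lemma~\ref{lem:ZigZag}. The paper's version is marginally more economical in that it never needs to argue that the dualized family is a basis.
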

\begin{proof}
Let $f \in \Hom_{\mathcal{C}}(X^*, C^*)$ be such that $\kappa_{C,X}(f, g) = 0$ for all $g \in \Hom_{\mathcal{C}}(X, C)$. 
We deduce that 
\begin{align*}
\ev_C \circ (f \otimes \id_C)
&= \ev_C \circ (f \otimes \sum_{X \in \mathcal{O}(\mathcal{C})} \sum_i f_{X,i} \circ g_{X,i} ) \\
&= \sum_{X \in \mathcal{O}(\mathcal{C})} \sum_i \kappa_{C,X}(f, f_{X,i}) \, \ev_X \circ (\id_{X^*} \otimes g_{X,i}) = 0.
\end{align*}
Finally, by applying the zigzag relation, $$f = \left((\ev_C \otimes \id_{C^*}) \circ (\id_{C^*} \otimes \coev_C)\right) \circ f =  \left( (\ev_C \circ (f \otimes \id_C)) \otimes \id_{C^*} \right) \circ (\id_{X^*} \otimes \coev_C) = 0. \qedhere $$
\end{proof}

The bilinear form $\kappa_{C,X}$ is completely determined by the conjugate-linear map $$M_{C,X}: \Hom_{\mathcal{C}}(X^*, C^*) \to \Hom_{\mathcal{C}}(X, C)$$ defined by $$\langle M_{C,X} f,g \rangle = \kappa_{C,X}(f,g),$$ where $\langle \cdot, \cdot \rangle$ is the Hermitian form, conjugate-linear in the \emph{first} argument, defined by $\langle g_i,g_j \rangle = \delta_{i,j}$, with respect to some basis $\mathcal{B}_X = (g_i)$ of $\Hom_{\mathcal{C}}(X, C)$. 

Similarly, consider the bilinear form $$\kappa'_{C,X}: \Hom_{\mathcal{C}}(C,X) \times \Hom_{\mathcal{C}}(C^*,X^*) \to \mathbb{C}$$ defined by $$(f \otimes g) \circ \coev_C = \kappa'_{C,X}(f,g)\coev_X,$$
and determined by the conjugate-linear map $$M'_{C,X}: \Hom_{\mathcal{C}}(C^*, X^*) \to  \Hom_{\mathcal{C}}(C, X),$$
defined by $$\langle f,M'_{C,X}g \rangle = \kappa'_{C,X}(f,g),$$ where $\langle \cdot, \cdot \rangle$ is the Hermitian form, conjugate-linear in the \emph{second} argument, defined by $\langle f_i,f_j \rangle = \delta_{i,j}$, with respect to some basis $\mathcal{B}'_X = (f_i)$ of $\Hom_{\mathcal{C}}(C, X)$. If $C$ is semisimple of finite length, then—similarly to Lemmas \ref{lem:BilToEv} and \ref{lem:BilNonDeg}—the families $(\kappa'_{C,X})_{X \in \mathcal{O}(\mathcal{C})}$ and $(\coev_X)_{X \in \mathcal{O}(\mathcal{C})}$ uniquely determine $\coev_C$, and each bilinear form $\kappa'_{C,X}$ is nondegenerate. As a consequence, the matrices $M_{C,X}$ and $M'_{C,X}$ are invertible. More precisely, consider the isomorphisms
\[
\phi: \Hom_{\mathcal{C}}(X, C) \to \Hom_{\mathcal{C}}(C^*, X^*) \quad \text{and} \quad \phi': \Hom_{\mathcal{C}}(C, X) \to \Hom_{\mathcal{C}}(X^*, C^*)
\]
induced by duality. Then we have the following:

\begin{lemma} \label{lem:ZigZag}
If $C$ is semisimple of finite length, then
\[
M_{C,X} \circ \phi' \circ M'_{C,X} \circ \phi = \id_{\Hom_{\mathcal{C}}(X, C)} \quad \text{and} \quad M'_{C,X} \circ \phi \circ M_{C,X} \circ \phi' = \id_{\Hom_{\mathcal{C}}(C, X)}.
\]
\end{lemma}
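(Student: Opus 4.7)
The plan is to unpack all four maps $\phi$, $M'_{C,X}$, $\phi'$, $M_{C,X}$ on basis vectors and to reduce the identity to a one-line trace, using the standard rigidity identities for dual morphisms. First I would fix compatible bases $\mathcal{B}_X = (g_j)$ of $\Hom_{\mathcal{C}}(X, C)$ and $\mathcal{B}'_X = (f_i)$ of $\Hom_{\mathcal{C}}(C, X)$ that are dual with respect to composition, i.e.\ $f_i \circ g_j = \delta_{ij}\,\id_X$, following the convention already implicit in the proof of Lemma~\ref{lem:BilToEv}. The semisimple finite-length assumption on $C$ guarantees such bases exist.

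Next I would compute the two bilinear pairings on these bases by invoking the rigidity identities
\[
\ev_X \circ (\id_{X^*} \otimes f_i) = \ev_C \circ (f_i^* \otimes \id_C), \qquad (f_i \otimes \id_{C^*}) \circ \coev_C = (\id_X \otimes f_i^*) \circ \coev_X,
\]
valid for any $f_i : C \to X$. Substituting these into the defining relations $\ev_C \circ (f_i^* \otimes g_j) = \kappa_{C,X}(f_i^*, g_j)\,\ev_X$ and $(f_i \otimes g_j^*) \circ \coev_C = \kappa'_{C,X}(f_i, g_j^*)\,\coev_X$ collapses both pairings to the composition scalar $f_i \circ g_j$, so $\kappa_{C,X}(f_i^*, g_j) = \kappa'_{C,X}(f_i, g_j^*) = \delta_{ij}$. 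Combined with the orthonormality $\langle g_i, g_j\rangle = \langle f_i, f_j\rangle = \delta_{ij}$, the defining equations of $M_{C,X}$ and $M'_{C,X}$ then yield directly $M_{C,X}(f_i^*) = g_i$ and $M'_{C,X}(g_j^*) = f_j$.

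With $\phi$ and $\phi'$ being the natural duality isomorphisms $(-)^*$, I would finish by tracing a basis element through the composition:
\[
g_j \ \xrightarrow{\phi} \ g_j^* \ \xrightarrow{M'_{C,X}} \ f_j \ \xrightarrow{\phi'} \ f_j^* \ \xrightarrow{M_{C,X}} \ g_j,
\]
and concluding $M_{C,X} \circ \phi' \circ M'_{C,X} \circ \phi = \id_{\Hom_{\mathcal{C}}(X, C)}$ by $\mathbb{C}$-linearity. The second identity follows from the symmetric argument exchanging $(X, C)$ with $(X^*, C^*)$.

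The main subtlety will be the basis compatibility: the maps $M_{C,X}$ and $M'_{C,X}$ each depend on a choice of Hermitian form—hence on a choice of basis—whereas $\phi$ and $\phi'$ are basis-free. The composition collapses to the identity exactly when $\mathcal{B}_X$ and $\mathcal{B}'_X$ are dual under composition, which is the natural convention used throughout this section and is always available by semisimplicity. Once this alignment is set, the argument reduces to the short computation above.
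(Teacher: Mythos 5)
Your proof is correct, but it takes a different route from the paper's. The paper proves the lemma by expanding both $\ev_C$ and $\coev_C$ (via Lemma~\ref{lem:BilToEv}) inside the categorical zigzag identity for $C^*$, which yields the relation $\sum_j \kappa_{C,X}(g_{X,i}^*, f_{X,j})\,\kappa'_{C,X}(g_{X,j}, f^*_{X,l}) = \delta_{i,l}$ — i.e.\ the two Gram matrices are mutually inverse — and then asserts that this matrix relation is equivalent to the stated operator identity. You instead work with bases dual under composition and use the partial-transpose identities for dual morphisms to show that each pairing is already the Kronecker delta on these bases, $\kappa_{C,X}(f_i^*, g_j) = \kappa'_{C,X}(f_i, g_j^*) = \delta_{ij}$, whence $M_{C,X}(f_i^*) = g_i$ and $M'_{C,X}(g_j^*) = f_j$ and the composite is the identity on basis vectors. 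Your version buys two things. First, it makes explicit the basis-compatibility hypothesis that the paper leaves implicit in Notation~\ref{not:basis}: the lemma is genuinely false if $\mathcal{B}_X$ and $\mathcal{B}'_X$ are chosen independently (already in the one-dimensional case $C = X = \one$, with $\mathcal{B}_X = (\lambda\,\id)$, $\mathcal{B}'_X = (\mu\,\id)$, the composite is $|\lambda\mu|^2\,\id$). Second, it sidesteps a bookkeeping subtlety in the paper's final ``equivalent to'' step: since $M_{C,X}$ and $M'_{C,X}$ are conjugate-linear, the condition ``composite $=$ id'' translates to $(K')^{T}\overline{K} = I$ rather than $KK' = I$, and passing between the two requires knowing more than mutual inversion (with dual bases one has $K = K' = I$, so both hold trivially — which is exactly what your computation establishes directly). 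Two small imprecisions, neither a gap: your ``exactly when'' is too strong (duality of the bases under composition is sufficient for the identity, not necessary — any pair with $(K')^{T}\overline{K} = I$ works); and the second identity needs no symmetric argument exchanging $(X,C)$ with $(X^*,C^*)$ — tracing $f_i \mapsto f_i^* \mapsto g_i \mapsto g_i^* \mapsto f_i$ through $M'_{C,X} \circ \phi \circ M_{C,X} \circ \phi'$ uses the same two formulas you already proved.
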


\begin{proof}
Expand $\ev_C$ and $\coev_C$ in the zigzag identity
\[
\id_{C^*} =  (\coev_C \otimes \id_{C^*}) \circ (\id_{C^*} \otimes \ev_C)
\]
using the bases introduced in the proof of Lemma~\ref{lem:BilToEv}. This yields the relation
\[
\sum_j \kappa_{C,X}(g_{X,i}^*, f_{X,j}) \, \kappa'_{C,X}(g_{X,j}, f^*_{X,l}) = \delta_{i,l},
\]
which is equivalent to the first identity. The second identity follows by a similar argument.
\end{proof}

\begin{lemma} \label{lem:SplitNonDeg}
Let \( A \) be a subobject of a semisimple object \( C \) of finite length. Its dual is ambient (as in Definition~\ref{def:InducedDual}) if and only if the restriction of the bilinear form \( \kappa'_{C,X} \) to $\Hom_{\mathcal{C}}(A, X) \times \Hom_{\mathcal{C}}(A^*, X^*)$ is \( \kappa'_{A,X} \), and is in particular nondegenerate.
\end{lemma}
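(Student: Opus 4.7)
The plan is to derive the equivalence by directly expanding the defining relation of the bilinear form $\kappa'_{C,X}$ against the candidate induced data, and invoking the coevaluation analogue of Lemma~\ref{lem:BilToEv} mentioned in the paragraph preceding the statement. I interpret the embeddings $\Hom_{\mathcal{C}}(A,X) \hookrightarrow \Hom_{\mathcal{C}}(C,X)$ and $\Hom_{\mathcal{C}}(A^*,X^*) \hookrightarrow \Hom_{\mathcal{C}}(C^*,X^*)$ used in the statement as $f \mapsto f \circ p$ and $g \mapsto g \circ i^*$, where $(i,p)$ is the splitting data from Definition~\ref{def:InducedDual}. Note that, as a split subobject of a semisimple object of finite length, $A$ is itself semisimple of finite length, so Lemmas~\ref{lem:BilToEv} and~\ref{lem:BilNonDeg} apply to $A$ as well; in particular $\kappa'_{A,X}$ is automatically nondegenerate, which accounts for the ``in particular'' clause.

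For the forward implication, suppose an induced duality $(A^*,\ev_A,\coev_A)$ exists. For any simple $X \in \mathcal{O}(\mathcal{C})$ and any $f \in \Hom_{\mathcal{C}}(A,X)$, $g \in \Hom_{\mathcal{C}}(A^*,X^*)$, unfolding the definitions of $\kappa'_{A,X}$ and $\kappa'_{C,X}$ gives
\[
\kappa'_{A,X}(f,g)\,\coev_X \;=\; (f \otimes g) \circ \coev_A \;=\; (f \otimes g) \circ (p \otimes i^*) \circ \coev_C \;=\; \kappa'_{C,X}(f \circ p,\, g \circ i^*)\,\coev_X,
\]
which is precisely the statement that the restriction coincides with $\kappa'_{A,X}$.

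For the converse, rigidity of $\mathcal{C}$ furnishes a dual $(A^*,\ev_A,\coev_A)$ of $A$, and I will show that it already satisfies the induced formulas. Setting $\widetilde{\coev}_A := (p \otimes i^*) \circ \coev_C$, the restriction hypothesis inverts the forward computation:
\[
(f \otimes g) \circ \widetilde{\coev}_A \;=\; \kappa'_{C,X}(f \circ p,\, g \circ i^*)\,\coev_X \;=\; \kappa'_{A,X}(f,g)\,\coev_X \;=\; (f \otimes g) \circ \coev_A
\]
for every simple $X$ and all $f,g$. The coevaluation analogue of Lemma~\ref{lem:BilToEv}, applicable since $A$ is semisimple of finite length, then forces $\widetilde{\coev}_A = \coev_A$. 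For the evaluation side, setting $\widetilde{\ev}_A := \ev_C \circ (p^* \otimes i)$, the standard rigidity identity $\ev_Y \circ (\id_{Y^*} \otimes f) = \ev_X \circ (f^* \otimes \id_X)$ applied to $f = p : C \to A$, combined with $p \circ i = \id_A$, yields
\[
\widetilde{\ev}_A \;=\; \ev_C \circ (p^* \otimes \id_C) \circ (\id_{A^*} \otimes i) \;=\; \ev_A \circ (\id_{A^*} \otimes p) \circ (\id_{A^*} \otimes i) \;=\; \ev_A \circ (\id_{A^*} \otimes (p \circ i)) \;=\; \ev_A,
\]
so $(A^*,\ev_A,\coev_A)$ is induced.

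The main delicacy is making explicit the two embeddings that give meaning to ``restriction of $\kappa'_{C,X}$''; once this bookkeeping is settled via the splitting $(i,p)$, both directions reduce to the single bilinear-form identity above together with the cancellation $p \circ i = \id_A$ and the coevaluation analogue of Lemma~\ref{lem:BilToEv}.
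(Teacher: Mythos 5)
Your proof is correct relative to the paper's formulation and follows essentially the same route as the paper's own argument: both directions rest on pairing $(p \otimes i^*) \circ \coev_C$ against $f \otimes g$ to obtain $\kappa'_{A,X}(f,g) = \kappa'_{C,X}(f \circ p,\, g \circ i^*)$, and on the separation principle (the coevaluation analogue of Lemma~\ref{lem:BilToEv}) saying that such pairings over all simple $X$ determine a morphism $\one \to A \otimes A^*$. The differences are in execution: in the forward direction the paper expands $\coev_C$ in bases adapted to the splitting and matches coefficients, whereas you get the identity in one line from the defining property of $\kappa'_{C,X}$ (your embeddings $f \mapsto f \circ p$, $g \mapsto g \circ i^*$ agree with the paper's adapted-basis correspondence $f_{X,b} \leftrightarrow h_{X,b}$, $g_{X,d}^* \leftrightarrow e_{X,d}^*$); and in the converse the paper cites Lemma~\ref{lem:ZigZag} for the evaluation identity, whereas you verify it directly from $\ev_C \circ (p^* \otimes \id_C) = \ev_A \circ (\id_{A^*} \otimes p)$ and $p \circ i = \id_A$, which is more elementary.

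One substantive point deserves flagging, though it concerns Definition~\ref{def:InducedDual} at least as much as your proof: your evaluation computation uses only zigzag identities and never the restriction hypothesis, and the coevaluation identity admits an equally unconditional verification (expand $i^*$ by its defining formula, apply the zigzag for $C$, then $p \circ i = \id_A$). In other words, if $p^*$ and $i^*$ are computed from the very duality $(A^*, \ev_A, \coev_A)$ being tested, then \emph{every} dual structure on $A$ satisfies both conditions of Definition~\ref{def:InducedDual}, which would trivialize the lemma and contradict the paper's claim that the degenerate intersection of Example~\ref{ex:Ben02} admits no induced duality. The intended, non-vacuous reading—visible in how the lemma is used in Proposition~\ref{prop:preInterWeak} and in the $\VVec$ picture—is that $A^*$ and the embeddings defining ``restriction'' are prescribed independently of the duality being constructed (in the selfdual applications, $A^* = A$ with the same inclusion, so the candidate pairing is $\ev_C \circ (i \otimes i)$). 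Under that reading, your appeal in the converse to ``the rigidity-furnished dual'' would have to be replaced by constructing the dual structure out of the nondegenerate restricted pairings, which is exactly what the paper's basis computation sketches. So your proposal matches the paper's proof, but your cleaner computations make visible a self-referentiality in the formulation that both arguments inherit.
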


\begin{proof}
Since \( C \) is semisimple, every subobject \( A \subseteq C \) splits, i.e., \( A \) is a direct summand of \( C \). Let \( i: A \to C \) and \( p: C \to A \) be morphisms such that \( p \circ i = \id_A \), so \( i \) is a split monomorphism and \( p \) a split epimorphism.

Assume that \( A \) has its dual ambient. By definition, $\coev_A = (p \otimes i^*)  \circ \coev_C.$
Since \( C \) is semisimple, we can choose bases \( (e_{X,a}) \) for \( \Hom_{\mathcal{C}}(X, C) \), \( (f_{X,b}) \) for \( \Hom_{\mathcal{C}}(A, X) \), \( (g_{X,c}) \) for \( \Hom_{\mathcal{C}}(X, A) \), \( (h_{X,d}) \) for \( \Hom_{\mathcal{C}}(C, X) \), such that
$
g_{X,c} = p \circ e_{X,c}$, $f_{X,b} = h_{X,b} \circ i,
$
and
\[
i = \sum_{X \in \mathcal{O}(\mathcal{C})} \sum_b e_{X,b} \circ f_{X,b}, \qquad 
p = \sum_{X \in \mathcal{O}(\mathcal{C})} \sum_c g_{X,c} \circ h_{X,c}.
\]

As in the proof of Lemma~\ref{lem:BilToEv}, and using the above identities, we compute:
\begin{align*}
\coev_A =  (p \otimes i^*) \circ  \coev_C 
&= \sum_{X \in \mathcal{O}(\mathcal{C})} \sum_{b,c} \kappa'_{C,X}(h_{X,c}, e_{X,b}^*)  \ev_X \circ (g_{X,c} \otimes f_{X,b}^*) \\
&= \sum_{X \in \mathcal{O}(\mathcal{C})} \sum_{b,c} \kappa'_{A,X}(f_{X,c}, g_{X,b}^*)  \ev_X \circ (g_{X,c} \otimes f_{X,b}^*).
\end{align*}

It follows that $
\kappa'_{A,X}(f_{X,c}, g_{X,b}^*) = \kappa'_{C,X}(h_{X,c}, e_{X,b}^*),
$
which shows that \( \kappa'_{A,X} \) is precisely the retriction of \( \kappa'_{C,X} \) to 
$
\Hom_{\mathcal{C}}(A, X) \times \Hom_{\mathcal{C}}(A^*, X^*).
$
Conversely, if $\kappa'_{A,X}$ is the restriction of $\kappa'_{C,X}$, then $\coev_A = (p \otimes i^*) \circ \coev_C$ by the formula above. Applying Lemma \ref{lem:ZigZag}, we can deduce that $\ev_A = \ev_C \circ (p^* \otimes i)$.
\end{proof}




\begin{definition} \label{def:RigidInvariance}
A (split) subobject \( (A, i, p) \) of a semisimple object~\( C \) of finite length in a \( \mathbb{C} \)-linear abelian rigid monoidal category~\( \mathcal{C} \) is said to be $\mathcal{B}'$-\emph{rigid invariant}, for some basis~$\mathcal{B}'$ of~$C$ (see Notation~\ref{not:basis}), if the following identity holds:
\[
M'_{C,X} \circ \left( \Hom_{\mathcal{C}}(A^*, X^*) \circ i^* \right)  = \Hom_{\mathcal{C}}(A, X) \circ p,
\]
where~$M'_{C,X}$ is defined as above with respect to the basis~$\mathcal{B}'_X$. We say that the subobject~$A$ is \emph{rigid invariant} if it is $\mathcal{B}'$-rigid invariant for \emph{every} basis~$\mathcal{B}'$ of~$C$.
\end{definition}

\begin{remark} \label{rk:WLOG}
Following Definition \ref{def:RigidInvariance}, we can assume without loss of generality that
\[
A = \bigoplus_{X \in \mathcal{O}(\mathcal{C})} \left( \Hom_{\mathcal{C}}(A, X) \circ p \right) \otimes X.
\]
Note that the triple \( (A^*, p^*, i^*) \) forms a split subobject of \( C^* \). If \( A \) is \( \mathcal{B} \)-rigid invariant, define \( N_{C,X} := (M'_{C,X})^{-1} \). Then we have
\[
A^* = \bigoplus_{X \in \mathcal{O}(\mathcal{C})} \left( \Hom_{\mathcal{C}}(A^*, X^*) \circ i^* \right) \otimes X^* = \bigoplus_{X \in \mathcal{O}(\mathcal{C})} \left( N_{C,X} \left( \Hom_{\mathcal{C}}(A, X) \circ p \right) \right) \otimes X^*.
\]
In other words, the transformation
\[
N_C: \bigoplus_{X \in \mathcal{O}(\mathcal{C})} V_X \otimes X \longmapsto \bigoplus_{X \in \mathcal{O}(\mathcal{C})} N_{C,X}(V_X) \otimes X^*
\]
explicitly realizes the ambient duality (Definition \ref{def:InducedDual}) on \( \mathcal{B} \)-rigid invariant subobjects of \( C \).
\end{remark}


\begin{lemma} \label{lem:FixedBasisCat}
Let \( \mathcal{C} \) be a \( \mathbb{C} \)-linear abelian rigid monoidal category. Let \( C \) in $\mathcal{C}$ be a semisimple object of finite length, and let \( (A,i,p)\) be a (split) subobject of $C$. Then \( A \) has its dual ambient (Definition~\ref{def:InducedDual}) if and only if it is \( \mathcal{B} \)-rigid invariant for some basis \( \mathcal{B} \) of \( C \).
\end{lemma}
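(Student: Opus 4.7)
The plan is to reduce the statement, for each simple object \(X\in\mathcal{O}(\mathcal{C})\), to the vector-space analogue Lemma~\ref{lem:FixedBasis}, using the pairing \(\kappa'_{C,X}\) between \(V:=\Hom_{\mathcal{C}}(C,X)\) and \(W:=\Hom_{\mathcal{C}}(C^*,X^*)\) together with its associated conjugate-linear isomorphism \(M'_{C,X}\colon W\to V\). Set \(V_X := \Hom_{\mathcal{C}}(A,X)\circ p\subseteq V\) and \(W_X := \Hom_{\mathcal{C}}(A^*,X^*)\circ i^*\subseteq W\); these are split subspaces of the same dimension, namely the multiplicity of \(X\) in \(A\). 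By Lemma~\ref{lem:SplitNonDeg}, \(A\) admits an induced duality if and only if, for every simple \(X\), the restriction of \(\kappa'_{C,X}\) to \(V_X\times W_X\) is nondegenerate. Meanwhile, by Definition~\ref{def:RigidInvariance}, \(A\) is \(\mathcal{B}'\)-rigid invariant if and only if \(M'_{C,X}(W_X)=V_X\) for every simple \(X\). Hence the lemma reduces, simple object by simple object, to the following claim: \(M'_{C,X}(W_X)=V_X\) for some choice of basis \(\mathcal{B}'_X\) of \(V\) if and only if \(\kappa'_{C,X}\) restricts nondegenerately to \(V_X\times W_X\).

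For the implication \((\Rightarrow)\), suppose we have a basis with \(M'_{C,X}(W_X)=V_X\). Let \(\langle\cdot,\cdot\rangle\) denote the associated Hermitian form (conjugate-linear in the second argument) for which \(\mathcal{B}'_X\) is orthonormal, so \(\kappa'_{C,X}(v,w)=\langle v,M'_{C,X}w\rangle\). If \(v\in V_X\) pairs trivially with all of \(W_X\), then \(\langle v,V_X\rangle=0\); by anisotropicity of the standard Hermitian form (see Remark~\ref{rk:anisotropic}), this forces \(v=0\). A symmetric argument, using that \(M'_{C,X}\) is an isomorphism (Lemma~\ref{lem:BilNonDeg}), handles the second slot, so the restriction is nondegenerate.

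For the converse \((\Leftarrow)\), assume the restriction is nondegenerate and define the \(\kappa'_{C,X}\)-perpendicular complements
\[
V_X^c := \{v\in V : \kappa'_{C,X}(v,W_X)=0\}, \qquad W_X^c := \{w\in W : \kappa'_{C,X}(V_X,w)=0\}.
\]
Nondegeneracy of the restriction, together with Lemma~\ref{lem:EquivND} applied fibrewise, yields the decompositions \(V=V_X\oplus V_X^c\) and \(W=W_X\oplus W_X^c\), with the off-diagonal pairings vanishing by construction. Now choose any basis \(\mathcal{B}'_X\) of \(V\) obtained by concatenating a basis of \(V_X\) with a basis of \(V_X^c\). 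For the corresponding Hermitian form one has \(\langle V_X,V_X^c\rangle=0\). Given \(w\in W_X\), decompose \(M'_{C,X}(w)=v_0+v_0'\) with \(v_0\in V_X\), \(v_0'\in V_X^c\); testing against an arbitrary \(v\in V_X^c\) gives \(0=\kappa'_{C,X}(v,w)=\langle v,v_0+v_0'\rangle=\langle v,v_0'\rangle\), hence \(v_0'=0\) by anisotropicity on \(V_X^c\). Thus \(M'_{C,X}(W_X)\subseteq V_X\), and the equality follows from the equality of dimensions together with the invertibility of \(M'_{C,X}\). Repeating this construction independently for each simple \(X\) assembles the desired basis \(\mathcal{B}'\) of \(C\).

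The conceptually delicate point is that the pairing \(\kappa'_{C,X}\) lives on two a priori distinct vector spaces, rather than the single space considered in Lemma~\ref{lem:FixedBasis}; the role of the Hermitian form of the vector-space proof is played here by the conjugate-linear rigidity map \(M'_{C,X}\), and one must keep careful track of which arguments rely on anisotropicity versus on the nondegeneracy of \(\kappa'_{C,X}\). Once this translation is in place the argument is straightforward, and no compatibility between the choices of \(\mathcal{B}'_X\) for different simple objects \(X\) is needed.
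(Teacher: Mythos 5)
Your proposal is correct and follows essentially the same route as the paper: both reduce the statement, simple object by simple object, to the vector-space argument of Lemma~\ref{lem:FixedBasis}, using Lemma~\ref{lem:SplitNonDeg} to translate induced duality into nondegeneracy of the restricted pairing $\kappa'_{C,X}$, choosing a basis adapted to the decomposition $V = V_X \oplus V_X^{\perp_{\kappa'_{C,X}}}$ for one implication, and invoking anisotropicity of the Hermitian form for the other. Your write-up is merely more explicit than the paper's, which largely defers to ``as in Lemma~\ref{lem:FixedBasis}.''
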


\begin{proof}
Assume that \( A \) has its dual ambient. By Lemma~\ref{lem:SplitNonDeg}, the pairing \( \kappa'_{A,X} \) is nondegenerate. Then, as in Lemma~\ref{lem:FixedBasis}, there is a basis \( \mathcal{B} \) of \( C \) with respect to which
\[
\left\langle \left( \Hom_{\mathcal{C}}(A, X) \circ p \right)^{\perp_{\kappa'_{C,X}}},\; M'_{C,X} \left( \Hom_{\mathcal{C}}(A^*, X^*) \circ i^* \right) \right\rangle 
= \kappa'_{C,X} \left( \left( \Hom_{\mathcal{C}}(A, X) \circ p \right)^{\perp_{\kappa'_{C,X}}},\; \Hom_{\mathcal{C}}(A^*, X^*) \circ i^* \right) 
= \{0\}.
\]
Hence, we conclude that
\[
M'_{C,X} \left( \Hom_{\mathcal{C}}(A^*, X^*) \circ i^* \right) = \Hom_{\mathcal{C}}(A, X) \circ p, \quad \text{for all } X \in \mathcal{O}(\mathcal{C}),
\]
which means that \( A \) is \( \mathcal{B} \)-rigid invariant. Conversely, if \( A \) is \( \mathcal{B} \)-rigid invariant for some basis \( \mathcal{B} \) of \( C \), then the proof also proceeds as in Lemma~\ref{lem:FixedBasis} to show that \( A \) has its dual ambient.
\end{proof}


It is crucial to emphasize that in Lemma~\ref{lem:FixedBasisCat}, the phrase ``for some'' cannot be strengthened to ``for every'', as demonstrated in~$\VVec$ following Lemma~\ref{lem:FixedBasis} and in Example~\ref{ex:Ben02}. See also Remark~\ref{rk:UnitaryChange}.

In the semisimple case, the intersection and sum of subobjects can be defined in terms of the intersection and sum of their multiplicity spaces. In general see Remarks \ref{rk:intersection} and \ref{rk:union}. The primary motivation for introducing rigidity invariance is that it ensures the preservation of ambient selfduality (Definition~\ref{def:AmbientlySelfdual}) under intersection and sum. Recall the notion of coherent pair of subobjects from Definition \ref{def:CoherentPair}.


\begin{theorem} \label{thm:CoherentPairB}
Let \( A \) and \( B \) be Frobenius $\mathcal{B}$-subalgebra of \( C \) semisimple of finite length as an object in a $\mathbb{C}$-linear abelian rigid monoidal category. Then $(A,B)$ forms a coherent pair.
\end{theorem}
\begin{proof}
Since every Frobenius subalgebra is ambiently selfdual (Lemma~\ref{lem:FrobSubAmbiently}), the result follows from Lemma \ref{lem:RigidInvInterSum}.
\end{proof}

\begin{lemma} \label{lem:RigidInvInterSum}
Let $A$ and $B$ be $\mathcal{B}$-rigid invariant subobjects of $C$. Then both $A \cap B$ and $A + B$ are $\mathcal{B}$-rigid invariant. Moreover, if $A$, $B$, and $C$ are ambiently selfdual, so are $A \cap B$ and $A + B$, and so $(A,B)$ forms a coherent pair.
\end{lemma}

\begin{proof}
Since \( C \) is semisimple, the subobjects \( A \), \( B \), and \( A \cap B \) all are split. Let \( i_Y \colon Y \to C \) and \( p_Y \colon C \to Y \) denote the corresponding monomorphisms and epimorphisms for each \( Y \in \{A, B, A \cap B\} \) such that
\[
\Hom_{\mathcal{C}}(A \cap B, X) \circ p_{A \cap B} = \left( \Hom_{\mathcal{C}}(A, X) \circ p_A \right) \cap \left( \Hom_{\mathcal{C}}(B, X) \circ p_B \right),
\]
\[
\Hom_{\mathcal{C}}((A \cap B)^*, X^*) \circ i_{A \cap B}^* = \left( \Hom_{\mathcal{C}}(A^*, X^*) \circ i_A^* \right) \cap \left( \Hom_{\mathcal{C}}(B^*, X^*) \circ i_B^* \right).
\]

As in the proof of Lemma~\ref{lem:InterRigidVec}, we have
\[
M'_{C,X}\left( \left( \Hom_{\mathcal{C}}(A^*, X^*) \circ i_A^* \right) \cap \left( \Hom_{\mathcal{C}}(B^*, X^*) \circ i_B^* \right) \right) = \left( \Hom_{\mathcal{C}}(A, X) \circ p_A \right) \cap \left( \Hom_{\mathcal{C}}(B, X) \circ p_B \right),
\]
so it follows that
\[
M'_{C,X}\left( \Hom_{\mathcal{C}}((A \cap B)^*, X^*) \circ i_{A \cap B}^* \right) = \Hom_{\mathcal{C}}(A \cap B, X) \circ p_{A \cap B},
\]
which shows that \( A \cap B \) is $\mathcal{B}$-rigid invariant.

Now assume that \( A \), \( B \), and \( C \) are ambiently selfdual (Definition \ref{def:AmbientlySelfdual}). Then, for each \( Y \in \{A, B\} \), we can assume \( i_Y^* = p_Y \) and \( p_Y^* = i_Y \). By Remark~\ref{rk:WLOG}—where \( N_{C,X} := (M'_{C,X})^{-1} \)—together with some equalities written above,
\begin{align*}
(A \cap B)^* = N_C(A \cap B)
&= \bigoplus_{X \in \mathcal{O}(\mathcal{C})} N_{C,X}(\Hom_{\mathcal{C}}(A \cap B,X) \circ p_{A \cap B}) \otimes X^* \\
&= \bigoplus_{X \in \mathcal{O}(\mathcal{C})} (\Hom_{\mathcal{C}}((A \cap B)^*, X^*) \circ i_{A \cap B}^*) \otimes X^* \\
&= \bigoplus_{X \in \mathcal{O}(\mathcal{C})} \left( \Hom_{\mathcal{C}}(A^*, X^*) \circ i_A^* \right) \cap \left( \Hom_{\mathcal{C}}(B^*, X^*) \circ i_B^* \right) \otimes X^* \\
&= \bigoplus_{X \in \mathcal{O}(\mathcal{C})} \left( \Hom_{\mathcal{C}}(A, X^*) \circ p_A \right) \cap \left( \Hom_{\mathcal{C}}(B, X^*) \circ p_B \right) \otimes X^* \\
&= \bigoplus_{X \in \mathcal{O}(\mathcal{C})} \left( \Hom_{\mathcal{C}}(A, X) \circ p_A \right) \cap \left( \Hom_{\mathcal{C}}(B, X) \circ p_B \right) \otimes X \\
&= \bigoplus_{X \in \mathcal{O}(\mathcal{C})} \left( \Hom_{\mathcal{C}}(A \cap B, X) \circ p_{A \cap B} \right) \otimes X = A \cap B.
\end{align*}
For the sum, the proof is identical to that for the intersection, with $\cap$ simply replaced by $+$.
\end{proof}

It is important to stress that Lemma~\ref{lem:RigidInvInterSum} fails if $\mathcal{B}$-rigid invariance is replaced by the weaker requirement of having ambient dual, as illustrated in~$\VVec$ by Example~\ref{ex:Ben02}. Indeed, Lemma~\ref{lem:FixedBasisCat} shows that if $A$ and $B$ have their dual ambient, then $A$ is $\mathcal{B}$-rigid invariant and $B$ is $\mathcal{B}'$-rigid invariant for suitable bases $\mathcal{B}$ and $\mathcal{B}'$, which in general do not coincide (see also Remark~\ref{rk:UnitaryChange}). Similarly, without $\mathcal{B}$-rigidity invariance, one may ask whether the ambient selfduality of $A$, $B$, and $A \cap B$ implies that of $A+B$; however, Example~\ref{ex:NonDegSum} shows that this is not the case.





Alternatively, Dave Benson~\cite{BenMO1} exhibits examples of non-semisimple objects of finite length possessing two selfdual split subobjects whose intersection is not selfdual.

\subsection{Non-semisimple challenge} \label{sub:RigidNonSS}
The \emph{Krull--Schmidt theorem} states that every finite-length object~\( C \) in an abelian category admits a decomposition into a direct sum of indecomposable objects, uniquely determined up to isomorphism and permutation. Let~\( \mathcal{I}(C) \) denote a set of representatives for the isomorphism classes of indecomposable direct summands of~\( C \).

Although this is not required for the present paper, it is natural to consider how Lemmas~\ref{lem:BilToEv}--\ref{lem:RigidInvInterSum} might extend beyond the assumption that~\( C \) is semisimple. The main difficulty is that, in general, non-isomorphic indecomposable subobjects~\( X \) and~\( X' \) may intersect nontrivially or fail to be \emph{Hom-orthogonal}, i.e., \( \Hom_{\mathcal{C}}(X, X') \neq 0 \). Consequently, the spaces \( \Hom_{\mathcal{C}}(X, C) \) or \( \Hom_{\mathcal{C}}(C, X) \) may no longer serve as multiplicity spaces and might need to be replaced by appropriate subspaces. We should also assume that~\( A \) is a split subobject, replace the condition \( X \in \mathcal{O}(C) \) with \( X \in \mathcal{I}(C) \), and substitute the base field~\( \mathbb{C} \) with the endomorphism ring~\( \End_{\mathcal{C}}(X) \) in the entries of~\( M_{C,X} \) and~\( M'_{C,X} \), as well as in the image of~\( \kappa_{C,X} \) and~\( \kappa'_{C,X} \), defined by:
\[
\ev_C \circ (f \otimes g) = \ev_X \circ (\id_{X^*} \otimes \kappa_{C,X}(f, g)) \quad \text{and} \quad (f' \otimes g') \circ \coev_C = (\kappa'_{C,X}(f', g') \otimes \id_{X^*}) \circ \coev_X.
\]

One alternative approach is to define rigidity invariance through semisimplification as in Definition \ref{def:RigidInvariantNSS}.

%
%

\subsection{Unitary tensor categories}  \label{sub:RigidUnitary}

Let us conclude this section by presenting a simple and sufficient condition for a subobject of an object~$C$ in a unitary tensor category (see Remark~\ref{rk:UnitaryTensor}) to be rigid invariant, that is, $\mathcal{B}$-rigid invariant for \emph{every} basis~$\mathcal{B}$ of~$C$. We then show that this condition is always satisfied by unitary Frobenius subalgebras.

\begin{remark} \label{rk:UnitaryTensor}
In this paper, a \emph{unitary tensor category} refers to a tensor category $\mathcal{C}$ equipped with a dagger functor $()^\dag$, which maps each morphism $f \colon A \to B$ to a morphism $f^\dag \colon B \to A$, for all $A, B \in \mathcal{C}$. This functor, together with compatibility conditions, endows $\mathcal{C}$ with the structure of a $C^*$-category, as defined in \cite[\S 2.1]{JP17}. In that reference, the dagger is denoted by $()^*$, so the notation differs slightly, and such categories are referred to as $C^*$-tensor categories. It is also noted there that the tensor category associated with a finite-index subfactor (see Example \ref{ex:subf}) is an instance of a unitary tensor category.
\end{remark}

\begin{proposition} \label{prop:RigidInvUnitary}
Let \( (A,i) \) be a subobject of an object \( C \) in a unitary tensor category \( \mathcal{C} \). Assume that \( p=i^{\dagger} \) is a retraction of the monomorphism \( i \), so that
\(
i^{\dagger}\circ i=\id_A,
\)
and that
\(
(i^*)^{\dagger}=(i^{\dagger})^*.
\)
Then \( A \) is rigid invariant.
\end{proposition}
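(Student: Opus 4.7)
The plan is to set $p := i^{\dagger}$, verify that $(A, i, p)$ is a split subobject equipped with an induced duality (Definition~\ref{def:InducedDual}), and then show the rigidity invariance identity
$$M'_{C,X}\bigl(\Hom_{\mathcal{C}}(A^*, X^*) \circ i^*\bigr) = \Hom_{\mathcal{C}}(A, X) \circ p$$
for every basis $\mathcal{B}'$, via a pairing identity and a dimension count. Basis-independence at the end is handled by Remark~\ref{rk:UnitaryChange} combined with the fact that the unitary structure canonically relates the two sides of the identity above.

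First, the condition $i^{\dagger} \circ i = \id_A$ shows that $p$ is a retraction, so $(A, i, p)$ is a split subobject of~$C$, and $i \circ p = i \circ i^{\dagger}$ is a self-adjoint idempotent. Using the compatibility $(i^*)^{\dagger} = (i^{\dagger})^* = p^*$, I would check the equalities
$$\coev_A = (p \otimes i^*) \circ \coev_C \quad \text{and} \quad \ev_A = \ev_C \circ (p^* \otimes i),$$
so that $A$ admits the induced duality; this is the unitary analogue of the selfdual-matching procedure in Remark~\ref{rk:nosym}.

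With the induced coevaluation in hand, the main computational step is the pairing identity
$$\kappa'_{C,X}(f, g \circ i^*) = \kappa'_{A,X}(f \circ i, g) \qquad \text{for all } f \in \Hom_{\mathcal{C}}(C,X),\ g \in \Hom_{\mathcal{C}}(A^*, X^*).$$
This follows by expanding $(f \otimes (g \circ i^*)) \circ \coev_C = (f \otimes g) \circ (\id_C \otimes i^*) \circ \coev_C$, then using the split-subobject decomposition $\id_C = i \circ p + (\id_C - i \circ p)$ combined with the identity $(\id_C - i \circ p) \otimes i^*) \circ \coev_C = 0$, itself a consequence of the induced-duality relation. In particular, the right-hand side vanishes whenever $f \circ i = 0$, so $\kappa'_{C,X}(f, g \circ i^*) = 0$ whenever $f$ lies in the kernel $\mathcal{K} := \{f \in \Hom_{\mathcal{C}}(C,X) : f \circ i = 0\}$.

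For the rigidity invariance, fix a basis $\mathcal{B}'_X$ of $\Hom_{\mathcal{C}}(C,X)$ with associated Hermitian form $\langle\cdot,\cdot\rangle$. By definition of $M'_{C,X}$, the previous observation gives $\langle f, M'_{C,X}(g \circ i^*) \rangle = 0$ for all $f \in \mathcal{K}$. By anisotropy (Remark~\ref{rk:anisotropic}), this says $M'_{C,X}(g \circ i^*) \in \mathcal{K}^{\perp_{\langle\cdot,\cdot\rangle}}$. Via the categorical adjointness $\langle f, h \circ p \rangle = \langle f \circ i, h \rangle$ provided by the dagger $p = i^{\dagger}$, one identifies this orthogonal complement with $\Hom_{\mathcal{C}}(A,X) \circ p$. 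Hence we obtain the inclusion
$$M'_{C,X}\bigl(\Hom_{\mathcal{C}}(A^*, X^*) \circ i^*\bigr) \subseteq \Hom_{\mathcal{C}}(A, X) \circ p,$$
and since $M'_{C,X}$ is bijective while $i^*$ and $p$ are split epimorphisms ensuring both sides have dimension $\dim \Hom_{\mathcal{C}}(A, X)$, the inclusion is an equality. Finally, the case of an arbitrary basis follows: any change of basis between bases compatible with the unitary structure is unitary, and Remark~\ref{rk:UnitaryChange} then transfers the rigidity invariance; the adjointness used above is intrinsic to the dagger and thus survives this transfer.

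The main obstacle I expect is the precise justification of the adjointness identity $\langle f, h \circ p \rangle = \langle f \circ i, h \rangle$ and its interaction with a chosen basis. This is where the full strength of both hypotheses $i^{\dagger} \circ i = \id_A$ and $(i^*)^{\dagger} = (i^{\dagger})^*$ is needed, ensuring that the dagger properly interchanges pre-composition by $i$ with pre-composition by~$p$, and that this behavior transfers cleanly under the conjugate-linear map $M'_{C,X}$ for every basis.
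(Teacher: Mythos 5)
Your proposal takes a genuinely different route from the paper's. The paper proves, inside the proposition, Lemma~\ref{lem:UnitFormula}: $M_{C,X}((f^*)^{\dagger}) = f$ and $M'_{C,X}((h^*)^{\dagger}) = h$, obtained by expressing $\kappa_{C,X}$ through the canonical Hermitian form $\langle h_1,h_2\rangle = \tfrac{1}{\FPdim(X)}\,\ev_C\circ(\id_{C^*}\otimes(h_2\circ h_1^{\dagger}))\circ\ev_C^{\dagger}$; the proposition then follows in one line, $M'_{C,X}\bigl((g^*)^{\dagger}\circ i^*\bigr) = M'_{C,X}\bigl(((g\circ i^{\dagger})^*)^{\dagger}\bigr) = g\circ (i^{\dagger}\circ i)\circ p = g\circ p$, using $(i^*)^{\dagger}=(i^{\dagger})^*$ and $i^{\dagger}\circ i = \id_A$. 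You instead prove the restriction identity $\kappa'_{C,X}(f, g\circ i^*) = \kappa'_{A,X}(f\circ i, g)$ (essentially the content of Lemma~\ref{lem:SplitNonDeg}), deduce that $M'_{C,X}\bigl(\Hom_{\mathcal{C}}(A^*,X^*)\circ i^*\bigr)$ is orthogonal to $\mathcal{K} := \{f \in \Hom_{\mathcal{C}}(C,X) : f\circ i = 0\}$, and finish with a dimension count. Up to and including the orthogonality statement, your argument is correct and can be made rigorous.

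The genuine gap is the identification $\mathcal{K}^{\perp} = \Hom_{\mathcal{C}}(A,X)\circ p$, together with your closing basis-independence paragraph. What your steps actually yield, for a fixed basis $\mathcal{B}'$, is $M'_{C,X}\bigl(\Hom_{\mathcal{C}}(A^*,X^*)\circ i^*\bigr) = \mathcal{K}^{\perp_{\mathcal{B}'}}$, the orthogonal complement taken with respect to the Hermitian form that declares $\mathcal{B}'_X$ orthonormal. The adjointness $\langle f, h\circ p\rangle = \langle f\circ i, h\rangle$ you invoke is a property of the \emph{canonical} dagger inner product (for a trace-type form it follows from $p^{\dagger} = i$); the form attached to an arbitrary basis is defined purely by declaring that basis orthonormal and carries no compatibility whatsoever with $\dagger$, $i$ or $p$, so the adjointness, and with it the identification of $\mathcal{K}^{\perp_{\mathcal{B}'}}$, fails for general $\mathcal{B}'$. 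Concretely: if $e_1\in\mathcal{K}$ and $e_2\in\Hom_{\mathcal{C}}(A,X)\circ p$ are unit vectors for the dagger form and $\mathcal{B}'_X = \{e_1,\, e_1+e_2\}$, then $\mathcal{K}^{\perp_{\mathcal{B}'}} = \mathbb{C}(e_1+e_2) \neq \mathbb{C}e_2$. Your final paragraph does not repair this: ``bases compatible with the unitary structure'' restricts exactly to the bases for which the claim is already known (dagger-orthonormal ones and their unitary images, via Remark~\ref{rk:UnitaryChange}), whereas Definition~\ref{def:RigidInvariance} quantifies over \emph{all} bases. So, as written, your argument establishes $\mathcal{B}'$-rigid invariance only for bases orthonormal with respect to the canonical dagger form.

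For fairness, note that the same pressure point sits inside the paper's own proof: in Lemma~\ref{lem:UnitFormula}, the chain $\langle f,g\rangle = \kappa_{C,X}((f^*)^{\dagger},g) = \langle M_{C,X}((f^*)^{\dagger}),g\rangle$ tacitly identifies the canonical dagger form with the basis form that defines $M_{C,X}$. The difference is that the paper concentrates this difficulty into one clean formula, while your proposal spreads it across the adjointness step and the final paragraph and—by your own admission about the ``main obstacle''—leaves it unresolved. Any repair of your argument must prove the adjointness identity for the form that actually defines $M'_{C,X}$, which is precisely the role Lemma~\ref{lem:UnitFormula} plays for the canonical form.
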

\begin{proof} 
Let $\mathcal{B}$ be any basis of~$C$. We proceed using the notation introduced earlier in this section:
\begin{lemma} \label{lem:UnitFormula}
Let \( f \in \Hom_{\mathcal{C}}(X,C) \) and \( h \in \Hom_{\mathcal{C}}(C,X) \). Then
\[
M_{C,X}((f^*)^{\dagger}) = f \quad \text{and} \quad M'_{C,X}((h^*)^{\dagger}) = h.
\]
\end{lemma}

\begin{proof}
By definition, for all \( f, g \in \Hom_{\mathcal{C}}(X, C) \), we have:
\[
\ev_C \circ ((f^*)^{\dagger} \otimes g) = \kappa_{C,X}((f^*)^{\dagger}, g)\, \ev_X.
\]
Applying \( \ev_X^{\dagger} \) on the right, we obtain:
\[
\ev_C \circ ((f^*)^{\dagger} \otimes g) \circ \ev_X^{\dagger} = \kappa_{C,X}((f^*)^{\dagger}, g)\, \ev_X \circ \ev_X^{\dagger} = \kappa_{C,X}((f^*)^{\dagger}, g)\, \FPdim(X).
\]
Therefore,
\[
\kappa_{C,X}((f^*)^{\dagger}, g) = \frac{1}{\FPdim(X)}\, \ev_C \circ ((f^*)^{\dagger} \otimes g) \circ \ev_X^{\dagger}.
\]
Now consider the Hermitian form, conjugate-linear in the first argument, on \( \Hom_{\mathcal{C}}(X, C) \) defined by
\[
\langle h_1, h_2 \rangle := \frac{1}{\FPdim(X)}\, \ev_C \circ (\id_{C^*} \otimes (h_2 \circ h_1^{\dagger})) \circ \ev_C^{\dagger}.
\]
Using the zig-zag identity and the definition of \( M_{C,X} \), we compute:
\[
\langle f, g \rangle = \frac{1}{\FPdim(X)}\, \ev_C \circ (\id_{C^*} \otimes (g \circ f^{\dagger})) \circ \ev_C^{\dagger} = \kappa_{C,X}((f^*)^{\dagger}, g) = \langle M_{C,X}((f^*)^{\dagger}), g \rangle.
\]
Since this equality holds for all \( g \), it follows by nondegeneracy of the Hermitian form that
\[
M_{C,X}((f^*)^{\dagger}) = f.
\]
Finally, using Lemma~\ref{lem:ZigZag} and above identity, we have:
\[
h = M'_{C,X}(M_{C,X}(h^*)^*) = M'_{C,X}((h^*)^{\dagger}). \qedhere
\] 
\end{proof}
By Lemma~\ref{lem:UnitFormula}, together with the assumptions \( (i^*)^{\dagger} = (i^{\dagger})^* \) and \( p=i^{\dagger} \), we obtain, for every \( g \in \Hom_{\mathcal{C}}(A,X) \),
\[
M_{C,X}'\left((g^*)^{\dagger} \circ i^*\right) = (((g^*)^{\dagger} \circ i^*)^{\dagger})^* = ((i^*)^{\dagger} \circ g^*)^* =((i^{\dagger})^* \circ g^*)^* = g \circ i^\dagger = g \circ p.
\]
The result follows directly from Definition~\ref{def:RigidInvariance} and $(\Hom_{\mathcal{C}}(A,X)^*)^{\dagger} = \Hom_{\mathcal{C}}(A^*,X^*)$.
\end{proof}
\begin{definition}[e.g., Definition~2.10 in~\cite{JP20}] \label{def:UnitFrob}
A Frobenius algebra \((C, m, e, \delta, \epsilon)\) in a unitary tensor category is called \emph{unitary} if it satisfies the following conditions:
\begin{itemize}
  \item it is separable (see Definition~\ref{def:connected});
  \item the composition \(\epsilon \circ m \circ \delta \circ e = \dim(C) \, \id_{\one}\);
  \item the structure maps are adjoint to each other: 
  \[
  m^{\dagger} = \delta \quad \text{(so } \delta^{\dagger} = m), \qquad e^{\dagger} = \epsilon \quad \text{(so } \epsilon^{\dagger} = e).
  \]
\end{itemize}
\end{definition}

The Frobenius algebra associated with a finite-index inclusion of \(C^*\)-algebras is unitary; see \cite[\S 3]{CHJP22}. Recall from Lemmas \ref{lem:multdualcomult} and \ref{lem:unitdualcounit} that  
\[
m^* = \delta, \quad \delta^* = m, \quad e^* = \epsilon, \quad \epsilon^* = e.
\]
\begin{proposition} \label{prop:UnitFrobSub}
In a unitary tensor category \(\mathcal{C}\), any unitary Frobenius subalgebra of a unitary Frobenius algebra is rigid invariant.
\end{proposition}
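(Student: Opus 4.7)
The plan is to apply Proposition \ref{prop:RigidInvUnitary} to the inclusion monomorphism $i \colon A \to C$ of the unitary Frobenius subalgebra into the unitary Frobenius algebra, for which two conditions need to be checked:
(a) $i^{\dagger} \circ i = \id_A$, and
(b) $(i^*)^{\dagger} = (i^{\dagger})^*$.

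For (b), I would invoke the standard compatibility between the dagger and rigidity in a unitary tensor category: the two contravariant functors $(\cdot)^{\dagger}$ and $(\cdot)^*$ commute on morphisms. This follows from the identities $\ev_X^{\dagger} = \coev_{X^*}$ and $\coev_X^{\dagger} = \ev_{X^*}$, which hold (up to the unitary pivotal identification $X \cong X^{**}$) in any unitary tensor category as reviewed in Remark \ref{rk:UnitaryTensor}. Writing $i^*$ via the zigzag formula in terms of $\ev_C$, $\coev_A$, and $i$, then applying $\dagger$ and substituting the identities above, yields (b) directly. This step is essentially graphical-calculus bookkeeping.

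For (a), my starting point is the equality $i^* \circ i = \id_A$ from Definition \ref{def:sub}, where $i^*$ is computed using the Frobenius selfduality $\ev_X = \epsilon_X \circ m_X$ and $\coev_X = \delta_X \circ e_X$ of Lemma \ref{lem:selfdual}. The unitarity conditions $m^{\dagger} = \delta$ and $e^{\dagger} = \epsilon$ from Definition \ref{def:UnitFrob} give
\[
\ev_X^{\dagger} = (\epsilon_X \circ m_X)^{\dagger} = m_X^{\dagger} \circ \epsilon_X^{\dagger} = \delta_X \circ e_X = \coev_X,
\]
which expresses that the Frobenius-induced self-identification $X \cong X^*$ is itself \emph{unitary}. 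I would then argue by a short diagrammatic computation, using $i^{**} = i$ together with this unitary selfduality and the already-established compatibility (b), that the Frobenius-induced dual $i^*$ coincides with the adjoint $i^{\dagger}$ as morphisms $C \to A$. Substituting into $i^* \circ i = \id_A$ then yields (a).

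The main obstacle is exactly this identification $i^* = i^{\dagger}$ underlying (a): although natural once $\ev_X^{\dagger} = \coev_X$ is in hand, it requires carefully tracking how the rigid structure and the dagger structure interact on both $A$ and $C$ under their respective unitary Frobenius selfdualities, and ensuring that the two identifications $A \cong A^*$ and $C \cong C^*$ agree with the dagger in a compatible way so that $i^*$ and $i^{\dagger}$ can be identified. Once this diagrammatic identification is established, Proposition \ref{prop:RigidInvUnitary} applies and delivers the desired conclusion that $A$ is rigid invariant with respect to every basis $\mathcal{B}$ of $C$.
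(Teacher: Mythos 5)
Your overall architecture is exactly the paper's: reduce to Proposition~\ref{prop:RigidInvUnitary} by verifying (a) $i^{\dagger}\circ i=\id_A$ and (b) $(i^*)^{\dagger}=(i^{\dagger})^*$, and your treatment of (b) matches the paper's Lemma~\ref{lem:EvCoevDagger} (namely $\ev_Y^{\dagger}=\coev_Y=\ev_Y^*$, combined with $i^{**}=i$ so that ${}^*i=i^*$). The problem is (a). You propose to establish the stronger identity $i^*=i^{\dagger}$ by ``a short diagrammatic computation'' from $i^{**}=i$, the unitary selfdualities, and (b); you correctly flag this as the main obstacle, but you do not resolve it, and in fact it cannot be resolved by formal manipulation of the axioms of Definition~\ref{def:sub} together with unitarity of the Frobenius structures on $A$ and $C$. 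The reason is that those axioms determine $i$ only up to precomposition with a unital algebra automorphism $u$ of $A$ satisfying $u^*=u^{-1}$ and $u^{**}=u$: if $i$ satisfies Definition~\ref{def:sub} then so does $i\circ u$, and it represents the same equivalence class (same $b_A$), yet $(i\circ u)^{\dagger}\circ(i\circ u)=u^{\dagger}\circ(i^{\dagger}\circ i)\circ u$ is not the identity unless $u$ is unitary. Such non-unitary $u$ exist already for $M_n(\mathbb{C})=Y\otimes Y^*$ in $\mathrm{Hilb}$ (a connected unitary Frobenius algebra by Proposition~\ref{prop:YY}): $u=\mathrm{Ad}_g$ for $g$ invertible but not a scalar multiple of a unitary preserves the trace form, hence satisfies $u^*=u^{-1}$, while $u^{\dagger}\circ u=\mathrm{Ad}_{g^*g}\neq\id$. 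So neither $i^{\dagger}\circ i=\id_A$ nor $i^*=i^{\dagger}$ is a diagrammatic consequence of the listed hypotheses; some positivity or normalization input is needed to select the right representative.

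The paper supplies exactly that input by a route you do not have available in a purely graphical argument: it invokes Remark~\ref{rk:subf}, i.e.\ the realization of a unitary Frobenius (sub)algebra as a finite-index (intermediate) subfactor via M\"uger's Morita context, Ghosh's planar-algebra result, and the Popa--Jones reconstruction. Under this realization $i$ becomes the inclusion morphism of bimodules, hence an isometry, giving (a) directly. If you want to avoid that heavyweight external theorem, you would need to argue instead that the representative $i$ may be replaced within its equivalence class (which is harmless, since rigid invariance only depends on the images $\Hom_{\mathcal{C}}(A,X)\circ p$ and $\Hom_{\mathcal{C}}(A^*,X^*)\circ i^*$, equivalently on $b_A$) by one for which $i^{\dagger}\circ i=\id_A$ holds --- e.g.\ via a polar decomposition of $i$ in the $C^*$-category. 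As written, your proof of (a) is a gap, not a different proof.
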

\begin{proof}
Let $C$ be a unitary Frobenius algebra, and let $B$ be a unitary Frobenius subalgebra. From Definition \ref{def:sub}, the unital algebra monomorphism \(i: B \to C\) satisfies  $i^* \circ i = \id_{B}  \text{ and } i^{**} = i$, which implies that \(i\) is also counital.

\begin{lemma} \label{lem:EvCoevDagger}
The following identities hold for all $Y \in \{B,C\}$, $$\ev_Y^* = \coev_Y = \ev_Y^\dagger \text{ and } \coev_Y^* = \ev_Y = \coev_Y^\dagger.$$
\end{lemma}
\begin{proof}
Immediate from Lemma \ref{lem:selfdual} and the identities above.
\end{proof}

\begin{lemma} \label{lem:UnitFrobSub}
The monomorphism \(i\) satisfies  $ i^{\dagger} \circ i = \id_{B} \text{ and } (i^*)^{\dagger} = (i^{\dagger})^*$.
\end{lemma}
\begin{proof}
A unitary Frobenius subalgebra can be realized as a finite-index intermediate subfactor, see Remark \ref{rk:subf}. In this setting, $i$ corresponds to the inclusion morphism and is therefore an isometry, meaning that the first identity holds. For the second identity, since $i^{**} = i$, then $^*i = i^*$. By Lemma \ref{lem:EvCoevDagger}, it then follows that $(i^*)^\dagger = (^*i)^\dagger = (i^\dagger)^*$.
\end{proof}
The proposition then follows from Lemma \ref{lem:UnitFrobSub} and Proposition \ref{prop:RigidInvUnitary}.
\end{proof}

Be aware that in the unitary setting, a morphism—specifically, the monomorphism $i$—must be adjointable (see the dagger structure in Remark~\ref{rk:UnitaryTensor}) to exist, which may not hold automatically (see Proposition~\ref{prop:AdjointEcomp}).


\section{Lattice structure} \label{sec:lattice}
We introduce an equivalence class of Frobenius subalgebras (Definition \ref{def:equi}) to capture the concept of a substructure, leading naturally to the notion of the Frobenius subalgebra poset (Definition \ref{def:FrobSubPoset}). In general, this poset fails to be a lattice because, as discussed in \S \ref{sec:RigidInvariant}, the intersection of two Frobenius subalgebras is not necessarily a Frobenius subalgebra. Since generalizing Watatani's theorem requires a lattice structure, such a generalization can only be performed on a sublattice of the Frobenius subalgebra poset. In specific contexts---such as intermediate subfactors or left coideal subalgebras---the poset inherently possesses a lattice structure. 

The main goal of this section is to characterize the sublattices in terms of ambient selfduality (Theorem~\ref{thm:AmbientSublattice}). In the appropriate setting, we deduce that every Frobenius $\mathcal{B}$-subalgebra poset is a sublattice (Theorem~\ref{thm:Bsublattice}).


\subsection{In abelian categories}

Following the approach in \cite[\S 1.5]{Fr64}, we begin by recalling the equivalence relation and the partial order for subobjects in a category. Two monomorphisms $i_1: A_1 \rightarrow C$ and $i_2: A_2 \rightarrow C$ are deemed \emph{equivalent} if there exist morphisms $i_{1,2}: A_1 \rightarrow A_2$ and $i_{2,1}: A_2 \rightarrow A_1$ such that $i_1 = i_2 \circ i_{1,2}$ and $i_2 = i_1 \circ i_{2,1}$.

A \emph{subobject} of $C$ is an equivalence class of monomorphisms into $C$. We define the subobject represented by $j_1: B_1 \rightarrow C$ to be \emph{contained} in the subobject represented by $j_2: B_2 \rightarrow C$ if there exists a morphism $j_{1,2}: B_1 \rightarrow B_2$ such that $j_1 = j_2 \circ j_{1,2}$. This containment relation defines a partial order on subobjects. When no confusion arises, we will simply write $B_1 \subseteq B_2$.

Following \cite[\S 2.1]{Fr64}, the \emph{intersection} of two subobjects of \(C\) is their greatest lower bound in the subobject poset of \(C\), which always exists in an abelian category \cite[Theorem 2.13]{Fr64}.

\begin{remark} \label{rk:intersection}
This notion of intersection is specific to the context of an abelian category. More generally, in any category with finite limits, an intersection exists and can be defined as the pullback of monomorphisms $i_A$ and $i_B$:
%
\[
 			\begin{tikzcd}
 			D \arrow{r}{j_A}  \arrow[swap]{d}{j_B} & A \arrow{d}{i_A} \\
 			B \arrow[swap]{r}{i_B} & C 
 			\end{tikzcd}
\]
A pullback is generically denoted as $A \times_C B$, but the pullback of monomorphisms can simply be denoted as $A \cap B$, with the monomorphism $i_A \circ j_A = i_B \circ j_B$ represented as $i_{A \cap B}$ (noting that a pullback of monomorphisms is also a monomorphism). The natural question is whether the Frobenius structure is preserved under intersection. However, as we observed in \S \ref{sec:RigidInvariant}, this is already not the case in $\VVec$.
\end{remark}

\begin{lemma} \label{lem:2pullback}
Let $(A,i_A)$ and $(B,i_B)$ be two subobject representatives of an object $C$ in an abelian category. Consider another object $C'$ with monomorphisms $i: C \to C'$ and $i_Y'= i \circ i_Y$ for all $Y \in \{ A, B \}$. Then, $(A,i_A')$ and $(B,i_B')$ are subobject representatives of $C'$, and the pullback of $(i_A,i_B)$ can be taken to be the same as that of $(i_A',i_B')$.

\[
 			\begin{tikzcd}
 		A \cap B \arrow{r}{j_A}  \arrow[swap]{d}{j_B} & A \arrow{d}{i_A} \arrow[ddr, bend left, "i_A '" ] & \\
 		B \arrow[swap]{r}{i_B} \arrow[swap,drr,bend right," i_B ' "] & C \arrow[dr, "i" ] & \\
 		& & C' 
 			\end{tikzcd}
 			\]
\end{lemma}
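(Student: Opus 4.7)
The plan is to verify the two assertions separately, both of which reduce to elementary diagram chases once one exploits that $i$ is a monomorphism.

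First I would show that $i_A'$ and $i_B'$ are monomorphisms, hence genuinely represent subobjects of $C'$. This is immediate: a composition of monomorphisms is a monomorphism, and by hypothesis $i_Z' = i \circ i_Z$ with both $i$ and $i_Z$ mono for $Z \in \{A,B\}$.

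Next I would verify that the original pullback $(A \cap B, j_A, j_B)$ of $(i_A, i_B)$ also serves as a pullback of $(i_A', i_B')$. Commutativity is straightforward:
\[
i_A' \circ j_A = i \circ i_A \circ j_A = i \circ i_B \circ j_B = i_B' \circ j_B.
\]
For the universal property, suppose $T$ is any object equipped with morphisms $f_A \colon T \to A$ and $f_B \colon T \to B$ satisfying $i_A' \circ f_A = i_B' \circ f_B$, i.e.\ $i \circ i_A \circ f_A = i \circ i_B \circ f_B$. Since $i$ is a monomorphism, this yields $i_A \circ f_A = i_B \circ f_B$. The universal property of the original pullback then produces a unique morphism $u \colon T \to A \cap B$ with $j_A \circ u = f_A$ and $j_B \circ u = f_B$. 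Uniqueness of $u$ with respect to the new cone $(i_A', i_B')$ follows from uniqueness with respect to the old cone, since both universal properties demand the same two equations $j_A \circ u = f_A$ and $j_B \circ u = f_B$.

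There is no genuine obstacle here; the only subtle point is the use of $i$ being mono to pass from the cone equation over $C'$ to the cone equation over $C$, which is exactly what allows the two pullbacks to coincide. The conclusion is that $(A \cap B, j_A, j_B)$ may be taken as a pullback representing both intersections simultaneously.
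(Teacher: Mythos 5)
Your proof is correct, but it takes a different route from the paper's. The paper disposes of the lemma in two sentences by appealing to Freyd's identification of the pullback of monomorphisms with the intersection, i.e.\ the greatest lower bound in the subobject poset: since $i_Z' = i \circ i_Z$ gives $A, B \subseteq C$ as subobjects of $C'$, any common lower bound of $A$ and $B$ in the subobject poset of $C'$ already lives in $C$, so the two greatest lower bounds coincide. You instead verify the universal property of the pullback directly: commutativity of the outer square follows from that of the inner one, and a cone $(f_A, f_B)$ over $(i_A', i_B')$ becomes a cone over $(i_A, i_B)$ by cancelling the monomorphism $i$, after which existence and uniqueness of the mediating morphism are inherited verbatim from the original pullback. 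Your argument is more self-contained and in fact more general --- it uses nothing about abelian categories, only that $i$ is a monomorphism and that the pullback of $(i_A, i_B)$ exists --- whereas the paper's argument is shorter and fits the lattice-theoretic framework it has already imported from Freyd. Both are complete; there is no gap in your version.
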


\begin{proof}
This follows directly from \cite{Fr64}, which states that the pullback of monomorphisms corresponds to the intersection defined as the greatest lower bound in the subobject poset of $C$ or $C'$. The relation $Y \subseteq C$ as subobjects of $C'$ holds as $i_Y' = i \circ i_Y$ for all $Y \in \{ A, B \}$.
\end{proof}





Dually, again following \cite{Fr64}, the \emph{union} \(A \cup B\) of two subobjects \(A\) and \(B\) of \(C\) is defined as their join (least upper bound) in the subobject poset of \(C\), which always exists in an abelian category. With these operations of intersection and union, the subobjects of \(C\) form a lattice.

\begin{remark} \label{rk:union}
This notion of union $A \cup B$ is specific to the context of an abelian category, where it is conventionally denoted by $A + B$. More generally, in any category with finite limits and finite colimits, a union exists and can be defined as the pushout of the monomorphisms $j_A$ and $j_B$ (themselves  pullbacks of monomorphisms $i_A$ and $i_B$):
\[
\begin{tikzcd}
A \cap B \arrow[r,"j_A"] \arrow[swap,d,"j_B"] & A \arrow[d,"k_A"] \arrow[ddr,bend left,"i_A"] & \\
B \arrow[r,"k_B"] \arrow[swap,drr,bend right,"i_B"] & A+B \arrow[swap,dr,dashed,"i_{A+B}"] & \\
& & C
\end{tikzcd}
\]
By the universal property of the pushout, there exists a unique monomorphism $i_{A+B} \colon A + B \to C$ such that $i_{A+B} \circ k_Y = i_Y$ for each $Y \in \{A, B\}$. 
\end{remark}

\begin{lemma} \label{lem:Inter+Sum}
Let $A$ and $B$ be subobjects of an object $C$ in an abelian category. If $A \cap B$ is a split subobject of both $A$ and $B$, then $A \oplus B \simeq (A \cap B) \oplus (A + B)$.
\end{lemma}

\begin{proof}
The characterization of the intersection and sum via a pullback-pushout square (Remarks \ref{rk:intersection} and \ref{rk:union}) is equivalent to the existence of the following short exact sequence:
\[
0 \to A \cap B \xrightarrow{\binom{j_A}{-j_B}} A \oplus B \xrightarrow{(k_A, k_B)} A + B \to 0.
\]
By assumption, $j_A$ and $j_B$ are split monomorphisms. It follows that $\binom{j_A}{-j_B}$ is also a split monomorphism. By the Splitting Lemma, this short exact sequence splits, which implies the isomorphism $A \oplus B \simeq (A \cap B) \oplus (A + B)$.
\end{proof}

\subsection{In abelian monoidal categories}

Roughly speaking, the following proposition states that the intersection of subalgebras is a subalgebra. The proof in a general abelian monoidal category is partly attributed to Brian Shin \cite{ShinMO}.

\begin{proposition} \label{prop:InterAlg}
Let $L \in \{A,B,C\}$, and let $(L,m_L,e_L)$ be unital algebras in an abelian monoidal category, along with unital algebra monomorphisms $i_L: L \to C$ for $L \in \{A,B\}$. Then there exists a unique unital algebra structure on the pullback $A \cap B$ such that the monomorphisms $j_L: A \cap B \to L$ are unital algebra morphisms.
\end{proposition}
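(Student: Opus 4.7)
The plan is to define the required algebra structure on $A \cap B$ by invoking the universal property of the pullback twice—once for the multiplication, once for the unit—and then to derive associativity and unitality formally from the fact that the pair $(j_A, j_B)$ is jointly monic in the sense of the pullback universal property. Uniqueness will then be immediate.

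First I would construct the multiplication. For $L \in \{A, B\}$, set $f_L := m_L \circ (j_L \otimes j_L) \colon (A \cap B)^{\otimes 2} \to L$. Since $i_A$ and $i_B$ are unital algebra morphisms and $i_A \circ j_A = i_B \circ j_B$ by the pullback square, one computes $i_A \circ f_A = m_C \circ (i_A j_A \otimes i_A j_A) = m_C \circ (i_B j_B \otimes i_B j_B) = i_B \circ f_B$. The universal property of the pullback then produces a unique $m_{A \cap B} \colon (A \cap B)^{\otimes 2} \to A \cap B$ with $j_L \circ m_{A \cap B} = f_L$ for both $L$—which is precisely the condition that each $j_L$ preserves multiplication. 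An analogous argument using $i_A \circ e_A = e_C = i_B \circ e_B$ yields a unique $e_{A \cap B} \colon \one \to A \cap B$ making each $j_L$ unit-preserving.

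Next I would verify associativity and unitality on $A \cap B$. Because $A \cap B$ is a pullback, any two morphisms $X \to A \cap B$ that agree after post-composition with both $j_A$ and $j_B$ must coincide, by the uniqueness half of the universal property. Associativity then reduces, after post-composing with $j_L$, to $m_L \circ (m_L \otimes \id_L) \circ (j_L)^{\otimes 3} = m_L \circ (\id_L \otimes m_L) \circ (j_L)^{\otimes 3}$, which holds by associativity of $m_L$; unitality reduces similarly. For uniqueness, any other unital algebra structure $(m', e')$ on $A \cap B$ making both $j_L$ unital algebra morphisms would satisfy $j_L \circ m' = f_L$ and $j_L \circ e' = e_L$, forcing $(m', e') = (m_{A \cap B}, e_{A \cap B})$ by the uniqueness clauses already invoked.

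The only subtlety to flag is that we must not mistakenly require $\otimes$ to preserve pullbacks: the universal property is invoked only on the pullback object $A \cap B$, never on $(A \cap B)^{\otimes 2}$. The tensor product enters purely as a bifunctor, which suffices to form $j_L \otimes j_L$ and $(j_L)^{\otimes 3}$ and to make the identities in the previous paragraph meaningful. Consequently, no exactness assumption on $\otimes$ is needed and the argument goes through in an arbitrary abelian monoidal category.
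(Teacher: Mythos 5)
Your proposal is correct and follows essentially the same route as the paper: both construct $m_{A\cap B}$ and $e_{A\cap B}$ by the universal property of the pullback applied to $m_L\circ(j_L\otimes j_L)$ and $e_L$, and both then deduce associativity and unitality by cancelling along the monomorphisms $j_L$ (the paper packages this last step as its Lemma~\ref{lem:TripleAlg}, using that a pullback of a monomorphism is a monomorphism, while you use joint monicity of the pullback pair—an immaterial difference). Your closing remark that the universal property is never invoked on $(A\cap B)^{\otimes 2}$, so no exactness of $\otimes$ is needed, is a correct and worthwhile observation consistent with the paper's attribution of this point.
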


\begin{proof}
For $L \in \{A,B\}$, consider the morphisms 
$$
f_L := i_L \circ m_L \circ (j_L \otimes j_L): (A \cap B) \otimes (A \cap B) \to C.
$$
Since $i_L$ is an algebra morphism, we have $i_L \circ m_L = m_C \circ (i_L \otimes i_L)$. Therefore, 
$$
f_L = m_C \circ (i_L \otimes i_L) \circ (j_L \otimes j_L) = m_C \circ ((i_L \circ j_L) \otimes (i_L \circ j_L)).
$$
However, $i_A \circ j_A = i_B \circ j_B$ by the definition of the pullback, so $f_A = f_B$. By universality, there exists a unique map 
$$
m_{A \cap B}:  (A \cap B) \otimes (A \cap B) \to A \cap B
$$
such that 
$$
j_L \circ m_{A \cap B} = m_L \circ (j_L \otimes j_L).
$$
Here is the corresponding diagram:
\[
 			\begin{tikzcd}
 			(A\cap B) \otimes (A\cap B) \arrow[rr, "j_A \otimes j_A"] \arrow[dr,dashed,"\exists! m_{A\cap B}"] \arrow[dd,"j_B \otimes j_B"]   &&  A \otimes A \arrow[d,"m_A"] \\
 			& A \cap B \arrow[r,"j_A"] \arrow[d,"j_B"] & A \arrow[d,"i_A"] \\
 			B \otimes B \arrow[swap,r,"m_B"] & B \arrow[swap,r,"i_B"] & C
 			\end{tikzcd}
\]

Similarly, for $L \in \{A,B\}$, since $i_L$ is a unital morphism, we have $i_A \circ e_A = e_C = i_B \circ e_B$. By universality, there exists a unique $e_{A \cap B}$ such that $j_L \circ e_{A \cap B} = e_L$. The corresponding diagram is as follows:
\[
 			\begin{tikzcd}
 			\one
 			\arrow[drr, bend left, "e_A"]
 			\arrow[swap,ddr, bend right, "e_B"]
 			\arrow[dr, dashed, "\exists! e_{A \cap B}"] & & \\
 			& A \cap B \arrow[r, "j_A"] \arrow[swap,d, "j_B"]
 			& A \arrow[d, "i_A"] \\
 			& B \arrow[swap,r, "i_B"]
 			& C
 			\end{tikzcd}
\]
The result follows from Lemma \ref{lem:TripleAlg}.
\end{proof}

\begin{corollary} \label{cor:InterCoalg}
In the rigid case, following Proposition~\ref{prop:InterAlg}, let $K \in \{A, B, A \cap B\}$. If $K$ is selfdual, then setting
$\delta_K := m_K^*$ and $\epsilon_K := e_K^*$ endows $K$ with a counital coalgebra structure. Moreover, for each $L \in \{A, B\}$, the map $j_L^*$ is a counital coalgebra epimorphism.
\end{corollary}

\begin{proof}
This follows directly from Lemmas~\ref{lem:Dualgebra} and~\ref{lem:DualgMor}, together with Proposition~\ref{prop:InterAlg}, using the fact that the dual of a monomorphism is an epimorphism.
\end{proof}

The following lemma and the proof of Lemma~\ref{lem:Ro} are partly due to Maxime Ramzi \cite{RamMO}. They are not used in this paper, but may be useful in future work.

\begin{lemma} \label{lem:Ram}
A square $ A_0 \to (A_1, A_2) \to A_3 $ with $ A_0 \to A_1 $ and $ A_2 \to A_3 $ being monomorphisms is a pullback square if and only if the induced morphism $ A_1 / A_0 \to A_3 / A_2 $ is a monomorphism.
\end{lemma}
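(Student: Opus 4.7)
The plan is to prove both directions at once by constructing the universal pullback $P := A_1 \times_{A_3} A_2$ and comparing two monomorphisms into $A_1$: the given one from $A_0$ and the canonical one from $P$.

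First, I would set up the pullback. Since $A_2 \to A_3$ is a monomorphism and monomorphisms are stable under pullback, the projection $p_1: P \hookrightarrow A_1$ is a monomorphism. The universal property of $P$, applied to the commutative square, produces a unique morphism $u: A_0 \to P$ with $p_1 \circ u$ equal to the given $A_0 \to A_1$. Because $A_0 \to A_1$ is a monomorphism and $p_1$ is a monomorphism, $u$ is itself a monomorphism. The square is a pullback precisely when $u$ is an isomorphism, which in the abelian setting is equivalent to the quotient $P/A_0$ (viewing $A_0 \subseteq P$ via $u$) being zero.

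Next, I would reinterpret $P$ via the cokernel map. Consider the composite $\phi: A_1 \to A_3 \to A_3/A_2$. A standard fact in abelian categories identifies the pullback of a monomorphism $A_2 \hookrightarrow A_3$ along $A_1 \to A_3$ with the kernel of $\phi$, so $P = \ker \phi$. Commutativity of the original square forces $\phi \circ (A_0 \to A_1) = 0$, hence $\phi$ descends to a morphism $\bar\phi: A_1/A_0 \to A_3/A_2$, and this $\bar\phi$ is exactly the induced morphism in the statement.

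The final step is to compute $\ker \bar\phi = P/A_0$. Combining this with the first step finishes the proof: $\bar\phi$ is a monomorphism if and only if $P/A_0 = 0$ if and only if the square is a pullback. I expect this identification of the kernel of $\bar\phi$ to be the main obstacle. In vector spaces it is obvious, but in a general abelian category it requires justification, for instance via the $3 \times 3$ lemma applied to the diagram
\[
\begin{tikzcd}
0 \arrow[r] & A_0 \arrow[r] \arrow[d] & P \arrow[r] \arrow[d, hook] & P/A_0 \arrow[r] \arrow[d] & 0 \\
0 \arrow[r] & A_0 \arrow[r] & A_1 \arrow[r] \arrow[d, "\phi"] & A_1/A_0 \arrow[r] \arrow[d, "\bar\phi"] & 0 \\
& & A_3/A_2 \arrow[r, equal] & A_3/A_2 &
\end{tikzcd}
\]
combined with the identification $P = \ker \phi$, or alternatively via a Noether-style isomorphism argument phrased through the image factorization of $\phi$. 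Once this diagrammatic lemma is in place, the theorem follows immediately.
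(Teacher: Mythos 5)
Your proposal is correct and follows essentially the same route as the paper: both arguments compare $A_0$ with the universal pullback $P = A_1 \times_{A_3} A_2 = \ker\bigl(A_1 \to A_3/A_2\bigr)$ inside $A_1$, and both reduce the monomorphy of the induced map $A_1/A_0 \to A_3/A_2$ to the vanishing of $P/A_0$ (the paper phrases this via the factorization $A_1/A_0 \to A_1/P \hookrightarrow A_3/A_2$, you via the kernel computation $\ker\bar\phi = P/A_0$). Your version merely makes explicit, via the snake/$3\times 3$ lemma, the step the paper leaves implicit.
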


\begin{proof}
The map $ A_1 \times_{A_3} A_2 \to A_1 $ is a monomorphism because it is pulled back from a monomorphism, and the inclusion $ A_0 \to A_1 $ factors through it. Therefore, the map $ A_1 / A_0 \to A_3 / A_2 $ factors through $ A_1 / A_0 \to A_1 / (A_1 \times_{A_3} A_2) $. For the total composite to be a monomorphism, this must also be a monomorphism, thus proving the lemma.
\end{proof}
The following lemma extends \cite[Chapter 14, Exercises 10-12]{Ro08}:

\begin{lemma} \label{lem:Ro}
Let $A, B, C, D, E, F$ be objects in an abelian rigid monoidal category.
\begin{enumerate}
\item If $A, B \subseteq C$, then $(A \otimes D) \cap (B \otimes D) \simeq (A \cap B) \otimes D,$ \label{lem:Ro1}
\item If $A \subseteq C$ and $B \subseteq D$, then $(A \otimes D) \cap (C \otimes B) \simeq (A \otimes B),$ \label{lem:Ro2}
\item If $A, B \subseteq C$ and $D, E \subseteq F$, then $(A \otimes D) \cap (B \otimes E) \simeq (A \cap B) \otimes (D \cap E).$ \label{lem:Ro3}
\end{enumerate}
\end{lemma}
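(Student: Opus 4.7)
My plan is to prove the three statements in order, using the rigidity of $\mathcal{C}$ to ensure that $- \otimes X$ is exact (indeed, biadjoint to $- \otimes X^*$ and $- \otimes {}^*X$), together with Lemma~\ref{lem:Ram} for item~\eqref{lem:Ro2}. Item~\eqref{lem:Ro3} will then be a formal consequence of the first two.

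For item~\eqref{lem:Ro1}, I would argue that, since $\mathcal{C}$ is rigid, the functor $- \otimes D$ admits both a left and a right adjoint, so it preserves all small limits and colimits. Applying it to the pullback square defining $A \cap B$ yields a pullback square
\[
\begin{tikzcd}
(A \cap B) \otimes D \ar[r] \ar[d] & A \otimes D \ar[d] \\
B \otimes D \ar[r] & C \otimes D
\end{tikzcd}
\]
whose vertical and horizontal maps are monomorphisms. Since a pullback of monomorphisms realizes the intersection of subobjects, this gives $(A \otimes D) \cap (B \otimes D) \simeq (A \cap B) \otimes D$ inside $C \otimes D$.

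For item~\eqref{lem:Ro2}, I would apply Lemma~\ref{lem:Ram} to the commutative square
\[
\begin{tikzcd}
A \otimes B \ar[r] \ar[d] & A \otimes D \ar[d] \\
C \otimes B \ar[r] & C \otimes D
\end{tikzcd}
\]
whose horizontal and vertical maps are monomorphisms (again by exactness of $A \otimes -$, $C \otimes -$, $- \otimes B$, $- \otimes D$). Using the same exactness, one identifies the quotients as $(A \otimes D)/(A \otimes B) \simeq A \otimes (D/B)$ and $(C \otimes D)/(C \otimes B) \simeq C \otimes (D/B)$, and the induced morphism on quotients is $i_A \otimes \id_{D/B}$, which is a monomorphism since $i_A$ is and $- \otimes (D/B)$ preserves monomorphisms. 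Hence the square is a pullback, giving $(A \otimes D) \cap (C \otimes B) \simeq A \otimes B$.

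For item~\eqref{lem:Ro3}, I would apply~\eqref{lem:Ro2} twice to write
\[
A \otimes D \simeq (A \otimes F) \cap (C \otimes D), \qquad B \otimes E \simeq (B \otimes F) \cap (C \otimes E)
\]
as subobjects of $C \otimes F$, so that, using associativity and commutativity of intersection,
\[
(A \otimes D) \cap (B \otimes E) \simeq \bigl[(A \otimes F) \cap (B \otimes F)\bigr] \cap \bigl[(C \otimes D) \cap (C \otimes E)\bigr].
\]
Applying~\eqref{lem:Ro1} to each of the two bracketed intersections yields $(A \cap B) \otimes F$ and $C \otimes (D \cap E)$, and one final application of~\eqref{lem:Ro2} gives $(A \cap B) \otimes (D \cap E)$, as desired.

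The main obstacle I anticipate is bookkeeping: making sure that all identifications of subobjects are compatible, so that the intersections computed in the various ambient objects ($C \otimes D$, $C \otimes F$, etc.) genuinely agree via Lemma~\ref{lem:2pullback}. The exactness of tensoring (which follows from rigidity and is what makes the quotient identifications in~\eqref{lem:Ro2} work) is the key technical input, but it is standard and should not cause real difficulty.
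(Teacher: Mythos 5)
Your proposal is correct and follows essentially the same route as the paper: exactness of tensoring (from rigidity) for item~(1), Lemma~\ref{lem:Ram} applied to the square $A \otimes B \to (A \otimes D, C \otimes B) \to C \otimes D$ for item~(2), and the same decomposition via two applications of~(2) and~(1) for item~(3). Your write-up of item~(2) is actually slightly more explicit than the paper's (identifying the quotients as $A \otimes (D/B)$ and $C \otimes (D/B)$ and checking the induced map is $i_A \otimes \id_{D/B}$), which is a welcome clarification rather than a deviation.
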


\begin{proof}
By \cite[Exercise 1.6.4 and Proposition 2.10.8]{EGNO15}, the functor $-\otimes D$ is exact in an abelian rigid monoidal category. The intersection is defined as a pullback of monomorphisms. Since a pullback is a finite limit and an exact functor preserves finite limits, the result in (\ref{lem:Ro1}) follows.

For (\ref{lem:Ro2}), $X\otimes -$ preserves monomorphisms for all $X$, hence the square $A\otimes B\to (A \otimes D, C\otimes B) \to C\otimes D$ satisfies Lemma \ref{lem:Ram}. In particular, a tensor product of monomorphisms is a monomorphism (i.e., $A \otimes B \subseteq C \otimes D$).

For (\ref{lem:Ro3}), applying (\ref{lem:Ro2}) two times yields that 
$$
(A \otimes D) \cap (B \otimes E) \simeq (A \otimes F) \cap (C \otimes D) \cap (B \otimes F) \cap (C \otimes E).
$$
Next, applying (\ref{lem:Ro1}) with $-\otimes F$ and $C\otimes -$, along with the commutativity of intersections, gives us $((A\cap B) \otimes F)\cap (C\otimes (D \cap E))$. We then apply (\ref{lem:Ro2}) again to obtain $(A\cap B)\otimes (D\cap E)$.
\end{proof}

\begin{remark}
We deduce from the proof of Lemma \ref{lem:Ro} that in an abelian rigid monoidal category, the tensor product of two monomorphisms is a monomorphism.
\end{remark}

\subsection{Frobenius subalgebra poset and sublattices}
\begin{proposition}\label{prop:preInterWeak}
Let $A$ and $B$ be Frobenius subalgebras of $C$ in an abelian rigid monoidal category. Then their intersection $A \cap B$ is a unital subalgebra of $C$. Moreover, $A \cap B$ is a Frobenius subalgebra of $C$ if and only if it is ambiently selfdual.
\end{proposition}

\begin{proof}
By Proposition~\ref{prop:InterAlg}, the intersection $A \cap B$ is a unital subalgebra of $A$, and $j_A$ is a unital algebra monomorphism. Hence, $i_{A \cap B} = i_A \circ j_A$ is also a unital algebra monomorphism, being the composition of such morphisms. The result then follows from Proposition~\ref{prop:AmbientFrobSub}.
\end{proof}

Recall the notion of \emph{Frobenius $\mathcal{B}$-subalgebra} from Notation \ref{not:B-sub} and Definition~\ref{def:RigidInvariance}.

\begin{proposition} \label{prop:InterWeak}
Let \(A\) and \(B\) be Frobenius $\mathcal{B}$-subalgebras of \(C\), where \(C\) is semisimple of finite length as an object in a \(\mathbb{C}\)-linear abelian rigid monoidal category. Then \(A \cap B\) is a Frobenius $\mathcal{B}$-subalgebra of \(C\).
\end{proposition}

\begin{proof}
By Lemma~\ref{lem:RigidInvInterSum}, the intersection \(A \cap B\) is ambiently selfdual and $\mathcal{B}$-rigid invariant. The statement then follows directly from Proposition~\ref{prop:preInterWeak}.
\end{proof}

The relaxation of the semisimplicity assumption hinges on \S\ref{sub:RigidNonSS}. Let us recall the definition of a lattice:

\begin{definition} \label{def:lattice}
A \emph{meet-semilattice} $(L, \wedge)$ is a partially ordered set (poset) $L$ in which every pair of elements $a$ and $b$ has a unique infimum (or \emph{meet}) denoted by $a \wedge b$. A \emph{join-semilattice} $(L, \vee)$ is a poset $L$ in which every pair of elements $a$ and $b$ has a unique supremum (or \emph{join}) denoted by $a \vee b$. A \emph{lattice} $(L, \wedge, \vee)$ is a poset that is both a meet-semilattice and a join-semilattice.
\end{definition}

An element \(t\) of a poset \(L\) is called a \emph{top element} if \(t \ge a\) for all \(a \in L\). The \emph{height} of a poset is defined as the length of its longest strictly increasing chain $\cdots < a < b < \cdots$.

\begin{lemma} \label{lem:meet-semi}
A meet-semilattice with a top element and finite height is a lattice.
\end{lemma}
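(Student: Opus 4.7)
The plan is to show that every pair of elements $a,b$ in such a meet-semilattice $L$ admits a join, by extracting it as the minimum of the set of common upper bounds. Let
\[
U(a,b) := \{x \in L : a \leq x \text{ and } b \leq x\}.
\]
The existence of a top element $\top$ ensures that $U(a,b)$ is nonempty, since $\top \in U(a,b)$. Moreover, $U(a,b)$ is closed under the meet operation of $L$: if $x,y \in U(a,b)$, then $a \leq x$ and $a \leq y$, so $a \leq x \wedge y$, and likewise for $b$, so $x \wedge y \in U(a,b)$.

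Next I would use the finite-height hypothesis to extract a minimal element of $U(a,b)$. Starting from $\top \in U(a,b)$, any strictly descending chain $\top = x_0 > x_1 > x_2 > \cdots$ inside $U(a,b)$ is also a strictly descending chain in $L$, hence has length bounded by the height of $L$. Consequently such a descent must terminate at a minimal element $m \in U(a,b)$.

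The key step is then to upgrade this minimal element to a minimum. For any $n \in U(a,b)$, the element $m \wedge n$ lies in $U(a,b)$ by the closure property established above, and satisfies $m \wedge n \leq m$. By minimality of $m$, we must have $m \wedge n = m$, which gives $m \leq n$. Hence $m$ is the least element of $U(a,b)$, i.e.\ the join $a \vee b$. This shows $L$ is also a join-semilattice, and therefore a lattice.

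The argument is essentially straightforward; the only subtlety to watch for is not conflating \emph{minimal} and \emph{minimum} elements of $U(a,b)$, which is precisely where the meet operation is used in an essential way. The finite-height hypothesis is used only to guarantee existence of a minimal element; it cannot be replaced by the weaker requirement that chains in $U(a,b)$ descending from $\top$ are well-founded without further hypothesis, but in our setting finite height of $L$ suffices immediately.
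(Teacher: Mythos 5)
Your proof is correct and follows essentially the same route as the paper: both obtain a minimal upper bound of $a$ and $b$ from the top element and finite height, and then use closure of the set of common upper bounds under $\wedge$ to promote minimality to being the least upper bound. Your write-up is slightly more detailed (explicitly separating ``minimal'' from ``minimum''), but the underlying argument is identical.
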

\begin{proof}
Let $a, b \in L$. The set $U$ of their common upper bounds is non-empty since it contains the top element. Because $L$ has finite height, every $x \in U$ is bounded below by some minimal element of $U$. If $c$ and $d$ are two such minimal elements, their meet $c \wedge d$ is also an upper bound for $a$ and $b$, meaning $c \wedge d \in U$. By minimality, $c = c \wedge d = d$. Therefore, $U$ possesses a unique minimal element, which must be the least upper bound $a \vee b$.
\end{proof}

Let $X$ be an object in a category. A subobject of $X$ is a pair $(Y,i)$, where $i : Y \to X$ is a monomorphism. Note that, at the beginning of \S \ref{sec:lattice}, following \cite[\S 1.5]{Fr64}, subobjects were defined as equivalence classes of such pairs; here, however, we adopt a different convention. A split subobject of $X$ is a triple $(Y,i,p)$, where $i : Y \to X$ and $p : X \to Y$ satisfy $p \circ i = \id_Y$. It follows that $i$ is a monomorphism (i.e., left-cancellative), that $p$ is an epimorphism (i.e., right-cancellative), and that $b = i \circ p$ is an idempotent.

\begin{remark} \label{rk:Karou}
A category $\mathcal{C}$ is called \emph{Karoubian} if it is idempotent-complete, meaning that for every idempotent $b$ in $\End_{\mathcal{C}}(X)$ there is a split subobject $(Y,i,p)$ such that $b=i \circ p$. Notably, every abelian category is Karoubian. 
\end{remark}

We now define an equivalence relation on split subobjects, which will provide a complete characterization of the associated idempotent.

\begin{definition} \label{def:equisplit}
Two split subobjects $(Y_1,i_1,p_1)$ and $(Y_2,i_2,p_2)$ of $X$ are said to be \emph{equivalent} if there exists an isomorphism $i_{1,2} : Y_1 \to Y_2$ with inverse $i_{2,1}$ such that $i_1 = i_2 \circ i_{1,2}$ and $p_1 = i_{2,1} \circ p_2$. It follows that $i_1 \circ i_{2,1} = i_2 $ and $i_{1,2} \circ p_1 = p_2$, but also $i_{1,2} = p_2 \circ i_1$ and  $i_{2,1} = p_1 \circ i_2$. The equivalence class of $Y_1$ is denoted by $[Y_1]$.
\end{definition}


\begin{proposition} \label{prop:idempotentsplit}
Using the notation from Definition \ref{def:equisplit}, let $b_k$ be the idempotent $i_k \circ p_k$. Then
$$
[Y_1] = [Y_2] \Leftrightarrow b_1 = b_2.
$$
\end{proposition}

\begin{proof}
If $[Y_1] = [Y_2]$, then
$$
b_1 = i_1 \circ p_1 = (i_2 \circ i_{1,2}) \circ (i_{2,1} \circ p_2)
= i_2 \circ (i_{1,2} \circ i_{2,1}) \circ p_2
= i_2 \circ \id_{X_2} \circ p_2
= i_2 \circ p_2
= b_2.
$$
Conversely, suppose that $b_1=b_2$, and define $i_{1,2} = p_2 \circ i_1$ and $i_{2,1} = p_1 \circ i_2$. Then
$$
i_2 \circ i_{1,2}
= i_2 \circ p_2 \circ i_1
= b_2 \circ i_1
= b_1 \circ i_1
= i_1 \circ p_1 \circ i_1
= i_1,
$$
%
%
$$
i_{1,2} \circ i_{2,1}
= p_2 \circ i_1 \circ p_1 \circ i_2
= p_2 \circ b_1 \circ i_2
= p_2 \circ b_2 \circ i_2
= \id_{Y_2}.
$$
Similarly, we show that $i_{2,1} \circ p_2 = p_1$ and $i_{2,1} \circ i_{1,2} = \id_{Y_1}$.
\end{proof}

Typically, idempotents in $\End_{\mathcal{C}}(X)$ carry a natural partial order, defined by $b_1 \le b_2$ when $b_1 \circ b_2 = b_1 = b_2 \circ b_1$.

\begin{lemma} \label{lem:idemposet}
Using the notations from Proposition \ref{prop:idempotentsplit}, $b_1 \le b_2$ if and only if there exist morphisms $i_{1,2}$ and $p_{2,1}$ such that $i_1 = i_2 \circ i_{1,2}$ and $p_1 = p_{2,1} \circ p_2$. In this case, $i_{1,2} = p_2 \circ i_1$ and $p_{2,1} = p_1 \circ i_2$, so $p_{2,1} \circ i_{1,2} = \id_{Y_1}$, meaning that $(Y_1,i_{1,2}, p_{2,1})$ is a split subobject of $Y_2$. Consequently, in an abelian category, $Y_2 \cong Y_1 \oplus Z$ for some object $Z$.
\end{lemma}

\begin{proof}
By the cancellativity of the monomorphism $i_1$ and epimorphism $p_1$, we obtain the following equivalences:
\begin{align*}
b_1 \le b_2 &\iff b_1 \circ b_2 = b_1 \text{ and } b_2 \circ b_1 = b_1 \\
&\iff i_1 \circ p_1 \circ i_2 \circ p_2 = i_1 \circ p_1 \text{ and } i_2 \circ p_2 \circ i_1 \circ p_1 = i_1 \circ p_1 \\
&\iff p_1 \circ i_2 \circ p_2 = p_1 \text{ and } i_2 \circ p_2 \circ i_1 = i_1 \\
&\iff \exists i_{1,2}, p_{2,1} \text{ such that } i_1 = i_2 \circ i_{1,2} \text{ and } p_1 = p_{2,1} \circ p_2.
\end{align*}
The final equivalence is uniquely witnessed by assigning $i_{1,2} = p_2 \circ i_1$ and $p_{2,1} = p_1 \circ i_2$, since $p_2 \circ i_2 = \id_{Y_2}$. Thus,
$$
p_{2,1} \circ i_{1,2} = p_1 \circ i_2 \circ p_2 \circ i_1 = p_1 \circ b_2 \circ i_1 = p_1 \circ i_1 = \id_{Y_1}.
$$
The final direct sum decomposition follows directly from the splitting lemma.
\end{proof}

Let $X$ be a Frobenius algebra object in a monoidal category. 

\begin{definition} \label{def:equi}
Using the notation of Definition \ref{def:sub}, two Frobenius subalgebras $X_1$ and $X_2$ of $X$ are said to be \emph{equivalent} if they are equivalent as split subobjects $(X_1, i_1, i_1^*)$ and $(X_2, i_2, i_2^*)$ in the sense of Definition \ref{def:equisplit}. It follows that $i_{2,1} = p_1 \circ i_2 = i_1^* \circ i_2 = (i_2^* \circ i_1)^* = (p_2 \circ i_1)^* = i_{1,2}^*$. The equivalence class of $X_1$ is denoted by $[X_1]$.
\end{definition}

As with Proposition \ref{prop:idempotentsplit}, the equivalence classes correspond to idempotents, viewed as generalized biprojections:

\begin{proposition} \label{prop:biprojection}
Using the notation from Definitions \ref{def:sub} and \ref{def:equi}, the following holds,
$$[X_1] = [X_2] \Leftrightarrow b_{X_1} = b_{X_2}.$$
\end{proposition}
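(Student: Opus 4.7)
The plan is to prove each implication separately by direct manipulation using the defining identities $i_k^* \circ i_k = \id_{X_k}$ and $i_k^{**} = i_k$ coming from Definition~\ref{def:sub}, together with the contravariance $(f \circ g)^* = g^* \circ f^*$ inherited from the self-duality established in Lemma~\ref{lem:selfdual}.

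For the forward implication ($\Rightarrow$), I assume $[X_1]=[X_2]$ and pick a witnessing isomorphism $i_{1,2}: X_1 \to X_2$ as in Definition~\ref{def:equi}. Then I compute
$$b_{X_1} = i_1 \circ i_1^* = (i_2 \circ i_{1,2}) \circ (i_2 \circ i_{1,2})^* = i_2 \circ (i_{1,2} \circ i_{1,2}^*) \circ i_2^* = i_2 \circ \id_{X_2} \circ i_2^* = b_{X_2},$$
where I use $i_1 = i_2 \circ i_{1,2}$ and the assumption $i_{1,2} \circ i_{1,2}^* = \id_{X_2}$.

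For the backward implication ($\Leftarrow$), the natural candidate for the witnessing isomorphism is
$$i_{1,2} := i_2^* \circ i_1 : X_1 \to X_2.$$
I will then verify the four required identities by repeatedly substituting $b_{X_1}=b_{X_2}$ and absorbing factors of $i_k^* \circ i_k = \id_{X_k}$. For instance, $i_2 \circ i_{1,2} = i_2 \circ i_2^* \circ i_1 = b_{X_2} \circ i_1 = b_{X_1} \circ i_1 = i_1 \circ (i_1^* \circ i_1) = i_1$, and using $i_2^{**}=i_2$ one sees $i_{1,2}^* = i_1^* \circ i_2$, from which $i_1 \circ i_{1,2}^* = i_2$ follows symmetrically. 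The two idempotency-type identities $i_{1,2}^* \circ i_{1,2} = \id_{X_1}$ and $i_{1,2} \circ i_{1,2}^* = \id_{X_2}$ then fall out of the same pattern: insert $b_{X_2}=b_{X_1}$ in the middle and collapse using $i_k^* \circ i_k = \id_{X_k}$ twice. In particular, these last two equations force $i_{1,2}$ to be an isomorphism with inverse $i_{1,2}^*$.

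I do not expect a serious obstacle: every step is a one-line algebraic manipulation. The only subtle point worth flagging is that the identity $(i_2^* \circ i_1)^* = i_1^* \circ i_2$ relies on $i_2^{**}=i_2$, which is already built into Definition~\ref{def:sub}; without it, the backward direction would not close.
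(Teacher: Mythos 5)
Your proposal is correct and follows essentially the same route as the paper: the forward direction absorbs $i_{1,2}\circ i_{1,2}^*=\id_{X_2}$ between $i_2$ and $i_2^*$, and the backward direction uses the same candidate $i_{1,2}:=i_2^*\circ i_1$ with the same substitutions $b_{X_1}=b_{X_2}$ and $i_k^*\circ i_k=\id_{X_k}$. Your remark that $(i_2^*\circ i_1)^*=i_1^*\circ i_2$ rests on $i^{**}=i$ from Definition~\ref{def:sub} is a correct observation the paper leaves implicit.
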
 
\begin{proof}
Immediate from Proposition \ref{prop:idempotentsplit}.
\end{proof}

\begin{definition} \label{def:FrobSubPoset}
An equivalence class $[X_1]$ is said to be \emph{contained} in $[X_2]$ if $b_{X_1} \le b_{X_2}$ (equivalently, if there exists a morphism $i_{1,2}$ such that $i_1 = i_2 \circ i_{1,2}$, by Lemma \ref{lem:idemposet} with $p_{2,1} = i_{1,2}^*$). It defines a partial order on the equivalence classes. The resulting poset is called the \emph{Frobenius subalgebra poset} of the Frobenius algebra $X$.
\end{definition}



\begin{theorem}\label{thm:AmbientSublattice}
Let $X$ be a Frobenius algebra of finite length in an abelian rigid monoidal category. A subposet of its Frobenius subalgebra poset can be extended to a sublattice with $[A] \wedge [B] = [A \cap B]$ if and only if the intersection of any two class representatives is ambiently selfdual.
\end{theorem}

\begin{proof}
One direction is immediate by Lemma \ref{lem:FrobSubAmbiently}. For the converse direction, by Proposition~\ref{prop:preInterWeak}, such a subposet can be extended to a meet-semilattice, with meet given by
\(
[A] \wedge [B] := [A \cap B],
\)
and with top element $[X]$. Since $X$ has finite length, the poset has finite height. The claim then follows from Lemma~\ref{lem:meet-semi}.
\end{proof}

The entire Frobenius subalgebra poset is not a lattice in general. For counterexamples in $\VVec$, see \cite{BenMO2, WillMO1} by Dave Benson and Will Sawin, with the first detailed in Example~\ref{ex:Ben02}. That is why the ambiently selfdual or rigid invariant assumptions matter. Recall that the notion of ambiently selfdual generalizes that of a nondegenerate subspace.

\begin{theorem}\label{thm:Bsublattice}
Let $X$ be a Frobenius algebra that is semisimple of finite length as an object in a $\mathbb{C}$-linear abelian rigid monoidal category. Then the subposet of its Frobenius subalgebra poset consisting of classes of Frobenius $\mathcal{B}$-subalgebras (the \emph{Frobenius $\mathcal{B}$-subalgebra poset}) is a lattice with $[A] \wedge [B] = [A \cap B]$.
\end{theorem}

\begin{proof}
This follows immediately from Lemma~\ref{lem:RigidInvInterSum} and Theorem~\ref{thm:AmbientSublattice}.
\end{proof}


\begin{corollary} \label{cor:lattice}
For every Frobenius algebra in a semisimple tensor category over $\mathbb{C}$, its Frobenius $\mathcal{B}$-subalgebra posets are lattices with $[A] \wedge [B] = [A \cap B]$.
\end{corollary}

As explained in \S\ref{sec:RigidInvariant}, the choice of a basis \( \mathcal{B} \) of \( X \) (see Notation~\ref{not:basis}) is essential: it determines a perspective that collapses the Frobenius subalgebra poset into a sublattice. Moreover, every Frobenius subalgebra appears in some such sublattice. To remove the dependence on a specific basis, we restrict to \emph{rigid invariant} Frobenius subalgebras, i.e., those that are $\mathcal{B}$-rigid invariant for every choice of basis \( \mathcal{B} \), as defined in Definition~\ref{def:RigidInvariance}.

\begin{corollary} \label{cor:lattice0}
For every Frobenius algebra $X$ that is semisimple of finite length in a \( \mathbb{C} \)-linear abelian rigid monoidal category, its rigid invariant Frobenius subalgebra poset is a lattice with $[A] \wedge [B] = [A \cap B]$.
\end{corollary}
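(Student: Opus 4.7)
The plan is to deduce Corollary \ref{cor:lattice0} as an essentially direct consequence of Theorem \ref{thm:lattice} by observing that rigid invariance is preserved under the meet operation. The key point is that a Frobenius subalgebra is rigid invariant precisely when it is $\mathcal{B}$-rigid invariant for every choice of basis $\mathcal{B}$ of $X$ (Definition \ref{def:RigidInvariance}), so the rigid invariant Frobenius subalgebra poset is the intersection, over all bases $\mathcal{B}$, of the $\mathcal{B}$-rigid invariant Frobenius subalgebra posets.

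First, I would verify that the containment relation from Definition \ref{def:poset} restricts meaningfully to the rigid invariant subposet, which is immediate since the underlying relation is inherited from the ambient Frobenius subalgebra poset (Lemma \ref{lem:poset}). Next, given two rigid invariant Frobenius subalgebras $A$ and $B$ of $X$, I would fix any basis $\mathcal{B}$ of $X$; by hypothesis both $A$ and $B$ are $\mathcal{B}$-rigid invariant, so Proposition \ref{prop:InterWeak} produces a $\mathcal{B}$-rigid invariant Frobenius subalgebra structure on the intersection $A \cap B$, with the induced morphisms $j_A, j_B$ of Proposition \ref{prop:preInterWeak}. The crucial observation is that the \emph{underlying subobject} $A \cap B$, together with its Frobenius algebra structure (given by $m_{A \cap B}, e_{A \cap B}, \delta_{A \cap B} = m_{A \cap B}^*, \epsilon_{A \cap B} = e_{A \cap B}^*$) and the inclusion $i_{A \cap B} = i_A \circ j_A$, is constructed by the universal property of the pullback and does not depend on the choice of $\mathcal{B}$. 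Hence, letting $\mathcal{B}$ vary, Lemma \ref{lem:RigidInvInter} applies for every basis, so $A \cap B$ is $\mathcal{B}$-rigid invariant for every $\mathcal{B}$, i.e., rigid invariant.

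This shows that the rigid invariant Frobenius subalgebra poset is a meet-semilattice, with meet $[X_1] \wedge [X_2] := [X_1 \cap X_2]$. The top element is $X$ itself, which is trivially rigid invariant (take $i = \id_X$, so $M'_{X,Y}$ preserves each multiplicity space tautologically). Since $X$ is of finite length as an object, the Frobenius subalgebra poset has finite height, and so does its rigid invariant subposet. Invoking Lemma \ref{lem:meet-semi} then upgrades the meet-semilattice to a lattice, which concludes the proof.

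There is no substantive obstacle here beyond the bookkeeping verification that the Frobenius structure and containment morphisms on $A \cap B$ produced by Proposition \ref{prop:preInterWeak} are genuinely intrinsic to the pullback and not to any basis-dependent choice. Once this is observed, the statement follows by applying Theorem \ref{thm:lattice} simultaneously across all bases.
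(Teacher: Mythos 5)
Your proposal is correct and follows the route the paper intends: since the pullback $A\cap B$ and its Frobenius structure are defined by universal properties independent of any basis, Lemma~\ref{lem:RigidInvInter} (via Proposition~\ref{prop:InterWeak}) applies basis by basis, so the intersection of rigid invariant Frobenius subalgebras is again rigid invariant, and the meet-semilattice argument of Theorem~\ref{thm:lattice} together with Lemma~\ref{lem:meet-semi} concludes. The paper states this corollary without an explicit proof, and your reconstruction — including the observation that only the rigidity-invariance \emph{verification} is basis-dependent, not the object or its structure — is exactly the intended argument.
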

\begin{proof}
Immediate from Theorem~\ref{thm:Bsublattice} together with the fact that an arbitrary intersection of sublattices is again a sublattice.
\end{proof}

\begin{corollary} \label{cor:lattice2}
For every Frobenius algebra in a semisimple tensor category over $\mathbb{C}$, its rigid invariant Frobenius subalgebra poset is a lattice with $[A] \wedge [B] = [A \cap B]$.
\end{corollary}

We arrive, quite naturally, at the unitary case:
\begin{corollary} \label{cor:latticeunitary}
For every unitary Frobenius algebra in a unitary tensor category, its unitary Frobenius subalgebra poset is a lattice with $[A] \wedge [B] = [A \cap B]$.
\end{corollary}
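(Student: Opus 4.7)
The plan is to deduce Corollary~\ref{cor:latticeunitary} from Corollary~\ref{cor:lattice0} combined with Proposition~\ref{prop:UnitFrobSub}, by showing that the unitary Frobenius subalgebra poset embeds as a sub-meet-semilattice of the rigid invariant Frobenius subalgebra lattice and still admits joins. First I would note that a unitary tensor category is, in particular, a semisimple tensor category over $\mathbb{C}$ of finite length, so Corollary~\ref{cor:lattice0} already provides a lattice structure on the rigid invariant Frobenius $\mathcal{B}$-subalgebra poset of any Frobenius algebra $X$. By Proposition~\ref{prop:UnitFrobSub}, every unitary Frobenius subalgebra of a unitary Frobenius algebra $X$ is automatically rigid invariant, so the unitary Frobenius subalgebras sit inside the (basis-independent) rigid invariant subposet, and the containment order of Definition~\ref{def:poset} is inherited. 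In particular, $X$ itself is the top element and the poset has finite height.

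The substantive step is to verify that the meet $[A] \wedge [B] := [A \cap B]$ constructed in Proposition~\ref{prop:InterWeak} lies in the unitary subposet whenever $[A]$ and $[B]$ do. Following the notation of that proposition, the pullback inclusions $j_A : A \cap B \to A$ and $j_B : A \cap B \to B$ satisfy $j_A^* \circ j_A = \id_{A \cap B}$ by Remark~\ref{rk:jAstar}, and the composite $i_{A \cap B} := i_A \circ j_A = i_B \circ j_B : A \cap B \to X$ is a unital algebra monomorphism with $i_{A \cap B}^* \circ i_{A \cap B} = \id_{A \cap B}$ and $i_{A \cap B}^{**} = i_{A \cap B}$. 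Using Lemma~\ref{lem:UnitFrobSub} applied to $i_A$ and to the pullback morphism (whose dagger and categorical dual coincide by Remark~\ref{rk:nosym} and the unitary conventions), I would deduce that $i_{A \cap B}^{\dagger} \circ i_{A \cap B} = \id_{A \cap B}$ and $(i_{A \cap B}^*)^{\dagger} = (i_{A \cap B}^{\dagger})^*$.

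From this isometric-and-$*$-compatible inclusion, the unitary Frobenius structure on $A \cap B$ is then forced by the one on $X$: since $i_{A \cap B}$ is unital, $e_{A \cap B} = i_{A \cap B}^{\dagger} \circ e_X$, and applying $\dagger$ together with $e_X^{\dagger} = \epsilon_X$ and the counitality relation $\epsilon_{A \cap B} = \epsilon_X \circ i_{A \cap B}$ established in the proof of Proposition~\ref{prop:preInterWeak} yields $e_{A \cap B}^{\dagger} = \epsilon_{A \cap B}$. An analogous argument using the algebra/coalgebra morphism identities $i_{A \cap B} \circ m_{A \cap B} = m_X \circ (i_{A \cap B} \otimes i_{A \cap B})$ and its dual gives $m_{A \cap B}^{\dagger} = \delta_{A \cap B}$. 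Separability and the dimension normalization $\epsilon \circ m \circ \delta \circ e = \dim(A \cap B)\,\id_{\one}$ then follow from Lemma~\ref{lem:cose} together with the fact that $i_{A \cap B}^{\dagger} \circ i_{A \cap B} = \id_{A \cap B}$ preserves categorical dimensions under the unique pivotal structure of a unitary tensor category.

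Having shown closure under meets, the unitary Frobenius subalgebra poset is a meet-semilattice with top element $X$ and finite height, so Lemma~\ref{lem:meet-semi} produces joins, yielding the claimed lattice structure. The main obstacle I anticipate is the careful bookkeeping that the dagger on $A \cap B$ inherited from the ambient dagger via $i_{A \cap B}$ is compatible with the Frobenius structure constructed purely as a pullback in Proposition~\ref{prop:preInterWeak}; this requires unpacking the identifications between $j_A^{\dagger}$, $j_A^*$, and the induced (co)evaluation on $A \cap B$, for which Proposition~\ref{prop:UnitFrobSub} and Remark~\ref{rk:nosym} are the essential ingredients.
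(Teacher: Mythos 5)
Your core deduction---combining Proposition~\ref{prop:UnitFrobSub} with the lattice result for rigid invariant Frobenius subalgebras (Corollary~\ref{cor:lattice0}/\ref{cor:lattice2})---is exactly the paper's proof, which consists of that single line. Your additional verification that the meet of two unitary Frobenius subalgebras is again a \emph{unitary} Frobenius subalgebra goes beyond what the paper records and addresses a genuine point its one-line proof glosses over, though note that your appeal to Lemma~\ref{lem:cose} for separability of $A \cap B$ requires connectedness of $A \cap B$, which is not assumed there.
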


\begin{proof}
Immediate from Proposition~\ref{prop:UnitFrobSub} and Corollary~\ref{cor:lattice2}.
\end{proof}

Much like Corollary~\ref{cor:lattice2}, which fails in the absence of the rigid invariance assumption, Corollary~\ref{cor:latticeunitary} may also fail if the Frobenius subalgebra is not assumed to be unitary, given that Question~\ref{q:UnitFrob} remains open.


\section{Weak positivity} \label{sec:pseudo}

Let $ \mathbbm{k} $ be an algebraically closed field.

\begin{definition} \label{def:weakpositive} 
Let $ \mathcal{C} $ be a $ \mathbbm{k} $-linear monoidal category with a linear-simple unit object $ \one $. A nonzero object $ X \in \mathcal{C} $ is called \emph{weakly non-vanishing} if there exists an isomorphism $ \alpha \in \Hom_{\mathcal{C}}(X, X^{**}) $ such that for every nonzero idempotent morphism $ b \in \End_{\mathcal C}(X) $, the trace
$
\tr_\alpha(b) := \tr(\alpha \circ b),
$
as defined in Definition \ref{def:trace}, is nonzero. When the choice of $ \alpha $ is to be emphasized, we refer to $ X $ as $\alpha$-weakly non-vanishing.
If $ \mathbbm{k} = \mathbb{C} $ and $ \tr_\alpha(b) $ is always positive for all nonzero idempotents $ b $, then $ X $ is called \emph{weakly positive}, or more specifically, $\alpha$-weakly positive.
\end{definition}

\begin{lemma} \label{lem:trftrg}
Let $ X $ be an $ \alpha $-weakly positive object. For all nonzero idempotents $ f,g \in \End_{\mathcal{C}}(X) $ such that $ g \leq f $, it follows that $ \tr_{\alpha}(g) \leq \tr_{\alpha}(f) $. Moreover, if $ f \neq g $, then $ \tr_{\alpha}(g) < \tr_{\alpha}(f) $.
\end{lemma}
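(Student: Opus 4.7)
The plan is to reduce the inequality to the additivity of $\tr_{\alpha}$ together with the weak positivity hypothesis applied to a complementary idempotent. The key algebraic observation is that the condition $g \leq f$, which by definition means $g \circ f = g = f \circ g$, lets me split $f$ as a sum of two orthogonal idempotents.

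Concretely, I would set $h := f - g \in \Hom_{\mathcal{C}}(X,X)$ and first check that $h$ is itself an idempotent orthogonal to $g$. A direct computation using $g \circ f = g = f \circ g$ and $g^2 = g$, $f^2 = f$ gives
\[
h \circ h = (f-g)\circ(f-g) = f - g - g + g = f - g = h,
\]
and similarly $h \circ g = f \circ g - g^2 = g - g = 0$, $g \circ h = 0$. Thus $f = g + h$ is an orthogonal decomposition into idempotents.

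Next I would invoke linearity. Since composition in a $\mathbbm{k}$-linear category is bilinear and the trace is defined in Definition \ref{def:trace} as a composition of morphisms involving $\ev$ and $\coev$, the map $\beta \mapsto \tr_{\alpha}(\beta) = \tr(\alpha \circ \beta)$ is $\mathbbm{k}$-linear in $\beta$. Hence
\[
\tr_{\alpha}(f) = \tr_{\alpha}(g) + \tr_{\alpha}(h).
\]
If $h = 0$, then $f = g$ and the inequality $\tr_{\alpha}(g) \leq \tr_{\alpha}(f)$ is the trivial equality. If $h \neq 0$, then $h$ is a nonzero idempotent, so by the $\alpha$-weak positivity of $X$ we have $\tr_{\alpha}(h) > 0$, giving the strict inequality $\tr_{\alpha}(g) < \tr_{\alpha}(f)$. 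This handles exactly the second clause, since $f \neq g$ is equivalent to $h \neq 0$.

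I do not expect any serious obstacle: the only subtlety is making sure that linearity of the trace together with the characterization of $g \leq f$ from Lemma~\ref{lem:idemposet} (or its defining relations) is enough. Note that the argument does not require $g$ or $f$ to be nonzero individually for the non-strict inequality, but the hypothesis that $f,g$ are nonzero idempotents in the statement ensures $\tr_{\alpha}(f), \tr_{\alpha}(g) > 0$, which is consistent with the conclusion.
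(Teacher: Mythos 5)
Your proposal is correct and follows essentially the same route as the paper: decompose $f = g + (f-g)$, verify that $f-g$ is an idempotent using $g \circ f = g = f \circ g$, and conclude by linearity of the trace together with $\alpha$-weak positivity applied to $f-g$. The extra observations you include (orthogonality of $g$ and $f-g$, and the explicit case split on $f-g = 0$) are harmless refinements of the paper's argument.
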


\begin{proof}
Recall that $ g \leq f $ if and only if $ g \circ f = g = f \circ g $. By the linearity of the trace, we have:
$$
\tr_{\alpha}(f) = \tr_{\alpha}(g) + \tr_{\alpha}(f-g).
$$
But $ f-g $ is an idempotent because:
$$
(f-g) \circ (f-g) = (f \circ f) - (f \circ g) - (g \circ f) + (g \circ g) = f - g - g + g = f - g.
$$
Thus, the result follows from the assumption that $ X $ is $ \alpha $-weakly positive.
\end{proof}

Recall the notion of (quasi-)pivotal structure $\phi$ in Definitions \ref{def:pivotal} and \ref{def:quasipivotal}, and of  trace $\tr_\phi$ below them. 
\begin{definition} \label{def:PositiveTensor}
A $\mathbb{C}$-linear rigid monoidal category $ \mathcal{C} $ with a linear-simple unit $ \one $ is called \emph{positive} if it has a pivotal structure $\phi$ such that $\tr_{\phi}(\id_X)$ is positive for all nonzero object $X$. If we want to highlight the choice of $ \phi $, we refer to $ \mathcal{C} $ as $ \phi $-positive.
\end{definition}

Recall the notion of a unitary tensor category as mentioned in Remark \ref{rk:UnitaryTensor}.

\begin{proposition} \label{prop:C*positive}
Every unitary tensor category is positive.
\end{proposition}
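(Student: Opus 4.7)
The plan is to produce, on any unitary tensor category $\mathcal{C}$, a canonical pivotal structure $\phi$ coming from the dagger, and then show that the quantity $\tr_\phi(\id_X)$ is a positive real scalar in $\End_{\mathcal{C}}(\one)=\mathbb{C}$ for every nonzero object $X$, hence $\mathcal{C}$ is $\phi$-positive in the sense of Definition~\ref{def:PositiveTensor}.

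The first step is to normalize the rigid structure so that it is compatible with the dagger: for each simple $X$, choose a left dual $(X^*,\ev_X,\coev_X)$ such that the standard conjugate equation solution is \emph{unitary}, i.e.\ $\coev_X = \ev_X^\dagger$ and $\ev_X = \coev_X^\dagger$, as in Lemma~\ref{lem:EvCoevDagger}, extended to all objects by direct sums. The existence of such a normalization—referred to in the literature as a standard solution to the conjugate equations—is a classical fact in any $C^*$-tensor category; as recalled in Remark~\ref{rk:nosym}, the rigid structure on an object is unique up to a unique isomorphism, so this adjustment does not alter the abstract rigid monoidal structure of~$\mathcal{C}$.

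Once this normalization is fixed, I would define $\phi_X: X \to X^{**}$ by the usual double-duality construction built from $(\ev_X,\coev_X)$ and $(\ev_{X^*},\coev_{X^*})$, namely
\[
\phi_X := (\ev_{X^*} \otimes \id_{X^{**}}) \circ (\id_X \otimes \coev_{X^*}).
\]
Naturality and the monoidal axiom of Definition~\ref{def:pivotal} then follow from the standard dagger-compatibility of duals in a $C^*$-tensor category. This is where I expect most of the verification work: checking that $\phi_{X\otimes Y}=\phi_X\otimes\phi_Y$ requires the unitary duals on $X\otimes Y$ to be compatible with those on $X$ and $Y$, which is precisely where standardness of the solutions enters.

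The final step is the positivity. Using the unitary normalization and unfolding Definition~\ref{def:trace},
\[
\tr_\phi(\id_X) \;=\; \ev_{X^*}\circ(\phi_X\otimes \id_{X^*})\circ \coev_X \;=\; \ev_{X^*}\circ \coev_{X^*} \;=\; \coev_{X^*}^\dagger \circ \coev_{X^*}.
\]
Since $\coev_{X^*}\in\Hom_{\mathcal{C}}(\one,X^*\otimes X^{**})$ is nonzero whenever $X\neq 0$, and $\End_{\mathcal{C}}(\one)=\mathbb{C}$ is a $C^*$-algebra under the dagger, the element $T^\dagger T$ for $T=\coev_{X^*}$ is a strictly positive real number. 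The only potential obstacle is verifying that the $\phi$ so defined is globally monoidal across all of~$\mathcal{C}$ rather than only on a chosen family of representatives; this is handled by transporting along the unique isomorphisms of rigid structures noted in Remark~\ref{rk:nosym} and invoking that unitary standard solutions are unique up to unitary, which forces the induced pivotal structures to coincide. With $\phi$ in hand and $\tr_\phi(\id_X)>0$ for every nonzero $X$, Definition~\ref{def:PositiveTensor} is satisfied.
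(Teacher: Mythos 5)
Your proposal follows essentially the same route as the paper: both obtain the canonical spherical/pivotal structure from unitary standard solutions of the conjugate equations (the paper simply cites \cite[\S 2.3]{JP17} for its existence rather than sketching the construction), and both conclude by writing $\tr_\phi(\id_X)$ in the form $T^\dagger \circ T$ for a nonzero morphism $T$ out of $\one$ and invoking positivity and faithfulness in the $C^*$-algebra $\End_{\mathcal{C}}(\one) \cong \mathbb{C}$. The only cosmetic difference is that the paper records the trace as $\coev_X^\dagger \circ \coev_X$ where you write $\coev_{X^*}^\dagger \circ \coev_{X^*}$, which changes nothing in the positivity argument.
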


\begin{proof}
According to \cite[\S 2.3]{JP17}, any unitary tensor category $\mathcal{C}$ comes with a canonical spherical structure $\phi$ such that every nonzero object $X$ in $\mathcal{C}$, $\tr_{\phi}(\id_X) = \coev_X^\dag \circ \coev_X$.
By the definition of a $C^*$-category, for any morphism $f \in \Hom_{\mathcal{C}}(Y, Z)$, there exists a morphism $g \in \End_{\mathcal{C}}(Y)$ such that $f^\dag \circ f = g^\dag \circ g$, and $\End_{\mathcal{C}}(Y)$ carries a $C^*$-algebra structure (with involution $ ()^\dag $), so $g^\dag \circ g$ is a positive element.
Taking $Y = \one$, $Z = X \otimes X^*$, and $f = \coev_X$, we conclude that $\tr_{\phi}(\id_X)$ is positive.
\end{proof}

By the Frobenius-Perron theorem, in a positive finite tensor category, we have $ \tr_{\phi}(\id_X) = \FPdim(X) $.
\begin{proposition} \label{prop:pseudo}
A finite tensor category over $\mathbb{C}$ is positive if and only if it is a pseudo-unitary fusion category.
\end{proposition}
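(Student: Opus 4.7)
The plan is to handle the two implications separately, the key ingredients being the canonical spherical structure on a pseudo-unitary fusion category, the uniqueness of Frobenius--Perron dimension for transitive unital $\mathbb{Z}_+$-rings, and the vanishing of categorical traces on projective objects in a non-semisimple finite tensor category.

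For the direction ($\Leftarrow$): if $\mathcal{C}$ is pseudo-unitary fusion, then by Etingof--Nikshych--Ostrik it carries a canonical spherical structure $\phi$ satisfying $\tr_\phi(\id_X) = \FPdim(X)$ for every simple object $X$. Since $\mathcal{C}$ is semisimple, any nonzero object $Y$ decomposes as a direct sum of simples, and additivity of the trace combined with the positivity of $\FPdim$ on simples yields $\tr_\phi(\id_Y) > 0$; hence $\mathcal{C}$ is $\phi$-positive.

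For the direction ($\Rightarrow$): assume $\mathcal{C}$ is $\phi$-positive. I would first establish semisimplicity by contradiction. If $\mathcal{C}$ were not semisimple, the projective cover $P(\one)$ of the unit would be a nonzero projective object. The essential input is the classical vanishing result: in any non-semisimple finite pivotal tensor category, every projective object has vanishing categorical trace (this is the very phenomenon that motivates the theory of modified traces of Geer--Kujawa--Patureau-Mirand and collaborators). Applied to $P(\one)$, it forces $\tr_\phi(\id_{P(\one)}) = 0$, contradicting $\phi$-positivity; hence $\mathcal{C}$ is semisimple and therefore fusion.

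Once $\mathcal{C}$ is known to be fusion, the map $d \colon [X] \mapsto \tr_\phi(\id_X)$ defines a character of the Grothendieck ring of $\mathcal{C}$ taking positive values on simples. By the uniqueness of Frobenius--Perron dimension on transitive unital $\mathbb{Z}_+$-rings (\cite[Theorem~3.3.6]{EGNO15}), $d$ must coincide with $\FPdim$ on simples. This immediately yields pseudo-unitarity, since $\sum_{X \in \mathcal{O}(\mathcal{C})} \tr_\phi(\id_X)^2 = \sum_{X \in \mathcal{O}(\mathcal{C})} \FPdim(X)^2 = \FPdim(\mathcal{C})$, and it also shows that $\phi$ is spherical, since $\FPdim(X) = \FPdim(X^*)$. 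The main obstacle is to justify the vanishing of categorical traces on projectives in the non-semisimple case in a self-contained manner; although classical, it may warrant a short standalone lemma or a careful citation.
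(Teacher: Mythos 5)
Your proposal is correct and follows essentially the same route as the paper: the forward implication of pseudo-unitarity via \cite[Proposition 9.5.1]{EGNO15}, the exclusion of the non-semisimple case via the vanishing of categorical traces of projective objects (the paper cites the proof of \cite[Theorem 6.6.1]{EGNO15} for exactly this), and the identification of the trace character with $\FPdim$ to conclude pseudo-unitarity. The only difference is that you spell out the last step (positive fusion $\Rightarrow$ pseudo-unitary) via the uniqueness of the positive character on a transitive unital $\mathbb{Z}_+$-ring, whereas the paper simply asserts it; your elaboration is the standard justification and is compatible with the paper's argument.
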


\begin{proof}
By \cite[Proposition 9.5.1]{EGNO15}, every pseudo-unitary fusion category is positive. Conversely, let $\mathcal{C}$ be a positive pivotal finite tensor category. Since finite tensor categories have enough projectives (\cite[Definition 1.8.6]{EGNO15}), there exists a nonzero projective object $P$ in $\mathcal{C}$. As observed in the proof of \cite[Theorem 6.6.1]{EGNO15}, if $\mathcal{C}$ is non-semisimple, then $\tr_{\phi}(\id_P)=0$. This contradicts positivity. Hence $\mathcal{C}$ is semisimple, and therefore a fusion category. Finally, every positive fusion category is intrinsically pseudo-unitary. This completes the proof.
\end{proof}

\begin{remark} \label{rk:PosVsUnitary}
The proof of Proposition \ref{prop:pseudo} shows that a positive \emph{finite} tensor category is necessarily semisimple. Without the finiteness assumption, however, semisimplicity can fail: for example, the Tannakian tensor category $\Rep_{\mathrm{fd}}(G)$ of finite-dimensional representations of a non-reductive subgroup $G \subset \mathrm{GL}(n,\mathbb{C})$ (e.g., the group of upper-triangular matrices for $n \ge 2$) is positive but non-semisimple, and therefore non-unitary. More generally, if a Hopf algebra $H$ over the complex field is involutory (i.e., $S^2 = \id$), then both tensor categories $\Rep_{\mathrm{fd}}(H)$ and $\Corep_{\mathrm{fd}}(H)$ are positive (see \cite[Corollary 5.3.7, Remark 5.3.8]{EGNO15}), and we suspect the reverse implication for the latter. Thus, the converse of Proposition \ref{prop:C*positive} fails in general: there exist positive tensor categories that admit no unitary structure. In the finite case, however, the converse remains open. Indeed, by Proposition \ref{prop:pseudo}, positivity is equivalent to pseudo-unitarity for finite tensor categories. While every unitary fusion category is pseudo-unitary, it is not known whether every pseudo-unitary fusion category admits a compatible unitary structure (see \cite[\S 9.4]{EGNO15}).
\end{remark}



\begin{definition} \label{def:weaklyPositiveTensor}  Let $ \mathcal{C} $ be a $\mathbbm{k}$-linear rigid monoidal category with a linear-simple unit object $ \one $. We say that $ \mathcal{C} $ is \emph{weakly non-vanishing} if it is abelian and admits a quasi-pivotal structure $ \phi $ such that $ \tr_\phi(\id_X) $ is nonzero for every simple object $ X $. If we wish to emphasize the choice of $ \phi $, we refer to $ \mathcal{C} $ as \emph{$\phi$-weakly non-vanishing}. Without assuming $\mathcal{C}$ is abelian (since the condition is now on all nonzero objects rather than only simple ones) it is called \emph{weakly positive}, or more specifically \emph{$\phi$-weakly positive}, if $ \mathbbm{k} = \mathbb{C} $ and $ \tr_\phi(\id_X) $ is positive for every nonzero object $ X $.
\end{definition}

\begin{remark} \label{rk:PosWpos}
A $\phi$-positive category is $\phi$-weakly positive, since a pivotal structure is quasi-pivotal.
\end{remark}

\begin{proposition} \label{prop:WeaklyPositiveTensor}
Every nonzero object $ X $ in a Karoubian $\phi$-weakly positive category $ \mathcal{C} $ is $\phi_X$-weakly positive. Every simple object $ X $ in a $\phi$-weakly non-vanishing category $ \mathcal{C} $ is $\phi_X$-weakly non-vanishing.
\end{proposition}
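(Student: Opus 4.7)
The plan for the first claim is to reduce the trace of an arbitrary nonzero idempotent on $X$ to the trace of the identity on a smaller object, and then invoke the positivity hypothesis on $\mathcal{C}$. More precisely, I would proceed as follows.

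Let $X$ be a nonzero object in a Karoubian $\phi$-weakly positive category $\mathcal{C}$, and let $b \in \Hom_{\mathcal{C}}(X,X)$ be a nonzero idempotent. By the Karoubian property, there exist an object $Y$, a monomorphism $i \colon Y \to X$, and an epimorphism $p \colon X \to Y$ with $b = i \circ p$ and $p \circ i = \id_Y$. First I would observe that $Y$ is nonzero, since otherwise both $i$ and $p$ would vanish, forcing $b = 0$ contrary to assumption. Next I would apply Lemma \ref{lem:trfgpivotalquasi} to the pair $(p,i)$, which together with $b = i \circ p$ and $p \circ i = \id_Y$ yields
$$\tr_{\phi_X}(b) \;=\; \tr_\phi(i \circ p) \;=\; \tr_\phi(p \circ i) \;=\; \tr_\phi(\id_Y) \;=\; \tr_{\phi_Y}(\id_Y).$$
The $\phi$-weakly positive hypothesis on $\mathcal{C}$ then gives $\tr_{\phi_Y}(\id_Y) > 0$, establishing that $X$ is $\phi_X$-weakly positive.

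For the second claim, the plan is simpler: I would use that a simple object in an abelian category admits no nontrivial idempotents. If $b \in \Hom_{\mathcal{C}}(X,X)$ is a nonzero idempotent and $X$ is simple, then $\ker b$ and $\mathrm{im}(b)$ are subobjects of $X$, hence each equal to $0$ or $X$; nonvanishing of $b$ forces $b$ to be both a monomorphism and an epimorphism, i.e.\ an isomorphism, and idempotence then yields $b = \id_X$. The hypothesis that $\mathcal{C}$ is $\phi$-weakly nonzero directly gives $\tr_{\phi_X}(\id_X) = \tr_\phi(\id_X) \neq 0$, so $X$ is $\phi_X$-weakly nonzero.

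The only mild subtlety lies in the first step of the first claim, namely verifying that the image of a nonzero split idempotent is a nonzero object; but this is automatic, as noted above. All remaining inputs are direct invocations of results already proved in the paper (the quasi-pivotal trace cyclicity of Lemma \ref{lem:trfgpivotalquasi}) together with standard facts about abelian and Karoubian categories, so I do not anticipate any genuine obstacle.
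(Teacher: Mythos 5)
Your proof of the first claim is essentially identical to the paper's: decompose the nonzero idempotent as $b = i \circ p$ with $p \circ i = \id_Y$ via the Karoubian structure, apply the trace cyclicity of Lemma \ref{lem:trfgpivotalquasi} to get $\tr_{\phi_X}(b) = \tr_\phi(\id_Y)$, and invoke $\phi$-weak positivity; your explicit remark that $Y \neq 0$ is a point the paper leaves implicit. For the second claim the paper merely says the argument ``follows analogously,'' which, read literally, would reduce $\tr_{\phi_X}(b)$ to $\tr_\phi(\id_Y)$ for a retract $Y$ of the simple object $X$ and would still require observing that $Y \cong X$ is simple before the weakly-nonzero hypothesis (which only covers simple objects) can be applied. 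You instead note that a simple object in an abelian category admits no nonzero idempotent other than $\id_X$, so the hypothesis applies directly; this is a genuinely different and slightly cleaner route that avoids the retract argument altogether. Both approaches are correct, and there is no gap in your proposal.
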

\begin{proof}
For any nonzero idempotent $ b \in \End_{\mathcal C}(X) $, consider its decomposition $ b = i \circ p $, where $ p \circ i = \id_Y $ follows from the Karoubian structure. By Lemma \ref{lem:trfgpivotalquasi} and the assumption of $\phi$-weak positivity, we obtain:  
\[
\tr_{\phi_X}(b) = \tr_\phi(b) = \tr_\phi(i \circ p) = \tr_\phi(p \circ i) = \tr_\phi(\id_Y) > 0.
\]  
The proof for the weakly non-vanishing case follows analogously.
\end{proof}

Let $\mathcal{O}(\mathcal{C})$ be a set of isomorphism class representatives of the simple objects in an abelian monoidal category $\mathcal{C}$.

\begin{theorem} \label{thm:semisweakly}
A semisimple tensor category over $\mathbb{C}$ is weakly positive.
\end{theorem}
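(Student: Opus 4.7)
The plan is to construct a quasi-pivotal structure $\phi$ for which $\tr_\phi(\id_X)$ is a positive real number for every simple $X \in \mathcal{O}(\mathcal{C})$; weak positivity for arbitrary nonzero objects then follows by additivity of the trace across direct sums together with semisimplicity.

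First, I would invoke the fact that in a semisimple tensor category over $\mathbb{C}$, every simple object satisfies $X \cong X^{**}$ (this is the Etingof--Nikshych--Ostrik theorem in the fusion case, and extends to this setting). For each simple $X \in \mathcal{O}(\mathcal{C})$ I would fix an arbitrary isomorphism $\psi_X : X \to X^{**}$, and define $\phi$ on any object by decomposing it into simples and taking the corresponding direct sum of the $\psi_X$'s. The naturality condition of Definition~\ref{def:quasipivotal} reduces, via Schur's lemma, to checking $f = \phi_Y^{-1} \circ f^{**} \circ \phi_X$ for $f \in \Hom_{\mathcal{C}}(X,Y)$ with $X,Y$ simple: both sides vanish when $X \not\cong Y$, while for $X = Y$ simple, $f$ is a scalar and the identity is immediate. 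Extending by linearity then yields a quasi-pivotal structure.

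The main obstacle is showing $\tr(\psi_X) \neq 0$ for each simple $X$. For this, I would use that by the natural adjunction isomorphism (see \cite[Proposition 2.10.8]{EGNO15}) and Schur's lemma,
\[
\Hom_{\mathcal{C}}(X \otimes X^*, \one) \cong \Hom_{\mathcal{C}}(X, X^{**}) \cong \mathbb{C},
\]
so the target space is one-dimensional. Since $\coev_X : \one \to X \otimes X^*$ is nonzero with $\one$ simple, it is a monomorphism, and by semisimplicity it splits; let $p$ be a retraction normalized so that $p \circ \coev_X = \id_\one$. The morphism $\ev_{X^*} \circ (\psi_X \otimes \id_{X^*}) \in \Hom_{\mathcal{C}}(X \otimes X^*, \one)$ is nonzero, being the composition of the isomorphism $\psi_X \otimes \id_{X^*}$ with the nonzero duality morphism $\ev_{X^*}$. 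Since the Hom space is one-dimensional, it equals $c \cdot p$ for some $c \in \mathbb{C}^*$, and therefore
\[
\tr(\psi_X) \;=\; \ev_{X^*} \circ (\psi_X \otimes \id_{X^*}) \circ \coev_X \;=\; c \cdot (p \circ \coev_X) \;=\; c \;\neq\; 0.
\]

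Finally, I would rescale each $\psi_X$ by the scalar $\FPdim(X)/\tr(\psi_X) \in \mathbb{C}^*$, producing $\phi_X$ with $\tr_\phi(\id_X) = \FPdim(X) > 0$. Rescaling each component of a direct sum by a scalar is compatible with naturality (the check reduces again to Schur's lemma on simples), so $\phi$ remains a quasi-pivotal structure. Then for any nonzero object $Y = \bigoplus_i X_i$ with $X_i \in \mathcal{O}(\mathcal{C})$, additivity of the trace gives $\tr_\phi(\id_Y) = \sum_i \FPdim(X_i) > 0$, establishing $\phi$-weak positivity of $\mathcal{C}$.
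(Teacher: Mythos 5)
Your argument follows essentially the same route as the paper's proof (Lemmas \ref{lem:NatFunctSimple}--\ref{lem:WNonZeroToWPositive}): choose isomorphisms $X \to X^{**}$ on simples, extend to a quasi-pivotal structure by semisimplicity and Schur's lemma, verify the trace is nonzero on each simple, and rescale to make it positive; your self-contained derivation of $\tr(\psi_X)\neq 0$ from the one-dimensionality of $\Hom_{\mathcal{C}}(X\otimes X^*,\one)$ is correct and simply reproves the fact the paper cites as \cite[Proposition 4.8.4]{EGNO15}. The one flaw is the normalization constant: a semisimple tensor category over $\mathbb{C}$ in this theorem need not be finite, so $\FPdim(X)$ need not be defined (the paper itself stresses in \S\ref{sub:Dim} that positive dimension functions can fail to exist without the finite-rank hypothesis). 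This is harmless for the argument, since any positive real target works: rescale $\psi_X$ by $1/\tr(\psi_X)$ to get trace $1$, or, as the paper does, multiply only by the phase $e^{-i\theta_X}$ from the polar decomposition $\tr(\psi_X)=\rho_X e^{i\theta_X}$ so that the trace becomes $\rho_X>0$. With that substitution your proof is complete.
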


\begin{proof} 
We break the proof into the following lemmas:
\begin{lemma} \label{lem:NatFunctSimple}
Let $\mathcal{C}$ be a semisimple $\mathbbm{k}$-linear abelian category, and let $F, G: \mathcal{C} \to \mathcal{C}$ be $\mathbbm{k}$-linear (covariant) functors. Suppose that for each simple object $S$ in $\mathcal{O}(\mathcal{C})$, 
we are given a morphism $\phi_S: F(S) \to G(S).$ Then there exists a natural transformation $\phi: F \to G$ whose components at $S \in \mathcal{O}(\mathcal{C})$ are the given morphisms $\phi_S$.
\end{lemma}
\begin{proof}
Since $\mathcal{C}$ is semisimple, every object $V$ can, without loss of generality, be written as $V = \bigoplus_{S \in \mathcal{O}(\mathcal{C})} V_S \otimes S,$ where $V_S$ are multiplicity spaces $\Hom_{\mathcal{C}}(S,V)$. Because $F$ and $G$ are additive functors, they preserve direct sums, giving $F(V) = \bigoplus_{S} V_S \otimes F(S)$, $G(V) = \bigoplus_{S} V_S \otimes G(S)$. Define $\phi_V: F(V) \to G(V)$ as $\phi_V := \bigoplus_{S} \id_{V_S} \otimes \phi_S.$ To verify naturality, let $ f: V \to W $ be any morphism in $ \mathcal{C} $. Without loss of generality, we can express $V = \bigoplus_{S} V_S \otimes S$, $W = \bigoplus_{S} W_S \otimes S$, $f = \bigoplus_{S} f_S \otimes \id_S$. Here, each $ f_S: V_S \to W_S $ is a $ \mathbbm{k} $-linear map such that for any $ \alpha \in V_S = \Hom_{\mathcal{C}}(S,V) $, we have $ f_S(\alpha) = \alpha \circ f $. The functoriality and $\mathbbm{k}$-linearity of $F$ and $G$ imply $F(f) = \bigoplus_{S} f_S \otimes F(\id_S)$, $G(f) = \bigoplus_{S} f_S \otimes G(\id_S)$. To ensure naturality, we must verify that the following diagram commutes: $$\begin{tikzcd}
F(S) \ar[r, "F(\id_S)"] \ar[d, "\phi_{S}"'] & F(S) \ar[d, "\phi_{S}"] \\
G(S) \ar[r, "G(\id_S)"'] & G(S)
\end{tikzcd}$$ This holds since functoriality gives $F(\id_S) = \id_{F(S)}$ and $G(\id_S) = \id_{G(S)}$. Thus, $\phi$ is a natural transformation.
\end{proof}

\begin{lemma} \label{lem:semisquasi}
A semisimple multitensor category over $\mathbbm{k}$ has a quasi-pivotal structure.
\end{lemma}

\begin{proof}
By \cite[Proposition 4.8.1]{EGNO15}, for each $X$ in $\mathcal{O}(\mathcal{C})$, there exists an isomorphism $\phi_X: X \to X^{**}$. Applying Lemma \ref{lem:NatFunctSimple}, we obtain a natural isomorphism between $\id_{\mathcal{C}}$ and the double dual functor $()^{**}$.
\end{proof}

\begin{lemma} \label{lem:TensWNonzero}
A semisimple tensor category over $\mathbbm{k}$ is weakly non-vanishing.
\end{lemma}

\begin{proof}
By Lemma \ref{lem:semisquasi}, the category has a quasi-pivotal structure $\phi: \id_{\mathcal{C}} \to ()^{**}$. According to \cite[Proposition 4.8.4]{EGNO15}, $\tr_{\phi}(X)$ is nonzero for all $X$ in $\mathcal{O}(\mathcal{C})$.
\end{proof}

\begin{lemma} \label{lem:WNonZeroToWPositive}
A $\phi$-weakly non-vanishing semisimple tensor category over $\mathbb{C}$ is weakly positive.
\end{lemma}
\begin{proof}
By the polar decomposition, for each $X$ in $\mathcal{O}(\mathcal{C})$, we have 
$\tr_\phi(\id_X) = \rho_X e^{i \theta_X}$, 
where $\rho_X$ is positive. Define a modified isomorphism $\phi_X' := e^{-i \theta_X} \phi_X$ for each $X$. Applying Lemma \ref{lem:NatFunctSimple} to $(\phi_X')$ gives a new natural isomorphism $\phi'$, ensuring that  
$
\tr_{\phi'}(\id_X) = \rho_X > 0
$  
for all $X$ in $\mathcal{O}(\mathcal{C})$. The result follows.
\end{proof}

This completes the proof of the theorem.
\end{proof}

\begin{corollary} \label{cor:semisweaklyObject}
Every nonzero object in a semisimple tensor category over $\mathbb{C}$ is weakly positive.
\end{corollary}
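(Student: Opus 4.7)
\medskip
\noindent\textit{Proof proposal.} The plan is to obtain Corollary~\ref{cor:semisweaklyObject} as a direct consequence of Theorem~\ref{thm:semisweakly} combined with Proposition~\ref{prop:WeaklyPositiveTensor}. Since a semisimple tensor category $\mathcal{C}$ over $\mathbb{C}$ is in particular abelian, it is idempotent-complete, hence Karoubian in the sense used right before Lemma~\ref{lem:idemposet}. Thus the only structural hypothesis needed to invoke Proposition~\ref{prop:WeaklyPositiveTensor} is already available for free.

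The key step is to produce a quasi-pivotal structure $\phi$ on $\mathcal{C}$ with respect to which $\mathcal{C}$ is weakly positive. This is exactly the content of Theorem~\ref{thm:semisweakly}: a semisimple tensor category over $\mathbb{C}$ is $\phi$-weakly positive for some quasi-pivotal structure $\phi$, meaning $\tr_\phi(\id_S) > 0$ for every simple $S \in \mathcal{O}(\mathcal{C})$, which (as in the proof of Lemma~\ref{lem:WNonZeroToWPositive}) ensures positivity on every nonzero object by the additivity of the trace along direct sum decompositions.

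Having fixed such a $\phi$, one simply applies Proposition~\ref{prop:WeaklyPositiveTensor} to each nonzero object $X \in \mathcal{C}$: the component $\phi_X \colon X \to X^{**}$ of the quasi-pivotal structure serves as the required isomorphism, and for any nonzero idempotent $b \in \Hom_{\mathcal{C}}(X,X)$ the Karoubian property yields a splitting $b = i \circ p$ with $p \circ i = \id_Y$ for some nonzero $Y$. Then by Lemma~\ref{lem:trfgpivotalquasi},
\[
\tr_{\phi_X}(b) \;=\; \tr_\phi(i \circ p) \;=\; \tr_\phi(p \circ i) \;=\; \tr_\phi(\id_Y) \;>\; 0,
\]
where the last inequality uses the $\phi$-weak positivity of $\mathcal{C}$ applied to the nonzero object $Y$. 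This shows $X$ is $\phi_X$-weakly positive, completing the proof.

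There is no genuine obstacle here; the statement is essentially a packaging corollary that promotes the category-level positivity of Theorem~\ref{thm:semisweakly} to the object-level positivity of Definition~\ref{def:weakpositive}. The only thing to double-check is that the $\phi$ supplied by Theorem~\ref{thm:semisweakly} is the one used to verify the hypothesis of Proposition~\ref{prop:WeaklyPositiveTensor}, which it is by construction.
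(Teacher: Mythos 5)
Your proof is correct and follows exactly the paper's own route: the paper derives this corollary as an immediate consequence of Proposition~\ref{prop:WeaklyPositiveTensor} and Theorem~\ref{thm:semisweakly}, which is precisely the combination you use. Your additional unpacking of the Karoubian splitting and the trace computation via Lemma~\ref{lem:trfgpivotalquasi} simply makes explicit what the paper leaves implicit.
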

\begin{proof}
Immediate by Proposition \ref{prop:WeaklyPositiveTensor} and Theorem \ref{thm:semisweakly}.
\end{proof}


Finally, we will see that in the semisimple case, the isomorphism that makes an object weakly positive (Definition \ref{def:weakpositive}) can be strongly restricted. 

\begin{lemma} \label{lem:idem}  
Let $M$ be a matrix in $M_n(\mathbb{C})$ such that for every nonzero idempotent $b$ in $M_n(\mathbb{C})$, the trace $\mathrm{Tr}(Mb)$ is positive. Then $M$ is a positive scalar multiple of the identity.  
\end{lemma}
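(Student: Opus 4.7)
The plan is to reduce to rank-one idempotents and exploit complex linearity. By cyclicity of the trace together with $b^2=b$, we have $\mathrm{Tr}(bMb)=\mathrm{Tr}(Mb^2)=\mathrm{Tr}(Mb)$, so the hypothesis simplifies to: $\mathrm{Tr}(Mb)>0$ for every nonzero idempotent $b$ in $M_n(\mathbb{C})$. First I would handle the trivial case $n=1$ and then assume $n\geq 2$.

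The next step is to parametrize the rank-one idempotents. Any such idempotent has the form $b=vw^{T}$ with $v,w\in\mathbb{C}^{n}$ satisfying $w^{T}v=1$, and conversely every such pair yields a rank-one idempotent. A direct computation gives $\mathrm{Tr}(Mb)=\mathrm{Tr}(Mvw^{T})=w^{T}Mv$. The hypothesis therefore specializes to the statement that $w^{T}Mv\in\mathbb{R}_{>0}$ whenever $w^{T}v=1$.

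Now fix a nonzero vector $v$ and consider the $\mathbb{C}$-linear functional $\ell_{v}\colon w\mapsto w^{T}Mv$. Its restriction to the complex affine hyperplane $H_{v}=\{w : w^{T}v=1\}$ takes values in $\mathbb{R}_{>0}$. Since any nonconstant complex-affine function on a complex affine space of positive dimension is surjective onto $\mathbb{C}$, the functional $\ell_{v}$ must be constant on $H_{v}$. Equivalently, $\ell_{v}$ vanishes on the linear hyperplane $\{w:w^{T}v=0\}$, which forces $Mv$ to be a scalar multiple of $v$: there exists $c(v)\in\mathbb{C}$ with $Mv=c(v)v$. Thus every nonzero vector is an eigenvector of $M$, and a standard argument (apply $M$ to $v_{1}+v_{2}$ for linearly independent eigenvectors) shows that $M=\lambda I$ for a single scalar $\lambda\in\mathbb{C}$. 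Evaluating on any rank-one idempotent then gives $\lambda=w^{T}Mv>0$, completing the proof.

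The argument is essentially an elementary exercise once one focuses on rank-one idempotents; I do not expect any serious obstacle. The only subtle point — which is really the content of the lemma — is the passage from ``$\ell_{v}|_{H_{v}}$ is real-valued and positive'' to ``$\ell_{v}|_{H_{v}}$ is constant,'' which is where complex linearity (as opposed to real linearity) is crucially used. Over $\mathbb{R}$ the statement would fail: many non-scalar matrices satisfy a real-positivity condition on idempotents.
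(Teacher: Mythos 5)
Your proof is correct and follows the same overall architecture as the paper's: restrict to rank-one idempotents $b=vw^{T}$ with $w^{T}v=1$, show every nonzero vector is an eigenvector of $M$, and conclude that $M$ is a positive scalar. The only real difference is in the middle step: where you invoke the surjectivity of a nonconstant complex-affine functional onto $\mathbb{C}$ to force $\ell_{v}$ to be constant on the hyperplane $H_{v}$, the paper writes $Mv=\lambda v+w$ with $w\perp v$ and lets $s=-nw$, $n\to\infty$, to contradict positivity unless $w=0$; both arguments amount to the observation that an affine function with range constrained to $\mathbb{R}_{>0}$ on an affine subspace must be constant. One small inaccuracy in your closing remark: the statement does \emph{not} fail over $\mathbb{R}$ --- a nonconstant real-affine function on a real affine space is still surjective onto $\mathbb{R}$, so the identical argument (and the paper's) goes through; this does not affect the validity of your proof of the lemma as stated over $\mathbb{C}$.
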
  

\begin{proof}  
%
For any matrix $X \in M_n(\mathbb{C})$, the commutator $[X, b]$ decomposes as $[X, b] = Xb - bX = x - y$, where $x = (1-b)Xb$ and $y = bX(1-b)$. 
Observe that $bx = 0$ and $xb = x$, which implies $x^2 = 0$. Therefore, $b + zx$ is idempotent for all $z \in \mathbb{C}$. By assumption, $\mathrm{Tr}((b+zx)M) = \mathrm{Tr}(bM) + z\mathrm{Tr}(xM) > 0$, for all $z \in \mathbb{C}$. Thus, we must have $\mathrm{Tr}(xM) = 0$. 
Symmetrically, $yb = 0$ and $by = y$, making $b+zy$ idempotent, which similarly implies $\mathrm{Tr}(yM) = 0$. 
Thus, $\mathrm{Tr}([X, b]M) = \mathrm{Tr}(xM) - \mathrm{Tr}(yM) = 0$. Since $\mathrm{Tr}([X, b]M) = \mathrm{Tr}(X[b, M]) = 0$ for all $X$, the non-degeneracy of the trace pairing dictates that $[b, M] = 0$. 
Because idempotents linearly span $M_n(\mathbb{C})$, $M$ is in the commutant of $M_n(\mathbb{C})$, forcing $M = \lambda I$. Finally, $\mathrm{Tr}(b \lambda I) = \lambda \mathrm{Tr}(b) > 0$ yields $\lambda > 0$.
\end{proof}
\begin{proposition} \label{prop:WeakPosIso}
Let $ X $ be a semisimple $ \alpha $-weakly positive object in a $ \mathbb{C} $-linear abelian monoidal category $ \mathcal{C} $ with a linear-simple unit $ \mathbf{1} $. Assume that $X^{**} = X$. Then $\alpha$ can be chosen of the form $\alpha = \bigoplus_{S \in \mathcal{O}(X)} e^{-i \theta_S}\id_{M_S} \otimes \id_S$.
\end{proposition}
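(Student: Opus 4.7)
The plan is to leverage the semisimplicity of $X$ to decompose the trace block-wise, apply Lemma~\ref{lem:idem} within each block, and then rescale. First I would write $X = \bigoplus_{S \in \mathcal{O}(X)} M_S \otimes S$ with multiplicity spaces $M_S := \Hom_{\mathcal{C}}(S, X)$, so that $\End_{\mathcal{C}}(X) \cong \bigoplus_S \End(M_S)$ with components isomorphic to matrix algebras $M_{n_S}(\mathbb{C})$. Under this identification, the given isomorphism $\alpha$ decomposes as $\alpha = \bigoplus_S \alpha_S \otimes \id_S$, and any idempotent $b$ as $b = \bigoplus_S b_S \otimes \id_S$ with each $b_S$ an idempotent matrix. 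Since $X^{**} = X$, the categorical trace is additive across the isotypic components and multiplicative on pure tensors, giving
\[
\tr(\alpha \circ b) = \sum_S \mathrm{Tr}(\alpha_S b_S)\, d_S,
\]
where $d_S := \tr(\id_S) \in \mathbb{C}$.

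The central step is to apply Lemma~\ref{lem:idem} in each block. Fix a simple summand $S$ and a nonzero idempotent matrix $b_S \in M_{n_S}(\mathbb{C})$; extending by zero on the other blocks produces a nonzero idempotent $b \in \End_{\mathcal{C}}(X)$, so $\alpha$-weak positivity forces $d_S \mathrm{Tr}(\alpha_S b_S) > 0$. In particular $d_S \neq 0$, and setting $\tilde{\alpha}_S := d_S \alpha_S$ one obtains $\mathrm{Tr}(\tilde{\alpha}_S b_S) > 0$ for every nonzero idempotent $b_S$. Lemma~\ref{lem:idem} then yields $\tilde{\alpha}_S = \rho_S \id_{M_S}$ for some $\rho_S > 0$. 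Polar-decomposing $d_S = |d_S|\, e^{i\theta_S}$ gives $\alpha_S = (\rho_S/|d_S|)\, e^{-i\theta_S}\, \id_{M_S}$, a positive scalar multiple of the desired phase.

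Finally, I would define $\alpha' := \bigoplus_S e^{-i\theta_S}\, \id_{M_S} \otimes \id_S$, which is block-wise invertible, hence an isomorphism of $X$. For any nonzero idempotent $b = \bigoplus_S b_S \otimes \id_S$,
\[
\tr(\alpha' \circ b) = \sum_S e^{-i\theta_S}\, \mathrm{Tr}(b_S)\, d_S = \sum_S |d_S|\, \mathrm{Tr}(b_S),
\]
and each $\mathrm{Tr}(b_S)$ is a nonnegative integer (the rank of the idempotent $b_S$), strictly positive for at least one index since $b \neq 0$. Hence $\tr(\alpha' \circ b) > 0$, so $X$ is $\alpha'$-weakly positive with $\alpha'$ of the required form. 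The only delicate point I anticipate is carefully justifying the block-additivity formula for the categorical trace under the strict identification $X^{**} = X$; once this is in hand, the argument reduces transparently to matrix algebra via Lemma~\ref{lem:idem}, and the rescaling away of the positive factors $\rho_S/|d_S|$ is harmless because the sign of the trace is unaffected.
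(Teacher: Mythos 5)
Your proposal is correct and follows essentially the same route as the paper's proof: decompose $X$ into isotypic blocks, reduce the trace condition to $\mathrm{Tr}(B_S (d_S \alpha_S) B_S) > 0$ for all nonzero idempotent matrices $B_S$ (using $\mathrm{Tr}(M b) = \mathrm{Tr}(b M b)$ for $b$ idempotent), apply Lemma~\ref{lem:idem} to conclude each block is a positive multiple of the identity, and absorb the harmless positive factor after polar-decomposing $d_S = \tr(\id_S)$. The only cosmetic difference is that you verify $\alpha'$-weak positivity of the rescaled morphism explicitly at the end, whereas the paper simply observes that weak positivity is independent of the positive scalars $\eta_S$; both come down to the same computation $\tr(\alpha' \circ b) = \sum_S |d_S|\,\mathrm{rank}(b_S)$.
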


\begin{proof}
Since $ X $ is semisimple, we can express it as $X = \bigoplus_{S} M_S \otimes S$,
where $ M_S $ is a multiplicity space and $\mathcal{O}(X)$ is the set of $S$ in $\mathcal{O}(\mathcal{C})$ with $ M_S \neq 0 $. Any idempotent $ b \in \End_{\mathcal C}(X)$ takes the form  $b = \bigoplus_{S} B_S \otimes \id_S$, where $ B_S $ is an idempotent matrix in $ \End_{\mathbb{C}}(M_S) $.
By assumption, if $ b \neq 0 $, then $ \tr_{\alpha}(b) $ is positive. By assumption, $ X^{**} = X $, so that $ S^{**} = S $ for all $ S \in \mathcal{O}(X)$.
The isomorphism $ \alpha $ must then be of the form  $\alpha = \bigoplus_{S} A_S \otimes \id_S$, where $ A_S $ is an invertible matrix in $ \End_{\mathbb{C}}(M_S) $. This gives $\tr_{\alpha}(b) = \tr(\alpha \circ b) = \sum_{S} \Tr(A_S B_S) \tr(\id_S)$, where $ \Tr $ denotes the usual matrix trace. Now, consider an idempotent $ b_S $ of the same form as $ b $, but with $ B_{S'} = \delta_{S,S'} B_S $. Then $\tr_{\alpha}(b_S) = \Tr(A_S B_S) \tr(\id_S)$.
Thus, for all nonzero idempotent $ B_S \in \End_{\mathbb{C}}(M_S) $, we find that  $\Tr(M B_S) > 0$, where $M = \tr(\id_S) A_S$. By Lemma \ref{lem:idem}, this implies that $ M $ is a positive scalar multiple of the identity. 
%
Let $\tr(\id_S) = \rho_S e^{i \theta_S}$ be the polar decomposition with $\rho_S$ positive, then there is a positive scalar $\eta_S$ such that $A_S =  \eta_S e^{-i \theta_S}\id_{M_S}$, thus $\alpha = \bigoplus_{S} \eta_S e^{-i \theta_S}\id_{M_S} \otimes \id_S$. Thus $
\tr_{\alpha}(b) = \sum_{S} \Tr(A_S B_S) \tr(\id_S) = \sum_{S} \Tr(\eta_S e^{-i \theta_S}\id_{M_S} B_S) \rho_S e^{i \theta_S} = \sum_{S} \eta_S\rho_S n_{b,S}$, where $n_{b,S}$ is the rank of $B_S$. Observe that the weak positivity is independent on the choice of the positive scalar $\eta_S$, thus we can take $\eta_S = 1$ for all $S \in \mathcal{O}(X)$. Thus, without loss of generality, we can have $\alpha = \bigoplus_{S} e^{-i \theta_S}\id_{M_S} \otimes \id_S$. \qedhere   
\end{proof}



\section{Semisimplification} \label{sec:semi} 
The notion of semisimplification for spherical tensor categories was first introduced in \cite{BW99}. In this subsection, we follow the exposition in \cite{EO22}. 

\begin{definition} \label{def:negl} Let $\mathcal{C}$ be a rigid monoidal category equipped with a pivotal structure $\phi$. A morphism $f: X \to Y$ is said to be \emph{negligible} if, for every morphism $g: Y \to X$, the trace $\tr_{\phi}(f \circ g)$ is zero. \end{definition}

In a spherical tensor category $\mathcal{C}$, the collection $\mathcal{N}$ of negligible morphisms forms a tensor ideal. Consequently, the quotient category $\overline{\mathcal{C}} = \mathcal{C}/\mathcal{N}$ inherits the structure of a semisimple tensor category. In this new category, the morphism compositions and tensor products remain unchanged from those in $\mathcal{C}$, while the simple objects consist of the indecomposable objects of $\mathcal{C}$ that have nonzero dimension. Objects with zero dimension map to the zero object. This process is referred to as the \emph{semisimplification} of $\mathcal{C}$. 

\begin{remark} \label{rk:zeronil}
This result can be extended to $\mathbbm{k}$-linear Karoubian pivotal rigid monoidal categories, where all morphism spaces are finite-dimensional, and with a pivotal structure such that the trace of a nilpotent endomorphism is zero, and the left and right dimensions of indecomposable objects vanish simultaneously (see \cite[Theorem 2.6]{EO22}).
\end{remark}

\begin{lemma} \label{lem:phi1}
Let $\mathcal{C} $ be a $\mathbbm{k}$-linear rigid monoidal category with a pivotal structure $\phi $ and a linear-simple unit object $\one $. Then, $\phi_{\one} = \id_{\one} $.
\end{lemma}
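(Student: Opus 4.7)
The plan is direct and rests on three ingredients: linear-simplicity of $\one$, the monoidality of a pivotal structure, and the standard fact that the unit object is self-dual so that $\one^{**} = \one$ (at the level of strict categories, as assumed).

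First, since $\one$ is linear-simple and $\one^{**} = \one$, the morphism $\phi_\one$ lies in $\Hom_\mathcal{C}(\one, \one) = \mathbbm{k}\,\id_\one$, so I can write $\phi_\one = \lambda \, \id_\one$ for some $\lambda \in \mathbbm{k}$. Because $\phi_\one$ is an isomorphism (as part of the natural isomorphism $\phi \colon \id_{\mathcal{C}} \to ()^{**}$), the scalar $\lambda$ must be nonzero.

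Next, I would invoke the monoidality condition from Definition~\ref{def:pivotal}, namely $\phi_{X \otimes Y} = \phi_X \otimes \phi_Y$, specialized to $X = Y = \one$. Using the (strict) unit constraint $\one \otimes \one = \one$, this gives $\phi_\one = \phi_\one \otimes \phi_\one$. Since tensoring scalars in $\End(\one)$ amounts to their product (as $(\lambda \id_\one) \otimes (\lambda \id_\one) = \lambda^2 \id_{\one \otimes \one} = \lambda^2 \id_\one$), we obtain $\lambda = \lambda^2$ in $\mathbbm{k}$. Combined with $\lambda \neq 0$, this forces $\lambda = 1$, so $\phi_\one = \id_\one$.

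There is no real obstacle here; the only subtle point is to make explicit the identifications $\one \otimes \one = \one$ and $\one^{**} = \one$ allowed by the strictness convention adopted at the beginning of Section~\ref{sec:Frob}. If one wished to avoid strictness, the same argument would go through after inserting the unit constraint $l_\one = r_\one$ and the canonical isomorphism $\one^{**} \cong \one$ coming from the rigid structure; the computation $\lambda = \lambda^2$ is unchanged.
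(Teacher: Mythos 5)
Your proof is correct and follows essentially the same route as the paper's: write $\phi_{\one} = \lambda\,\id_{\one}$ by linear-simplicity, use monoidality together with $\one \otimes \one = \one$ to get $\lambda = \lambda^2$, and use invertibility of $\phi_{\one}$ to rule out $\lambda = 0$. Your extra remarks on the identifications $\one^{**} = \one$ and on the non-strict case are harmless elaborations of points the paper leaves implicit.
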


\begin{proof}
Since $\one$ is linear-simple, there exists a scalar $c \in \mathbbm{k}$ such that $\phi_{\one} = c \, \id_{\one}$.  
Because $\phi$ is a monoidal natural isomorphism, $c$ is nonzero and
$
\phi_{\one} = \phi_{\one \otimes \one} = \phi_{\one} \otimes \phi_{\one}.
$
Substituting $\phi_{\one} = c \, \id_{\one}$, this gives
$
c \, \id_{\one} = (c \, \id_{\one}) \otimes (c \, \id_{\one}) = c^2 \, \id_{\one}.
$
Thus, $c^2 = c$. Hence $c=1$. Therefore, $\phi_{\one} = \id_{\one}$.
\end{proof}

\begin{lemma} \label{lem:specialnegl}
Let $\mathcal{C}$ be a $\mathbbm{k}$-linear rigid monoidal category with a pivotal structure $\phi$, where $\one$ is linear-simple. Suppose $(X, m, \delta, e, \epsilon)$ is a connected Frobenius algebra in $\mathcal{C}$. If $\tr(\phi_X)$ is nonzero, then none of the morphisms $m$, $\delta$, $e$, or $\epsilon$ are negligible.
\end{lemma}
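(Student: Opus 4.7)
The plan is to exhibit, for each $f \in \{m, \delta, e, \epsilon\}$, an explicit morphism $g$ witnessing that $\tr_\phi(f \circ g) \neq 0$. Before doing so, I would establish the key auxiliary fact that $\mu_X := \epsilon \circ e \in \Hom_\mathcal{C}(\one,\one) = \mathbbm{k}$ is nonzero. Indeed, applying Lemma \ref{lem:contr} with $\alpha = \phi_X$ (which is valid by connectedness) yields the identity
$$ \mu_X\, m \circ (\phi_X \otimes \id_X) \circ \delta = \tr(\phi_X)\, \id_X.$$
Since $\tr(\phi_X) \neq 0$ by hypothesis, the right-hand side is nonzero, and therefore $\mu_X \neq 0$. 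I will also need that $\phi_{\one} = \id_{\one}$, which is Lemma \ref{lem:phi1}.

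For the unit and counit, I would take $g = \epsilon$ to test $e$, and $g = e$ to test $\epsilon$. In both cases, the composition $\epsilon \circ e = \mu_X \id_{\one}$, so using the cyclicity of $\tr_\phi$ (Lemma \ref{lem:trfgpivotalquasi}) and $\phi_{\one} = \id_{\one}$, one finds
$$ \tr_\phi(e \circ \epsilon) = \tr_\phi(\epsilon \circ e) = \mu_X \neq 0, \qquad \tr_\phi(\epsilon \circ e) = \mu_X \neq 0. $$
Hence neither $e$ nor $\epsilon$ is negligible.

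For the multiplication, I would take $g = (\phi_X \otimes \id_X) \circ \delta \colon X \to X \otimes X$. Then by the identity recalled above,
$$ m \circ g = \mu_X^{-1} \tr(\phi_X)\, \id_X, \qquad \text{so} \qquad \tr_\phi(m \circ g) = \mu_X^{-1} \tr(\phi_X)^2 \neq 0. $$
For the comultiplication, I would take the ``dual'' choice $g = m \circ (\phi_X \otimes \id_X) \colon X \otimes X \to X$. Applying cyclicity (Lemma \ref{lem:trfgpivotalquasi}) to the pair $(\delta, g)$,
$$ \tr_\phi(\delta \circ g) = \tr_\phi(g \circ \delta) = \tr_\phi\bigl( \mu_X^{-1} \tr(\phi_X) \id_X \bigr) = \mu_X^{-1} \tr(\phi_X)^2 \neq 0. $$
Thus none of $m, \delta, e, \epsilon$ is negligible.

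The only conceptual obstacle is the initial step of establishing $\mu_X \neq 0$; once that is in hand, everything else is a short bookkeeping calculation combining Lemma \ref{lem:contr} with cyclicity. The proof does not require separability beyond what is already guaranteed by connectedness (Lemma \ref{lem:cose}), nor does it use the monoidality of the pivotal structure beyond Lemma \ref{lem:phi1}.
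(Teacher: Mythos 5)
Your proof is correct and follows essentially the same route as the paper: both use Lemma \ref{lem:contr} with $\alpha = \phi_X$ to deduce $\mu_X \neq 0$, then Lemma \ref{lem:phi1} and the cyclicity of $\tr_\phi$ to handle $e$ and $\epsilon$. The only (harmless) difference is your choice of witnesses for $m$ and $\delta$: the paper simply tests against $e \otimes \id_X$ and $\epsilon \otimes \id_X$, so that (co)unitality gives $\tr_\phi(\id_X) = \tr(\phi_X) \neq 0$ directly, without needing the invertibility of $\mu_X$ for those two cases.
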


\begin{proof}
By (co)unitality, we have
$
m \circ (e \otimes \id_X) = \id_X = (\epsilon \otimes \id_X) \circ \delta.
$
Taking traces, it follows that
\[
\tr_{\phi}(m \circ (e \otimes \alpha)) = \tr_{\phi}((\epsilon \otimes \alpha) \circ \delta) = \tr_{\phi}(\id_X) = \tr(\phi_X),
\]
which is nonzero by assumption. Moreover, by Lemma~\ref{lem:contr}, we have $\mu_X \, m \circ (\phi_X \otimes \id_X) \circ \delta = \tr(\phi_X) \, \id_X$, implying that $\mu_X$ is also nonzero. But $\mu_X := \epsilon \circ e = \tr(\epsilon \circ e) =  \tr_{\phi}(\epsilon \circ e)$,
since $\phi_{\one}=\id_{\one}$ (Lemma \ref{lem:phi1}). The result follows by Lemma~\ref{lem:trfgpivotalquasi}.
\end{proof}

\begin{proposition} \label{prop:posetsemi}
Let $\mathcal{C}$ be a $\mathbbm{k}$-linear rigid monoidal category with a pivotal structure $\phi$, a linear-simple unit object $\one$, and satisfying the conditions described in Remark~\ref{rk:zeronil} (e.g., a spherical tensor category). Let $X$ be a connected Frobenius algebra in $\mathcal{C}$ that is $\phi_X$-weakly non-vanishing (Definition \ref{def:weakpositive}). Then, $\overline{X}$ is also a Frobenius algebra in the semisimplification $\overline{\mathcal{C}}$. Furthermore, the Frobenius subalgebra poset (Definition \ref{def:FrobSubPoset}) of $X$ in $\mathcal{C}$ embeds into the Frobenius subalgebra poset of $\overline{X}$ in $\overline{\mathcal{C}}$.
\end{proposition}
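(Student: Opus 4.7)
The plan is to establish the two assertions in turn. First, I would show that $\overline{X}$ inherits a Frobenius algebra structure in $\overline{\mathcal{C}}$. The $\phi_X$-weakly nonzero hypothesis applied to the idempotent $\id_X$ gives $\tr_\phi(\id_X) \neq 0$, so by Lemma \ref{lem:specialnegl} none of the structural morphisms $m, \delta, e, \epsilon$ of $X$ are negligible. Since the semisimplification functor $\pi : \mathcal{C} \to \overline{\mathcal{C}}$ is $\mathbbm{k}$-linear, monoidal and pivotal, it sends these to nonzero morphisms $\overline{m}, \overline{\delta}, \overline{e}, \overline{\epsilon}$ in $\overline{\mathcal{C}}$ and preserves all associativity, (co)unitality and Frobenius identities. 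Moreover $\overline{X}$ remains connected, since $\mathbbm{k}\,\overline{e} \subseteq \Hom_{\overline{\mathcal{C}}}(\one, \overline{X})$ is a nonzero quotient of the one-dimensional space $\Hom_{\mathcal{C}}(\one, X) = \mathbbm{k} e$.

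Second, I would define the poset map $[X'] \mapsto [\overline{X'}]$ and verify that it is order-preserving. Given a representative $(X', i)$ of a Frobenius subalgebra of $X$, I would first argue that $X'$ is connected (the monomorphism $i$ embeds $\Hom_{\mathcal{C}}(\one, X')$ into $\mathbbm{k} e$, and contains the nonzero element $e'$ since $i \circ e' = e$) and $\phi_{X'}$-weakly nonzero. For the latter, given any nonzero idempotent $b' \in \End_{\mathcal{C}}(X')$, the morphism $i \circ b' \circ i^*$ is a nonzero idempotent in $\End_{\mathcal{C}}(X)$: it is idempotent because $i^* \circ i = \id_{X'}$, and nonzero because $i^* \circ (i b' i^*) \circ i = b' \neq 0$. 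Applying Lemma \ref{lem:trfgpivotalquasi},
\[
\tr_{\phi_{X'}}(b') = \tr_\phi(i^* \circ i \circ b') = \tr_\phi(i \circ b' \circ i^*) = \tr_{\phi_X}(i b' i^*) \neq 0.
\]
Applying the first step to $X'$, we obtain that $\overline{X'}$ is a Frobenius algebra in $\overline{\mathcal{C}}$. The image $\overline{i}: \overline{X'} \to \overline{X}$ is a unital algebra morphism satisfying $\overline{i}^* \circ \overline{i} = \id_{\overline{X'}}$ (nonzero, hence a split monomorphism) and $\overline{i}^{**} = \overline{i}$, so $(\overline{X'}, \overline{i})$ defines a Frobenius subalgebra of $\overline{X}$.

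Well-definedness of the map on equivalence classes follows from Proposition \ref{prop:biprojection}: $[X']$ is determined by the biprojection $b_{X'} = i \circ i^*$, and $\pi$ sends $b_{X'}$ to $b_{\overline{X'}} = \overline{b_{X'}}$. Order preservation is similar, using Lemma \ref{lem:idemposet} translated through biprojections: $[X_1] \le [X_2]$ means $b_{X_1} \circ b_{X_2} = b_{X_1} = b_{X_2} \circ b_{X_1}$, and these identities descend to $\overline{\mathcal{C}}$.

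The main obstacle is injectivity. Supposing $\overline{b_{X_1}} = \overline{b_{X_2}}$, one must show $b_{X_1} = b_{X_2}$ in $\End_{\mathcal{C}}(X)$, i.e., that the negligible difference $n := b_{X_1} - b_{X_2}$ vanishes. The ideal property of negligible morphisms yields that $b_{X_1} - b_{X_1} b_{X_2}$ and $b_{X_2} - b_{X_2} b_{X_1}$ are also negligible, but this by itself does not force $n = 0$. I would attempt to conclude by feeding the $\phi_X$-weakly nonzero hypothesis into idempotents built from $b_{X_1}$ and $b_{X_2}$---notably those produced by Theorem \ref{thm:landau} and Corollary \ref{cor:exchangesub}, such as the normalized convolution $\mu_X (b_{X_1} \ast b_{X_2})/\tr(b_{X_1} \circ b_{X_2})$ provided the denominator is nonzero---and comparing the resulting traces to those of $b_{X_1}$ and $b_{X_2}$ via the formal angle machinery of \S\ref{sec:Angle}. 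This strict-trace comparison is where the argument is most delicate and is the step I would expect to require the most care.
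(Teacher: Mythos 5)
Your first half matches the paper: Lemma \ref{lem:specialnegl} shows the structural morphisms of $X$ are not negligible, and since semisimplification leaves compositions and tensor products untouched, the Frobenius axioms persist for $\overline{X}$. The construction of the poset map via biprojections and the order-preservation check are also unobjectionable.

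The gap is exactly where you locate it: you never close the injectivity step, and the direction you sketch for closing it is unlikely to work here. The formal-angle and convolution-idempotent machinery of \S\ref{sec:Angle} and Theorem \ref{thm:landau} is developed under hypotheses absent from this proposition --- $X$ semisimple as an object, $\mathcal{B}$-rigid invariance of the subalgebras, and weak \emph{positivity} (strict positivity of traces of nonzero idempotents) rather than the mere nonvanishing you are given --- so ``feeding the weakly nonzero hypothesis into the angle estimates'' has no clear meaning in this generality. The paper's actual argument is more economical and bypasses any comparison of $b_{X_1}$ with $b_{X_2}$: for each indecomposable direct summand $Y$ of $X$, the splitting $i \circ p = b$, $p \circ i = \id_Y$ exhibits a nonzero idempotent $b \in \End_{\mathcal{C}}(X)$, and weak nonzero-ness together with Lemma \ref{lem:trfgpivotalquasi} gives $\dim_{\phi}(Y) = \tr_{\phi}(\id_Y) = \tr_{\phi}(b) \neq 0$. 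Hence no indecomposable component of $X$ --- and therefore none of any split summand of $X$, in particular of any Frobenius subalgebra --- is annihilated by the semisimplification, which is what yields the poset embedding. You should replace your speculative strict-trace comparison with this dimension count on indecomposable components; as written, the proposal does not establish the embedding.
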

\begin{proof}
The Frobenius algebra $X$ in $\mathcal{C}$ remains a Frobenius algebra $\overline{X}$ in $\overline{\mathcal{C}}$ by Lemma \ref{lem:specialnegl} and the preservation of morphism compositions and tensor products from $\mathcal{C}$, which ensures that the axioms are satisfied. For any indecomposable component $Y$ of $X$, there exists a monomorphism $i: Y \to X$ and an epimorphism $p: X \to Y$ such that $i \circ p = b$ and $p \circ i = \id_Y$, indicating that $b$ is a nonzero idempotent. Given that $X$ is $\phi_X$-weakly non-vanishing, we also have by Lemma \ref{lem:trfgpivotalquasi} that $\dim_{\phi}(Y) = \tr_{\phi}(\id_Y) = \tr_{\phi}(b) \neq 0$.
Therefore, $X$ has no indecomposable components of zero dimension, resulting in the poset embedding.
\end{proof}

The main purpose of Proposition~\ref{prop:posetsemi} is to provide a non-semisimple generalization of Watatani's theorem (Corollary~\ref{cor:allfinite1}) via the process of semisimplification. Since this generalization relies on the notion of rigidity invariance—which remains a subtle issue in the non-semisimple setting (see \S\ref{sub:RigidNonSS})—one possible approach is to define rigidity invariance through semisimplification, as follows:

\begin{definition} \label{def:RigidInvariantNSS}
Under the assumptions of Proposition~\ref{prop:posetsemi}, a Frobenius $\mathcal{B}$-subalgebra $Y$ of $X$ in $\mathcal{C}$ is a Frobenius subalgebra such that $\overline{Y}$ is a $\mathcal{B}$-rigid invariant Frobenius subalgebra of $\overline{X}$ in $\overline{\mathcal{C}}$.
\end{definition}

\section{Formal angle} \label{sec:Angle}

We extend the notion of the angle between two intermediate subfactors introduced in~\cite{BDLR19} by defining a \emph{formal angle} for a coherent pair (Definition~\ref{def:CoherentPair}) of Frobenius subalgebras. This construction relies on the idempotent $b_{X'}$ from Definition~\ref{def:sub}, which generalizes the notion of a biprojection in subfactor planar algebra theory.

Let $\mathcal{C}$ be a $\mathbb{C}$-linear abelian rigid monoidal category in which the unit object~$\one$ is linear-simple. Let $X$ be a connected Frobenius algebra in $\mathcal{C}$.
Suppose there exists a quasi-pivotal structure $\phi$ (see Definition~\ref{def:quasipivotal}) such that $\alpha := \phi_X$ is an algebra isomorphism (Definition~\ref{def:FrobMor}) and $X$ is $\alpha$-weakly positive (Definition~\ref{def:weakpositive}). In particular, $\tr_{\alpha}(\id_X)$ is positive. Recall the partial order on idempotents in $\End_{\mathcal{C}}(X)$ introduced before Lemma \ref{lem:idemposet}. Let $(A,B)$ be a coherent pair of Frobenius subalgebras of $X$. Assume further that $\tr(b_A \circ b_B) \ne 0$ (see  Remark~\ref{rk:zerocase} for the case $\tr(b_A \circ b_B) = 0$). Applying Corollary~\ref{cor:landau}, we define the idempotent
\[
b_{AB} := \frac{\mu_X}{\tr(b_A \circ b_B)} \, b_A * b_B,
\]
see Remark~\ref{rk:AB}. This idempotent satisfies
\[
\tr(b_{AB}) = \frac{\tr(b_A) \, \tr(b_B)}{\tr(b_A \circ b_B)},
\quad \text{and} \quad
b_Y \le b_{AB} \quad \text{for all } Y \in \{A, B\}.
\]

\begin{remark} \label{rk:tralpha}
Since $ \tr_{\alpha}(\id_X)$ is nonzero, Corollary~\ref{cor:landau} implies that $ \tr(b) = \tr_{\alpha}(b) $ for all $ b \in \{b_A, b_B, b_{AB}\} $, and this scalar is positive because $ X $ is $ \alpha $-weakly positive and $ b $ is idempotent. Thus, $ \tr(b_A \circ b_B) = \frac{\tr(b_A) \tr(b_B)}{\tr(b_{AB})} $ is positive.
\end{remark}


Following the notation from \S \ref{sec:lattice}, we define \( b_{A \cap B} \) as \( i_{A \cap B} \circ i_{A \cap B}^* \), where \( i_{A \cap B} = i_Y \circ j_Y \) for \( Y \in \{A, B\} \).

\begin{lemma} \label{lem:Y>AnB}
For all $ Y \in \{A,B\} $, we have $ b_{A \cap B} \le b_Y $.  But $ \tr_{\alpha}(b_{A \cap B}) = \tr(b_{A \cap B}) $. Consequently, $ \tr(b_{A \cap B}) \le \tr(b_Y) $.
\end{lemma}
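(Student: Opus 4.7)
The plan is to first establish the idempotent inequality $b_{A \cap B} \le b_Y$ by direct computation from the definitions, then to deduce the trace identity via Corollary~\ref{cor:landau}, and finally to use the positivity in Lemma~\ref{lem:trftrg} to obtain the numerical comparison.

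First, I would verify $b_{A \cap B} \le b_Y$ for $Y \in \{A, B\}$. By construction, $i_{A \cap B} = i_Y \circ j_Y$, so
\[
b_{A \cap B} = i_{A \cap B} \circ i_{A \cap B}^* = i_Y \circ j_Y \circ j_Y^* \circ i_Y^*.
\]
Using $i_Y^* \circ i_Y = \id_Y$ (from Definition~\ref{def:sub}), a direct composition gives both $b_{A \cap B} \circ b_Y = b_{A \cap B}$ and $b_Y \circ b_{A \cap B} = b_{A \cap B}$, which is the definition of the partial order on idempotents recalled in \S\ref{sec:pseudo}.

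Next, I would invoke Proposition~\ref{prop:InterWeak}: since $A$ and $B$ are $\mathcal{B}$-rigid invariant Frobenius subalgebras of $X$, their intersection $A \cap B$ inherits the structure of a $\mathcal{B}$-rigid invariant Frobenius subalgebra of $X$. In particular, $b_{A \cap B}$ is the idempotent associated (via Definition~\ref{def:sub}) to a genuine Frobenius subalgebra. This is the key conceptual step, and it is also the only point where $\mathcal{B}$-rigid invariance is used. Then the last assertion of Corollary~\ref{cor:landau} applies directly to $b_{A \cap B}$ (in the role of $b_A$), since $\tr_{\alpha}(\id_X)$ is positive by $\alpha$-weak positivity; this yields $\tr_{\alpha}(b_{A \cap B}) = \tr(b_{A \cap B})$, and analogously $\tr_{\alpha}(b_Y) = \tr(b_Y)$.

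Finally, observe that $b_Y$ and $b_{A \cap B}$ are both nonzero idempotents (they satisfy $b \circ e = e \circ \epsilon \circ e \neq 0$ via the unit of the corresponding subalgebra, and more directly they split as $i \circ i^*$ with $i^* \circ i = \id$). Applying Lemma~\ref{lem:trftrg} with the $\alpha$-weak positivity of $X$ and the inequality $b_{A \cap B} \le b_Y$ established in the first step gives $\tr_{\alpha}(b_{A \cap B}) \le \tr_{\alpha}(b_Y)$, which by the preceding identities translates into $\tr(b_{A \cap B}) \le \tr(b_Y)$. The only potential subtlety is ensuring the hypotheses of Corollary~\ref{cor:landau} really do apply to $A \cap B$—but this is exactly guaranteed by Proposition~\ref{prop:InterWeak}, so no additional work is needed.
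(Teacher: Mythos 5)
Your proposal is correct and follows essentially the same route as the paper: the same direct computation of $b_Y \circ b_{A\cap B}$ using $i_{A\cap B} = i_Y \circ j_Y$ and $i_Y^* \circ i_Y = \id_Y$, the same appeal to Proposition~\ref{prop:InterWeak} plus Corollary~\ref{cor:landau} for $\tr_{\alpha}(b_{A\cap B}) = \tr(b_{A\cap B})$, and the same use of Lemma~\ref{lem:trftrg} together with weak positivity for the final inequality. The only cosmetic difference is that you re-derive the identity $\tr_{\alpha}(b_Y)=\tr(b_Y)$ directly where the paper cites Remark~\ref{rk:tralpha}.
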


\begin{proof}
By definition, and the equality $ i_Y^* \circ i_Y = \id_Y $ (from Definition \ref{def:sub}):
\begin{align*}
b_Y \circ b_{A \cap B} &= i_Y \circ i_Y^* \circ (i_Y \circ j_Y) \circ (i_Y \circ j_Y)^* \\
&= i_Y \circ (i_Y^* \circ i_Y) \circ j_Y \circ (i_Y \circ j_Y)^* \\
&= (i_Y \circ j_Y) \circ (i_Y \circ j_Y)^* = b_{A \cap B}.
\end{align*}
Similarly, we find $ b_{A \cap B} \circ b_Y = b_{A \cap B} $. The second statement is immediate from Corollary \ref{cor:landau} since $A \cap B$ is also a Frobenius subalgebra by Proposition \ref{prop:preInterWeak}. The last statement follows from Lemma \ref{lem:trftrg} and Remark \ref{rk:tralpha}.
\end{proof}


Let $b_{A+B}$ denote the idempotent $i_{A+B} \circ i_{A+B}^*$, where $i_{A+B}$ is defined as in Remark \ref{rk:union}. 

\begin{lemma} \label{lem:AB+}
The following equalities hold:
$$ \tr(b_{A+B}) = \tr(b_A) + \tr(b_B) - \tr(b_{A \cap B})  = \tr_{\alpha}(b_A) + \tr_{\alpha}(b_B) - \tr_{\alpha}(b_{A \cap B}) = \tr_{\alpha}(b_{A+B}) $$
\end{lemma}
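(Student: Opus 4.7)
The plan is to prove the three equalities by reducing them to a single inclusion--exclusion identity for $\tr_\alpha$. For the second equality, I would apply the final sentence of Corollary~\ref{cor:landau} to each $Y \in \{A, B, A \cap B, A+B\}$: its hypothesis $\tr_\alpha(\id_X) \ne 0$ holds because $X$ is $\alpha$-weakly positive, so $\tr_\alpha(\id_X) > 0$, and the requirement that each $Y$ be a Frobenius $\mathcal{B}$-subalgebra of $X$ is satisfied by hypothesis for $A$ and $B$, by Proposition~\ref{prop:InterWeak} for $A \cap B$, and by Lemma~\ref{lem:A+BSelf} for $A + B$. This yields $\tr(b_Y) = \tr_\alpha(b_Y)$ in each case, which immediately gives the second equality and reduces the first and third equalities to a single identity.

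It then remains to establish the inclusion--exclusion identity
\[
\tr_\alpha(b_A) + \tr_\alpha(b_B) = \tr_\alpha(b_{A \cap B}) + \tr_\alpha(b_{A+B}).
\]
Applying Lemma~\ref{lem:trfgpivotalquasi} with $f = i_Y$ and $g = i_Y^*$, and using $i_Y^* \circ i_Y = \id_Y$, I obtain
\[
\tr_\alpha(b_Y) = \tr_\phi(i_Y \circ i_Y^*) = \tr_\phi(i_Y^* \circ i_Y) = \tr(\phi_Y),
\]
so the identity reduces to $\tr(\phi_A) + \tr(\phi_B) = \tr(\phi_{A \cap B}) + \tr(\phi_{A+B})$. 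Using the semisimplicity of $X$, I would decompose $X = \bigoplus_{S \in \mathcal{O}(\mathcal{C})} M_S \otimes S$ so that each Frobenius $\mathcal{B}$-subalgebra $Y$ corresponds to a family of multiplicity subspaces $V_{Y,S} \subseteq M_S$. The categorical pullback and pushout are computed componentwise and translate to subspace intersection and subspace sum, yielding $V_{A \cap B, S} = V_{A,S} \cap V_{B,S}$ and $V_{A+B, S} = V_{A,S} + V_{B,S}$. Since $\tr(\phi_Y) = \sum_S \dim(V_{Y,S})\, \tr(\phi_S)$, the classical dimension identity $\dim V_{A,S} + \dim V_{B,S} = \dim(V_{A,S} \cap V_{B,S}) + \dim(V_{A,S} + V_{B,S})$, summed over $S$ weighted by $\tr(\phi_S)$, gives the desired equality.

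The main obstacle is the componentwise description of the pushout $A + B$: while subobject intersections are pullbacks of monomorphisms that compute simple-by-simple in an evident way, the pushout realizing $A + B$ requires the splitting of the canonical short exact sequence $0 \to A \cap B \to A \oplus B \to A + B \to 0$. This splitting is ensured by the semisimplicity of $X$, together with the $\mathcal{B}$-rigidity invariance that keeps $A \cap B$ and $A+B$ within the Frobenius--selfdual framework used to define the idempotents $b_Y$; once the splitting is in place, the additivity of $\tr$ across direct sums, combined with the naturality of $\phi$, concludes the proof.
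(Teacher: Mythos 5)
Your overall strategy matches the paper's: identify $\tr(b_Y)$ with $\tr(\id_Y)$, identify $\tr_\alpha(b_Y)$ with $\tr_\phi(\id_Y)$ via Lemma~\ref{lem:trfgpivotalquasi}, and reduce everything to the Grassmann identity $\dim(V_{A,S}) + \dim(V_{B,S}) = \dim(V_{A,S}\cap V_{B,S}) + \dim(V_{A,S}+V_{B,S})$ on multiplicity spaces. However, there is one concrete gap: you invoke the final sentence of Corollary~\ref{cor:landau} for $Y = A+B$, citing Lemma~\ref{lem:A+BSelf} as the reason $A+B$ is a Frobenius $\mathcal{B}$-subalgebra. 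Lemma~\ref{lem:A+BSelf} does \emph{not} give this: it only shows that $A+B$ is a $\mathcal{B}$-rigid invariant, selfdual subobject with $i_{A+B}^{**}=i_{A+B}$ and $i_{A+B}^*\circ i_{A+B}=\id_{A+B}$. The sum of two subalgebras is in general not closed under multiplication, so $i_{A+B}$ is not known to be an algebra morphism, and the proof of Corollary~\ref{cor:landau} (which feeds $b_Y = i_Y\circ i_Y^*$ into Landau's theorem, requiring $i_Y$ to be an algebra morphism with $i_Y^{**}=i_Y$) does not apply. This is precisely why the paper does not treat $A+B$ on the same footing as $A$, $B$, and $A\cap B$, and why $b_{A+B}$ is later compared with the genuinely algebraic idempotent $b_{AB}$ rather than identified with it.

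The repair stays entirely within your framework and is what the paper actually does: run the inclusion--exclusion computation twice, once for the plain trace (using $\tr(b_Y)=\tr(i_Y^*\circ i_Y)=\tr(\id_Y)$ from Lemma~\ref{lem:trfg} and $i_Y^{**}=i_Y$, which holds for all four subobjects including $A+B$) to get the first equality, and once for $\tr_\phi$ (your Step~2) to get the third; then bridge the two chains using $\tr(b_Y)=\tr_\alpha(b_Y)$ only for $Y\in\{A,B,A\cap B\}$, where Corollary~\ref{cor:landau} legitimately applies. The equality $\tr(b_{A+B})=\tr_\alpha(b_{A+B})$ then comes out as a \emph{consequence} of the two inclusion--exclusion identities, rather than as an input obtained from Corollary~\ref{cor:landau}.
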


\begin{proof}
By Proposition \ref{prop:AmbientFrobSub}, $A$ and $B$ are ambiently selfdual split subobject of $X$, and so are $A \cap B$ and $A+B$ by assumption. By Lemma \ref{lem:AmbientlyInter}, $A \cap B$ is a split subobject of both $A$ and $B$, so by Lemma \ref{lem:Inter+Sum}, $A \oplus B \simeq (A \cap B) \oplus (A + B)$. Using Lemma \ref{lem:trfg} and the property $ i_Z^{**} = i_Z $, we get $ \tr(b_Z) = \tr(i_Z \circ i_Z^*) = \tr(i_Z^* \circ i_Z) = \tr(\id_Z) $.  Therefore,
\begin{align*}
\tr(b_{A+B}) &= \tr(\id_{A+B}) = \tr(\id_{A \oplus B}) - \tr(\id_{A \cap B}) \\
&= \tr(\id_{A}) + \tr(\id_{B}) - \tr(\id_{A \cap B}) = \tr(b_A) + \tr(b_B) - \tr(b_{A \cap B}) \\
&= \tr_{\alpha}(b_A) + \tr_{\alpha}(b_B) - \tr_{\alpha}(b_{A \cap B}) = \tr_{\phi}(\id_{A}) + \tr_{\phi}(\id_{B}) - \tr_{\phi}(\id_{A \cap B}) \\
&=  \tr_{\phi}(\id_{A \oplus B}) - \tr_{\phi}(\id_{A \cap B}) =  \tr_{\phi}(\id_{A+B}) = \tr_{\alpha}(b_{A+B}),
\end{align*} 
Recall that $\alpha = \phi_X$ and $\phi$ is a quasi-pivotal structure (see Lemma \ref{lem:trfgpivotalquasi}).
\end{proof}
Throughout this section, it suffices to assume that $ X $ is $ \alpha $-weakly positive for some isomorphism $ \alpha $, except in the proof of Lemma~\ref{lem:AB+}, where we explicitly require that $ \alpha $ extends to a quasi-pivotal structure $ \phi $. Whether this extra assumption is genuinely necessary remains unclear. 

\begin{lemma} \label{lem:A+B>Y}
For all $ Y \in \{A, B\} $, we have $ b_Y \leq b_{A+B} $. But $ \tr_{\alpha}(b_{A+B}) = \tr(b_{A+B}) $. Thus $ \tr(b_Y) \leq \tr(b_{A+B}) $.
\end{lemma}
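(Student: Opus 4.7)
The plan is to mirror the argument used for Lemma \ref{lem:Y>AnB}, but replacing the pullback relations by the pushout relations, and then to deduce the trace inequality from $\alpha$-weak positivity combined with the $\tr = \tr_{\alpha}$ identities already at our disposal.

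First, I would exploit the pushout identity $i_Y = i_{A+B} \circ k_Y$ from Lemma \ref{lem:A+BSelf}, which yields by duality $i_Y^* = k_Y^* \circ i_{A+B}^*$. Using also the counitality $i_{A+B}^* \circ i_{A+B} = \id_{A+B}$ from that same lemma, one computes
\[
b_{A+B} \circ b_Y = i_{A+B} \circ (i_{A+B}^* \circ i_Y) \circ i_Y^* = i_{A+B} \circ k_Y \circ i_Y^* = i_Y \circ i_Y^* = b_Y,
\]
and symmetrically
\[
b_Y \circ b_{A+B} = i_Y \circ (i_Y^* \circ i_{A+B}) \circ i_{A+B}^* = i_Y \circ k_Y^* \circ i_{A+B}^* = i_Y \circ i_Y^* = b_Y.
\]
This establishes $b_Y \le b_{A+B}$ in the partial order of idempotents recalled in \S\ref{sec:pseudo}.

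The identity $\tr_{\alpha}(b_{A+B}) = \tr(b_{A+B})$ is already part of Lemma \ref{lem:AB+}, so nothing further is needed there. To derive the inequality $\tr(b_Y) \le \tr(b_{A+B})$, I would combine three ingredients: the equality $\tr(b_Y) = \tr_{\alpha}(b_Y)$ (from Corollary \ref{cor:landau}, since $\tr_{\alpha}(\id_X) \ne 0$), the equality $\tr(b_{A+B}) = \tr_{\alpha}(b_{A+B})$ from Lemma \ref{lem:AB+}, and Lemma \ref{lem:trftrg} applied to $b_Y \le b_{A+B}$, which gives $\tr_{\alpha}(b_Y) \le \tr_{\alpha}(b_{A+B})$ thanks to the $\alpha$-weak positivity of $X$.

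I do not anticipate any real obstacle here: the only delicate point is to apply the duality operation $(-)^*$ correctly to the pushout relations so that $i_Y^* = k_Y^* \circ i_{A+B}^*$ comes out cleanly, but this is guaranteed by Lemma \ref{lem:A+BSelf} (notably $i_{A+B}^{**} = i_{A+B}$), and by the selfduality conferred on $A+B$ by the same lemma in the rigid invariant setting. The argument is essentially the formal dual of Lemma \ref{lem:Y>AnB}.
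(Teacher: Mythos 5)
Your proof is correct and follows essentially the same route as the paper: both use the pushout factorization $i_Y = i_{A+B}\circ k_Y$ together with $i_{A+B}^*\circ i_{A+B} = \id_{A+B}$ to show $b_{A+B}\circ b_Y = b_Y = b_Y\circ b_{A+B}$, then invoke Lemma~\ref{lem:AB+} for $\tr_\alpha(b_{A+B})=\tr(b_{A+B})$ and combine Lemma~\ref{lem:trftrg} with the $\tr=\tr_\alpha$ identities (Remark~\ref{rk:tralpha}/Corollary~\ref{cor:landau}) for the final inequality. The only cosmetic difference is that you simplify $i_{A+B}^*\circ i_Y=k_Y$ first, whereas the paper expands $b_Y=(i_{A+B}\circ k_Y)\circ(i_{A+B}\circ k_Y)^*$ before cancelling; the substance is identical.
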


\begin{proof}
By definition, the equality $ i_{A+B}^* \circ i_{A+B} = \id_{A+B} $ and the notations in Remark \ref{rk:union},
\begin{align*}
b_{A+B} \circ b_Y &= (i_{A+B} \circ i_{A+B}^*) \circ (i_Y \circ i_Y^*) \\
&= i_{A+B} \circ i_{A+B}^* \circ (i_{A+B} \circ k_Y) \circ (i_{A+B} \circ k_Y)^* \\
&= i_{A+B} \circ (i_{A+B}^* \circ i_{A+B}) \circ k_Y \circ (i_{A+B} \circ k_Y)^* \\
&= (i_{A+B} \circ k_Y) \circ (i_{A+B} \circ k_Y)^* = b_Y.
\end{align*}
Similarly, we also find $ b_Y \circ b_{A+B} = b_Y $. The second statement is contained in Lemma \ref{lem:AB+}. The last statement follows from Lemma \ref{lem:trftrg} and Remark \ref{rk:tralpha}.
\end{proof}

%

\begin{lemma} \label{lem:joinidem}
Let $ P $ be a poset and let $ p, q_1, q_2 $ belong to $ P $ such that $ q_i \le p $ for $ i \in \{1,2\} $. If $ q_1 $ and $ q_2 $ have a unique supremum $ q_1 \vee q_2 $ (called the join), then $ q_1 \vee q_2 \le p $.
\end{lemma}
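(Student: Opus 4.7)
The proof plan is essentially an unpacking of the definition of the supremum in a poset. By hypothesis, $q_1 \le p$ and $q_2 \le p$, so $p$ is an upper bound of the pair $\{q_1, q_2\}$ in $P$. By the defining property of a supremum (least upper bound), $q_1 \vee q_2$ is less than or equal to every upper bound of $\{q_1, q_2\}$, hence $q_1 \vee q_2 \le p$. No further machinery or case analysis is required.

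Since the statement is purely order-theoretic and does not interact with the Frobenius algebra or monoidal category structure established earlier in the paper, there is no real obstacle. The only point worth being careful about is simply to invoke the correct half of the definition of supremum (it is both an upper bound \emph{and} the least such), and to note that uniqueness of the join is assumed so the notation $q_1 \vee q_2$ is well-defined, but uniqueness plays no further role in the argument. The proof will therefore consist of a single sentence.
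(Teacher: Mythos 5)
Your proof is correct and matches the paper's, which simply declares the statement immediate: $p$ is an upper bound of $\{q_1,q_2\}$, and the supremum is by definition the \emph{least} upper bound, so $q_1 \vee q_2 \le p$. Your remark that uniqueness only serves to make the notation well-defined is also accurate (indeed, in a poset a supremum is automatically unique when it exists).
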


\begin{proof}
This statement is immediate.
\end{proof}

\begin{lemma} \label{lem:A+B<AB}
The inequality $ b_{A+B} \le b_{AB} $ holds. Consequently, $ \tr(b_{A+B}) \le \tr(b_{AB}) $.
\end{lemma}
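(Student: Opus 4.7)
The plan is to first establish $b_{A+B}\le b_{AB}$ in the idempotent poset of $\End_{\mathcal{C}}(X)$, and then derive the trace inequality via $\alpha$-weak positivity. The guiding intuition is that $b_{A+B}$ acts as the join (supremum) of $b_A$ and $b_B$ among idempotents of $\End_{\mathcal{C}}(X)$, so once we know $b_A,b_B\le b_{AB}$ (which is supplied by Corollary \ref{cor:landau}), Lemma \ref{lem:joinidem} will give the desired inequality. Rather than develop the full join theory, I would prove the two required relations $b_{AB}\circ b_{A+B}=b_{A+B}=b_{A+B}\circ b_{AB}$ directly, exploiting the universal property of the pushout defining $A+B$.

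For $b_{AB}\circ b_{A+B}=b_{A+B}$: from $b_A\le b_{AB}$ and $b_B\le b_{AB}$ (Corollary \ref{cor:landau}), one has $b_{AB}\circ b_Y=b_Y$ for $Y\in\{A,B\}$. Since $i_Y^{*}$ is a split epimorphism (because $i_Y^{*}\circ i_Y=\id_Y$), it can be right-cancelled, giving $b_{AB}\circ i_Y=i_Y$. Using $i_Y=i_{A+B}\circ k_Y$, this becomes $b_{AB}\circ i_{A+B}\circ k_Y=i_{A+B}\circ k_Y$ for both $Y\in\{A,B\}$. Both $b_{AB}\circ i_{A+B}$ and $i_{A+B}$ are morphisms $A+B\to X$ satisfying the compatibility condition required by the universal property of the pushout $A+B$ (which is automatic from $i_A\circ j_A=i_B\circ j_B$), so they must coincide. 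Post-composing with $i_{A+B}^{*}$ yields $b_{AB}\circ b_{A+B}=b_{A+B}$.

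For the symmetric identity $b_{A+B}\circ b_{AB}=b_{A+B}$, I would invoke self-duality of the two idempotents. The identity $b_{A+B}^{*}=b_{A+B}$ comes from $i_{A+B}^{**}=i_{A+B}$ (Lemma \ref{lem:A+BSelf}), while $b_{AB}^{*}=b_{AB}$ follows from the explicit expression $b_{AB}=\frac{\mu_X}{\tr(b_A\circ b_B)}\,\delta\circ(b_A\otimes b_B)\circ m$, using $m^{*}=\delta$, $\delta^{*}=m$ (Lemma \ref{lem:multdualcomult}) and the selfduality $b_A^{*}=b_A$, $b_B^{*}=b_B$ (Definition \ref{def:sub}). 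Applying $(-)^{*}$ to the equality established in the previous paragraph then gives the second identity, which concludes $b_{A+B}\le b_{AB}$.

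Finally, to obtain $\tr(b_{A+B})\le\tr(b_{AB})$, I would apply Lemma \ref{lem:trftrg} (using the $\alpha$-weak positivity of $X$) to the inequality $b_{A+B}\le b_{AB}$, giving $\tr_{\alpha}(b_{A+B})\le\tr_{\alpha}(b_{AB})$, and then convert $\tr_{\alpha}$ back to $\tr$ via Lemma \ref{lem:AB+} (for $b_{A+B}$) and Remark \ref{rk:tralpha} or Corollary \ref{cor:landau} (for $b_{AB}$). The step I expect to require the most care is the pushout universal property step: one must check that the two candidate morphisms actually sit inside the relevant universal cone, and that the right-cancellation of $i_Y^{*}$ is legitimate—both points being immediate once one tracks the split mono/split epi structure carried by the Frobenius subalgebra inclusions.
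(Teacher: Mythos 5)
Your overall strategy is sound and is essentially an expanded version of the paper's own one-line argument: the paper simply invokes Lemma \ref{lem:joinidem} together with the (unproved) assertion that $b_{A+B}=b_A\vee b_B$ in the idempotent poset, and your pushout computation is exactly the verification that assertion requires. Your first identity is correct: from $b_{AB}\circ b_Y=b_Y$ (Corollary \ref{cor:landau}) one gets $b_{AB}\circ i_Y=i_Y$ by composing with $i_Y$ on the right and using $i_Y^*\circ i_Y=\id_Y$, and then the uniqueness clause of the pushout forces $b_{AB}\circ i_{A+B}=i_{A+B}$, whence $b_{AB}\circ b_{A+B}=b_{A+B}$. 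The trace conversion at the end also matches the paper.

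However, your derivation of the second identity has a genuine gap: the claimed selfduality $b_{AB}^*=b_{AB}$ does not follow from the computation you sketch. Since $(X\otimes X)^*=X^*\otimes X^*$ with the tensor factors \emph{swapped}, one has $(b_A\otimes b_B)^*=b_B^*\otimes b_A^*=b_B\otimes b_A$, so $(b_A*b_B)^*=b_B*b_A$ and hence $b_{AB}^*=\frac{\mu_X}{\tr(b_B\circ b_A)}\,b_B*b_A$, which is the idempotent $b_{BA}$, not $b_{AB}$. The commutativity $b_A*b_B=b_B*b_A$ is not available in this paper; it would essentially follow from a Bisch-type characterization of biprojections, which is precisely what is left open in Statement \ref{sta:liu}. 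Applying $(-)^*$ to your first identity therefore only yields $b_{A+B}\circ b_{BA}=b_{A+B}$. The fix is easy and keeps your method intact: Corollary \ref{cor:landau} gives \emph{both} $b_{AB}\circ b_Y=b_Y$ and $b_Y\circ b_{AB}=b_Y$, so you can run the mirror argument directly — left-cancel the monomorphism $i_Y$ in $i_Y\circ i_Y^*\circ b_{AB}=i_Y\circ i_Y^*$ to get $i_Y^*\circ b_{AB}=i_Y^*$, then use the universal property of the pullback square obtained by dualizing the pushout (noting $k_Y^*\circ i_{A+B}^*=i_Y^*$ and $i_{A+B}^{**}=i_{A+B}$ from Lemma \ref{lem:A+BSelf}) to conclude $i_{A+B}^*\circ b_{AB}=i_{A+B}^*$ and hence $b_{A+B}\circ b_{AB}=b_{A+B}$, with no appeal to selfduality of $b_{AB}$.
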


\begin{proof}
This follows directly from the inequality $ b_Y \le b_{AB} $ and Lemma \ref{lem:joinidem}, since $ b_{A+B} = b_A \vee b_B $. The last statement follows from Lemma \ref{lem:trftrg}, Remark \ref{rk:tralpha} and Lemma \ref{lem:AB+}.
\end{proof}

By combining Lemmas \ref{lem:Y>AnB}, \ref{lem:AB+}, \ref{lem:A+B<AB}, and Corollary \ref{cor:landau}, for all $Y \in \{A,B\}$ we have:
$$
0 < \tr(b_Y) \le \tr(b_A) + \tr(b_B) - \tr(b_{A \cap B}) = \tr(b_{A+B}) \le \tr(b_{AB}) = \frac{\tr(b_A) \tr(b_B)}{\tr(b_A \circ b_B)},
$$
where positivity holds because $ b_Y $ is a nonzero idempotent and $X$ is $\alpha$-weakly positive. Therefore, we derive:
\begin{align*}
\tr(b_A \circ b_B) &\le \frac{\tr(b_A)\tr(b_B)}{\tr(b_A) + \tr(b_B) - \tr(b_{A \cap B})}, \\
\tr(b_A \circ b_B) - \tr(b_{A \cap B}) &\le \frac{\tr(b_A)\tr(b_B)}{\tr(b_A) + \tr(b_B) - \tr(b_{A \cap B})} - \tr(b_{A \cap B}), \\
&= \frac{\tr(b_A)\tr(b_B) - \tr(b_{A \cap B})(\tr(b_A) + \tr(b_B)) + \tr(b_{A \cap B})^2}{\tr(b_A) + \tr(b_B) - \tr(b_{A \cap B})},\\
&= \frac{(\tr(b_A)-\tr(b_{A \cap B})) (\tr(b_B) - \tr(b_{A \cap B}))}{\tr(b_A) + \tr(b_B) - \tr(b_{A \cap B})}.
\end{align*}
By Lemma \ref{lem:Y>AnB}, we have $b_{A \cap B} \le b_Y$ and
\[
(b_A - b_{A \cap B}) \circ (b_B - b_{A \cap B}) = b_A \circ b_B - b_{A \cap B}.
\]  
Moreover, if $ b_Y \neq b_{A \cap B} $, then $ \tr(b_Y - b_{A \cap B}) = \tr(b_Y) - \tr(b_{A \cap B}) = \tr_{\alpha}(b_Y) - \tr_{\alpha}(b_{A \cap B}) > 0 $ by Lemma \ref{lem:trftrg}, and 
\begin{align*}
\frac{\tr((b_A - b_{A \cap B}) \circ (b_B - b_{A \cap B}))}{\sqrt{\tr(b_A-b_{A \cap B})\tr(b_B-b_{A \cap B})}} &\le \frac{\sqrt{\tr(b_A-b_{A \cap B})\tr(b_B-b_{A \cap B})}}{\tr(b_A) + \tr(b_B) - \tr(b_{A \cap B})}, \\
&< \frac{\sqrt{\tr(b_A-b_{A \cap B})\tr(b_B-b_{A \cap B})}}{\tr(b_A-b_{A \cap B}) + \tr(b_B-b_{A \cap B})} \le \frac{1}{2}.
\end{align*}
The final inequality follows from the AM–GM inequality, which states that for two non-negative numbers $ x $ and $ y $, we have $ \frac{x + y}{2} \geq \sqrt{xy} $.

\begin{remark} \label{rk:zerocase}
We initially assumed $ \tr(b_A \circ b_B) $ to be nonzero; if it is zero, the inequality above holds trivially.\end{remark}

Let us define the \emph{formal angle} between $ b_A $ and $ b_B $ with respect to $ b_A \wedge b_B  = b_{A \cap B} $ as follows:
$$
\theta(b_A, b_B) :=
\begin{cases} 
\arccos\left(\tr\left( \frac{(b_A - b_{A \cap B})}{\sqrt{\tr(b_A-b_{A \cap B})}} \circ \frac{(b_B - b_{A \cap B})}{\sqrt{\tr(b_B-b_{A \cap B})}}\right) \right), & \text{if } b_A \neq b_B \\
0, & \text{if } b_A = b_B.
\end{cases}
$$

\begin{remark} \label{rk:formal}
The term \emph{angle} draws inspiration from the work in \cite{BDLR19}, while the descriptor \emph{formal} indicates that the trace $ \tr $ may not define an inner product space on $ \End_{\mathcal{C}}(X) $, as the tensor category is not assumed to be unitary, and thus it may not yield a genuine angle. After standard analytic continuation, $\cos(\arccos(z)) = z$ for all $z \in \mathbb{C}$.
\end{remark}

As shown above, if $b_A \neq b_B$, then $\theta(b_A, b_B) > \pi/3$, which is the key step in the proof of Watatani's theorem.%

\section{Watatani's finiteness theorem on tensor categories} \label{sec:Wat} 
This section is dedicated to the proof of Theorem \ref{thm:allfinite}—our most comprehensive generalization of Watatani’s finiteness theorem—along with its eight corollaries, naturally arriving at the unitary case.

\begin{definition} \label{def:minimal}
Let $ L $ be a poset and $ a \in L $. We define the subset $ m_a \subset L $ as the collection of elements $ b \in L $ such that if $ a < c \leq b $, then $ b = c $. In simpler terms, $ m_a $ is the set of minimal elements in $ L $ that are greater than $ a $.
\end{definition}

%

\begin{definition} \label{def:minimals}
Let $ L $ be a poset and $ a \in L $. Define the subset $ m^s_a \subset L $ as follows: $ m^0_a = \{a\} $ if $ s=0 $, and $ m^s_a = \bigcup_{b \in m^{s-1}_a} m_b $ if $ s \ge 1 $. In particular, $m^1_a = m_a$.
\end{definition}

\begin{remark} \label{rk:unionlat}
If $ L $ has finite height $ h $ and a bottom element $ 1 $, then $ L = \bigcup_{s=0}^h m^s_1 $.
\end{remark}

\begin{definition} \label{def:CoherentSublattice}
Let $X$ be a Frobenius algebra in an abelian rigid monoidal category. A sublattice $L$ of its Frobenius subalgebra poset (Definition~\ref{def:FrobSubPoset}) is called \emph{coherent} if it has a bottom element and, for all $[A],[B] \in L$, the pair $(A,B)$ is coherent (Definition~\ref{def:CoherentPair}) and $[A] \wedge [B] = [A \cap B]$.
\end{definition}

There are two main motivations for Definition \ref{def:CoherentSublattice}. First, it provides our largest setting in which Watatani's finiteness theorem can be generalized (see Theorem \ref{thm:allfinite}); second, it encompasses a large class of examples (see Proposition \ref{prop:Bcoherent}), properly covering the unitary case (see Proposition \ref{prop:UnitFrobSub}).

\begin{proposition} \label{prop:Bcoherent}
Let $X$ be a Frobenius algebra of finite length that is semisimple as an object in a $\mathbb{C}$-linear abelian rigid monoidal category, where the unit object $\mathbf{1}$ is simple.  
Then every Frobenius $\mathcal{B}$-subalgebra lattice (Theorem \ref{thm:Bsublattice}) forms a coherent sublattice of the Frobenius subalgebra poset of $X$.
\end{proposition}
\begin{proof}
By Lemma \ref{lem:b1nonzero}, the unit object $\mathbf{1}$ is a Frobenius subalgebra of $X$. Being evidently $\mathcal{B}$-rigid invariant, it serves as the bottom element of the lattice. The result then follows from Theorem \ref{thm:CoherentPairB}.
\end{proof}

\begin{theorem} \label{thm:allfinite}
Let $\mathcal{C}$ be a $\mathbb{C}$-linear abelian rigid monoidal category with a linear-simple unit object $\one$ and a quasi-pivotal structure $\phi$. Let $X$ be a finite-length connected Frobenius algebra in $\mathcal{C}$ that is $\phi_X$-weakly positive, where $\phi_X$ is an algebra isomorphism, and suppose that $\End_{\mathcal{C}}(X)$ is finite-dimensional. Then every coherent sublattice $L$ of the Frobenius subalgebra poset of $X$ is finite.
\end{theorem}
\begin{proof}
Following Proposition \ref{prop:biprojection}, we identify $L$ with $\{b_Y \ | \ [Y] \in L\}$. Let $b_0$ be the bottom element. The height of $L$ is finite, say $ h $, since $ X $ has finite length. By Remark \ref{rk:unionlat}, we have $ L =  \bigcup_{s=0}^h m^s_{b_0} $. The rest of the proof relies on the following lemma, where $\alpha:=\phi_X$.


\begin{lemma} \label{lem:min_finite}
Let $b$ be an element of $L$. The subset $m_b$ of $L$, as defined in Definition \ref{def:minimal}, is finite.
\end{lemma}

\begin{proof}
Note that $m_b = \{b_{A_i} \ | \ i \in I \}$, for some index set $I$, where $A_i$ is a Frobenius subalgebra of $X$. Define the vectors $$ v_i := \frac{b_{A_i} - b}{\sqrt{\tr(b_{A_i} - b)}} $$ for each $ i \in I $. Let $ V $ be the $\mathbb{R}$-subspace of $\End_{\mathcal{C}}(X)$ spanned by the vectors $(v_i)_{i \in I}$. 
Since $\End_{\mathcal{C}}(X)$ is finite-dimensional, there exists a finite subset $ J \subseteq I $ such that $(v_i)_{i \in J}$ forms a basis for $ V $.

We define a real inner product on $ V $ as follows:
$$
\left\langle \sum_{j \in J} \lambda_j v_j, \sum_{j \in J} \mu_j v_j \right\rangle := \sum_{j \in J} \lambda_j \mu_j.
$$
Let $ M $ be the matrix defined by $ ( \tr(v_i \circ v_j) )_{i,j \in J} $. For all  $ i, j \in I $, 
\begin{align*}
\langle v_i, M v_j \rangle &= \left\langle \sum_{k \in J} \lambda_k v_k, M \sum_{l \in J} \mu_l v_l \right\rangle \\ 
&= \sum_{k,l \in J} \lambda_k \mu_l \left\langle v_k, M v_l \right\rangle \\
&= \sum_{k,l \in J} \lambda_k \mu_l \tr(v_k \circ v_l)   \\
&= \tr((\sum_{k \in J} \lambda_k v_k) \circ (\sum_{l \in J} \mu_l v_l)) \\
&= \tr(v_i \circ v_j).
\end{align*}
If $ i \neq j $ then $ b_{A_i \cap A_j} = b $ by minimality, since $[A_i \cap A_j] = [A_i] \wedge [A_j]$. Therefore, based on the end of \S \ref{sec:Angle}, 
$$
\langle v_i, M v_j \rangle = \tr(v_i \circ v_j) = \cos(\theta(b_{A_i}, b_{A_j})) < \frac{1}{2}.
$$

Assume there exists $ i \in I $ and a sequence $ (i_n)_{n \in \mathbb{N}} $ such that $ i \neq i_n $ for all $ n $, and the angle $ \angle(v_i, v_{i_n}) $ tends to zero in the real inner product space $ V $.
As a result, there exists a sequence of positive real numbers $ (\alpha_n) $ such that 
$
\lim_{n \to \infty} (\alpha_n v_{i_n}) = v_i.
$
For all $ j \in I $, we have 
$$
\langle v_j, M v_j \rangle = \tr(v_j \circ v_j) = \tr\left(\left(\frac{b_{A_j} - b}{\sqrt{\tr(b_{A_j} - b)}}\right) \circ \left(\frac{b_{A_j} - b}{\sqrt{\tr(b_{A_j} - b)}}\right)\right) = \text{tr}\left(\frac{b_{A_j} - b}{\text{tr}(b_{A_j} - b)}\right) = 1.
$$
By continuity, we find that 
$$
\lim_{n \to \infty} \langle \alpha_n v_{i_n}, M \alpha_n v_{i_n} \rangle = \langle v_i, M v_i \rangle = 1.
$$
However, since 
$$
\langle \alpha_n v_{i_n}, M \alpha_n v_{i_n} \rangle = \alpha_n^2 \langle v_{i_n}, M v_{i_n} \rangle = \alpha_n^2,
$$
it follows that $\lim_{n \to \infty} \alpha_n = 1$.
Again, by continuity, we have 
$$
\lim_{n \to \infty} \langle v_{i}, M v_{i_n} \rangle = \lim_{n \to \infty} \alpha_n \langle v_{i}, M v_{i_n} \rangle = \lim_{n \to \infty} \langle v_{i}, M \alpha_n v_{i_n} \rangle = \langle v_{i}, M v_i \rangle = 1.
$$
This contradicts the inequality $ \langle v_{i}, M v_{i_n} \rangle < \frac{1}{2} $, which holds because we assumed that $ i \neq i_n $ for all $ n $. Thus, our initial assumption must be incorrect. This means that there exists a constant $ \beta > 0 $ such that for all $ i, j \in I $ with $ i \neq j $, the angle $ \angle(v_i, v_j) \ge \beta $. Since $ V $ is finite-dimensional, the index set $ I $ must also be finite.
\end{proof}

By Definition \ref{def:minimals}, Lemma \ref{lem:min_finite}, and induction, each set $ m^s_{b} $ is finite. The result follows by Remark \ref{rk:unionlat}. 
\end{proof}

\begin{remark} \label{rk:estimate}
An estimate of $\beta$, possibly involving $\Vert M \Vert$, should enable the derivation of an upper bound for the size of the finite lattice, analogous to \cite{BDLR19}.
\end{remark}

Let us provide a list of eight corollaries of Theorem \ref{thm:allfinite}: 

\begin{corollary} \label{cor:allfinite1.5}
Let $\mathcal{C}$ be a tensor category over $\mathbb{C}$ with a quasi-pivotal structure $\phi$. Let $X$ be a connected Frobenius algebra in $\mathcal{C}$ that is $\phi_X$-weakly positive with $\phi_X$ an algebra isomorphism. Then every coherent sublattice of the Frobenius subalgebra poset of $X$ is finite.
\end{corollary}
\begin{proof}
This follows directly from Theorem \ref{thm:allfinite}.
\end{proof}

\begin{corollary} \label{cor:allfinite1.75b}
Let $\mathcal{C}$ be a semisimple tensor category over $\mathbb{C}$, so weakly positive for some quasi-pivotal strucutre $\phi$. Let $X$ be a connected Frobenius algebra such that $\phi_X$ an algebra morphism. Then every coherent sublattice of the Frobenius subalgebra poset of $X$ is finite. In particular, every Frobenius $\mathcal{B}$-subalgebra lattice is finite.
\end{corollary}
\begin{proof}
Immediate by Corollary \ref{cor:allfinite1.5} and Theorem \ref{thm:semisweakly}. The last sentence applies Proposition \ref{prop:Bcoherent}.
\end{proof}
%

\begin{lemma} \label{lem:QuasiPivAlgMap}
Let $\mathcal{C}$ be a rigid monoidal category. Let $(X,m)$ be an algebra object in $\mathcal{C}$ such that $X^{**} = X$ and $m^{**} = m$. If there is a quasi-pivotal structure $\phi$ such that $\phi_{X \otimes X} = \phi_X \otimes \phi_X$, then $\phi_X$ is an algebra morphism.
\end{lemma}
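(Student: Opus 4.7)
The plan is to directly exploit the naturality condition of the quasi-pivotal structure applied to the multiplication map. By Definition \ref{def:quasipivotal}, for every morphism $f \in \Hom_{\mathcal{C}}(Y,Z)$ we have $f = \phi_Z^{-1} \circ f^{**} \circ \phi_Y$, which rearranges to the naturality square
$$\phi_Z \circ f = f^{**} \circ \phi_Y.$$
First I would apply this identity to the multiplication morphism $m \in \Hom_{\mathcal{C}}(X \otimes X, X)$, obtaining
$$\phi_X \circ m = m^{**} \circ \phi_{X \otimes X}.$$

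Next, I would substitute the two hypotheses: $m^{**} = m$ and $\phi_{X \otimes X} = \phi_X \otimes \phi_X$. The right-hand side then becomes $m \circ (\phi_X \otimes \phi_X)$, yielding
$$\phi_X \circ m = m \circ (\phi_X \otimes \phi_X),$$
which is precisely the multiplicativity condition from Definition \ref{def:FrobMor} for $\phi_X$ viewed as a morphism from $(X,m)$ to $(X^{**}, m^{**}) = (X,m)$.

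Since the lemma only equips $X$ with a multiplication and not a unit, this verifies the algebra morphism condition in full. There is no real obstacle: the proof is essentially a one-line unwinding of the quasi-pivotal naturality together with the two compatibility hypotheses $m^{**} = m$ and $\phi_{X \otimes X} = \phi_X \otimes \phi_X$. The only subtle point worth flagging is that, in contrast with the genuine pivotal setting (compare Lemma \ref{lem:phi1}), no claim about $\phi_{\one}$ is needed here, since unitality is not part of the hypothesis; should one later wish to promote this to a unital algebra morphism statement, one would additionally need to invoke the alignment convention of Remark \ref{rk:nosym} and a hypothesis of the form $\phi_{\one} = \id_{\one}$ to handle the equality $\phi_X \circ e = e^{**} \circ \phi_{\one}$ obtained from naturality applied to the unit $e$.
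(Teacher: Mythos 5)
Your proposal is correct and follows essentially the same route as the paper: apply the naturality of the quasi-pivotal structure to $m$, then substitute $m^{**}=m$ and $\phi_{X\otimes X}=\phi_X\otimes\phi_X$ to obtain $\phi_X\circ m = m\circ(\phi_X\otimes\phi_X)$. Your closing remark about unitality is consistent with the paper's conventions, since Definition \ref{def:FrobMor} separates the condition labelled ``algebra morphism'' (multiplicativity) from the ``unital morphism'' condition, and only the former is claimed here.
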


\begin{proof}
By Definition \ref{def:quasipivotal}, a quasi-pivotal structure is a natural isomorphism from the identity functor $\id_{\mathcal{C}}$ to the double dual functor $(\_)^{**}$. By naturality, we have
\(
m \circ \phi_X = \phi_{X \otimes X} \circ m^{**}.
\)
By assumption, $\phi_{X \otimes X} = \phi_X \otimes \phi_X$. Substituting this into the equation above and using that $m^{**} = m$, we conclude that $\phi_X$ is an algebra morphism.
\end{proof}

Regarding Theorem \ref{thm:allfinite} and its corollaries, it follows from Lemmas \ref{lem:multdualcomult} and \ref{lem:QuasiPivAlgMap} that if $\phi_X$ satisfies $\phi_{X \otimes X} = \phi_X \otimes \phi_X$, then $\phi_X$ is necessarily an algebra isomorphism.


\begin{lemma} \label{lem:PivAlgMap}
Let $\mathcal{C}$ be a rigid monoidal category with a pivotal structure $\phi$. Let $(X,m)$ be an algebra object in $\mathcal{C}$ such that $X^{**} = X$ and $m^{**} = m$. Then $\phi_X$ is an algebra morphism.
\end{lemma}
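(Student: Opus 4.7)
The plan is to obtain Lemma \ref{lem:PivAlgMap} as an immediate specialization of Lemma \ref{lem:QuasiPivAlgMap}, so the proof amounts to checking that a pivotal structure satisfies the two hypotheses required there: being quasi-pivotal, and satisfying $\phi_{X \otimes X} = \phi_X \otimes \phi_X$.

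First, I would observe that by Definition \ref{def:pivotal}, a pivotal structure $\phi$ is a monoidal natural isomorphism $\id_{\mathcal{C}} \to ()^{**}$. In particular, condition (1) of Definition \ref{def:pivotal} (naturality) coincides with condition (1) of Definition \ref{def:quasipivotal}, so any pivotal structure is automatically quasi-pivotal. Hence the first hypothesis of Lemma \ref{lem:QuasiPivAlgMap} is satisfied.

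Next, I would apply condition (2) of Definition \ref{def:pivotal} (the monoidal axiom) with $Y = X$, which yields the equality $\phi_{X \otimes X} = \phi_X \otimes \phi_X$. This is exactly the extra hypothesis required by Lemma \ref{lem:QuasiPivAlgMap}. Together with the standing assumptions $X^{**} = X$ and $m^{**} = m$, this lemma applies and gives that $\phi_X$ is an algebra morphism, concluding the proof.

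There is essentially no obstacle here: the content of Lemma \ref{lem:PivAlgMap} is already packaged inside Lemma \ref{lem:QuasiPivAlgMap}, and the role of the present lemma is merely to record the useful corollary in the pivotal case, where the monoidal compatibility of $\phi$ is built into the definition and need not be imposed separately (compare Remark \ref{rk:AutoAlgMor}).
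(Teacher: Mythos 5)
Your proposal is correct and follows exactly the same route as the paper: the paper's proof also deduces Lemma \ref{lem:PivAlgMap} directly from Lemma \ref{lem:QuasiPivAlgMap}, noting that a pivotal structure is in particular quasi-pivotal and, being monoidal, satisfies $\phi_{X \otimes X} = \phi_X \otimes \phi_X$. Nothing is missing.
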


\begin{proof}
This follows directly from Lemma \ref{lem:QuasiPivAlgMap}, since a pivotal structure, being monoidal, satisfies the identity $\phi_{X \otimes X} = \phi_X \otimes \phi_X$.
\end{proof}

\begin{corollary} \label{cor:allfinite1.75}
Let $\mathcal{C}$ be a tensor category over $\mathbb{C}$ with pivotal structure $\phi$. Let $X$ be a connected Frobenius algebra in $\mathcal{C}$ that is $\phi_X$-weakly positive. Then every coherent sublattice of the Frobenius subalgebra poset of $X$ is finite.
\end{corollary}
\begin{proof}
The result follows from Lemma \ref{lem:multdualcomult}, Lemma \ref{lem:PivAlgMap} and Corollary \ref{cor:allfinite1.5}.
\end{proof}

By applying semisimplification together with Definition~\ref{def:RigidInvariantNSS} for rigidity invariance without the assumption of semisimplicity, we obtain the following corollary. We note, however, that a more general statement can be derived by considering the conditions in Remark~\ref{rk:zeronil}.
\begin{corollary} \label{cor:allfinite1}
Let $\mathcal{C}$ be a tensor category over $\mathbb{C}$ with a spherical structure $\phi$. Let $X$ be a connected Frobenius algebra in $\mathcal{C}$ that is $\phi_X$-weakly positive. Then every Frobenius $\mathcal{B}$-subalgebra poset is finite.
\end{corollary}

\begin{proof}
The conclusion follows from Proposition \ref{prop:posetsemi} and Corollary \ref{cor:allfinite1.75}. Note that as an object in a tensor category, $ X $ has finite length and can be uniquely decomposed (up to isomorphism) into a direct sum of finitely many indecomposable components (Krull-Schmidt theorem). Due to weak positivity, all these indecomposables have nonzero dimensions. Consequently, $ \overline{X} $ in the semisimplification $ \overline{\mathcal{C}} $ also admits a unique decomposition (up to isomorphism) as a direct sum of finitely many simple objects corresponding to the aforementioned indecomposables.
\end{proof}

\begin{corollary} \label{cor:allfinite2b}
Let $X$ be a connected Frobenius algebra in a positive tensor category. Then every coherent sublattice of the Frobenius subalgebra poset of $X$ is finite.
\end{corollary}

\begin{proof}
The result follows from Definition \ref{def:PositiveTensor}, Remark \ref{rk:PosWpos}, Proposition \ref{prop:WeaklyPositiveTensor} and Corollary \ref{cor:allfinite1.75}.
\end{proof}

\begin{corollary} \label{cor:allfinite2}
Let $X$ be a connected Frobenius algebra in a positive semisimple tensor category. Then every Frobenius $\mathcal{B}$-subalgebra lattice is finite.
\end{corollary}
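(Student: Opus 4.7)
The plan is to reduce Corollary~\ref{cor:allfinite2} directly to Corollary~\ref{cor:allfinite1.75}. The only two things one must verify are that the ambient category admits a pivotal structure $\phi$ witnessing the hypothesis, and that $X$ itself is $\phi_X$-weakly positive with respect to this structure.

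First I would unpack the definition of \emph{positive} (Definition~\ref{def:PositiveTensor}): it supplies a pivotal structure $\phi$ on $\mathcal{C}$ such that $\tr_\phi(\id_Y) > 0$ for every nonzero object $Y$. By Remark~\ref{rk:PosWpos}, any $\phi$-positive category is in particular $\phi$-weakly positive in the sense of Definition~\ref{def:weaklyPositiveTensor}. Since $\mathcal{C}$ is semisimple, it is abelian and hence Karoubian (every idempotent splits).

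Next, I would apply Proposition~\ref{prop:WeaklyPositiveTensor} to the nonzero object $X$ in the Karoubian $\phi$-weakly positive category $\mathcal{C}$; this gives that $X$ is $\phi_X$-weakly positive in the sense of Definition~\ref{def:weakpositive}. Concretely, for every nonzero idempotent $b \in \Hom_{\mathcal{C}}(X,X)$, splitting $b = i \circ p$ with $p \circ i = \id_Y$ and using Lemma~\ref{lem:trfgpivotalquasi} together with positivity yields $\tr_{\phi_X}(b) = \tr_\phi(\id_Y) > 0$.

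With $\phi$ pivotal and $X$ connected and $\phi_X$-weakly positive, the hypotheses of Corollary~\ref{cor:allfinite1.75} are satisfied, and the conclusion that the Frobenius $\mathcal{B}$-subalgebra lattice of $X$ is finite follows immediately. There is essentially no obstacle here; the only subtlety is matching the conventions (pivotal vs.\ quasi-pivotal, object-level vs.\ category-level positivity), and these are all covered by the lemmas and remarks cited above.
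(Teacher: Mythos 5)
Your proposal is correct and follows exactly the same route as the paper, which cites Definition~\ref{def:PositiveTensor}, Remark~\ref{rk:PosWpos}, Proposition~\ref{prop:WeaklyPositiveTensor}, and Corollary~\ref{cor:allfinite1.75} in the same order. The additional detail you supply (semisimplicity giving the Karoubian hypothesis, and the explicit trace computation via Lemma~\ref{lem:trfgpivotalquasi}) just unpacks what those citations already contain.
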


\begin{proof}
Immediate from Corollary \ref{cor:allfinite2b} and Proposition \ref{prop:Bcoherent}.
\end{proof}

The following corollary demonstrates that our framework recovers the classical version of Watatani's finiteness theorem in the unitary setting, where rigidity invariance is not required:

\begin{corollary} \label{cor:allfiniteUnitary}
Let \( X \) be a connected unitary Frobenius algebra in a unitary tensor category. Then its unitary Frobenius subalgebra lattice is finite.
\end{corollary}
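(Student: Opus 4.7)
The plan is to reduce to Corollary \ref{cor:allfinite2} by exhibiting the unitary Frobenius subalgebra lattice as a subposet of the Frobenius $\mathcal{B}$-subalgebra lattice for some (in fact every) basis $\mathcal{B}$ of $X$. First, I would recall from Proposition \ref{prop:C*positive} that every unitary tensor category is positive (via the canonical spherical structure), and that such a category is semisimple in the sense of this paper (this is implicit in the use of unitary tensor categories throughout, e.g.\ in the proof of Corollary \ref{cor:latticeunitary}, which invokes Corollary \ref{cor:lattice2}). Hence the ambient category $\mathcal{C}$ is a positive semisimple tensor category over $\mathbb{C}$.

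Next, I would invoke Proposition \ref{prop:UnitFrobSub}: every unitary Frobenius subalgebra $B$ of the unitary Frobenius algebra $X$ is rigid invariant, meaning it is $\mathcal{B}$-rigid invariant for \emph{every} basis $\mathcal{B}$ of $X$ in the sense of Definition \ref{def:RigidInvariance}. Fix any such basis $\mathcal{B}$. Then by Notation \ref{not:Brigid}, each unitary Frobenius subalgebra of $X$ corresponds to an element of $[b_{\one}, b_X]_{\mathcal{B}}$, so the unitary Frobenius subalgebra poset embeds into the Frobenius $\mathcal{B}$-subalgebra poset of $X$.

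By Corollary \ref{cor:latticeunitary}, the unitary Frobenius subalgebra poset is itself a lattice, so the embedding above makes it a subposet of the finite lattice provided by Corollary \ref{cor:allfinite2} (applied to the positive semisimple tensor category $\mathcal{C}$ and the connected Frobenius algebra $X$). Finiteness of the unitary Frobenius subalgebra lattice then follows immediately.

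There is no genuine obstacle here; the only subtlety is the matching of hypotheses. In particular, one should verify that the unitarity assumption on $X$ yields both connectedness as required in Corollary \ref{cor:allfinite2} (which is given) and that the canonical pivotal structure $\phi$ of the unitary tensor category makes $X$ into a $\phi_X$-weakly positive object (which is automatic from positivity of $\mathcal{C}$ together with Remark \ref{rk:PosWpos} and Proposition \ref{prop:WeaklyPositiveTensor}). With these routine verifications in place, the corollary follows at once.
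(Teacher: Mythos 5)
Your proposal is correct and follows exactly the paper's own argument: the paper proves this corollary by citing Proposition \ref{prop:UnitFrobSub} (unitary Frobenius subalgebras are rigid invariant), Proposition \ref{prop:C*positive} (unitary tensor categories are positive), and Corollary \ref{cor:allfinite2}, which is precisely the reduction you carry out. Your additional verifications (semisimplicity of the ambient category, the embedding into $[b_{\one}, b_X]_{\mathcal{B}}$, and the lattice structure via Corollary \ref{cor:latticeunitary}) just make explicit what the paper leaves implicit.
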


\begin{proof}
The result follows from Proposition~\ref{prop:UnitFrobSub}, Proposition~\ref{prop:C*positive}, and Corollary~\ref{cor:allfinite2}.
\end{proof}

Unitary tensor categories are positive in the sense of Definition~\ref{def:PositiveTensor}, as shown in Proposition~\ref{prop:C*positive}. However, the converse may not hold; pseudo-unitary fusion categories also satisfy positivity, as established in Proposition~\ref{prop:pseudo}.

\begin{corollary} \label{cor:allfinite3}
Let $X$ be a connected Frobenius algebra in a pseudo-unitary fusion category over $\mathbb{C}$. Then every coherent sublattice of the Frobenius subalgebra poset of $X$ is finite. In particular, every Frobenius $\mathcal{B}$-subalgebra lattice is finite.
\end{corollary}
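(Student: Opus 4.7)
The plan is to obtain this corollary essentially for free by chaining two results already established in the paper. The statement to prove concerns a connected Frobenius algebra $X$ in a pseudo-unitary fusion category $\mathcal{C}$ over $\mathbb{C}$, and the conclusion is the finiteness of its Frobenius $\mathcal{B}$-subalgebra lattice. The key observation is that pseudo-unitary fusion categories already sit inside the scope of Corollary \ref{cor:allfinite2}, so no new categorical machinery needs to be developed.

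First I would invoke Proposition \ref{prop:pseudo}, which characterizes positivity for finite tensor categories over $\mathbb{C}$ as equivalent to being a pseudo-unitary fusion category. This immediately gives that $\mathcal{C}$ is a positive tensor category. Since fusion categories are, by definition, semisimple (with finitely many simple objects and a simple unit), $\mathcal{C}$ is in particular a positive \emph{semisimple} tensor category, which is exactly the hypothesis of Corollary \ref{cor:allfinite2}.

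The second and final step is then simply to apply Corollary \ref{cor:allfinite2} to the connected Frobenius algebra $X$ in $\mathcal{C}$, yielding the finiteness of the Frobenius $\mathcal{B}$-subalgebra lattice of $X$. There is no real obstacle here, since all the hard work, namely the formal-angle argument, the lattice structure under rigidity invariance, and the reduction from positivity to weak positivity via Remark \ref{rk:PosWpos} and Proposition \ref{prop:WeaklyPositiveTensor}, has already been done upstream. The corollary is best presented as a direct consequence, with the proof reduced to a single sentence citing Proposition \ref{prop:pseudo} and Corollary \ref{cor:allfinite2}.
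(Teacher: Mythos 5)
Your proposal is correct and coincides exactly with the paper's proof, which derives the corollary from Proposition~\ref{prop:pseudo} (pseudo-unitary fusion categories are positive) followed by Corollary~\ref{cor:allfinite2}. Nothing further is needed.
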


\begin{proof}
The result follows from Proposition \ref{prop:pseudo}, Corollaries \ref{cor:allfinite2b} and \ref{cor:allfinite2}.
\end{proof}



We conclude this section with several questions. First, regarding a general version of Watatani's theorem in tensor categories:

\begin{question} \label{qu:allfinitetensor}
Let $X$ be a connected Frobenius algebra in a tensor category over $\mathbb{C}$. 
Is the Frobenius subalgebra poset of $X$ necessarily a lattice? Furthermore, is it always finite?
\end{question}

Next, we restrict our attention to coherent sublattices in the semisimple setting:
\begin{question} \label{qu:allfinite}
Let $X$ be a connected Frobenius algebra in a semisimple tensor category over $\mathbb{C}$. 
Is every coherent sublattice of its Frobenius subalgebra poset finite?
\end{question}

\begin{question} \label{qu:allfinite4}
Let $\mathcal{C}$ and $\mathcal{D}$ be Grothendieck-equivalent semisimple tensor categories over $\mathbb{C}$. Suppose that for every connected Frobenius algebra in $\mathcal{C}$, every coherent sublattice of its Frobenius subalgebra poset is finite. Does the same property necessarily hold for $\mathcal{D}$?
\end{question}

According to Corollary \ref{cor:allfinite3}, if Question \ref{qu:allfinite4} has a positive answer, then any potential counterexample to Question \ref{qu:allfinite} would necessarily arise from a fusion category that is not Grothendieck-equivalent to a pseudo-unitary one. Such categories are rare in the literature; the first known example, the rank 6 modular fusion category $\mathcal{C}(\mathfrak{so}_5, 3/2)_{\mathrm{ad}}$, was described in \cite{Scho22}. It would be of interest to classify the connected Frobenius algebras within this category.

%
%
%
%



\section{Inclusion of $C^*$-algebras} \label{sec:IncC*}
Let $ A \subset B $ be a unital inclusion of $ \mathbbm{k} $-algebras. Define $ \mathcal{C} $ as the $ \mathbbm{k} $-linear abelian monoidal category of $ A $-bimodules, denoted $ \mathrm{Bimod}(A) $. More generally for rings, we refer the reader to \cite[Example 1.6]{Phh02}. It is convenient to use the (abuse of) notation $ A' \cap B $ to denote the commutant of $ A $ within $ B $, defined as $\{ b \in B \ | \ ab = ba \ \forall a \in A \}$, even though the term $ A' $ on its own may not make sense in general.

\begin{lemma} \label{lem:relative}
The $ \mathbbm{k} $-vector spaces $ \Hom_{\mathcal{C}}(_A A_A, {_A B_A}) $ and $ A' \cap B $ are isomorphic.
\end{lemma}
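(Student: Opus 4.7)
The plan is to construct mutually inverse $\mathbbm{k}$-linear maps between $\Hom_{\mathcal{C}}({_A A_A}, {_A B_A})$ and $A' \cap B$ via evaluation at $1$. The forward direction sends a bimodule morphism $f: {_A A_A} \to {_A B_A}$ to the element $f(1) \in B$, and the backward direction sends $b \in A' \cap B$ to the map $f_b(a) := ab = ba$.

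First I would show that for any bimodule morphism $f$, the element $f(1)$ lies in $A' \cap B$. By the left $A$-linearity, $f(a) = f(a \cdot 1) = a f(1)$, and by the right $A$-linearity, $f(a) = f(1 \cdot a) = f(1) a$. Comparing the two expressions yields $a f(1) = f(1) a$ for every $a \in A$, so $f(1) \in A' \cap B$ as claimed. Conversely, given $b \in A' \cap B$, the map $f_b: a \mapsto ab$ is well-defined and $A$-bilinear, since $f_b(a x a') = a x a' b = a (x b) a' = a f_b(x) a'$ using that $b$ commutes with everything in $A$ (and in particular $x \in A$ commutes with $b$ on the relevant side).

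Second, I would verify that the assignments are mutually inverse: for $b \in A' \cap B$, one has $f_b(1) = 1 \cdot b = b$; and for $f \in \Hom_{\mathcal{C}}({_A A_A}, {_A B_A})$, one has $f_{f(1)}(a) = a f(1) = f(a)$, so $f_{f(1)} = f$. Both maps are clearly $\mathbbm{k}$-linear, which establishes the isomorphism.

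There is no real obstacle here: the statement is essentially the classical identification of bimodule maps from the regular bimodule to another bimodule with the relative commutant, and the only thing to be careful about is ensuring the left and right actions agree on $f(1)$, which is precisely what forces membership in $A' \cap B$.
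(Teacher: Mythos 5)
Your proof is correct and follows essentially the same route as the paper: both directions are given by evaluation at $1$ and by $b \mapsto (a \mapsto ab = ba)$, with the same verifications of bimodule-linearity and mutual inverseness (you even make explicit the check that $f(1) \in A' \cap B$, which the paper leaves implicit). No issues.
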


\begin{proof}
Consider the linear map $ T: A' \cap B \to \Hom_{\mathcal{C}}(_A A_A, {_A B_A}) $ defined by $ T(x) = \hat{x}: a \mapsto xa $. The map $ \hat{x} $ is a morphism in $ \mathcal{C} $ because $ \hat{x}(a_1 a a_2) = x a_1 a a_2 = a_1 x a a_2 = a_1 \hat{x}(a) a_2 $, where $ a_1 \in A$ commutes with $ x \in A' \cap B $. Next, define the linear map $ S: \Hom_{\mathcal{C}}(_A A_A, {_A B_A}) \to A' \cap B $ by $ S(f) = f(1) $. We will show that $ S $ and $ T $ are inverses of each other. First, we have $T(S(f)) = T(f(1)) = \widehat{f(1)}$. Since $ \widehat{f(1)}(a) = f(1)a = f(a) $ (because $ f $ is a bimodule morphism), we find $ \widehat{f(1)} = f $, implying $ T \circ S = \id $. Next, $T(S(x)) = T(\hat{x}) = \hat{x}(1) = x$, for $ x \in A' \cap B $, which shows that $ S \circ T = \id $. Thus, $ S $ and $ T $ are indeed inverses.
\end{proof}

\begin{definition} \label{def:irred}
A unital inclusion of algebras $ A \subseteq B $ is called \emph{irreducible} if $ A' \cap B $ is one-dimensional.
\end{definition}

\begin{lemma} \label{lem:IrredConn}  
A unital inclusion of algebras $ A \subseteq B $ is irreducible if and only if $ {_A B_A} $ is connected (and consequently, $ A $ and $ B $ are centerless, and $ \one $ is linear-simple).  
\end{lemma}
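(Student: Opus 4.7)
The plan is to reduce the main equivalence directly to Lemma \ref{lem:relative} and then read off the consequences about centers. The unit object of $\mathcal{C} = \mathrm{Bimod}(A)$ is $\one = {_A A_A}$, so by definition $X := {_A B_A}$ is connected if and only if $\Hom_{\mathcal{C}}({_A A_A}, {_A B_A})$ is one-dimensional. But Lemma \ref{lem:relative} identifies this space with $A' \cap B$ as $\mathbbm{k}$-vector spaces, so connectedness of $X$ is equivalent to $\dim(A' \cap B) = 1$, which is precisely the irreducibility of $A \subset B$ (Definition \ref{def:irred}). This gives the ``if and only if'' essentially for free.

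For the parenthetical consequences, I would argue as follows. The center of $B$ satisfies $Z(B) = B' \cap B \subseteq A' \cap B$, since $A \subseteq B$ forces $B' \subseteq A'$; similarly $Z(A) = A' \cap A \subseteq A' \cap B$. Under the irreducibility hypothesis $A' \cap B = \mathbbm{k} \cdot 1$, both centers collapse to $\mathbbm{k}\cdot 1$, so $A$ and $B$ are centerless. Finally, $\Hom_{\mathcal{C}}(\one, \one) = \Hom_{\mathcal{C}}({_A A_A}, {_A A_A})$ can be computed by the same argument as in Lemma \ref{lem:relative} (with $B$ replaced by $A$), yielding $Z(A) = \mathbbm{k}\cdot 1$, so $\one$ is linear-simple (Remark \ref{rk:LinSimple}).

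There is no real obstacle here, since all the nontrivial work is packaged into Lemma \ref{lem:relative}; the remaining step is just the standard observation that commutants reverse inclusions. The only minor thing to be careful about is the direction of the inclusions of commutants, and ensuring that the isomorphism from Lemma \ref{lem:relative} genuinely restricts to the endomorphism case when $B = A$, which is immediate since its construction $x \mapsto (a \mapsto xa)$ makes sense for any bimodule target containing $A$.
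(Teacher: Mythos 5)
Your proof is correct and follows essentially the same route as the paper: the main equivalence is reduced to Lemma \ref{lem:relative} together with Definitions \ref{def:connected} and \ref{def:irred}, and the parenthetical consequences are read off from the inclusions $A' \cap A,\; B' \cap B \subseteq A' \cap B$ and the identification $\Hom_{\mathcal{C}}(\one,\one) \cong A' \cap A$. No gaps.
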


\begin{proof} 
Since $ {_A A_A} = \one $ serves as the unit in $ \mathcal{C} $, the result follows directly from Definitions \ref{def:connected} and \ref{def:irred}, together with Lemma \ref{lem:relative}.  
Furthermore, since $ A' \cap A $ and $ B' \cap B $ are both contained in $ A' \cap B $, the connectedness of $ {_A B_A} $ implies that $ A' \cap A $, $ B' \cap B $, and $ \Hom_{\mathcal{C}}(\one, \one) $ are all one-dimensional. This ensures that $ A $ and $ B $ are centerless, and that $ \one $ is linear-simple.
\end{proof}

Suppose $A \subseteq B$ is a unital inclusion of $C^*$-algebras equipped with a faithful conditional expectation $E: B \to A$. We view $B$ as a right $A$-$A$ correspondence (or Hilbert $C^*$-module), where the left and right actions are given by multiplication, and the right $A$-valued inner product is defined by $\inner{x}{y} = E(x^* y)$, for all $x, y \in B$. Let $A \subseteq C \subseteq B$ be an intermediate $C^*$-algebra. Then $C$ also inherits the structure of an $A$-$A$ correspondence with the right $A$-valued inner product given by $\inner{x}{y} = E(x^* y)$, for all $x, y \in C$.
Following \cite[Definition 2.4]{IW14} and \cite[\S 2.2.1]{GK24}:

\begin{definition} \label{def:Ecomp}
Let $A \subseteq B$ be a unital inclusion of $\mathrm{C}^*$-algebras equipped with a finite-index conditional expectation $E_A^B : B \to A$. An intermediate $\mathrm{C}^*$-subalgebra $A \subseteq C \subseteq B$ is said to be \emph{$E$-compatible} if there exists a conditional expectation $E_C^B : B \to C$ satisfying the compatibility condition $E_A^C \circ E_C^B = E_A^B$, where $E_A^C := E_A^B|_C : C \to A$ denotes the restriction of $E_A^B$ to $C$.
\end{definition}

\begin{proposition}  \label{prop:AdjointEcomp}
    Let $i: {_A}C_A \to {_A}B_A$ be the inclusion map (which is clearly an $A$-$A$ bimodule map). Then $i$ is adjointable if and only if the triple $(A \subseteq C \subseteq B)$ is $E$-compatible.
\end{proposition}

\begin{proof}
    Suppose the inclusion $(A \subseteq C \subseteq B)$ is $E$-compatible. By definition, there exists a conditional expectation $E^B_C: B \to C$ such that $E = E^C_A \circ E^B_C$. Consequently, for any $b \in B$ and $c \in C$, we have:
    \begin{equation*}
        \inner{i(c)}{b} = E^B_A (c^*b) = E^C_A (c^* E^B_C(b)) = \inner{c}{E^B_C(b)}.
    \end{equation*}
    By the faithfulness of the conditional expectations, it follows that $i$ is adjointable with $i^\dagger = E^B_C$.

    Conversely, suppose $i$ is adjointable. Then, for each $b \in B$ and $c \in C$:
    \begin{equation*}
        E^B_A (c^*b) = \inner{i(c)}{b} = \inner{c}{i^\dagger(b)} = E^C_A (c^* i^\dagger(b)).
    \end{equation*}
    Since $i$ is a unital inclusion, $i^\dagger$ is a projection of norm one from $B$ onto $C$. By Tomiyama's theorem \cite{T57}, $i^\dagger$ is a conditional expectation $E^B_C: B \to C$. Setting $c = 1$ in the identity above, we obtain $E^B_A = E^C_A \circ E^B_C$, which confirms that the inclusion is $E$-compatible.
\end{proof}

The following result was implicit in \cite{IW14} and was subsequently proven in detail in \cite[Corollary 4.2]{GK24}, allowing for a generalization beyond the irreducible case (\cite[Theorem 4.1]{GK24}). By applying our reformulation of Watatani's theorem on subfactors within the unitary tensor category setting (Corollary \ref{cor:allfiniteUnitary}) to $C^*$-algebras, we recover this deep result.

\begin{corollary} \label{cor:InoWat}
The $E$-compatible intermediate $C^*$-algebras of a finite-index unital irreducible inclusion of $C^*$-algebras form a finite lattice.
\end{corollary}
\begin{proof}
According to \cite{Wat90}, a finite index unital inclusion of $ C^* $-algebras is defined through a faithful conditional expectation $ E_A: B \to A $. As a result, based on \cite[Lemma 3.11]{CHJP22}, the object $ {_A B_A} $ is a unitary Frobenius algebra object within the unitary tensor category $ \mathrm{Bimod}(A) $, and it is connected by Lemma \ref{lem:IrredConn}. The result follows by Corollary \ref{cor:allfiniteUnitary}, since an intermediate $ C^* $-algebra corresponds to a unitary Frobenius subalgebra of $ {_A B_A} $ if and only if it is E-compatible by Proposition \ref{prop:AdjointEcomp}. 
\end{proof}

%

The following example of a unital irreducible finite-index inclusion of $C^*$-algebras features uncountably many intermediate $C^*$-algebras. This shows that $E$-compatibility is not always guaranteed for such an intermediate, proving that this assumption used in Corollary \ref{cor:InoWat} is necessary and cannot be omitted.

\begin{example} \label{ex:coburn}
Let $A_1$ be a centerless unital $C^*$-algebra equipped with an outer action $\alpha_1$ of the cyclic group $C_2$ (for instance, the Cuntz algebra $\mathcal{O}_2$). Let $A_2$ be a centerless unital $C^*$-algebra possessing uncountably many closed two-sided ideals $I_i$ (e.g., the Toeplitz algebra $\mathcal{T}$, whose ideal theory is determined by the quotient $\mathcal{T}/\mathcal{K} \cong C(S^1)$, see \cite[Theorem 2]{Co67}).
Define $A = A_1 \otimes A_2$ as the minimal tensor product, which remains centerless by \cite[Corollary 1]{HW73}. Let $\alpha = \alpha_1 \otimes \mathrm{id}$ and consider the crossed product $B = A \rtimes_\alpha C_2 = A \oplus Au$. The inclusion $A \subseteq B$ is irreducible (Lemma \ref{lem:irred}) with Watatani index 2.
For each ideal $I_i \subseteq A_2$, let $J_i = A_1 \otimes I_i$. Since each $J_i$ is $\alpha$-invariant, the subspace $D_i = A \oplus J_i u$ forms an intermediate $C^*$-algebra such that $A \subseteq D_i \subseteq B$. Because there are uncountably many distinct ideals $I_i$, we obtain uncountably many distinct intermediate $C^*$-algebras $D_i$.

\end{example}
Notably, there exists no conditional expectation from $B$ onto $D_i$ unless $D_i$ is $A$ or $B$ (Lemma~\ref{lem:CE}). Combining Corollary~\ref{cor:InoWat}, Remark~\ref{rk:Ecomp}, and a suitable topological argument should yield a more conceptual proof of Lemma~\ref{lem:CE}. Finally, if $B \subseteq B(H)$ with $B''$ a factor, then $D_i'' = B''$ unless $D_i = A$.
%
%

\begin{lemma} \label{lem:irred}
The inclusion $A \subseteq B$ from Example \ref{ex:coburn} is irreducible.
\end{lemma}

\begin{proof}
Let $x = a + bu \in A' \cap B$. For any $c \in A$, the relation $cx = xc$ implies $ca = ac$ and $cb = b\alpha(c)$. Since $A$ is centerless, $ca = ac$ for all $c \in A$ forces $a \in \mathbb{C}\id$. Taking the adjoint of the second equation yields $b^*c = \alpha(c)b^*$, and thus $b^*b\alpha(c) = b^*cb = \alpha(c)b^*b$. Because $\alpha$ is an automorphism, this shows $b^*b \in Z(A) = \mathbb{C}\id$. Similarly, $bb^* \in \mathbb{C}\id$. If $b \neq 0$, normalizing $b$ produces a unitary $v \in A$ satisfying $cv = v\alpha(c)$ for all $c \in A$. This implies $\alpha = \mathrm{Ad}_{v^*}$, contradicting the assumption that $\alpha$ is an outer automorphism. Hence $b = 0$, and we conclude $A' \cap B = \mathbb{C}\id$.
\end{proof}

%
%

\begin{lemma} \label{lem:CE}
There exists a conditional expectation from $B$ onto $D_i$ if and only if $D_i = A$ or $B$.
\end{lemma}
%
%
%

\begin{proof}
Recall that $B = A \oplus Au$ and $D_i = A \oplus J_i u$, where $J_i$ is a two-sided closed ideal in $A$. The case $D_i=A$ is clear. Assume $D_i \neq A$.
The existence of a conditional expectation from $B$ onto $D_i$ is equivalent to the existence of one from $A$ onto $J_i$. 
A two-sided closed ideal $J \subseteq A$ is the range of a conditional expectation if and only if there exists a two-sided closed ideal $I$ such that $A = I \oplus J$ (see, e.g., \cite[Remark 2]{PR17}). This decomposition holds if and only if there exists a central projection $p \in A$ such that $J = pA$, obtained by decomposing $\id \in A$ via $I \oplus J$. For $J$ nonzero, since $A$ is centerless, then $p=\id$, i.e. $J=A$. Consequently, $D_i=B$, which completes the proof.
\end{proof}

%
%

\begin{remark} \label{rk:Ecomp}
For a unital irreducible inclusion $A \subseteq B$ ($A' \cap B = \mathbb{C}\id$) of finite Watatani index, the space of $A$-bimodule maps is one-dimensional:
\( \mathrm{Hom}_{A,A}(B, A) \cong A' \cap B = \mathbb{C}\id. \)
Consequently, there exists a unique faithful conditional expectation $E^B_A$. It follows that any intermediate $C^*$-algebra $D$ admitting faithful conditional expectations $E^B_D$ and $E^D_A$ is necessarily $E$-compatible, as their composition must coincide with $E^B_A$ by uniqueness.
\end{remark}

Since $E$-compatibility always holds in the simple case \cite{IW14}, Corollary~\ref{cor:InoWat} represents a genuine strengthening of \cite[Corollary~3.9]{IW14} as it removes the requirement of simplicity.
As shown in Lemma \ref{lem:IrredConn}, the algebras $A$ and $B$ are centerless when the inclusion is irreducible. Consequently, it is essential to consider non-simple, centerless $C^*$-algebras (see Proposition \ref{prop:CenterlessNonsimple}). We begin by recalling some basic definitions and preliminary results.
\begin{definition} \label{def:simple}
A $ C^* $-algebra (or von Neumann algebra) is called \emph{simple} if it has no nonzero proper ideals that are closed under the operator norm topology (or the weak operator topology, respectively).
\end{definition}

\begin{proposition} \label{prop:factor}
A von Neumann algebra $ M $ is simple if and only if it is centerless (i.e., a factor).
\end{proposition}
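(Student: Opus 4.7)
The plan is to exploit the classical correspondence in von Neumann algebra theory between weakly closed two-sided ideals of $M$ and projections in the center $Z(M)$. Concretely, I will show both implications via central projections.

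For the forward direction, suppose $M$ is simple (in the sense of Definition~\ref{def:simple}, i.e., has no nontrivial weakly closed two-sided ideals). Assume for contradiction that $Z(M) \ne \mathbb{C}\cdot 1$. Since $Z(M)$ is a commutative von Neumann algebra, it is generated (in the weak operator topology) by its projections, so $Z(M) \ne \mathbb{C}\cdot 1$ forces the existence of a projection $p \in Z(M)$ with $p \ne 0$ and $p \ne 1$. Then $I := pM = Mp$ is a two-sided ideal of $M$ because $p$ is central, and it is weakly closed since multiplication by a fixed element is WOT-continuous on bounded sets (alternatively, $I = \{x \in M : px = x\}$ is the kernel of the WOT-continuous map $x \mapsto (1-p)x$). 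Moreover, $I$ is nonzero (it contains $p$) and proper (it does not contain $1$, since $(1-p) \cdot 1 \ne 0$). This contradicts simplicity.

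For the converse, suppose $M$ is a factor, and let $I$ be a weakly closed two-sided ideal in $M$. The key step is to show $I = pM$ for some central projection $p$. Since $I$ is a weakly closed left ideal, there exists a projection $p \in M$ with $I = Mp$ (standard fact: weakly closed left ideals of a von Neumann algebra are generated by projections, obtained by taking $p$ to be the projection onto the WOT-closure of $I\xi$ for suitable $\xi$, or via the polar decomposition applied to elements of $I$). Because $I$ is also a right ideal, $pM \subseteq Mp$, hence $xp = pxp$ for every $x \in M$. Taking adjoints and using $p = p^*$, also $px = pxp$, so $xp = px$ for all $x \in M$, meaning $p \in Z(M)$. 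Since $M$ is a factor, $Z(M) = \mathbb{C}\cdot 1$, so $p \in \{0, 1\}$, and therefore $I = 0$ or $I = M$. Hence $M$ is simple.

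The main obstacle is the structural lemma that every weakly closed two-sided ideal in a von Neumann algebra arises from a central projection; this is where the von Neumann algebra structure (as opposed to just $C^*$-structure) is essential, since it allows us to invoke the existence of the projection $p$ generating a given weakly closed left ideal. The rest of the argument is a straightforward manipulation of central projections, together with the fact that a commutative von Neumann algebra is trivial precisely when it contains no nontrivial projections.
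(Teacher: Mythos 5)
Your proof is correct and matches the paper's approach: the ``not a factor $\Rightarrow$ not simple'' direction is the identical central-projection argument (the paper writes the ideal as $pMp$, which equals your $pM$ since $p$ is central), and for ``factor $\Rightarrow$ simple'' the paper simply cites \cite[Proposition A.3.1]{JS97}, whose content is precisely the standard weakly-closed-ideal-is-generated-by-a-central-projection argument you spell out. One harmless slip: from $pM \subseteq Mp$ one gets $px = pxp$ directly (not $xp = pxp$), but since you then take adjoints you obtain both identities and the conclusion $p \in Z(M)$ stands.
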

\begin{proof}
The simplicity of a factor is specifically addressed in \cite[Proposition A.3.1]{JS97}. Conversely, if $ M $ is not a factor, it contains a nontrivial central projection $ p \in M \cap M' $. Thus, $ pMp $ forms a nontrivial closed ideal in $ M $, demonstrating that $ M $ is not simple.
\end{proof}

\begin{proposition} \label{prop:centerless}
Let $ H $ be a separable Hilbert space, and let $ A \subset B(H) $ be a unital $ C^* $-algebra. Let $ M := A'' $ be the von Neumann algebra generated by $ A $. If $ M $ is a factor then $ A $ is centerless.  
\end{proposition}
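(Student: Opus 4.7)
The plan is to reduce the centerlessness of $A$ to the factoriality of $M$ via the identity $A' = A'''$. First I would take an arbitrary element $z$ in the center of $A$, that is, $z \in A \cap A'$, and aim to show $z \in \mathbb{C} \cdot \id_H$.

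The key observation is that $z$ lies simultaneously in $M$ and in $M'$. Indeed, since $A \subset A'' = M$, we have $z \in M$ directly from $z \in A$. For the other inclusion, I would invoke the standard identity $S' = S'''$ valid for any subset $S \subset B(H)$: the inclusion $S \subset S''$ gives $S' \supset S'''$ upon taking commutants, while applying $(\,\cdot\,)''$ to $S'$ yields $S' \subset (S')'' = S'''$. Applying this with $S = A$ gives $A' = A''' = (A'')' = M'$, so $z \in A'$ forces $z \in M'$.

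Combining the two memberships, $z \in M \cap M' = Z(M)$. Since $M$ is a factor, $Z(M) = \mathbb{C}\cdot \id_H$, so $z$ is a scalar. Hence $A \cap A' = \mathbb{C} \cdot \id_H$, i.e., $A$ is centerless.

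There is no serious obstacle here: the argument is a three-line consequence of the bicommutant relation $A'' = M$, the identity $A' = A'''$, and the definition of a factor. The only care needed is to distinguish clearly between the center of $A$ (an algebraic commutant within $A$) and the center of $M$ (the intersection $M \cap M'$ in $B(H)$), and to use $S' = S'''$ to bridge these two notions. No separability of $H$ is actually required for the argument; the separability hypothesis is inherited from the ambient setup of Corollary~\ref{cor:InoWat}.
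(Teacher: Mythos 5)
Your proof is correct and takes essentially the same route as the paper: both arguments reduce to showing that the center of $A$ sits inside $M \cap M'$, which is trivial when $M$ is a factor. The paper phrases this via $(A \cap A')'' = \overline{A \cap A'}^{\mathrm{wot}} \subseteq M \cap M'$, whereas your element-wise version using only $A \subseteq A''$ and $A' = A''' = M'$ is, if anything, slightly more economical since it avoids invoking the bicommutant theorem.
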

\begin{proof}
This follows immediately since $ (A \cap A')'' = \overline{A \cap A'}^{\text{wot}} \subseteq (\overline{A}^{\text{wot}} \cap \overline{A'}^{\text{wot}}) = M \cap M' $.
\end{proof}

Recall that the von Neumann algebra $ L(\Gamma) $ of a countable group $ \Gamma $ is a factor if and only if $ \Gamma $ is ICC (infinite conjugacy classes). 

\begin{proposition} \label{prop:CenterlessNonsimple}
Let $ \Gamma $ be a countable ICC group with a non-trivial amenable normal subgroup $N \subseteq \Gamma $. Then the reduced $ C^* $-algebra $ C_r^{\ast}(\Gamma) $ is centerless but non-simple.
\end{proposition}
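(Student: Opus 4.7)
The statement cleanly splits into two independent claims, centerlessness and non-simplicity, and I would treat them separately.

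For centerlessness, the strategy is to invoke the classical fact that the group von Neumann algebra $L(\Gamma) := C_r^*(\Gamma)''$ of a countable ICC group is a $\mathrm{II}_1$ factor (this is the archetypal Murray--von Neumann construction). By Proposition \ref{prop:factor}, $L(\Gamma)$ being a factor is the same as $L(\Gamma)$ being centerless, and then Proposition \ref{prop:centerless} applied to $A = C_r^*(\Gamma) \subset B(\ell^2(\Gamma))$ immediately yields that $C_r^*(\Gamma)$ is centerless.

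For non-simplicity, the plan is to exhibit a non-trivial proper closed two-sided ideal inside $C_r^*(\Gamma)$. The crucial ingredient is the existence of a surjective $*$-homomorphism
\[
\pi : C_r^*(\Gamma) \twoheadrightarrow C_r^*(\Gamma/N), \qquad \lambda_\Gamma(g) \longmapsto \lambda_{\Gamma/N}(gN),
\]
induced by the quotient $\Gamma \to \Gamma/N$. Granting such a $\pi$, the verification of non-simplicity is immediate: pick any non-trivial $n \in N$, then $\pi(\lambda_\Gamma(n) - 1) = 0$, whereas $\lambda_\Gamma(n) - 1 \neq 0$ in $C_r^*(\Gamma)$ because the left regular representation of $\Gamma$ is faithful. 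Hence $\ker \pi$ is a proper non-zero closed two-sided ideal, and $C_r^*(\Gamma)$ is non-simple.

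The main obstacle is justifying the existence of $\pi$. For the full group $C^*$-algebra the quotient map $\Gamma \to \Gamma/N$ always induces a surjection $C^*(\Gamma) \twoheadrightarrow C^*(\Gamma/N)$, but this descends to the \emph{reduced} $C^*$-algebras only under amenability of $N$; this is precisely where the amenability hypothesis on $N$ is used. To supply $\pi$, I would invoke the classical equivalence (Hulanicki--Reiter) that $N$ is amenable if and only if the trivial representation is weakly contained in the regular representation, $1_N \preceq \lambda_N$. Applying Fell's continuity of induction along $N \hookrightarrow \Gamma$ and using the identifications
\[
\mathrm{Ind}_N^\Gamma(1_N) \;=\; \lambda_{\Gamma/N}, \qquad \mathrm{Ind}_N^\Gamma(\lambda_N) \;=\; \lambda_\Gamma,
\]
one obtains the weak containment $\lambda_{\Gamma/N} \preceq \lambda_\Gamma$ as representations of $\Gamma$. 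By the standard dictionary between weak containment of unitary representations and quotients of reduced $C^*$-algebras, this is exactly the assertion that the map $\lambda_\Gamma(g) \mapsto \lambda_{\Gamma/N}(gN)$ extends from $\mathbb{C}[\Gamma]$ to the desired surjection $\pi$, which completes the argument.
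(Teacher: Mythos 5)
Your proof is correct and follows essentially the same route as the paper: centerlessness is obtained exactly as in the paper, via the ICC hypothesis forcing $L(\Gamma)=C_r^*(\Gamma)''$ to be a factor and then Propositions \ref{prop:factor} and \ref{prop:centerless}. The only difference is that for non-simplicity the paper simply cites \cite[Proposition 3]{Ha07}, whereas you supply the standard proof of that cited result — amenability of $N$ gives $1_N\preceq\lambda_N$, Fell continuity of induction gives $\lambda_{\Gamma/N}\preceq\lambda_\Gamma$, hence a surjection $\pi\colon C_r^*(\Gamma)\twoheadrightarrow C_r^*(\Gamma/N)$ whose kernel contains $\lambda_\Gamma(n)-1\neq 0$ for $e\neq n\in N$ — which is precisely de la Harpe's argument, so your write-up is a correct unpacking of the citation rather than a genuinely different approach.
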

\begin{proof}
Since $ \Gamma $ is a countable ICC group, $ L(\Gamma) $ is a factor. Therefore, by Proposition \ref{prop:centerless}, $ C_r^{\ast}(\Gamma) $ is centerless because $ C_r^{\ast}(\Gamma)'' = L(\Gamma) $. The non-simplicity follows from \cite[Proposition 3]{Ha07}.
\end{proof}

In particular, for all non-trivial ICC groups $ G $ and $ \Gamma $, with $ G $ being amenable (e.g., $ S_{\infty} $), $ C_r^{\ast}(G) $ and $ C_r^{\ast}(G \times \Gamma) $ are centerless, non-simple $ C^* $-algebras. In contrast, as noted in \cite{Ha07}, for every torsion-free non-elementary hyperbolic group $\Gamma$, the C*-algebra $C_r^*(\Gamma)$ is simple. This implies that $\Gamma$ has no non-trivial amenable normal subgroups (i.e., it has a trivial amenable radical) and is ICC.


\section{Hopf algebras} \label{sec:hopf}
By \cite{LS69, Par71, Skr07}, every finite-dimensional Hopf algebra, as well as each of its left coideal subalgebras, is a Frobenius algebra in $\VVec$. However, the application of Corollary~\ref{cor:allfinite3} requires a \emph{connected} Frobenius algebra object in a tensor category. Since an object of $\VVec$ is connected if and only if it is trivial, these classical results are not sufficient in the present setting. This motivates the refinement provided by Theorem~\ref{thm:HopfFrobRep}, formulated in the tensor category $\Corep(H)$ (see Definition~\ref{def:Corep}). As an application, assuming a positive answer to the natural Question~\ref{Q:TwoCoideal}, partially addressed by Theorem~\ref{thm:NonDegSum}, we recover \cite[Theorem~3.6]{EW14}.



\subsection{Basic definitions} \label{sub:HopfDef}
Let $\field$ be a field, and let $\mathcal{V}$ denote the tensor category of vector spaces over $\field$, with unit $\one=\field$, and braided by the flip $c_{U,V} : U \otimes V \longrightarrow V \otimes U$, $c_{U,V}(u \otimes v) = v \otimes u$. All definitions are purely categorical (with some elementwise reformulations provided), so that they remin valid for any braided tensor category \(\mathcal{V}\).

\begin{definition}
A \emph{bialgebra} in $\mathcal{V}$ is a quintuple $(H, m, e, \Delta, \varepsilon)$ consisting of:

\begin{itemize}
    \item an object $H$ in $\mathcal{C}$,
    \item multiplication morphism $m: H \otimes H \to H$,
    \item unit morphism $e: \one \to H$,
    \item comultiplication morphism $\Delta: H \to H \otimes H$,
    \item counit morphism $\varepsilon: H \to \one$,
\end{itemize}
that satisfies the following axioms:
\begin{enumerate}
    \item \emph{Associativity:} $m \circ (m \otimes \id) = m \circ (\id \otimes m)$,
    \item \emph{Unit axiom:} $m \circ (e \otimes \id) = \id = m \circ (\id \otimes e)$,
    \item \emph{Coassociativity:} $(\Delta \otimes \id) \circ \Delta = (\id \otimes \Delta) \circ \Delta$,
    \item \emph{Counit axiom:} $(\varepsilon \otimes \id) \circ \Delta  = \id = (\id \otimes \varepsilon) \circ \Delta$,
    \item \emph{Compatibility axioms:} $\Delta$ and $\varepsilon$ are unital algebra morphisms, 
\[    
    \Delta \circ m = (m \otimes m) \circ (\id_H \otimes c_{H,H} \otimes \id_H) \circ (\Delta \otimes \Delta), \ 
    \Delta \circ e = e \otimes e, \ 
    \varepsilon \circ m = \varepsilon \otimes \varepsilon, \ 
    \varepsilon \circ e = \id_\one,
\]   
which can be reformulated elementwise as follows, where $e(1) = 1_H$: for all $a,b \in H$,
\[
    \Delta(ab) = \Delta(a)\Delta(b), \ \Delta(1_H) = 1_H \otimes 1_H, \ \varepsilon(ab) = \varepsilon(a)\varepsilon(b), \ \varepsilon(1_H) = 1.
\]
Equivalently, this means that \(m\) and \(e\) are counital coalgebra morphisms.
\end{enumerate}
\end{definition}

\begin{definition}
An \emph{antipode} is a morphism $S: H \to H$ that satisfies the \emph{antipode axiom}:
\begin{equation*}
    m \circ (\id \otimes S) \circ \Delta = e \circ \varepsilon = m \circ (S \otimes \id) \circ \Delta,
\end{equation*}
which can be reformulated elementwise as follows, where $\Delta(a) = a_{(1)} \otimes a_{(2)}$ is the Sweedler notation: for all $a \in H$,
\begin{equation*}
     a_{(1)}S(a_{(2)}) = \varepsilon(a)1_H = S(a_{(1)})a_{(2)}.
\end{equation*}
\end{definition}

Recall that $S$ is an antihomomorphism of unital algebra and counital coalgebras (see e.g., \cite[Proposition 1.3.12]{Tim08}), 
\[
    S \circ m = m \circ (S \otimes S) \circ c_{H,H}, \ 
    S \circ e = e, \ 
    \Delta \circ S = c_{H,H} \circ (S \otimes S) \circ \Delta, \
    \epsilon \circ S = \epsilon,
\]
which can be reformulated elementwise as follows: for all $a,b,h$ in $H$,
$$S(ab) = S(b)S(a), \  S(1_H) = 1_H, \ (S(h))_{(1)} \otimes (S(h))_{(2)} = S(h_{(2)}) \otimes S(h_{(1)}), \ \varepsilon(S(h)) = \varepsilon(h).$$

\begin{definition} \label{def:hopf}
A \emph{Hopf algebra} in $\mathcal{V}$ is a bialgebra $(H, m, e, \Delta, \varepsilon)$ with an invertible antipode $S$.
\end{definition}
A Hopf algebra in $\mathcal{V}$ is usually referred to as a finite-dimensional Hopf algebra over the field $\field$.

\begin{definition} \label{def:Corep}
Let $H = (H, m, e, \Delta, \varepsilon, S)$ be a Hopf algebra in $\mathcal{V}$. The tensor category of left $H$-comodules, denoted $\mathcal{C} = \Corep(H)$, is defined as follows:

\begin{itemize}
    \item \emph{Objects:} The objects $V$ in $\mathcal{V}$ equipped with a morphism $\rho_V : V \to H \otimes V$ in $\mathcal{V}$ satisfying
    \[
        (\Delta \otimes \mathrm{id}_V) \circ \rho_V = (\mathrm{id}_H \otimes \rho_V) \circ \rho_V
        \ \text{ and } \ 
        (\varepsilon \otimes \mathrm{id}_V) \circ \rho_V = \mathrm{id}_V.
    \]
It is referred to as a \emph{coaction}, and elementwise it is written as $\rho_V(v) = v_{(-1)} \otimes v_{(0)}$ using Sweedler's notation.  
    \item \emph{Morphisms:} The morphisms $f : V \to W$ in $\mathcal{V}$ satisfying 
    \[
        \rho_W \circ f = (\mathrm{id}_H \otimes f) \circ \rho_V.
    \]
    \item \emph{Tensor product:} For $V, W $ in $\mathcal{C}$, their tensor product is the object $V \otimes W$ in $\mathcal{V}$ equipped with  the coaction
    \[
\rho_{V \otimes W} =  (m \otimes \id_V \otimes \id_W) \circ (\id_H \otimes c_{V,H} \otimes \id_W) \circ (\rho_V \otimes \rho_W).
\]
Elementwise, for all $v$ in $V$ and all $w$ in $W$,
    \[
        \rho_{V \otimes W}(v \otimes w) =  v_{(-1)} w_{(-1)} \otimes v_{(0)} \otimes w_{(0)}.
    \]
   \item \emph{Unit object:} The unit $\one$ in $\mathcal{V}$ with trivial coaction
    \(
        \rho_{\one} = e \otimes \id_{\one}
    \)
(elementwise,    
    \(
        \rho_{\field}(c) = 1_H \otimes c,
    \)
     for all $c$ in $\field$).    

\end{itemize}
\end{definition}

\begin{definition} \label{def:coideal}
Let $H$ be a Hopf algebra in $\mathcal{V}$. A \emph{left coideal subalgebra} is a unital algebra $K$ in $\mathcal{V}$ equipped with a unital algebra monomorphism $i: K \to H$ such that there exists a morphism $\rho_K: K \to H \otimes K$ satisfying
\(
    (\id_H \otimes i) \circ \rho_K = \Delta \circ i,
\)
which can be reformulated elementwise as a unital subalgebra $ K \subseteq H$ such that $\Delta(K) \subseteq H \otimes K$.
\end{definition}

As shown in Lemma \ref{lem:coidealobject}, a left coideal subalgebra of \(H\) in \(\mathcal{V}\) is precisely a unital subalgebra of \(H\) in \(\mathcal{C}\).

\begin{definition}\label{def:leftint}
A \emph{left integral} is a morphism \(\lambda \in \Hom_{\mathcal{C}}(H,\one)\). Equivalently, it is a morphism \(\lambda \in \Hom_{\mathcal{V}}(H,\one)\) satisfying
\((\id_H \otimes \lambda) \circ \Delta = \rho_{\one} \circ \lambda\)
(elementwise, $h_{(1)}\,\lambda(h_{(2)}) = \lambda(h)\,1_H$, for all $h$ in $H$).
\end{definition}

\subsection{Extra results on monoidal categories} \label{sub:HopfMonoidal}


\begin{lemma} \label{lem:AlgToFrob}
Let $(X,m,e)$ be a unital algebra object in a monoidal category $\mathcal{C}$, with a left dual $(X^*, \ev_X, \coev_X)$. If $X^* = X$ and $\ev_X = e^* \circ m,$ then $(X,m,e,m^*,e^*)$ is a Frobenius algebra object in $\mathcal{C}$. In addition, $m^{**}=m$,  $e^{**}=e$ and $\coev_X = \ev_X^* = m^* \circ e$.
\end{lemma}
\begin{proof}
Let us depict these morphisms and introduce $\delta$ as follows: 
\[ \raisebox{-5mm}{
	\begin{tikzpicture}[scale=0.8]
	\draw[blue,in=-90,out=-90,looseness=2] (-0.5,0.5) to (-1.5,0.5);
	 \draw[blue] (-1,-.1) to (-1,-.6);
	 \end{tikzpicture}} \coloneqq m , \hspace*{2mm}
	\raisebox{-6mm}{
		\begin{tikzpicture}[scale=0.8]
		\draw [blue] (-0.8,-.6) to (-.8,.6);
		\node at (-.8,.6) {${\color{blue}\bullet}$};
		 \end{tikzpicture}} \coloneqq e , \hspace*{2mm}
	 \raisebox{-7mm}{
	 	\begin{tikzpicture}[scale=0.8]
	 		\draw [blue] (-0.8,-.6) to (-.8,.6);
	 		\node at (-.8,-.6) {${\color{blue}\bullet}$};
	 	\end{tikzpicture}} \coloneqq e^*
 	=
 	\raisebox{-4mm}{
 	\begin{tikzpicture}[scale=0.8]
 	\draw[blue,in=-90,out=-90,looseness=2] (-0.5,0.5) to (-1.5,0.5);
 	\node at (-.5,.5) {${\color{blue}\bullet}$};
 	\end{tikzpicture}}	
 	=
 	\raisebox{-4mm}{
 		\begin{tikzpicture}[scale=0.8]
 			\draw[blue,in=-90,out=-90,looseness=2] (-0.5,0.5) to (-1.5,0.5);
 			\node at (-1.5,.5) {${\color{blue}\bullet}$};
 	\end{tikzpicture}},
 	\hspace*{2mm}
\raisebox{-4mm}{%
    \begin{tikzpicture}[scale=.8]
        \draw[blue,in=-90,out=-90,looseness=2] (0,0) to (1,0);
    \end{tikzpicture}%
} 
= \ev_X = e^* \circ m =
\raisebox{-6mm}{%
    \begin{tikzpicture}[scale=.8] 
        \draw[blue,in=-90,out=-90,looseness=2] (-0.5,0.5) to (-1.5,0.5);
        \draw[blue] (-1,-.1) to (-1,-.6);
        \node at (-1,-.6) {${\color{blue}\bullet}$};
    \end{tikzpicture}%
},
\hspace*{2mm}
\delta := 
\raisebox{-6mm}{%
    \begin{tikzpicture}[scale=0.6] 
        \draw[blue,in=90,out=90,looseness=2] (0,0) to (1,0);
        \draw[blue,in=-90,out=-90,looseness=2] (1,0) to (2,0);
        \draw[blue] (1.5,-.6) to (1.5,-1.2);
        \draw[blue] (0,0) to (0,-1.2);
        \draw[blue] (2,0) to (2,.9);
    \end{tikzpicture}%
} 
=
\raisebox{-6mm}{%
    \begin{tikzpicture}[scale=0.6]
        \draw[blue,in=-90,out=-90,looseness=2] (0,0) to (1,0);
        \draw[blue,in=90,out=90,looseness=2] (1,0) to (2,0);
        \draw[blue] (.5,-.6) to (.5,-1.2);
        \draw[blue] (0,0) to (0,.9);
        \draw[blue] (2,0) to (2,-1.2);
    \end{tikzpicture}%
}
\]

Two equalities above, involving $e^*$ and $\delta$, requires a proof, depicted below:
 \[  	\begin{tikzpicture}
  		\begin{scope}[xshift=0cm]
  			\draw[blue,in=-90,out=-90,looseness=2] (-0.5,0.5) to (-1.5,0.5);
  		\node at (-1.5,.5) {${\color{blue}\bullet}$};
  		\draw [color=DarkOrange, dashed, loop] (-1, -.2) circle (0.3cm);
  		\end{scope}
 
\begin{scope}[xshift=2cm]
 \draw[blue,in=-90,out=-90,looseness=2] (-0.5,0.5) to (-1.5,0.5);
\draw[blue] (-1,-.1) to (-1,-.6);
\node at (-1,-.6) {${\color{blue}\bullet}$};
\node at (-1.5,.5) {${\color{blue}\bullet}$};
\draw [color=DarkRed, dashed, loop] (-1, .3) circle (0.6cm);
\end{scope}

\begin{scope}[xshift=3cm]
		 			\draw [blue] (-0.8,-.6) to (-.8,.6);
			\node at (-.8,-.6) {${\color{blue}\bullet}$};
\end{scope}

\begin{scope}[xshift=5cm]
	\draw[blue,in=-90,out=-90,looseness=2] (-0.5,0.5) to (-1.5,0.5);
	\draw[blue] (-1,-.1) to (-1,-.6);
	\node at (-1,-.6) {${\color{blue}\bullet}$};
	\node at (-.5,.5) {${\color{blue}\bullet}$};
	\draw [color=DarkOrange, dashed, loop] (-1, -.3) circle (0.5cm);
\end{scope}

\begin{scope}[xshift=7cm]
	\draw[blue,in=-90,out=-90,looseness=2] (-0.5,0.5) to (-1.5,0.5);
	\node at (-.5,.5) {${\color{blue}\bullet}$};
\end{scope}

\node at (0,0) {$=$};
\node at (1.8,0) {$=$};
\node at (2.8,0) {$=$};
\node at (4.9,0) {$=$};

\node at (0,1) [align=center, above, DarkOrange,scale=.8] {$\ev_X = e^* \circ m$};
\node at (2.2,1) [align=center, above, DarkRed,scale=.9] {unitality};

 \draw[<-, DarkOrange] (0,.2) to (0,.9);
 \draw[<-, DarkRed] (1.8,.2) to (1.8,.9);
 \draw[<-, DarkRed] (2.8,.2) to (2.8,.9);
 \end{tikzpicture}
\]
from which we immediately see that $e^{**} = e$ by zigzag relation, and
\[ \begin{tikzpicture}[scale=.75]
\begin{scope}[xshift=0cm]
	\draw[blue,in=-90,out=-90,looseness=2] (0,0) to (1,0);
\draw[blue,in=90,out=90,looseness=2] (1,0) to (2,0);
\draw[blue] (.5,-.6) to (.5,-1.2);
\draw[blue] (0,0) to (0,1);
\draw[blue] (2,0) to (2,-1);
\draw [color=violet, dashed, loop] (.5,-.9) circle (0.2cm);
\end{scope}

\begin{scope}[xshift=4.5cm]
	\draw[blue,in=-90,out=-90,looseness=2] (0,0) to (1,0);
	\draw[blue,in=-90,out=-90,looseness=2] (.5,-.6) to (-.5,-.6);
	\draw[blue,in=90,out=90,looseness=2] (-.5,-.6) to (-1.5,-.6);
	\draw[blue,in=90,out=90,looseness=2] (1,0) to (2,0);
	\draw[blue] (-1.5,-.6) to (-1.5,-1);
	\draw[blue] (2,0) to (2,-.6);
	\draw [color=DarkOrange, dashed, loop] (0,-1.2) circle (0.4cm);
\end{scope}

\begin{scope}[xshift=8.8cm]
	\draw[blue,in=-90,out=-90,looseness=2] (0,0) to (1,0);
	\draw[blue,in=-90,out=-90,looseness=2] (.5,-.6) to (-.5,-.6);
	\draw[blue,in=90,out=90,looseness=2] (-.5,-.6) to (-1.5,-.6);
	\draw[blue,in=90,out=90,looseness=2] (1,0) to (2,0);
	\draw[blue] (-1.5,-.6) to (-1.5,-1);
	\draw[blue] (2,0) to (2,-.6);
	\draw[blue] (0,-1.2) to (0,-1.6);
	\node at (0,-1.6) {${\color{blue}\bullet}$};
	\draw [color=DarkGreen, dashed, loop] (.2,-.4) circle (.9cm);
\end{scope}

\begin{scope}[xshift=11.5cm]
	\draw[blue,in=90,out=90,looseness=2] (0,0) to (1,0);
	\draw[blue,in=-90,out=-90,looseness=2] (1,0) to (2,0);
	\draw[blue,in=-90,out=-90,looseness=2] (1.5,-.6) to (2.5,-.6);
	\draw[blue,in=90,out=90,looseness=2] (2.5,-.6) to (3.5,-.6);
	\draw[blue] (0,0) to (0,-1);
	\draw[blue] (2,-1.2) to (2,-1.6);
	\node at (2,-1.6) {${\color{blue}\bullet}$};
	\draw [color=DarkOrange, dashed, loop] (2,-1.3) circle (0.4cm);
\end{scope}

\begin{scope}[xshift=15.5cm]
	\draw[blue,in=90,out=90,looseness=2] (0,0) to (1,0);
	\draw[blue,in=-90,out=-90,looseness=2] (1,0) to (2,0);
	\draw[blue,in=-90,out=-90,looseness=2] (1.5,-.6) to (2.5,-.6);
	\draw[blue,in=90,out=90,looseness=2] (2.5,-.6) to (3.5,-.6);
	\draw[blue] (0,0) to (0,-1);
\end{scope}

\begin{scope}[xshift=19.5cm]
	\draw[blue,in=90,out=90,looseness=2] (0,0) to (1,0);
	\draw[blue,in=-90,out=-90,looseness=2] (1,0) to (2,0);
	\draw[blue] (1.5,-.6) to (1.5,-1.2);
	\draw[blue] (0,0) to (0,-1.2);
	\draw[blue] (2,0) to (2,1.2);
\end{scope}

\node at (2.5,-.2) {$=$};
\node at (6.9,-.2) {$=$};
\node at (11.2,-.2) {$=$};
\node at (15.25,-.2) {$=$};
\node at (19.2,-.2) {$=$};

\node at (2.5,.7) [align=center, above, DarkRed,scale=.8] {zig-zag};
\node at (6.9,.7) [align=center, above, DarkOrange,scale=.8] {$\ev_X = e^* \circ m$};
\node at (11.2,.7) [align=center, above, DarkGreen,scale=.8] {associativity};
\node at (19.2,.7) [align=center, above, DarkRed,scale=.8] {zig-zag};

\draw[->, DarkRed] (2.5, .7) to (2.5,0);
\draw[->, DarkOrange] (6.9, .7) to (6.9,0);
\draw[->, DarkGreen] (11.2, .7) to (11.2,0);
\draw[->, DarkRed] (19.2, .7) to (19.2,0);
\end{tikzpicture}
\]	
from which we deduce that $(X,\delta,e^*)$ is a counital coalgebra, as follows, where $\delta$ is depicted as in Definition \ref{def:coalgebra}:
\[ \begin{tikzpicture}[scale=.9]
	\begin{scope}[xshift=0cm]
		\draw[blue,in=90,out=90,looseness=2] (0,0) to (1,0);
		\node at (0,0) {$\textcolor{blue}{\bullet}$};
		\draw[blue] (.5,.6) to (.5,1.2);
		\draw [color=DarkOrange, dashed, loop] (.5,.6) circle (0.5cm);
	\end{scope}
	
	\begin{scope}[xshift=2cm,yshift=.7cm]
		\draw[blue,in=-90,out=-90,looseness=2] (0,0) to (1,0);
		\draw[blue,in=90,out=90,looseness=2] (1,0) to (2,0);
		\draw[blue] (.5,-.6) to (.5,-1.2);
		\draw[blue] (0,0) to (0,.6);
		\draw[blue] (2,0) to (2,-1);
		\node at (.5,-1.2) {${\color{blue}\bullet}$};
		\draw [color=DarkRed, dashed, loop] (.5,-.7) circle (0.6cm);
	\end{scope}
	
	\begin{scope}[xshift=5.4cm]
		\draw [blue] (0,0) to (0,1);
	\draw[blue,in=90,out=90,looseness=2] (0,1) to (.6,1);
	\draw[blue,in=-90,out=-90,looseness=2] (0,0) to (-.6,0);
	\draw[blue] (.6,1) to (.6,.1);
	\draw[blue] (-.6,0) to (-.6,.9);
	\end{scope}
	
	\begin{scope}[xshift=6.5cm]
		\draw[blue] (0,-.2) to (0,1.2);
	\end{scope}
	
	\begin{scope}[xshift=8cm]
		\draw [blue] (0,0) to (0,1);
		\draw[blue,in=90,out=90,looseness=2] (0,1) to (-.6,1);
		\draw[blue,in=-90,out=-90,looseness=2] (0,0) to (.6,0);
		\draw[blue] (-.6,1) to (-.6,.2);
		\draw[blue] (.6,0) to (.6,.8);
		\draw [color=DarkRed, dashed, loop] (.3,-.2) circle (0.4cm);
	\end{scope}
	
	\begin{scope}[xshift=9.5cm, yshift=.6cm]
		\draw[blue,in=90,out=90,looseness=2] (0,0) to (1,0);
		\draw[blue,in=-90,out=-90,looseness=2] (1,0) to (2,0);
		\draw[blue] (1.5,-.6) to (1.5,-1.2);
		\draw[blue] (0,0) to (0,-1.2);
		\draw[blue] (2,0) to (2,1);
		\node at (1.5,-1.2) {$\textcolor{blue}{\bullet}$};
		\draw [color=DarkOrange, dashed, loop] (1,.2) circle (1.2cm);
	\end{scope}
	
	\begin{scope}[xshift=12.5cm]
		\draw[blue,in=90,out=90,looseness=2] (0,0) to (1,0);
		\node at (1,0) {$\textcolor{blue}{\bullet}$};
		\draw[blue] (.5,.6) to (.5,1.2);
	\end{scope}
	
	\node at (1.5,.3) {$=$};
	\node at (4.25,.3) {$=$};
	\node at (6.25,.3) {$=$};
	\node at (6.9,.3) {$=$};
	\node at (8.9,.3) {$=$};
	\node at (12,.3) {$=$};
	
	\node at (1.4,1.2) [align=center, above, DarkOrange,scale=.8] {def. of $\delta$};
	\node at (4.25,1.35) [align=center, above, DarkRed,scale=.8] {$\ev_X = e^* \circ m$};
	
	\draw[->, DarkOrange] (1.5,1.2) to (1.5,.5);
	\draw[->, DarkRed] (4.25,1.2) to (4.25,.5);
\end{tikzpicture}\] 	
\[ \begin{tikzpicture}[scale=.8]
	\begin{scope}[xshift=0cm]
		\draw[blue,in=90,out=90,looseness=2] (0,0) to (1,0);
\draw[blue,in=90,out=90,looseness=2] (0.5,.6) to (-.5,.6);
\draw[blue] (-.5,.6) to (-.5,0);
\draw[blue] (0,1.2) to (0,1.6);
	\end{scope}
	
	\begin{scope}[xshift=2cm,yshift=1.1cm]
		\draw[blue,in=90,out=90,looseness=2] (0,0) to (1,0);
		\draw[blue,in=-90,out=-90,looseness=2] (1,0) to (2,0);
		\draw[blue,in=-90,out=-90,looseness=2] (1.5,-.6) to (2.5,-.6);
		\draw[blue,in=90,out=90,looseness=2] (2.5,-.6) to (3.5,-.6);
		\draw[blue] (0,0) to (0,-1);
		\draw[blue] (2,-1.2) to (2,-1.6);
		\draw[blue] (3.5,-.6) to (3.5,-1);
		\draw [color=DarkRed, dashed, loop] (1.9,-.4) circle (1cm);
	\end{scope}
	
	\begin{scope}[xshift=8cm,yshift=1.1cm]
		\draw[blue,in=-90,out=-90,looseness=2] (0,0) to (1,0);
		\draw[blue,in=-90,out=-90,looseness=2] (.5,-.6) to (-.5,-.6);
		\draw[blue,in=90,out=90,looseness=2] (-.5,-.6) to (-1.5,-.6);
		\draw[blue,in=90,out=90,looseness=2] (1,0) to (2,0);
		\draw[blue] (-1.5,-.6) to (-1.5,-1);
		\draw[blue] (2,0) to (2,-1);
		\draw[blue] (0,-1.2) to (0,-1.6);
	\end{scope}
	
	\begin{scope}[xshift=11cm]
		\draw[blue,in=90,out=90,looseness=2] (0,0) to (1,0);
		\draw[blue,in=90,out=90,looseness=2] (.5,.6) to (1.5,.6);
		\draw[blue] (1.5,.6) to (1.5,0);
		\draw[blue] (1,1.2) to (1,1.6);
	\end{scope}
	
	\node at (1.4,.4) {$=$};
	\node at (5.9,.4) {$=$};
	\node at (10.5,.4) {$=$};
	
	\node at (5.9,1.4) [align=center, above, DarkRed,scale=.8] {associativity};
	
	\draw[->, DarkRed] (5.9,1.4) to (5.9,.6);
	
\end{tikzpicture}\] 
Thus $(X,m,e,\delta,e^*)$ is weak Frobenius by Lemma \ref{lem:intermultdualcomult}, so Frobenius by Lemma \ref{lem:weakfull}. 

Finally, since $X^*=X$ and $e^{**}=e$, we have $m^* \circ e = (e^* \circ m)^* = \ev_X^{*} = \coev_{X^*} = \coev_{X}$. Thus, $\delta=m^*$ and $\delta^*=m$ by Lemma \ref{lem:multdualcomult}. The result follows.
\end{proof}
\begin{lemma} \label{lem:AnotherDual}
Let $X$ be an object in a monoidal category $\mathcal{C}$, with a left dual $(X^*, \ev_X, \coev_X)$, and let
$\Phi \in \Hom_{\mathcal{C}}(X,X^*)$
be an isomorphism. Let $\ev_X'$ and $\coev_X'$ denote the images of $\Phi$ and $\Phi^{-1}$ under the natural adjunction isomorphisms
$\Hom_{\mathcal{C}}(X,X^*)
\cong
\Hom_{\mathcal{C}}(X \otimes X,\one)$
and
$\Hom_{\mathcal{C}}(X^*,X)
\cong
\Hom_{\mathcal{C}}(\one,X \otimes X),$
respectively. Then $(X,\ev_X',\coev_X')$ is another left dual structure on $X$.
\end{lemma}
\begin{proof}
Let us show that pictorially as follows: 
\[\ev'_X = \raisebox{-6mm}{
	\begin{tikzpicture}
		\draw[blue,in=-90,out=-90,looseness=2] (0,0) to (1,0);
		\draw[blue] (0,0) to (0,.4);
		\draw[blue] (1,0) to (1,.4);
		\node[draw,thick,rounded corners,fill = white,scale=.8] at (0,0) {$\Phi$};
\end{tikzpicture}}, \hspace*{1mm}
\coev'_X = \raisebox{-4mm}{
	\begin{tikzpicture}
		\draw[blue,in=90,out=90,looseness=2] (0,0) to (1,0);
		\draw[blue] (0,0) to (0,-.4);
		\draw[blue] (1,0) to (1,-.4);
		\node[draw,thick,rounded corners,fill = white,scale=.7] at (1,0) {$\Phi^{-1}$};
\end{tikzpicture}} \, ,
\hspace*{2mm}
\raisebox{-8mm}{
\begin{tikzpicture}
	\draw [blue] (0,0) to (0,1);
	\draw[blue,in=90,out=90,looseness=2] (0,1) to (-.6,1);
	\draw[blue,in=-90,out=-90,looseness=2] (0,0) to (.6,0);
	\draw[blue] (-.6,1) to (-.6,.2);
	\draw[blue] (.6,0) to (.6,.8);
	\node[draw,thick,rounded corners,fill = white,scale=.7] at (0,.6) {$\Phi^{-1}$};
	\node[draw,thick,rounded corners,minimum width =20, fill = white,scale=.7] at (0,.1) {$\Phi$};
\end{tikzpicture}}
=
\raisebox{-8mm}{
	\begin{tikzpicture}
		\draw [blue] (0,0) to (0,1);
		\draw[blue,in=90,out=90,looseness=2] (0,1) to (-.6,1);
		\draw[blue,in=-90,out=-90,looseness=2] (0,0) to (.6,0);
		\draw[blue] (-.6,1) to (-.6,.2);
		\draw[blue] (.6,0) to (.6,.8);
	\end{tikzpicture}}
=
\raisebox{-4mm}{
\begin{tikzpicture}
	\draw[blue] (0,0) to (0,1.2);
\end{tikzpicture}}
=
\raisebox{-4mm}{
\begin{tikzpicture}
	\draw[blue] (0,-.4) to (0,1.2);
	\node[draw,thick,rounded corners,fill = white,scale=.7] at (0,.1) {$\Phi^{-1}$};
	\node[draw,thick,rounded corners,minimum width =20, fill = white,scale=.7] at (0,.6) {$\Phi$};
\end{tikzpicture}}
=
\raisebox{-6mm}{
	\begin{tikzpicture}
		\draw [blue] (0,0) to (0,1);
		\draw[blue,in=90,out=90,looseness=2] (0,1) to (.6,1);
		\draw[blue,in=-90,out=-90,looseness=2] (0,0) to (-.6,0);
		\draw[blue] (.6,1) to (.6,.1);
		\draw[blue] (-.6,0) to (-.6,.9);
		\node[draw,thick,rounded corners,fill = white,scale=.7] at (.6,.5) {$\Phi^{-1}$};
		\node[draw,thick,rounded corners,minimum width =20, fill = white,scale=.7] at (-.6,.1) {$\Phi$};
\end{tikzpicture}}. \qedhere
\]

\end{proof}

\begin{definition} \label{def:NonDeg}
Let $(X,m)$ be an algebra object in a monoidal category $\mathcal{C}$, with a left dual $X^*$. A morphism
$\lambda \in \Hom_{\mathcal{C}}(X,\one)$
is called \emph{nondegenerate} if the image of
$\lambda \circ m$
under the natural adjunction isomorphism
$\Hom_{\mathcal{C}}(X \otimes X,\one)
\cong
\Hom_{\mathcal{C}}(X,X^*)$
is an isomorphism
$\Phi.$
We depict these morphisms as follows:
\[ \lambda = \raisebox{-7mm}{
	\begin{tikzpicture}[scale=.9]
		\draw [blue] (-0.8,-.6) to (-.8,.6);
		\node at (-.8,-.6) {${\color{blue}\bullet}$};
\end{tikzpicture}},
 \hspace*{2mm}
 m =  \raisebox{-6mm}{
 	\begin{tikzpicture}[scale=.9]
 		\draw[blue,in=-90,out=-90,looseness=2] (-0.5,0.5) to (-1.5,0.5);
 		\draw[blue] (-1,-.1) to (-1,-.6);
 \end{tikzpicture}}, \hspace*{2mm}
\lambda \circ m = \raisebox{-6mm}{
	\begin{tikzpicture}[scale=.9]
		\draw[blue,in=-90,out=-90,looseness=2] (-0.5,0.5) to (-1.5,0.5);
		\draw[blue] (-1,-.1) to (-1,-.6);
		\node at (-1,-.6) {${\color{blue}\bullet}$};
\end{tikzpicture}},
\hspace*{2mm}
\Phi = \raisebox{-8mm}{
	\begin{tikzpicture}[scale=.8]
		\draw[blue,in=-90,out=-90,looseness=2] (0,0) to (1,0);
		\draw[blue,in=90,out=90,looseness=2] (1,0) to (2,0);
		\draw[blue] (.5,-.6) to (.5,-1.2);
		\draw[blue] (0,0) to (0,.6);
		\draw[blue] (2,0) to (2,-1);
		\node at (.5,-1.2) {${\color{blue}\bullet}$};
\end{tikzpicture}}
   \]
\end{definition}

\begin{proposition} \label{prop:NonDegToFrob}
Let $(X,m,e)$ be a unital algebra object in a monoidal category $\mathcal{C}$, with a left dual $X^*$. Let
$\Phi \in \Hom_{\mathcal{C}}(X,X^*)$
be the isomorphism associated with a nondegenerate morphism
$\lambda \in \Hom_{\mathcal{C}}(X,\one)$
as in Definition~\ref{def:NonDeg}. Let $(X,\ev_X',\coev_X')$ be the corresponding alternative left dual structure on $X$ constructed in Lemma~\ref{lem:AnotherDual}, and let $()^{\circ}$ be the associated duality functor. Then
$(X,m,e,m^{\circ},e^{\circ})$
is a Frobenius algebra object in $\mathcal{C}$. In addition, $e^{\circ} = \lambda$, $\ev_X' = e^{\circ} \circ m$ and $\coev_X' = (\ev_X')^{\circ} =  m^{\circ} \circ e$.
\end{proposition}
\begin{proof}
We first show that $\ev_X' = e^{\circ} \circ m$ as follows:
\[\ev'_X = \raisebox{-6mm}{
	\begin{tikzpicture}[scale=.9]
		\draw[blue,in=-90,out=-90,looseness=2] (0,0) to (1,0);
		\draw[blue] (0,0) to (0,.4);
		\draw[blue] (1,0) to (1,.4);
		\node[draw,thick,rounded corners,fill = white,scale=.8] at (0,0) {$\Phi$};
\end{tikzpicture}}
=
\raisebox{-11mm}{
	\begin{tikzpicture}[scale=.8]
		\draw[blue,in=-90,out=-90,looseness=2] (0,0) to (1,0);
		\draw[blue,in=90,out=90,looseness=2] (1,0) to (2,0);
		\draw[blue] (.5,-.6) to (.5,-1.2);
		\draw[blue] (0,0) to (0,.6);
		\draw[blue,in=-90,out=-90,looseness=2] (2,0) to (3,0);
		\draw[blue] (3,0) to (3,.6);
		\node at (.5,-1.2) {${\color{blue}\bullet}$};
\end{tikzpicture}}
=
\raisebox{-6mm}{
	\begin{tikzpicture}[scale=.9]
		\draw[blue,in=-90,out=-90,looseness=2] (-0.5,0.5) to (-1.5,0.5);
		\draw[blue] (-1,-.1) to (-1,-.6);
		\node at (-1,-.6) {${\color{blue}\bullet}$};
\end{tikzpicture}}
=
\lambda \circ m \  \ \text{and} \ \
\hspace*{2mm}
e^\circ = 
\raisebox{-6mm}{
	\begin{tikzpicture}[scale=.9]
		\draw[blue,in=-90,out=-90,looseness=2] (0,0) to (1,0);
		\draw[blue] (0,0) to (0,.4);
		\draw[blue] (1,0) to (1,.4);
		\node[draw,thick,rounded corners,fill = white,scale=.8] at (0,0) {$\Phi$};
		\node at (1,.4) {${\color{blue}\bullet}$};
\end{tikzpicture}}
=
\raisebox{-6mm}{
	\begin{tikzpicture}[scale=.9]
		\draw[blue,in=-90,out=-90,looseness=2] (-0.5,0.5) to (-1.5,0.5);
		\draw[blue] (-1,-.1) to (-1,-.6);
		\node at (-1,-.6) {${\color{blue}\bullet}$};
		\node at (-.5,.5) {${\color{blue}\bullet}$};
\end{tikzpicture}} 
=
\raisebox{-7mm}{
	\begin{tikzpicture}[scale=.9]
		\draw [blue] (-0.8,-.6) to (-.8,.6);
		\node at (-.8,-.6) {${\color{blue}\bullet}$};
\end{tikzpicture}}
=
\lambda.\]
Since $X^{\circ} = X$ by definition, the result follows by Lemma \ref{lem:AlgToFrob}.
\end{proof}

\subsection{Connected Frobenius algebras} \label{sub:HopfAppl}

This subsection provides a detailed proof of Theorem~\ref{thm:HopfFrobRep}. Although the result is presumably well known to experts, such a proof does not seem to be readily available in the literature, and we hope that this exposition will be useful to non-experts. Here $\mathcal{V}$ denotes the braided tensor category of finite-dimensional vector spaces over a field $\field$. A Hopf algebra in $\mathcal{V}$ is just a finite-dimensional Hopf algebra over $\field$. See Remark~\ref{rk:BraidedTensor} for possible extensions to an arbitrary braided tensor category \(\mathcal{V}\).





\begin{theorem}\label{thm:HopfFrobRep}
Let $H$ be a Hopf algebra in $\mathcal{V}$, and let $\mathcal{C}$ be the tensor category $\Corep(H)$. Then $H$ carries a natural structure of connected Frobenius algebra object in $\mathcal{C}$.
For a unital subalgebra $(K,i)$ of $H$, consider the following conditions:
\begin{enumerate}
    \item\label{itm:coideal} $K$ is a left coideal subalgebra of $H$;
    \item\label{itm:subalgobj} $K$ is a unital subalgebra object of $H$ in $\mathcal{C}$;
    \item\label{itm:frobsubalg} $K$ is a Frobenius subalgebra object of $H$ in $\mathcal{C}$;
\end{enumerate}
Then conditions \eqref{itm:coideal} and \eqref{itm:subalgobj} are equivalent. Moreover, if \eqref{itm:coideal} or \eqref{itm:subalgobj} hold, then \eqref{itm:frobsubalg} is equivalent to $\lambda \circ i$ be nondegenerate, with $\lambda\in \Hom_{\mathcal{C}}(H,\one)$ nonzero. 
In particular, if $H$ is semisimple and $\field = \mathbb{C}$, then condition \eqref{itm:coideal} implies this nondegeneracy. Consequently, in this case, conditions \eqref{itm:coideal}--\eqref{itm:frobsubalg} are all equivalent.
\end{theorem}
\begin{proof}
We first construct the Frobenius algebra $(H, m, e, \delta, \epsilon)$ step by step. Note that its counital coalgebra structure $(H, \delta, \epsilon)$ differs from $(H, \Delta, \varepsilon)$ arising from the Hopf algebra structure.

\begin{lemma}  \label{lem:AlgObj}
$(H, m, e)$ is a unital algebra object in $\mathcal{C}$.
\end{lemma}
\begin{proof}
We can endow $H$ with a $H$-comodule structure by setting $\rho_H = \Delta$, since $\Delta$ is coassociative and counital. The unit $e$ and the multipliciation $m$ are morphisms in $\mathcal{C}$ because
$$ \rho_H \circ e = \Delta \circ e = e \otimes e = (\id_H \otimes e) \circ (e \otimes \id_{\one}) = (\id_H \otimes e) \circ \rho_{\one}, \text{ and }$$
\begin{align*}
&\rho_H \circ m
= \Delta \circ m = (m \otimes m)\circ (\id_H \otimes c_{H,H} \otimes \id_H)\circ (\Delta \otimes \Delta) = \\
&(\id_H \otimes m)\circ (m \otimes \id_H \otimes \id_H)\circ (\id_H \otimes c_{H,H} \otimes \id_H)\circ (\Delta \otimes \Delta) = (\id_H \otimes m)\circ \rho_{H\otimes H}. \qedhere
\end{align*}
\end{proof}

\begin{lemma} \label{sublem:ResCoresMorGeneral}
Consider morphisms $f \in \Hom_{\mathcal{C}}(X,Y)$, $j \in \Hom_{\mathcal{C}}(Z,X)$, and a monomorphism $i \in \Hom_{\mathcal{C}}(T,Y)$. Let $g:Z \to T$ be a map such that $f \circ j = i \circ g$. Then $g$ is a morphism in $\mathcal{C}$.
\end{lemma}
\begin{proof}
By using the morphism identities for $f$, $j$ and $i$, together with the equality $f \circ j = i \circ g$, we obtain
\begin{align*}
&(\id_H \otimes i) \circ \rho_{T} \circ g = \rho_Y \circ i \circ g = \rho_Y \circ f \circ j = (\id_H \otimes f) \circ \rho_X \circ j 
= (\id_H \otimes f) \circ ( \id_H \otimes j) \circ \rho_{Z}  \\
&= (\id_H \otimes f \circ j) \circ \rho_{Z}
=(\id_H \otimes i \circ g) \circ \rho_{Z} 
=(\id_H \otimes i) \circ (\id_H \circ g) \circ \rho_{Z}.
\end{align*}
Thus, $\rho_{T} \circ g = (\id_H \otimes g) \circ \rho_{Z}$, i.e., $g$ is a morphism in $\mathcal{C}$, since $(\id_H \otimes i)$ is a monomorphism (left-cancellative).
\end{proof}

\begin{lemma} \label{sublem:ResCoresCoaction}
Let  $\rho_V: V \to H \otimes V$, $\rho_W: W \to H \otimes W$, and $i: W \to V$ be morphisms in $\mathcal{V}$, 
where $\rho_V$ is a coaction and $i$ is a monomorphism. 
If the condition 
\(
\rho_V \circ i = (\mathrm{id}_H \otimes i) \circ \rho_W
\)
holds, then $\rho_W$ is also a coaction.
\end{lemma}

\begin{proof}
Using the coaction property of $\rho_V$ together with the assumed condition, we have
\[
i \circ (\varepsilon \otimes \mathrm{id}_W) \circ \rho_W
= (\varepsilon \otimes \mathrm{id}_V) \circ (\mathrm{id}_H \otimes i) \circ \rho_W
= (\varepsilon \otimes \mathrm{id}_V) \circ \rho_V \circ i
= \mathrm{id}_V \circ i
= i.
\]
Since $i$ is left-cancellative, 
\(
(\varepsilon \otimes \mathrm{id}_W) \circ \rho_W = \mathrm{id}_W.
\) 
Similarly, we deduce that
\(
(\Delta \otimes \mathrm{id}_W) \circ \rho_W = (\mathrm{id}_H \otimes \rho_W) \circ \rho_W.
\) 
Hence, $\rho_W$ is a coaction.
\end{proof}

\begin{lemma} \label{lem:functor}
Let $F\colon \mathcal{A} \to \mathcal{B}$ be a left exact faithful functor between abelian categories. A morphism $f$ in $\mathcal{A}$ is a monomorphism if and only if $F(f)$ is a monomorphism in $\mathcal{B}$.
\end{lemma}

\begin{proof}
If $f \circ g = f \circ h$, then $F(f) \circ F(g) = F(f) \circ F(h)$. If $F(f)$ is a monomorphism, then $F(g) = F(h)$. Moreover, if $F$ is faithful, then $g = h$. Conversely, if $f$ is a monomorphism, then $\ker(f)=0$. If $F$ is left exact, it preserves kernels, hence $\ker(F(f)) = F(\ker(f)) = 0$. Therefore, $F(f)$ is a monomorphism.
\end{proof}

Recall from \cite[\S 5]{EGNO15} that the forgetful functor $\mathcal{C} \to \mathcal{V}$ is a tensor functor; in particular, it is faithful and exact.
\begin{lemma} \label{lem:coidealobject}
A unital subalgebra $K$ of $H$ in $\mathcal{V}$ is a left coideal subalgebra if and only if it is a unital subalgebra object of $H$ in $\mathcal{C}$.
\end{lemma}

\begin{proof}
Let $K$ be a left coideal subalgebra of $H$, namely (Definition \ref{def:coideal}), there is a unital algebra monomorphism $i$ in $\Hom_{\mathcal{V}}(K, H)$ such that there exists $\rho_K$ in $\Hom_{\mathcal{V}}(K, H \otimes K)$ satisfying $(\id_H \otimes i) \circ \rho_K = \Delta \circ i$.
This identity and Lemma \ref{sublem:ResCoresCoaction} imply that $\rho_K$ is a coaction, and hence $K$ becomes an object in $\mathcal{C}$. Furthermore, it means that $i$ is a morphism in $\mathcal{C}$, so a monomorphism in $\mathcal{C}$ (by Lemma \ref{lem:functor}), and consequently $i \otimes i$ is also a monomorphism in $\mathcal{C}$.
Since $i$ is a unital algebra monomorphism in $\mathcal{V}$ then $e_K = e \circ i$ and $m \circ (i \otimes i) = i \circ m_K$, which give them the structure of morphisms in $\mathcal{C}$, by Lemma \ref{sublem:ResCoresMorGeneral}, and $i$ is a unital algebra monomorphism in $\mathcal{C}$.


Conversely, if $K$ is a unital subalgebra object of $H$ in $\mathcal{C}$. Let $\rho_K: K \to H \otimes K$ be the coaction. Since $i$ is a morphism in $\mathcal{C}$, the identity $\Delta \circ i = (\id_H \otimes i) \circ \rho_K$ holds, which precisely means that $\rho_K$ satisfies the condition in Definition \ref{def:coideal}. Finally, since \(i\) is a unital algebra monomorphism in \(\mathcal{C}\), it is also one in \(\mathcal{V}\), see Lemma~\ref{lem:functor}.
\end{proof}

\begin{lemma} \label{lem:ConnectedFrob}
The Hopf algebra $H$ in $\mathcal{V}$ has the structure of a connected Frobenius algebra object in $\mathcal{C}$.
\end{lemma}
\begin{proof}
By the Larson-Sweedler theorem \cite[\S 2]{LS69}, the space
\(
\Hom_{\mathcal{C}}(H,\one)
\)
of left integrals (Definition~\ref{def:leftint}) is one-dimensional, say \(\mathbbm{k}\lambda\), and \(\lambda\) is nondegenerate in the sense of Definition~\ref{def:NonDeg}. Hence, by Proposition~\ref{prop:NonDegToFrob}, and using its notation, \((H,m,e,m^{\circ},e^{\circ})\) is a Frobenius algebra in \(\mathcal{C}\), with \(e^{\circ}=\lambda\).

Moreover, by the natural adjunction isomorphism,
\(
\Hom_{\mathcal{C}}(H,\one)
\simeq
\Hom_{\mathcal{C}}(\one,H^{\circ}),
\)
and \(H^{\circ}=H\) by the proof of Proposition~\ref{prop:NonDegToFrob}. Therefore,
\(
\dim \Hom_{\mathcal{C}}(\one,H)=1,
\)
that is, \(H\) is connected.
%
\end{proof}

\begin{remark}[For information only] \label{rk:casimir}
Let $c:=\coev_H'(1) \in H \otimes H $ be the \emph{Casimir element}. The fact that $\coev_H' \in \mathrm{Hom}_{\mathcal{C}}(\one,H \otimes H)$ reformulates as $\rho_{H \otimes H}(c) = 1_H \otimes c$ (i.e. $c$ is a \emph{coinvariant element}). By \S \ref{sub:HopfMonoidal}, the Frobenius comultiplication $\delta=m^{\circ} = (\id_H \otimes m) \circ (\coev_H' \otimes \id_H) = (m \otimes \id_H) \circ (\id_H \otimes \coev_H')$. Thus, with $c=\sum_i e_i \otimes f_i$, for all $h \in H$, $\delta(h) = (\id_H \otimes m)(h \otimes c) = (m \otimes \id_H)(c \otimes h) =  \sum_i he_i \otimes f_i = \sum_i e_i \otimes f_ih$,
and the Frobenius condition reformulates as follows: for all $x,y \in H$, $ \sum_i x(ye_i) \otimes f_i  = \sum_i (xy)e_i \otimes f_i  = \sum_i e_i \otimes f_i(xy) = \sum_i e_i \otimes (f_ix)y$.
\end{remark}

\begin{lemma} \label{lem:SSubb} Let $A$ be a semisimple unital subalgebra of $M_{n}(\mathbb{C})$. Then $A$ is conjugate to a $*$-subalgebra.
\end{lemma}

\begin{proof}
By the Artin-Wedderburn theorem, $A$ is spanned by a set of elements $\{e_{i,j}^{(k)}\}$ that satisfy the standard matrix unit relations $e_{i,j}^{(k)}e_{p,q}^{(l)}=\delta_{k,l}\delta_{j,p}e_{i,q}^{(k)}$ and $\sum_{k,i} e_{i,i}^{(k)} = I_{n}$. Define a new inner product on $\mathbb{C}^{n}$ by averaging over these units: $\langle x, y \rangle' = \sum_{k,i,j} \langle e_{i,j}^{(k)} x, e_{i,j}^{(k)} y \rangle$, where $\langle \cdot, \cdot \rangle$ is the former inner product. Direct computation shows that the adjoint of $e_{u,v}^{(w)}$ with respect to $\langle \cdot, \cdot \rangle'$ is $e_{v,u}^{(w)} \in A$; thus, $A$ is closed under this new adjoint operation $a \mapsto a^{\sharp}$.

Representing this inner product via a positive operator $P = V^{*}V$, we have $\langle x, y \rangle' = \langle Px, y \rangle$, which implies $a^{\sharp} = P^{-1} a^{*} P$. It follows that $V a^{\sharp} V^{-1} = (V a V^{-1})^{*}$. Therefore, the conjugate algebra $B = V A V^{-1}$ is closed under the former adjoint, making it a *-subalgebra of $M_{n}(\mathbb{C})$.
\end{proof}

\begin{lemma} \label{lem:tau|nondeg}
Let $\mathcal{M}$ be a finite-dimensional $C^*$-algebra, and let $\tau: \mathcal{M} \to \mathbb{C}$ be a faithful trace. If $A \subseteq \mathcal{M}$ is a semisimple subalgebra, then the restriction $\tau|_A$ is nondegenerate.
\end{lemma}

\begin{proof}
Since a finite-dimensional $C^*$-algebra is a direct sum of matrix algebras, Lemma \ref{lem:SSubb} implies that $A$ is conjugate to a $*$-subalgebra of $\mathcal{M}$. That is, there exists an invertible element $V \in \mathcal{M}$ such that for every $a \in A$, we have  
$(V a V^{-1})^* = V a^{\#} V^{-1}$ for some $a^{\#} \in A$. Using the tracial property of $\tau$ (i.e. $\tau(xy)=\tau(yx)$), we then obtain  
\[
\tau|_A(a a^{\#}) = \tau(a a^{\#}) = \tau(V^{-1} V a a^{\#}) = \tau(V a a^{\#} V^{-1}) = \tau(V a V^{-1} V a^{\#} V^{-1}) = \tau((V a V^{-1}) (V a V^{-1})^*).
\]  
Since $\tau$ is faithful on $\mathcal{M}$, it follows that $\tau|_A(a a^{\#}) = 0$ if and only if $a = 0$. Hence, $\tau|_A$ is nondegenerate.
\end{proof}

\begin{lemma} \label{lem:leftcoidealss}
If $H$ is semisimple over $\mathbb{C}$, then any left coideal subalgebra $K \subseteq H$ is semisimple.
\end{lemma}
\begin{proof}
This follows directly from \cite[Theorem 6.1(i)]{Skr07} together with \cite[Theorem 1.2(7)]{Ma92}.
\end{proof}

\begin{lemma} \label{lem:CoidealFrob}
If $H$ is semisimple over $\mathbb{C}$, then any left coideal subalgebra $K \subseteq H$ is a Frobenius subalgebra in $\mathcal{C}$.
\end{lemma}

\begin{proof}
By \cite[Proposition 2]{Rad94b}, the left integral $\lambda$ can be constructed using the trace function of the regular representation $r$ as
\(
\lambda(x) = \frac{1}{\dim H} \Tr(r(x) \circ S^2).
\)
Since $H$ is finite-dimensional and semisimple over $\mathbb{C}$, we have $S^2 = \id_H$ by \cite{LR88}, so that
\(
\lambda(x) = \frac{1}{\dim H} \Tr(r(x)).
\)
Hence, $\lambda$ defines a faithful trace on $H$, viewed as a finite-dimensional C$^*$-algebra, because $r(H)$ decomposes as a direct sum of matrix algebras.  
Applying Lemmas \ref{lem:tau|nondeg} and \ref{lem:leftcoidealss}, it follows that $\lambda|_K$ is nondegenerate. Then, by Lemma \ref{lem:coidealobject} and Proposition \ref{prop:NonDegToFrob}, using their notation, the tuple $(K, m_K, e_K, m_K^{\circ}, e_K^{\circ})$ forms a Frobenius algebra in $\mathcal{C}$.  
Finally, Lemma \ref{lem:AmbientRefor} and Proposition \ref{prop:AmbientFrobSub} reduce the verification that $K$ is a Frobenius subalgebra of $H$ (in the sense of Definition \ref{def:sub}) to the identity
\[
\ev_H' \circ (i \otimes i) 
= \lambda \circ m \circ (i \otimes i) 
= \lambda \circ i \circ m_K 
= \lambda|_K \circ m_K 
= \ev_K'. \qedhere
\]
\end{proof} 
This completes the proof of Theorem \ref{thm:HopfFrobRep}.
\end{proof}

\begin{remark}\label{rk:BraidedTensor}
Lemmas~\ref{lem:AlgObj} and~\ref{lem:coidealobject} remain valid for any braided tensor category \(\mathcal{V}\), since their proofs are purely categorical. For Lemma~\ref{lem:ConnectedFrob}, one needs a braided tensor category \(\mathcal{V}\) in which the Larson-Sweedler theorem remains valid. Is semisimplicity of \(\mathcal{V}\) sufficient? For Lemma~\ref{lem:CoidealFrob}, one further needs nondegeneracy to be inherited by unital subalgebras in \(\mathcal{C}\). Is it sufficient to assume that \(\mathcal{V}\) is unitary?
\end{remark}

\subsection{Applications}
This subsection applies Corollary~\ref{cor:allfinite3} to semisimple Hopf algebras, noting limitations in the nonsemisimple case. See Definition~\ref{def:CoherentSublattice} for the notion of a coherent sublattice of the Frobenius subalgebra poset.


\begin{corollary} \label{cor:allfinite1Hopf}
Let $H$ be a finite-dimensional semisimple Hopf algebra over $\mathbb{C}$. Then the Frobenius subalgebra poset of $H$ in $\Corep(H)$ is the left coideal subalgebra lattice, and every coherent sublattice is finite.
\end{corollary}    
\begin{proof}
By Theorem \ref{thm:HopfFrobRep}, the entire Frobenius subalgebra poset is the left coideal subalgebra poset, which is trivially a lattice. Recall from \cite[Chapter 5]{EGNO15} that $ \Corep(H) $ is an integral fusion category, and thus pseudo-unitary by Proposition \cite[Proposition 9.6.5]{EGNO15}. The result follows immediately from Corollary \ref{cor:allfinite3}. 
\end{proof}

We now ask whether the entire left coideal subalgebra lattice is coherent, which is equivalent to:

\begin{question}[\cite{PalMO26}]\label{Q:TwoCoideal}
Is the restriction $\lambda|_{L+K}$ necessarily nondegenerate for all left coideal subalgebras $L,K$?
\end{question}

By the proof of Lemma~\ref{lem:CoidealFrob}, the restrictions $\lambda|_L$ and $\lambda|_K$ are nondegenerate. Serge Skryabin mentioned privately that Question~\ref{Q:TwoCoideal} admits an affirmative answer in the special case where one of the two coideal subalgebras is a Hopf subalgebra, by means of standard techniques. However, he does not know how to approach the general case. For completeness, we include below a proof in this special case (Theorem \ref{thm:NonDegSum}). 

Recall that an algebra is called \emph{simple} if it admits no proper nontrivial two-sided ideals, whereas a coalgebra is called \emph{simple} if it admits no proper nontrivial subcoalgebras. A coalgebra \(C\) is called \emph{cosemisimple} if it is a direct sum of simple subcoalgebras, each formed by the sum of all simple subcomodules of \(C\) in a given isomorphism class. By \cite{LR88}, every finite-dimensional semisimple Hopf algebra $H$ over $\mathbb{C}$ is cosemisimple. Let \(V\) and \(W\) be left coideals of \(H\)---equivalently, subobject of $H$ in $\mathcal{C} = \Corep(H)$. The multiplication \(m\), the left integral \(\lambda\), and the inclusion maps \(i_V\) and \(i_W\) are morphisms in $\mathcal{C}$. Hence $\lambda \circ m \circ (i_V \otimes i_W)$ is in $\Hom_{\mathcal{C}}(V \otimes W , \one)\simeq \Hom_{\mathcal{C}}(V,W^*)$. This morphism is precisely the linearization of the restriction of the bilinear form to \(V \times W\). If \(V\) and \(W\) are simple, Schur's lemma implies that $\Hom_{\mathcal{C}}(V,W^*) = 0$ unless \(V \simeq W^*\). It follows that any two simple left coideals that are not dual up to isomorphism are orthogonal. By Lemma \ref{lem:CoidealFrob}, each left coideal subalgebra is isomorphic to its dual as left comodule.


\begin{theorem} \label{thm:NonDegSum}
Let $H$ be a finite-dimensional semisimple Hopf algebra over $\mathbb{C}$ with integral $\lambda$. If $K$ is a Hopf subalgebra and $L$ is a left coideal subalgebra, then $\lambda|_{L+K}$ is nondegenerate.
\end{theorem}
\begin{proof}
If $K$ is a Hopf subalgebra, then $H$ admits an orthogonal decomposition
\(
H = K \oplus^\perp C,
\)
where $C$ is a subcoalgebra orthogonal to $K$ with respect to the bilinear form associated with $\lambda$. Since $H$ is cosemisimple, every left coideal decomposes as a direct sum of simple left coideals, and non-dual simple left coideals are mutually orthogonal. It follows that
\(
L = (L \cap K) \oplus^\perp (L \cap C)
\)
and
\(
L+K = K \oplus^\perp (L \cap C).
\)
By the proof of Lemma~\ref{lem:CoidealFrob}, the restriction of $\lambda$ to each of the left coideal subalgebras $L$, $K$, and $L \cap K$ is nondegenerate. Hence the restriction of $\lambda$ to $L \cap C$ is also nondegenerate, and therefore so is its restriction to $L+K$, by the above orthogonal decompositions.
\end{proof}

\begin{corollary}\label{cor:HopfSubCoherent}
Let $H$ be a finite-dimensional semisimple Hopf algebra over $\mathbb{C}$. Then the Hopf subalgebra lattice forms a coherent sublattice of the Frobenius subalgebra lattice of $H$ in $\Corep(H)$.
\end{corollary}

The finiteness of the Hopf subalgebra lattice is immediate, since a finite-dimensional cosemisimple coalgebra has only finitely many subcoalgebras. By combining Corollaries~\ref{cor:allfinite1Hopf} and~\ref{cor:HopfSubCoherent}, a positive answer to Question~\ref{Q:TwoCoideal} would suffice to deduce the finiteness of the left coideal subalgebra lattice, recovering \cite[Theorem~3.6]{EW14}.


\begin{question}
In Theorem \ref{thm:HopfFrobRep}, is $K$ a Frobenius subalgebra object of $H$ in $\mathcal{C}$ even without the semisimple assumption on $H$?
\end{question}
  

Based on Proposition \ref{prop:pseudo} or Corollary \ref{cor:ZeroHopf}, it may be challenging to find a non-semisimple finite-dimensional Hopf algebra that satisfies the weakly positive assumption. However, if one exists, we can apply Corollary \ref{cor:allfinite1.75} as well.

We are still uncertain about how much the semisimple assumption can be relaxed, but we do know that it cannot be completely removed, even when limited to Hopf subalgebras.

\begin{example} \label{ex:nichols}
Nichols' Hopf algebras $ H_{2^n}$ for $ n \geq 1$, introduced in \cite{N78} and further explored in \cite[Example 5.5.8]{EGNO15}, serve as a family of counterexamples for $ n \geq 3$. The Hopf algebra $ H_{2^n}$ (for $ n \geq 1$) is $ 2^n$-dimensional and generated by elements $ g, x_1, \ldots, x_{n-1}$ subject to the relations:
$$
g^2 = 1, \quad x_i g = -g x_i, \quad x_i^2 = 0, \quad x_i x_j = -x_j x_i \quad (i \neq j),
$$
$$
\Delta(g) = g \otimes g, \quad \Delta(x_i) = g \otimes x_i + x_i \otimes 1, \quad \epsilon(g) = 1, \quad \epsilon(x_i) = 0, \quad S(g) = g, \quad S(x_i) = -g x_i.
$$
For $ n=1$, this corresponds to the group algebra of the cyclic group of order 2, and for $ n=2$, it represents Sweedler's 4-dimensional Hopf algebra. When $ n \geq 3$, it contains infinitely many Hopf subalgebras. Specifically, for any subspace $ E \subset \bigoplus_i \mathbb{C}x_i$, the subalgebra $ H_E = \langle E, g \rangle$ is a Hopf subalgebra, since for any $ x \in E$, we have $ \Delta(x) = g \otimes x + x \otimes 1$ and $ S(x) = -g x$. The dimension of $ H_E$ is $ 2^{\dim(E) + 1}$ and is completely determined by $ E$. Nichols' Hopf algebra $ H_{2^n}$ is unimodular if and only if $ n$ is odd. For $ n=2,3$, refer to \cite[Proposition 7 (e) and Proposition 10 (d)]{Rad94}, where they are denoted $ H_{(2,1,-1)}$ and $ U_{(2,1,-1)}$. For general cases, see the correction in \cite[Exercise 6.5.10(i)]{EtiCor}.
\end{example}   

Every Hopf subalgebra is a left coideal subalgebra, but not vice versa. For example, Sweedler's 4-dimensional Hopf algebra has finitely many Hopf subalgebras but infinitely many left coideal subalgebras \cite[Example 3.5]{EW14}.

\begin{example} \label{ex:double}
The quantum double $ D(H)$ of a finite-dimensional Hopf algebra $ H$ is a bicrossed product of $ H$ with $ H^{op*}$. Hence, $ H$ is a Hopf subalgebra of $ D(H)$. Moreover, $ D(H)$ is factorizable and therefore unimodular (\cite[Definition 6.5.7, Exercise 8.6.4(i), Propositions 7.14.6, 8.6.3, and 8.10.10]{EGNO15}). Consequently, if $ H$ has infinitely many Hopf subalgebras (as is the case with Nichols' Hopf algebra $ H_8$), then $ D(H)$ is a factorizable finite-dimensional Hopf algebra that also has infinitely many Hopf subalgebras.
\end{example}

\section{Other examples} \label{sec:other}
This section presents additional examples—connected canonical Frobenius algebras (\S\ref{sub:canon}), abstract spin chains (\S\ref{sub:ASC}), and vertex operator algebras (\S\ref{sub:voa})—that may fall within the scope of Theorem~\ref{thm:allfinite}. 

\subsection{Connected canonical Frobenius algebra} \label{sub:canon}

In \cite[Corollary 7.20.4]{EGNO15}, the canonical Frobenius algebra $\underline{\mathrm{Hom}}(\mathbf{1},\mathbf{1})$ in $\mathcal{C} \boxtimes \mathcal{C}^{\mathrm{op}}$ is examined for a unimodular \emph{multitensor} category $\mathcal{C}$. However,

\begin{proposition} \label{prop:canon}
The Frobenius algebra $\underline{\mathrm{Hom}}(\mathbf{1},\mathbf{1})$ is connected if and only if $\mathcal{C}$ is a tensor category.
\end{proposition}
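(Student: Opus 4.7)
The plan is to evaluate $\mathrm{Hom}_{\mathcal{C} \boxtimes \mathcal{C}^{\mathrm{op}}}(\mathbf{1}_{\mathcal{C} \boxtimes \mathcal{C}^{\mathrm{op}}}, A)$ directly using the universal property of the internal hom that defines $A = \underline{\mathrm{Hom}}(\mathbf{1}, \mathbf{1})$, and then compare it to $\mathrm{End}_{\mathcal{C}}(\mathbf{1})$, which distinguishes tensor from multitensor categories.

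First I would recall that $\mathcal{C}$ carries a canonical structure of left module category over $\mathcal{C} \boxtimes \mathcal{C}^{\mathrm{op}}$, given on objects by $(Y \boxtimes Z) \cdot W = Y \otimes W \otimes Z$ (see \cite[\S 7.20]{EGNO15}). By construction, the canonical algebra $A = \underline{\mathrm{Hom}}(\mathbf{1}, \mathbf{1})$ is characterized by the natural isomorphism
$$\mathrm{Hom}_{\mathcal{C} \boxtimes \mathcal{C}^{\mathrm{op}}}(X, A) \cong \mathrm{Hom}_{\mathcal{C}}(X \cdot \mathbf{1}, \mathbf{1})$$
for all $X \in \mathcal{C} \boxtimes \mathcal{C}^{\mathrm{op}}$. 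Specializing to the unit $X = \mathbf{1}_{\mathcal{C} \boxtimes \mathcal{C}^{\mathrm{op}}} = \mathbf{1} \boxtimes \mathbf{1}$ and using the unit constraints $\mathbf{1} \otimes \mathbf{1} \otimes \mathbf{1} \cong \mathbf{1}$, this yields
$$\mathrm{Hom}_{\mathcal{C} \boxtimes \mathcal{C}^{\mathrm{op}}}(\mathbf{1}_{\mathcal{C} \boxtimes \mathcal{C}^{\mathrm{op}}}, A) \cong \mathrm{End}_{\mathcal{C}}(\mathbf{1}).$$

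Next I would exploit the standard fact that, in a multitensor category, the unit $\mathbf{1}$ decomposes as a direct sum $\mathbf{1} = \bigoplus_i \mathbf{1}_i$ of pairwise non-isomorphic simple subobjects satisfying $\mathbf{1}_i \otimes \mathbf{1}_j = \delta_{ij}\, \mathbf{1}_i$ (see \cite[\S 4.3]{EGNO15}). Consequently $\mathrm{End}_{\mathcal{C}}(\mathbf{1}) \cong \bigoplus_i \mathrm{End}_{\mathcal{C}}(\mathbf{1}_i) \cong \mathbb{C}^n$, where $n$ is the number of simple summands of $\mathbf{1}$. Thus $\mathrm{End}_{\mathcal{C}}(\mathbf{1})$ is one-dimensional if and only if $n = 1$, i.e., $\mathbf{1}$ is simple, which by definition means that $\mathcal{C}$ is a tensor category.

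Combining these two steps, $A$ is connected, in the sense of Definition~\ref{def:connected}, if and only if $\mathcal{C}$ is a tensor category. The argument is essentially formal; the only mild obstacle is checking that the universal property of the internal hom indeed identifies $\mathrm{Hom}(\mathbf{1}_{\mathcal{C} \boxtimes \mathcal{C}^{\mathrm{op}}}, A)$ with $\mathrm{End}(\mathbf{1})$ as a vector space compatibly with the unit map $e$ of $A$, so that connectedness in the sense of Definition~\ref{def:connected} (one-dimensionality of the hom from the unit) aligns with one-dimensionality of $\mathrm{End}(\mathbf{1})$; this compatibility follows from the definition of $e$ as the morphism corresponding to $\mathrm{id}_{\mathbf{1}}$ under the natural isomorphism above.
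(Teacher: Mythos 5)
Your proposal is correct and follows essentially the same route as the paper: both proofs specialize the adjunction $\mathrm{Hom}_{\mathcal{C}\boxtimes\mathcal{C}^{\mathrm{op}}}(X,\underline{\mathrm{Hom}}(\mathbf{1},\mathbf{1}))\cong\mathrm{Hom}_{\mathcal{C}}(X\cdot\mathbf{1},\mathbf{1})$ to $X=\mathbf{1}\boxtimes\mathbf{1}$ and identify connectedness with the one-dimensionality of $\mathrm{End}_{\mathcal{C}}(\mathbf{1})$, i.e.\ with $\mathcal{C}$ being a tensor category. Your extra remarks (the explicit decomposition $\mathbf{1}=\bigoplus_i\mathbf{1}_i$ and the compatibility of the unit map $e$ with the adjunction) only make explicit details the paper leaves implicit.
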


\begin{proof}
Let $\mathcal{D}$ be a multitensor category and $\mathcal{M}$ a $\mathcal{D}$-module category with objects $M_1$ and $M_2$ in $\mathcal{M}$. According to \cite[(7.20)]{EGNO15}, the space $\underline{\mathrm{Hom}}(M_1, M_2)$ is defined via the natural isomorphism:
$$ \mathrm{Hom}_{\mathcal{M}}(X \otimes M_1, M_2) \simeq \mathrm{Hom}_{\mathcal{D}}(X, \underline{\mathrm{Hom}}(M_1, M_2)). $$

Assuming $\mathcal{M}$ is also monoidal with unit $\mathbf{1}_{\mathcal{M}}$, and letting $\mathbf{1}_{\mathcal{D}}$ be the unit of $\mathcal{D}$, we set $M_1 = M_2 = \mathbf{1}_{\mathcal{M}}$ and $X = \mathbf{1}_{\mathcal{D}}$. This yields:
$$ \mathrm{Hom}_{\mathcal{M}}(\mathbf{1}_{\mathcal{M}}, \mathbf{1}_{\mathcal{M}}) \simeq \mathrm{Hom}_{\mathcal{D}}(\mathbf{1}_{\mathcal{D}}, \underline{\mathrm{Hom}}(\mathbf{1}_{\mathcal{M}}, \mathbf{1}_{\mathcal{M}})). $$

Therefore, $\underline{\mathrm{Hom}}(\mathbf{1}_{\mathcal{M}}, \mathbf{1}_{\mathcal{M}})$ is connected if and only if $\mathbf{1}_{\mathcal{M}}$ is linear-simple, meaning $\mathrm{Hom}_{\mathcal{M}}(\mathbf{1}_{\mathcal{M}}, \mathbf{1}_{\mathcal{M}})$ is one-dimensional. For a multitensor category $\mathcal{M}$, this indicates that $\underline{\mathrm{Hom}}(\mathbf{1}_{\mathcal{M}}, \mathbf{1}_{\mathcal{M}})$ is connected if and only if $\mathcal{M}$ is a tensor category. The result follows by taking $\mathcal{D} = \mathcal{C} \boxtimes \mathcal{C}^{\mathrm{op}}$ and $\mathcal{M} = \mathcal{C}$.
\end{proof}

\begin{corollary} \label{cor:CanonHopf}
Let $H$ be a finite-dimensional unimodular Hopf algebra. Then $H^*$ serves as the \emph{connected} canonical Frobenius algebra in $\mathrm{Rep}(H \otimes H^{\mathrm{cop}})$, albeit with a different comultiplication and counit.
\end{corollary}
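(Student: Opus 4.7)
The plan is to deduce Corollary~\ref{cor:CanonHopf} by specializing Proposition~\ref{prop:canon} to the case $\mathcal{C} = \mathrm{Rep}(H)$ and then identifying the underlying object of $\underline{\mathrm{Hom}}(\mathbf{1},\mathbf{1})$ with $H^*$. First I would observe that $\mathrm{Rep}(H)$ is a \emph{tensor} category over $\mathbb{C}$ (not merely multitensor): the unit object is the trivial $H$-module $\mathbb{C}$ via the counit $\epsilon_H$, and $\mathrm{Hom}_{\mathrm{Rep}(H)}(\mathbb{C},\mathbb{C}) \cong \mathbb{C}$, so $\mathbf{1}$ is linear-simple. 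Moreover, since $H$ is a finite-dimensional Hopf algebra, $\mathrm{Rep}(H)$ is finite and rigid, hence a tensor category in the sense of \cite{EGNO15}.

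Next I would appeal to the standard identification
\[
\mathrm{Rep}(H) \boxtimes \mathrm{Rep}(H)^{\mathrm{op}} \;\simeq\; \mathrm{Rep}(H) \boxtimes \mathrm{Rep}(H^{\mathrm{cop}}) \;\simeq\; \mathrm{Rep}(H \otimes H^{\mathrm{cop}}),
\]
where the first equivalence uses the fact that for a Hopf algebra the opposite tensor category of $\mathrm{Rep}(H)$ is realized by replacing the coproduct $\Delta_H$ with the opposite coproduct $\Delta_H^{\mathrm{op}}$, i.e.\ by passing to $H^{\mathrm{cop}}$, and the second is the Deligne product of representation categories of finite-dimensional algebras. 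This identification is crucial because it places the canonical algebra inside $\mathrm{Rep}(H \otimes H^{\mathrm{cop}})$, as desired.

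Then I would invoke the unimodularity hypothesis: by \cite[Theorem~6.5.11]{EGNO15} (or the analogous statement used in \S 7.20 of loc.\ cit.), $H$ is unimodular if and only if $\mathrm{Rep}(H)$ is a unimodular tensor category. This is precisely the hypothesis under which the canonical Frobenius algebra $\underline{\mathrm{Hom}}(\mathbf{1},\mathbf{1})$ in $\mathcal{C} \boxtimes \mathcal{C}^{\mathrm{op}}$ is constructed in \cite[\S 7.20]{EGNO15}. Applying Proposition~\ref{prop:canon} to $\mathcal{C} = \mathrm{Rep}(H)$ (which is tensor, not merely multitensor), the resulting canonical Frobenius algebra is connected.

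It remains to identify the underlying object. The plan is to use the defining adjunction
\[
\mathrm{Hom}_{\mathrm{Rep}(H)}(X \otimes \mathbf{1}, \mathbf{1}) \;\simeq\; \mathrm{Hom}_{\mathrm{Rep}(H) \boxtimes \mathrm{Rep}(H)^{\mathrm{op}}}(X, \underline{\mathrm{Hom}}(\mathbf{1},\mathbf{1}))
\]
for $X \in \mathrm{Rep}(H) \boxtimes \mathrm{Rep}(H)^{\mathrm{op}} \simeq \mathrm{Rep}(H \otimes H^{\mathrm{cop}})$. Taking $X$ to run over the regular bimodule and unwinding the adjunction identifies $\underline{\mathrm{Hom}}(\mathbf{1},\mathbf{1})$ with $H^*$ equipped with the natural left-right $H$-bimodule action $(h \otimes k) \cdot f = (x \mapsto f(k \cdot x \cdot h))$; this matches the algebra structure already used in Lemma~\ref{lem:H*algebra} when restricted to one side, while the other factor accounts for the comultiplication and counit coming from the canonical construction in $\mathcal{C} \boxtimes \mathcal{C}^{\mathrm{op}}$, which differ from the standard ones on $H^*$ (they are twisted by the isomorphism $\phi$ built from the right integral, as in Theorem~\ref{thm:HopfFrobRep}). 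The main obstacle I anticipate is checking that the comultiplication and counit produced by the universal property of $\underline{\mathrm{Hom}}(\mathbf{1},\mathbf{1})$ genuinely correspond, under this identification, to those induced by a nondegenerate integral as in Theorem~\ref{thm:HopfFrobRep}; this should follow from the fact that, in the unimodular case, both constructions are uniquely determined (up to scalar) by their intertwining properties and agree on the element $1_H \in H^*$ once a normalization of the integral is fixed.
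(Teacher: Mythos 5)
Your proposal is correct and follows essentially the same route as the paper: the identification of the canonical Frobenius algebra $\underline{\mathrm{Hom}}(\mathbf{1},\mathbf{1})$ with $H^*$ in $\mathrm{Rep}(H\otimes H^{\mathrm{cop}})$ is exactly the observation at the end of \cite[\S 7.20]{EGNO15} that the paper cites, and connectedness is obtained by applying Proposition~\ref{prop:canon} to the tensor category $\mathrm{Rep}(H)$. The extra verification you flag at the end (matching the canonical comultiplication with the integral-twisted one of Theorem~\ref{thm:HopfFrobRep}) is not needed for the statement as formulated and is not carried out in the paper either.
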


\begin{proof}
This characterization of the canonical Frobenius algebra is noted at the end of \cite[\S 7.20]{EGNO15}. The connectedness follows from Theorem \ref{prop:canon}.
\end{proof}

\begin{proposition}[\customcite{EGNO15}{Proposition 7.18.9}] \label{prop:zerotrace}
Let $\mathcal{C}$ be a unimodular non-semisimple finite tensor category. Let $f: \underline{\mathrm{Hom}}(\mathbf{1},\mathbf{1}) \to \underline{\mathrm{Hom}}(\mathbf{1},\mathbf{1})^{**}$ be a morphism in $\mathcal{C} \boxtimes \mathcal{C}^{\mathrm{op}}$. Then $\text{tr}(f) = 0$.
\end{proposition}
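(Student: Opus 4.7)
The plan is to follow the strategy of \cite[Proposition 7.18.9]{EGNO15} by reducing the vanishing of the trace to the fact that projective objects in a non-semisimple finite tensor category contribute trivially to any categorical dimension. The argument splits into two structural steps: identifying $A := \underline{\mathrm{Hom}}(\mathbf{1},\mathbf{1})$ as a projective object in $\mathcal{D} := \mathcal{C} \boxtimes \mathcal{C}^{\mathrm{op}}$, and then invoking the general vanishing of traces on projectives.

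First, I would recall the coend description $A \cong \int^{X \in \mathcal{C}} X \boxtimes X^*$ from \cite[\S 7.13--7.17]{EGNO15}. This is the universal dinatural target, and its image in $\mathcal{D}$ is controlled by the Radford $S^4$-type data of $\mathcal{C}$. The unimodularity hypothesis on $\mathcal{C}$ (triviality of the distinguished invertible object) forces this coend to coincide, up to the appropriate Nakayama twist, with the projective cover of the unit $\mathbf{1}_{\mathcal{D}} = \mathbf{1} \boxtimes \mathbf{1}$ in $\mathcal{D}$. In particular, $A$ decomposes as a finite direct sum of indecomposable projective objects in $\mathcal{D}$, and $\mathcal{D}$ itself is a finite tensor category which is non-semisimple since $\mathcal{C}$ is.

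Second, by additivity of the trace over direct summands, I would reduce to the following core claim: if $P$ is an indecomposable projective object in a non-semisimple finite tensor category $\mathcal{D}$ and $g : P \to P^{**}$ is any morphism, then $\tr(g) = 0$. By Lemma~\ref{lem:trfg}, the functional $g \mapsto \tr(g) \in \mathrm{End}_{\mathcal{D}}(\mathbf{1}) = \mathbb{C}$ is $\mathrm{End}_{\mathcal{D}}(P)$-central in the sense that $\tr(h^{**} \circ g) = \tr(g \circ h)$ for $h \in \mathrm{End}_{\mathcal{D}}(P)$. Since $P$ is indecomposable, $\mathrm{End}_{\mathcal{D}}(P)$ is a local ring with residue field $\mathbb{C}$, so this functional factors through the one-dimensional semisimple quotient and is determined by its value on the identity component. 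The resulting value is the (pivotal-free) categorical dimension of $P$, which is precisely the class of $P$ under the canonical additive map from the Grothendieck ring of projectives to $\mathbb{C}$; this map vanishes on every projective in a non-semisimple finite tensor category, since projectives lie in the kernel of the semisimplification functor and any such character factors through the image.

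The main obstacle is the first step: proving that unimodularity of $\mathcal{C}$ upgrades the coend $A$ to a genuinely projective object of $\mathcal{D}$, rather than merely a distinguished non-projective algebra. This is the technical core and relies on the categorical Radford isomorphism identifying $A^{**}$ with $A$ (twisted by the distinguished invertible object, which is trivial by unimodularity). Once this identification is in place, the trace on $A$ is intrinsically defined and the vanishing on projectives completes the proof.
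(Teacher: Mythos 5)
The paper does not actually prove this proposition; it imports it verbatim from \cite[Proposition 7.18.9]{EGNO15}, so the only thing to assess is whether your reconstruction is sound. It is not: your first step is false.

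The object $A:=\underline{\mathrm{Hom}}(\mathbf{1},\mathbf{1})$ is \emph{never} projective in $\mathcal{D}:=\mathcal{C}\boxtimes\mathcal{C}^{\mathrm{op}}$ when $\mathcal{C}$ is non-semisimple. Indeed, by the defining adjunction of the internal Hom, $\mathrm{Hom}_{\mathcal{D}}(Z,A)\cong\mathrm{Hom}_{\mathcal{C}}(T(Z),\mathbf{1})$ naturally in $Z$, where $T\colon\mathcal{D}\to\mathcal{C}$ is the (exact) tensor-product functor. Precomposing with $-\boxtimes\mathbf{1}$ shows that exactness of $\mathrm{Hom}_{\mathcal{D}}(-,A)$ forces exactness of $\mathrm{Hom}_{\mathcal{C}}(-,\mathbf{1})$, i.e.\ injectivity of $\mathbf{1}$ in $\mathcal{C}$, which holds if and only if $\mathcal{C}$ is semisimple; since projectives and injectives coincide in a finite tensor category, $A$ is projective in $\mathcal{D}$ if and only if $\mathcal{C}$ is semisimple. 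Concretely, for the unimodular non-semisimple Nichols Hopf algebra $H=H_{8}$ of Example~\ref{ex:nichols}, one has $\dim A=\dim H^{*}=8$, while every indecomposable projective of $H\otimes H^{\mathrm{cop}}$ is of the form $P\boxtimes Q$ and has dimension $16$; so $A$ has no projective direct summand at all, and in particular unimodularity does not identify $A$ with the projective cover of $\mathbf{1}\boxtimes\mathbf{1}$ (which is $P_{\mathbf{1}}\boxtimes P_{\mathbf{1}}$, of dimension $16$). Your appeal to the Radford isomorphism only yields selfduality $A^{**}\cong A$ (which indeed holds, and is consistent with $A$ being a Frobenius algebra, cf.\ Lemma~\ref{lem:selfdual}); selfduality does not imply projectivity.

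Your second step --- that $\tr(g)=0$ for any $g\colon P\to P^{**}$ with $P$ projective in a non-semisimple finite tensor category --- is a correct and standard fact (it is exactly what this paper quotes from the proof of \cite[Theorem 6.6.1]{EGNO15} in Proposition~\ref{prop:pseudo}; the clean argument is that a nonzero such trace would exhibit $\mathbf{1}$ as a retract of the projective object $P\otimes P^{*}$). But since $A$ is not projective, this fact cannot be applied to $A$, and the proof collapses. The genuine content of \cite[Proposition 7.18.9]{EGNO15} is precisely to obtain the vanishing \emph{without} projectivity of $A$, by routing the trace through Hom-spaces computed via the adjunction $\mathrm{Hom}_{\mathcal{D}}(Z,A)\cong\mathrm{Hom}_{\mathcal{C}}(T(Z),\mathbf{1})$ and the distinguished invertible object (this is where unimodularity is actually used); any correct reconstruction must take that route rather than reduce to a direct sum of projectives.
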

Referring to Lemma \ref{lem:selfdual}, we understand that a Frobenius algebra object is selfdual. Therefore, in Proposition \ref{prop:zerotrace}, the morphism $ f $ can be viewed as an isomorphism, which implies that the categorical dimension of $ \underline{\mathrm{Hom}}(\mathbf{1},\mathbf{1}) $ with respect to $ f $ must be zero.

\begin{corollary} \label{cor:ZeroTensor}
Let $\mathcal{C}$ be a unimodular non-semisimple finite tensor category. Assume that $\mathcal{C} \boxtimes \mathcal{C}^{\mathrm{op}}$ has a pivotal structure $\phi$. Then $\dim_{\phi}(\underline{\mathrm{Hom}}(\mathbf{1},\mathbf{1})) = 0$.
\end{corollary}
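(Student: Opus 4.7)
The plan is to observe that the claimed identity is essentially a direct unpacking of the definition of $\dim_{\phi}$ combined with Proposition \ref{prop:zerotrace}. Write $X := \underline{\mathrm{Hom}}(\mathbf{1},\mathbf{1})$ for brevity, viewed as an object in $\mathcal{C} \boxtimes \mathcal{C}^{\mathrm{op}}$. By the definition of the pivotal trace given just after Definition \ref{def:quasipivotal}, together with Definition \ref{def:trace}, one has
\[
\dim_{\phi}(X) \;=\; \tr_{\phi}(\id_X) \;=\; \tr(\phi_X \circ \id_X) \;=\; \tr(\phi_X).
\]

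Next, since $\phi$ is a pivotal (in particular, natural) isomorphism on $\mathcal{C} \boxtimes \mathcal{C}^{\mathrm{op}}$, the component $\phi_X$ is a morphism $X \to X^{**}$ in this category. This places $\phi_X$ exactly in the class of morphisms covered by Proposition \ref{prop:zerotrace}, whose hypotheses (unimodular, non-semisimple, finite tensor) are inherited directly from those of the corollary. Applying that proposition with $f := \phi_X$ yields $\tr(\phi_X) = 0$.

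Combining the two displays gives $\dim_{\phi}(X) = 0$, which is the desired conclusion. There is no genuine obstacle in this argument: the entire content is to recognise that $\dim_{\phi}$ is computed by the trace of the pivotal isomorphism and to invoke Proposition \ref{prop:zerotrace}. The only minor point worth flagging is that $\phi_X$ is an \emph{isomorphism} rather than an arbitrary morphism, but this plays no role since Proposition \ref{prop:zerotrace} applies to every morphism $X \to X^{**}$.
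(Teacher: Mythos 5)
Your proposal is correct and matches the paper's own argument: the paper likewise deduces the corollary directly from Proposition \ref{prop:zerotrace} by applying it to the pivotal isomorphism $\phi_X \colon X \to X^{**}$ and identifying $\dim_{\phi}(X)$ with $\tr(\phi_X)$. Nothing further is needed.
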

\begin{proof}
This follows directly from Proposition \ref{prop:zerotrace}.
\end{proof}

\begin{remark} \label{rk:failpositive}
In Corollary \ref{cor:ZeroTensor}, the canonical Frobenius algebra object $ X $ is not $ \phi_X $-weakly positive, which implies that Corollary \ref{cor:allfinite1} is not applicable.
\end{remark}

\begin{corollary} \label{cor:ZeroHopf}
Let $H$ be a finite-dimensional unimodular non-semisimple Hopf algebra. Assume that $\mathrm{Rep}(H \otimes H^{\mathrm{cop}})$ has a pivotal structure $\phi$. Then $\dim_{\phi}(H^*) = 0$.
\end{corollary}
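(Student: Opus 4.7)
The plan is to reduce this corollary to a direct combination of Corollaries~\ref{cor:CanonHopf} and~\ref{cor:ZeroTensor}. First I would set $\mathcal{C} := \mathrm{Rep}(H)$ and verify that $\mathcal{C}$ satisfies the hypotheses of Corollary~\ref{cor:ZeroTensor}: since $H$ is finite-dimensional, $\mathcal{C}$ is a finite tensor category; unimodularity of $H$ transfers to unimodularity of $\mathcal{C}$ (this is classical, see \cite[Proposition~6.5.9]{EGNO15}); and $\mathcal{C}$ is non-semisimple precisely because $H$ is non-semisimple, by Maschke's theorem.

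Next I would identify the ambient category, using the standard equivalence
\[
\mathrm{Rep}(H \otimes H^{\mathrm{cop}}) \simeq \mathrm{Rep}(H) \boxtimes \mathrm{Rep}(H^{\mathrm{cop}}) = \mathcal{C} \boxtimes \mathcal{C}^{\mathrm{op}},
\]
so that the given pivotal structure $\phi$ on $\mathrm{Rep}(H \otimes H^{\mathrm{cop}})$ transports to a pivotal structure on $\mathcal{C} \boxtimes \mathcal{C}^{\mathrm{op}}$. Then by Corollary~\ref{cor:CanonHopf}, $H^*$ (with the appropriately twisted comultiplication and counit) is the canonical Frobenius algebra $\underline{\mathrm{Hom}}(\mathbf{1},\mathbf{1})$ in $\mathcal{C} \boxtimes \mathcal{C}^{\mathrm{op}}$.

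Finally, applying Corollary~\ref{cor:ZeroTensor} to $\mathcal{C}$ with its inherited pivotal structure yields $\dim_{\phi}(\underline{\mathrm{Hom}}(\mathbf{1},\mathbf{1})) = 0$, and since $\underline{\mathrm{Hom}}(\mathbf{1},\mathbf{1}) \cong H^*$ as objects, we conclude $\dim_{\phi}(H^*) = 0$.

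There is no substantive obstacle here; the corollary is essentially a dictionary translation. The only point that requires a moment of care is confirming that the categorical dimension is computed with respect to the same pivotal structure on both sides of the equivalence $\mathrm{Rep}(H \otimes H^{\mathrm{cop}}) \simeq \mathcal{C} \boxtimes \mathcal{C}^{\mathrm{op}}$, which follows from the fact that the equivalence is monoidal and preserves duals, so pivotal structures correspond bijectively and traces are preserved.
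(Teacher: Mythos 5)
Your proposal is correct and follows exactly the paper's route: the paper derives this corollary immediately from Corollaries~\ref{cor:CanonHopf} and~\ref{cor:ZeroTensor}, which is precisely your reduction. The extra verifications you spell out (unimodularity and non-semisimplicity of $\Rep(H)$, the identification $\Rep(H\otimes H^{\mathrm{cop}})\simeq \Rep(H)\boxtimes\Rep(H)^{\mathrm{op}}$, and transport of the pivotal structure) are the routine details the paper leaves implicit in the word ``immediately.''
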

\begin{proof}
This follows immediately from Corollaries \ref{cor:CanonHopf} and \ref{cor:ZeroTensor}.
\end{proof}

In Corollary \ref{cor:ZeroHopf}, we note that due to the fiber functor, $\FPdim$ is equal to $\dim_{\mathbbm{k}}$. However, in characteristic zero, $\dim_{\mathbbm{k}}(H^*)$ is non-zero. Thus, $\dim_{\phi}$ must differ from $\FPdim$ in this case.

\subsection{Abstract spin chains} \label{sub:ASC}
Quantum cellular automata (QCA) are quantum operations that reflect the core principles of unitarity and locality \cite{Watr95}. In \cite{JSW24}, the authors explore extensions of bounded-spread isomorphisms of symmetric local algebras to QCA defined on complete (or edge-restricted) local operator algebras. They introduce the notions of abstract spin chains and categorical inclusions, offering an algebraic framework to study these inclusions. In this section, we aim to demonstrate that the lattice of categorical inclusions (as discussed in \cite[\S 4]{JSW24}) corresponding to a connected, commutative Frobenius algebra (Q-system) is finite.  

As detailed in \cite[\S 3]{JSW24}, let $\mathcal{C}$ be an indecomposable unitary multi-fusion category, and let $X$ be an object in $\mathcal{C}$. For any finite interval $I \subset \mathbb{Z}$, define $A(\mathcal{C},X)_I$ as the finite-dimensional $*$-algebra $\End_{\mathcal{C}}(X^{\otimes |I|})$. The colimit $A(\mathcal{C},X)$ of $A(\mathcal{C},X)_I$, taken in the category of $*$-algebras, is called an \emph{abstract spin chain}.

Let $\mathcal{C}$ be an indecomposable multi-fusion category. Following \cite[Definition 3.2]{JSW24}, a \emph{quotient} of $\mathcal{C}$ is defined as an indecomposable multi-fusion category $\mathcal{D}$, equipped with a dominant\footnote{That is, \emph{surjective}, as defined in \cite[Definition 1.8.3]{EGNO15}.} tensor functor $F: \mathcal{C} \to \mathcal{D}$.

As noted in \cite{JSW24}, a dominant tensor functor $F: \mathcal{C} \to \mathcal{D}$ induces an inclusion of C$^*$-algebras:
$$ i_F: A(\mathcal{C},X) \hookrightarrow A(\mathcal{D},F(X)). $$

Given two quotients, $F: \mathcal{C} \to \mathcal{D}$ and $G: \mathcal{D} \to \mathcal{E}$, the composition $G \circ F: \mathcal{C} \to \mathcal{E}$ is also a quotient, and the inclusion satisfies $i_{G \circ F} = i_G \circ i_F$. These inclusions are known as \emph{categorical inclusions}.

According to \cite[\S 4]{JSW24}, citing a result from \cite{BN10}, for a quotient $F: \mathcal{C} \to \mathcal{D}$, there exists a connected commutative Frobenius algebra object $L$ in $\mathcal{Z}(\mathcal{C})$ such that $\mathcal{D}$ is equivalent to the category $\mathcal{C}_L$ of right $L$-modules. Moreover, $F$ is equivalent to the functor $F_L: \mathcal{C} \to \mathcal{C}_L$, where $x \mapsto x \otimes L$, and $L$ is identified with its image under the forgetful functor. Using this, \cite[\S 4]{JSW24} demonstrates an equivalence between the lattice of intermediate categorical inclusions between $A(\mathcal{C},X)$ and $A(\mathcal{C}_L,X)$, and the lattice of Frobenius subalgebras of $L$. Since the Drinfeld center of a multi-fusion category is a fusion category \cite[Exercise 7.13.7]{EGNO15}, we can apply Corollary \ref{cor:allfiniteUnitary} to get the following:

\begin{corollary} \label{cor:CatInc} The lattice of intermediate categorical inclusions between $A(\mathcal{C},X)$ and $A(\mathcal{C}_L,X)$ is finite. 
\end{corollary}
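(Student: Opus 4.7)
The plan is to directly reduce the statement to Corollary~\ref{cor:allfiniteUnitary} via the equivalence of lattices recalled just above the statement. First, I would invoke the fact that the Drinfeld center $\mathcal{Z}(\mathcal{C})$ of the indecomposable unitary multi-fusion category $\mathcal{C}$ is itself a unitary fusion category: the center of a multi-fusion category is fusion by \cite[Exercise~7.13.7]{EGNO15}, and the unitary structure on $\mathcal{C}$ induces a unitary (i.e.\ $C^*$-tensor) structure on $\mathcal{Z}(\mathcal{C})$. So the ambient category satisfies the hypotheses of Corollary~\ref{cor:allfiniteUnitary}.

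Next, I would verify that the distinguished object $L \in \mathcal{Z}(\mathcal{C})$ provided by the Brugui\`eres--Natale classification is a \emph{connected unitary} Frobenius algebra. Connectedness is already part of the structure (it corresponds to indecomposability of the quotient $\mathcal{C}_L$). For unitarity, the standard fact is that quotient functors between unitary multi-fusion categories give rise to Q-systems, i.e.\ unitary Frobenius algebras in the sense of Definition~\ref{def:UnitFrob}; equivalently, the structure maps $(m,e,\delta,\epsilon)$ can be normalized so that $\delta = m^{\dagger}$ and $\epsilon = e^{\dagger}$, with separability and the dimension condition $\epsilon \circ m \circ \delta \circ e = \dim(L)\,\id_{\one}$. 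This is the standard passage between commutative Frobenius algebras in $\mathcal{Z}(\mathcal{C})$ arising from quotients and commutative Q-systems.

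With these ingredients in place, Corollary~\ref{cor:allfiniteUnitary} applies to $L$ in $\mathcal{Z}(\mathcal{C})$ and gives finiteness of the unitary Frobenius subalgebra lattice of $L$. Finally, I would invoke the equivalence between this lattice and the lattice of intermediate categorical inclusions between $A(\mathcal{C},X)$ and $A(\mathcal{C}_L,X)$, as recalled from \cite[\S4]{JSW24}, to transport the finiteness conclusion. The expected main obstacle is a bookkeeping point rather than a conceptual one: one must make sure that the equivalence of lattices from \cite{JSW24} matches the \emph{unitary} Frobenius subalgebra lattice (rather than a larger poset of Frobenius subalgebras), so that Corollary~\ref{cor:allfiniteUnitary} rather than the more delicate basis-dependent Corollary~\ref{cor:allfinite1.75} is the right tool; this is ensured by working throughout with Q-systems in the unitary setting, where, by Proposition~\ref{prop:UnitFrobSub}, every unitary Frobenius subalgebra is automatically rigid invariant with respect to every basis.
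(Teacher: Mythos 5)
Your proposal is correct and follows essentially the same route as the paper: reduce via the lattice equivalence of \cite[\S 4]{JSW24} to Corollary~\ref{cor:allfiniteUnitary} applied to the connected Frobenius algebra $L$ in the Drinfeld center, using that the center of a multi-fusion category is fusion. You are in fact somewhat more careful than the paper's one-line justification, as you explicitly verify the unitarity of $\mathcal{Z}(\mathcal{C})$ and of $L$ as a Q-system, and you flag (and resolve, via Proposition~\ref{prop:UnitFrobSub}) the bookkeeping point that the lattice from \cite{JSW24} must be identified with the \emph{unitary} Frobenius subalgebra lattice — details the paper leaves implicit.
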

This provides a rigidity-type result for abstract spin chains.

\subsection{Vertex operator algebras} \label{sub:voa}

We posed the following question to Kenichi Shimizu in \cite{Shi24}:  

\begin{question}  
Under what conditions is a vertex operator algebra (VOA) $ V $ a connected Frobenius algebra object in $ \Rep(V) $?  
\end{question}  

Shimizu's response can be understood through the categorical perspective on VOA extensions, as explored in \cite{CKM24}. This work highlights that a VOA extension $ A $ of $ V $ naturally forms a commutative algebra in the representation category $ \Rep(V) $. This algebra is characterized by two key properties: it is connected (or haploid; see Definition \ref{def:connected}) and has a trivial twist, for the ribbon structure of $ \Rep(V) $.  

Although it is not fully established that every VOA extension gives rise to a Frobenius algebra, a related partial result is known. Specifically, if $ \mathcal{C} $ is a modular tensor category and $ A $ is a connected, commutative, and exact algebra in $ \mathcal{C} $, and if the category of local $ A $-modules, $ \mathcal{C}_A^{\text{loc}} $, forms a ribbon category with the same twist as $ \mathcal{C} $, then $ A $ is a symmetric Frobenius algebra \cite[Proposition 5.19]{SY24}. However, these assumptions may be too restrictive when applied to VOA extensions.  

%
%
%
%
%
%

\section{Quantum arithmetic} \label{sec:QA}
In this section, we explore further applications, particularly the generalization of Ore's theorem \cite{Pal20} and Euler's totient theorem \cite{Pal18} to tensor categories. Both generalizations depend on the existence of a finite lattice, which naturally aligns with Watatani's theorem on tensor categories. We will present these results without proofs. While they can currently be regarded as conjectures, we anticipate that their proofs could follow methods similar to those in \cite{Pal20} and \cite{Pal18}, but formalizing these proofs remains a valuable task for future work.

Additionally, we explore an extension of Robin's reformulation of the Riemann hypothesis \cite{Rob84}, which involves the sigma function, to tensor categories. We do not assert the relevance of this generalization but propose it as a potential direction for further investigation.

\begin{remark} \label{rk:spe}
We caution the reader that this section is largely speculative. Speculation involves imagining possibilities or exploring ideas without a solid foundation in evidence or structure. In contrast, a conjecture is a more formalized hypothesis, often based on observed patterns or logical reasoning, and is typically intended to be proven or disproven.
\end{remark}

\subsection{Ore's theorem} \label{sub:Ore}  

Øystein Ore demonstrated in \cite{Or38} in 1938 that a finite group is cyclic if and only if its subgroup lattice is distributive. He also extended this result in one direction as follows:

\begin{theorem}[\cite{Or38}, Theorem 7] \label{thm:Ore}
Let $[H,G]$ be a distributive interval of finite groups. Then there exists an element $g \in G$ such that $\langle Hg \rangle = G$.
\end{theorem}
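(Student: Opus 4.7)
The plan is to argue by contradiction. Suppose no $g \in G$ satisfies $\langle Hg\rangle = G$. Since $\langle Hg\rangle$ coincides with the subgroup $\langle H,g\rangle$, this subgroup is a proper element of $[H,G) := [H,G]\setminus\{G\}$ for every $g \in G$, hence is contained in some coatom. Let $M_1,\dots,M_n$ be the full (finite) list of coatoms of $[H,G]$. The contradictory hypothesis then yields the set-theoretic cover
\[
G \;=\; \bigcup_{i=1}^n M_i
\]
by proper subgroups. The objective is to construct, from the $M_i$, a single element of $G$ lying outside every $M_i$, contradicting this cover.

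The core algebraic step uses distributivity to separate the coatoms. For each $i$, set $N_i := \bigcap_{j \neq i} M_j$. Any two distinct coatoms $M_i,M_j$ satisfy $M_i \vee M_j = G$, because the join strictly contains $M_i$ and nothing in $[H,G]$ strictly contains a coatom except $G$. Iterating the dual distributive identity $a \vee (b \wedge c) = (a \vee b) \wedge (a \vee c)$ across the $n-1$ meets defining $N_i$ gives
\[
M_i \vee N_i \;=\; \bigcap_{j \neq i}(M_i \vee M_j) \;=\; G.
\]
If $N_i \leq M_i$ then the left-hand side would collapse to $M_i$, forcing the coatom $M_i$ to equal $G$. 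Hence $N_i \not\leq M_i$, and we may select $g_i \in N_i \setminus M_i$: by construction $g_i \in M_j$ for every $j \neq i$, while $g_i \notin M_i$.

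The final step combines the $g_i$'s into a single witness $g := g_1 g_2 \cdots g_n$. I would prove by induction on $k$ that the partial product $g_1 \cdots g_k$ lies in $M_j$ for every $j > k$ and outside $M_j$ for every $j \leq k$. The inductive step relies on the elementary observation that, for any subgroup $M$, if exactly one of $a,b$ lies in $M$ then $ab \notin M$, combined with the prescribed membership pattern of $g_k$. Taking $k = n$ yields $g \notin M_i$ for all $i$, contradicting $G = \bigcup_i M_i$ and completing the proof. The main technical obstacle lies precisely in this last step: distributivity cleanly produces the family of separating witnesses $(g_i)$, but consolidating this family into a \emph{single} group element requires careful non-commutative bookkeeping through the ordered inductive product, which is the real content of the argument.
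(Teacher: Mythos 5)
Your argument is correct. Note first that the paper does not prove this statement at all: Theorem \ref{thm:Ore} is quoted verbatim from Ore's 1938 paper \cite{Or38} as background for the planar-algebra and tensor-category generalizations (Theorem \ref{thm:OrePlanar} and Statement \ref{sta:Ore}), so there is no in-paper proof to compare against. On its own terms, your proof is sound and is essentially the classical one. The reduction $\langle Hg\rangle = \langle H,g\rangle$ is valid (the generated subgroup contains $g$, hence $g^{-1}$, hence $h = (hg)g^{-1}$), so the contradictory hypothesis does give the cover $G = \bigcup_i M_i$ by the coatoms of $[H,G]$. Distributivity enters exactly where you place it: $M_i \vee \bigl(\bigwedge_{j\neq i} M_j\bigr) = \bigwedge_{j\neq i}(M_i\vee M_j) = G$ since distinct coatoms join to $G$, forcing $N_i \not\leq M_i$; the meet in $[H,G]$ really is set-theoretic intersection, so $g_i \in N_i\setminus M_i$ has the stated membership pattern. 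The inductive product argument is the standard ``exactly one factor in $M$ implies the product is outside $M$'' trick and your claimed invariant for $g_1\cdots g_k$ checks out in all three cases ($j>k+1$, $j=k+1$, $j\leq k$). Two trivial edge cases are worth a sentence each in a written version: if $H=G$ take $g=e$, and if there is a single coatom the empty meet $N_1$ should be read as the top element $G$ (or one simply observes that $G=M_1$ is already absurd). Contrary to your closing remark, the non-commutative bookkeeping in the last step is routine; the genuinely load-bearing step is the distributivity computation producing the $g_i$, which you have right.
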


The paper \cite{Pal20} generalizes Ore's Theorem \ref{thm:Ore} to planar algebras as follows, and applies it to establish a connection between combinatorics and representation theory.

\begin{theorem}[\cite{Pal20}] \label{thm:OrePlanar}
Let $\mathcal{P}$ be an irreducible subfactor planar algebra with a distributive biprojection lattice. Then there exists a minimal $2$-box projection that generates the identity biprojection.
\end{theorem}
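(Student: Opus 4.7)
The strategy is to transfer Ore's original group-theoretic argument to the planar algebra setting, using the correspondence where subgroups become biprojections, group elements become minimal projections in the 2-box space, and the generated subgroup $\langle H, g \rangle$ becomes the biprojection generated by a minimal projection $p$ (that is, the smallest biprojection lying above $p$ in the projection order). Concretely, I would argue by contrapositive: assume every minimal $2$-box projection $p$ generates some \emph{proper} biprojection $b(p) \lneq \id$, and derive that the biprojection lattice cannot be distributive.

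The first step is a structural lemma about the biprojection lattice. Since every minimal projection in $\mathcal{P}_{2,+}$ sits below a unique biprojection $b(p)$, the set of minimal projections decomposes according to which biprojection they generate. Under the contrapositive hypothesis, every such minimal projection is covered by a \emph{coatom} $e_i$ of the biprojection lattice, so the identity projection, expressed as a sum of minimal projections, is ``supported'' on the union $\bigcup_i e_i$ of the coatoms. The second step is to use distributivity, in conjunction with the exchange relation (Theorem~\ref{thm:exchange}) and Landau's theorem (Theorem~\ref{thm:landau}), to convert this support statement into an exact identity involving $b_{e_i}$, $b_{e_i} * b_{e_j}$, and the traces $\tr(b_{e_i} \circ b_{e_j})$. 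In the distributive case, the Möbius function of the lattice interval takes controlled values, allowing an inclusion--exclusion argument on the ranks of the biprojections $b_{e_i \cap e_j}$.

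The third step, which is also the main obstacle, is to extract a numerical contradiction. In the group case, one counts elements; here one must count weighted traces. The intended contradiction is that the formal identity
\[
\tr(\id) \;=\; \sum_{S \neq \emptyset} (-1)^{|S|+1} \tr\Bigl(\bigwedge_{i \in S} b_{e_i}\Bigr),
\]
which in a distributive lattice of biprojections would collapse via the standard Möbius calculation, cannot be compatible with having all coatoms $e_i$ proper unless a \emph{minimal} projection outside the union already exists. The delicate point is that the trace pairing on $\mathcal{P}_{2,+}$ is not just additive in the biprojection lattice: the angle and the overlap $\tr(b_{e_i} \circ b_{e_j})$ must be controlled. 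Here I would invoke the formal-angle machinery of \S\ref{sec:Angle}, together with the biprojection exchange relations, to show that under distributivity the intersection pairings are forced into a combinatorial pattern that is inconsistent with full coverage of the identity by the coatoms.

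The final paragraph would then package this into the positive statement: at least one minimal projection $p$ escapes every coatom, and by the definition of $b(p)$ it must generate the identity biprojection. The hard part, as indicated, is not the lattice combinatorics but controlling the non-commutative overlap terms $\tr(b_{e_i} \circ b_{e_j})$; I expect this is where the planar-algebraic input of \cite{Pal20} genuinely goes beyond a formal translation of Ore's proof, and where the technical heart of the argument lies.
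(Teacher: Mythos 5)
First, a point of order: the paper does not prove Theorem~\ref{thm:OrePlanar}. It is quoted from~\cite{Pal20} as motivation for Statement~\ref{sta:Ore}, so there is no internal proof to compare yours against; I can only judge your sketch against the known argument of~\cite{Pal20} and its companion~\cite{Pal18}. Your overall flavour — a M\"obius/inclusion--exclusion argument over the coatoms, with the overlap terms $\tr(b_{e_i}\circ b_{e_j})$ as the technical heart — points in the right direction, but the two steps you leave open are exactly where the proposal fails as written, and they are not minor.

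Concretely: the additive identity $\tr(\id)=\sum_{S\neq\emptyset}(-1)^{|S|+1}\tr\bigl(\bigwedge_{i\in S}b_{e_i}\bigr)$ that you want to contradict is not available. The join in the biprojection lattice is the biprojection \emph{generated} by $b_{e_i}$ and $b_{e_j}$, not their join as projections, so $\tr$ is not a valuation on this lattice; Landau's theorem (Theorem~\ref{thm:landau}, Corollary~\ref{cor:landau}) gives the \emph{multiplicative} relation $\tr(b_{AB})=\tr(b_A)\tr(b_B)/\tr(b_A\circ b_B)$, and nothing additive. Your covering step is also too weak: a finite union of proper projections in a multi-matrix algebra can dominate every minimal projection (take the block identities), so ``$\id$ is supported on $\bigcup_i e_i$'' produces no contradiction by itself — one needs a \emph{weighted} count. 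That weighted count is precisely the Euler-totient criterion of~\cite{Pal18} (recalled as Theorem~\ref{thm:Euler}): if $\sum_b\mu(b,\id)\,\tr(b)\neq 0$ then a generating minimal $2$-box projection exists. The argument of~\cite{Pal20} then uses distributivity only through the fact that the interval from the meet of the coatoms up to $\id$ is Boolean, so the M\"obius sum collapses to an alternating sum over subsets of coatoms, and shows that this alternating sum is nonzero via a multiplicative inequality relating $\tr(b_{A\vee B})$, $\tr(b_{A\wedge B})$, $\tr(b_A)$, $\tr(b_B)$ — exactly the chain $\tr(b_{A+B})\le\tr(b_{AB})$, $\tr(b_A\circ b_B)\ge\tr(b_{A\cap B})$ developed in \S\ref{sec:Angle}. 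Without (i) the M\"obius-weighted generation criterion, (ii) the reduction to the top Boolean interval, and (iii) the multiplicative index inequality, your step three cannot be completed, so the proposal has a genuine gap rather than a merely technical one.
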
 

We propose the following generalization, assuming a positive answer to Question~\ref{qu:allfinitetensor}:
\begin{statement} \label{sta:Ore}
Let \( X \) be a connected Frobenius algebra object in a tensor category \( \mathcal{C} \), and assume that every coherent sublattice of its Frobenius subalgebra poset is distributive. Then there exists a minimal idempotent in \( \End_{\mathcal{C}}(X) \) that generates \( b_X = \id_X \).
\end{statement}
Since sublattices of distributive lattices are themselves distributive, it follows that if the Frobenius subalgebra poset of \( X \) is a distributive lattice, then every coherent sublattice is distributive. Motivated by this observation, we propose the following alternative to Statement~\ref{sta:Ore}: 
\begin{statement} \label{sta:Ore2}
Let \( X \) be a connected Frobenius algebra object in a tensor category \( \mathcal{C} \), and assume that its Frobenius subalgebra poset is a finite distributive lattice. Then there exists a minimal idempotent in \( \End_{\mathcal{C}}(X) \) that generates \( b_X = \id_X \).
\end{statement}


%
According to the ordering described before Lemma \ref{lem:idemposet}, an idempotent \( p \) is called \emph{minimal} if, for any nonzero idempotent \( q \) satisfying \( p \circ q = q = q \circ p \), it necessarily follows that \( p = q \). Moreover, by Proposition \ref{prop:biprojection}, a \emph{biprojection} is defined as the idempotent \( b_Y \) corresponding to a Frobenius subalgebra \( Y \subset X \). The biprojection \emph{generated} by an idempotent \( p \) is the minimal biprojection \( b \) such that \( b \ge p \).  

It would be valuable to establish a tensor-categorical analogue of Bisch’s theorem ~\cite[Theorem~3.2]{Bi94} (see also~\cite[Proposition~3.6]{BJ00}, \cite[Theorem~2 and Corollary~2.1]{Lan02}, and \cite[Theorem~4.12]{Liu16}), as this would yield a more transparent characterization of (generated) biprojections. The objective can be expressed through the following reformulation:

\begin{statement} \label{sta:liu}
A selfdual idempotent \( b \) corresponds to a Frobenius subalgebra if and only if \( b \ast b = \lambda b \), for some nonzero scalar \( \lambda \).
\end{statement}

%

\subsection{Dimension} \label{sub:Dim}

By \cite[Proposition 4.5.4]{EGNO15}, the Grothendieck ring of a tensor category is a transitive unital $\mathbb{Z}_+$-ring. Furthermore, by \cite[\S 3]{EGNO15}, any transitive unital $\mathbb{Z}_+$-ring of finite rank admits a unique unital ring homomorphism, the Frobenius-Perron dimension ($\FPdim$), which maps the basis elements to positive real numbers. However, without the finite-rank assumption, such a positive dimension function may not always exist. 

\begin{definition}  
A \emph{dimension} of a unital $\mathbb{Z}_+$-ring $R$ with basis $B$ is a unital ring homomorphism $\dim: R \to \mathbb{C}$, meaning a one-dimensional unital representation of $R$. A dimension is called \emph{positive} if $\dim(B) \subset \mathbb{R}_{>0}$.  
\end{definition}  

Given a unital $\mathbb{Z}_+$-ring $R$ with fusion data $(N_{i,j}^k)$, finding a dimension amounts to solving  
\[
\hspace*{-6cm} \text{(dimension equations)} \hspace*{2cm} d_i d_j = \sum_k N_{i,j}^k d_k, 
\]
with the normalization $d_1 = 1$ for the unit. Throughout this section, a tensor category is said to have a dimension $\dim$ if its Grothendieck ring does. Moreover, for a positive dimension, we will use the shorthand notation $|\cdot|$ instead of $\dim(\cdot)$, where $\cdot$ can refer to a basic element of the ring or an object of the category.

\begin{question}  
Let $R$ be a transitive unital $\mathbb{Z}_+$-ring with basis $(b_i)_{i \in I}$ and fusion data $(N_{i,j}^k)$ (or, if necessary, assume $R$ is the Grothendieck ring of a tensor category). Suppose that the fusion matrices $M_i = (N_{i,j}^k)_{k,j}$, acting on the Hilbert space $H = \ell^2(B)$, are bounded operators. Does there exist a positive dimension function $ |\cdot| $ on $ R $? If so, can it be chosen so that $ |b_i| = \| M_i \| $, where $ \| \cdot \| $ denotes the $ \ell^2 $-operator norm?
\end{question}  

The above question has an affirmative answer in the finite-rank case, as established by the Frobenius-Perron theorem. It should also hold if $ R $ is a limit of $\mathbb{Z}_+$-rings of finite rank.

A tensor category associated with a finite-index subfactor (refer to Example \ref{ex:subf}) qualifies as a finite tensor category if and only if the subfactor has a finite depth. In this case, the index is given by $\FPdim(X)$, where $ X $ is the corresponding Frobenius algebra object. In the infinite-depth case, a positive dimension still exist by Proposition \ref{prop:C*positive}.



\subsection{Euler's totient}  \label{sub:Euler} 

The traditional Euler's totient function, denoted as $\varphi(n)$, counts the number of positive integers up to $n$ that are relatively prime to $n$. Let $G$ be a finite group and $\mu$ the M\"obius function associated with its subgroup lattice $\mathcal{L}(G)$. In 1936, Philip Hall established in \cite{Hal36} that the Euler totient of a group $G$, as defined below, corresponds to the cardinality of the set $\{g \in G \ | \ \langle g \rangle = G\}$:
$$
\varphi(G) := \sum_{H \in \mathcal{L}(G)} \mu(H,G) |H|.
$$
Consequently, if $\varphi(G)$ is nonzero, then $G$ must be cyclic, and it follows that $\varphi(C_n) = \varphi(n)$. This result has been generalized to planar algebras as follows. 
\begin{theorem}[\cite{Pal18}] \label{thm:Euler}  
Let $\mathcal{P}$ be an irreducible subfactor planar algebra, and let $\mu$ denote the M\"obius function of its biprojection lattice $[e_1,\id]$. We introduce the Euler totient of the planar algebra $\mathcal{P}$ as follows:
$$
\varphi(\mathcal{P}) := \sum_{b \in [e_1,\id]} \mu(b,\id) |b:e_1|.
$$
If $\varphi(\mathcal{P})$ is nonzero, then there exists a minimal $2$-box projection that generates the identity biprojection.
\end{theorem}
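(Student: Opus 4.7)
The plan is to transpose Hall's classical argument — which in the group setting expresses $\varphi(G)$ as the number of elements of $G$ generating the whole group — to the planar algebra setting. The dictionary is: minimal $2$-box projections play the rôle of elements of the group; biprojections play the rôle of subgroups; the operator $p \mapsto \langle p \rangle$ sending a projection to the smallest biprojection dominating it replaces $g \mapsto \langle g \rangle$; and the relative index $|b:e_1|$ replaces the cardinality $|H|$. Hall's identity $|H| = |\{g : \langle g \rangle \le H\}|$ is then what one must establish in the new setting, after which Möbius inversion automatically yields the conclusion.

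Concretely, I would define for every biprojection $b \in [e_1,\id]$ a nonnegative quantity $f(b)$ intended to measure, in an appropriately weighted tracial sense, the minimal $2$-box projections $p$ with $\langle p \rangle = b$. The natural candidate is the sum of $\tr(p)/\tr(b)$ over the minimal $p$ with $\langle p \rangle = b$, up to normalization. The decisive step is then to prove the key identity
\[
\sum_{b' \in [e_1,b]} f(b') \;=\; |b:e_1| \qquad \text{for every } b \in [e_1,\id],
\]
which is the exact counterpart of Hall's $|H| = |\{g : \langle g \rangle \le H\}|$. Once this identity is in hand, Möbius inversion on the biprojection lattice $[e_1,\id]$ — which is finite by (the planar-algebraic form of) Watatani's theorem, recovered here as Theorem \ref{thm:allfinite} — yields
\[
f(\id) \;=\; \sum_{b \in [e_1,\id]} \mu(b,\id)\,|b:e_1| \;=\; \varphi(\mathcal{P}).
\]
If $\varphi(\mathcal{P}) \ne 0$, then $f(\id) \ne 0$, and since $f(\id)$ is a sum of nonnegative contributions indexed by the minimal $2$-box projections $p$ with $\langle p \rangle = \id$, at least one such $p$ must exist.

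The main obstacle is the key identity $\sum_{b' \le b} f(b') = |b:e_1|$, which is a completeness statement expressing the relative index as the total weighted mass of minimal $2$-box projections supported inside $b$. Carrying it out rigorously requires two ingredients: first, that every projection in the $2$-box algebra attached to $b$ decomposes as a sum of minimal ones, so that summing $f(b')$ over $b' \le b$ indeed reconstructs the trace of the biprojection; and second, a convolution / exchange-relation computation in the spirit of Corollary \ref{cor:landau} (applied inside the Frobenius subalgebra corresponding to $b$) that identifies this trace with $|b:e_1|$. The most delicate point is handling minimal projections $p$ whose $\langle p \rangle$ is strictly less than $b$: they must be absorbed into the strata corresponding to proper subbiprojections without over- or undercounting, which is precisely what Möbius inversion is designed to resolve. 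This is the technical heart of the argument in \cite{Pal18}, and the rest of the proof then proceeds by bookkeeping along the finite chain $[e_1,\id]$.
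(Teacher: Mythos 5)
First, a framing remark: this paper does not prove Theorem~\ref{thm:Euler} --- it is imported verbatim from \cite{Pal18} and stated only to motivate Statement~\ref{sta:Euler} --- so there is no in-paper proof to compare against, and your proposal has to be judged as a reconstruction of the argument of \cite{Pal18}. As such it contains a genuine gap, concentrated in the definition of $f(b)$ and in the ``key identity''. In Hall's argument the strata $\{g : \langle g\rangle = H\}$ form a finite partition of the finite set $G$, and $|H|$ is literally the cardinality of $\bigcup_{K\le H}\{g:\langle g\rangle = K\}$. In the planar-algebra setting the minimal projections of $\mathcal{P}_2$ do \emph{not} form a finite set: any matrix block of size at least $2$ contains a continuum of rank-one projections, generically all generating the same biprojection, so ``the sum of $\tr(p)/\tr(b)$ over the minimal $p$ with $\langle p\rangle=b$'' is an uncountable sum and is simply not defined. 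Nor can you repair this by fixing a decomposition $\id=\sum_i u_i$ into minimal projections and stratifying the $u_i$ by $\langle u_i\rangle$: such decompositions are highly non-unique, and the resulting strata (hence your $f(b)$) depend on the choice, whereas $|b:e_1|$ is determined by $\tr(b)$ alone. This is exactly the obstruction that prevents a literal transposition of Hall's proof; it is also why the theorem is stated with ``$\varphi(\mathcal{P})$ nonzero'' rather than ``positive'' --- if $f(\id)=\varphi(\mathcal{P})$ really were a sum of nonnegative contributions indexed by minimal projections, $\varphi(\mathcal{P})\ge 0$ would follow, and no such claim is made.

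Consequently the identity $\sum_{b'\le b}f(b')=|b:e_1|$ is not a lemma one can hope to prove for your candidate $f$; and if you instead \emph{define} $f$ by M\"obius inversion so that the identity holds by fiat, then the entire content of the theorem --- that $f(\id)\neq 0$ forces the existence of a minimal $2$-box projection generating $\id$ --- is left unaddressed, because nothing then ties $f$ to minimal projections at all. Any correct argument has to route around the counting problem: one works by contrapositive, assuming no minimal projection generates $\id$, uses the finiteness of the biprojection lattice (classical Watatani suffices here; Theorem~\ref{thm:allfinite} is its later generalization) to pass to the finitely many maximal proper biprojections, exploits the fact that a finite-dimensional vector space is not a finite union of proper subspaces to control which central blocks of $\mathcal{P}_2$ they can cover, and evaluates the alternating sum via trace identities of exchange-relation/Landau type as in Corollary~\ref{cor:landau}. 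Your outline correctly locates the difficulty, but the step it defers is the theorem itself, and the specific mechanism you propose for closing it (a weighted count of minimal projections) cannot work as stated.
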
  

We propose the following generalization, which is subject to the same considerations outlined in \S \ref{sub:Ore}.

\begin{statement} \label{sta:Euler}
Let $ X $ be a connected Frobenius algebra object in a tensor category $ \mathcal{C} $ with dimension $ \dim $ (as described in \S \ref{sub:Dim}). Let $ \mu_{\mathcal{L}} $ represent the M\"obius function of a coherent sublattice $\mathcal{L}$ of the Frobenius subalgebra poset $[b_\one, b_X]$. We define the Euler totient of $ X $ with respect to $\mathcal{L}$ as follows:
$$
\varphi_{\mathcal{L}}(X) := \sum_{b_Y \in \mathcal{L}} \mu_{\mathcal{L}}(b_Y,b_X) \dim(Y).
$$
If $\varphi_{\mathcal{L}}(X)$ is nonzero for every coherent sublattice $\mathcal{L}$, then there is a minimal idempotent in $\End_{\mathcal C}(X)$ generating $b_X$. 
\end{statement}


\begin{question}
When $\varphi_{\mathcal{L}}(X)$ is nonzero for every coherent sublattice $\mathcal{L}$, can we also deduce that there exists a \emph{faithful} simple component $S$ of $X$, meaning that $X$ is a subobject of $S^{\otimes n}$ for sufficiently large $n$? 
\end{question}

The fact that $\varphi(n)$ is nonzero for any positive integer $n$ prompts the following speculation:
\begin{speculation} \label{spe:Euler}
If $[b_\one, b_X]$ is a distributive lattice, then $\varphi_{\mathcal{L}}(X)$ is nonzero for every coherent sublattice ${\mathcal{L}}$.
\end{speculation}  

We now present an application of Statement~\ref{sta:Euler}:
\begin{statement}
The minimal number of minimal idempotents required to generate \( b_X \) is bounded above by the minimal length \( \ell \) of a chain
\[
b_1 < b_2 < \cdots < b_{\ell+1} = b_X,
\]
such that \( \varphi_{\mathcal{L}}(b_i, b_{i+1}) \), defined analogously to~\cite[Definition~4.3]{Pal18}, is nonzero for every \( i \) and every \( \mathcal{L} \).
\end{statement}



\subsection{Depth} \label{sub:depth}
We aim to expand the concept of subfactor depth. Consider a Frobenius algebra object $ X $ in an abelian monoidal category $ \mathcal{C} $. We say that $ X $ has \emph{finite-depth} if the abelian monoidal subcategory it generates (which we can assume to be $ \mathcal{C} $ without loss of generality) contains a finite number of simple objects, up to isomorphism; otherwise, we call $ X $ \emph{infinite-depth}.
As stated in Remark \ref{rk:MugConverse}, we can express $ X $ as $ X = Y \circ Y^{\vee} $ for some object $ Y $ within a bimodule category, as detailed in Proposition \ref{prop:bimodYY}. In the finite-depth case, we define the \emph{depth} of $ X $ as the largest integer $ n = 2k $ or $ 2k+1 $ such that $ X^{\otimes k} $ or $ X^{\otimes k} \otimes Y $ has a new simple subquotient, up to isomorphism.

\begin{speculation} \label{spe:DepthWellDefined}
The concept of depth, as defined above, is independent of the choice of $ Y $. 
\end{speculation}



\begin{speculation} \label{spe:MaxDepth}
Let $ X_i $ be a Frobenius algebra in $\mathcal{C}_i$, with $i=1,2$. Then, $ X_1 \boxtimes X_2 $ is a Frobenius algebra in the Deligne tensor product $\mathcal{C}_1 \boxtimes \mathcal{C}_2$, and its depth is the maximum of the depths of $ X_1 $ and $ X_2 $.
\end{speculation}

According to \cite{Szy94}, a finite index irreducible subfactor is a Hopf $C^*$-algebra subfactor if and only if it has depth $2$. Drawing inspiration from this and Morita contexts of depth $2$ in \cite{Mug03}, we propose the following speculation:

\begin{speculation} \label{spe:fiber}
A finite tensor category admits a fiber functor if and only if it is generated by a connected Frobenius algebra of depth $2$.
\end{speculation}

Recall by \cite[Theorem 5.3.12]{EGNO15} that a finite tensor category admits a fiber functor if and only if it is equivalent to $\Rep(H)$ for some finite dimensional Hopf algebra $H$ (and see \cite[Theorem 5.4.1]{EGNO15} for the infinite case).

\subsection{Riemann hypothesis} \label{sub:RH} 
The sigma function $\sigma(n):=\sum_{d | n} d$ is defined as the sum of the positive divisors of $n$. Let $\gamma$ denote the Euler–Mascheroni constant. Then, we have \cite[Theorem 323]{HW08}:
$$
\limsup_{n \to \infty} \frac{\sigma(n)}{n \log \log n} = e^{\gamma}.
$$
In 1984, Guy Robin proved in \cite{Rob84} that the Riemann Hypothesis (RH) is true if and only if, for sufficiently large $n$,
$$
\sigma(n) < e^{\gamma} n \log \log n.
$$
Let $ X $ be a connected Frobenius algebra object in a tensor category $ \mathcal{C} $ with a positive dimension $ |\cdot| $ (see \S \ref{sub:Dim}). We define the sigma function of $ X $ as follows:  
$$ 
\sigma(X) := \sum_{d \in \mathcal{D}(X)} d, 
$$
where $ \mathcal{D}(X) $ is the divisor set of $ X $, defined as 
\[
\mathcal{D}(X) := \big\{ |Y| \;\big|\; Y \text{ is a Frobenius subalgebra of } X \big\}.
\] 
The set $ \mathcal{D}(X) $ is finite because the Grothendieck ring of a tensor category is a $ \mathbb{Z}_+ $-ring, as established more generally for multiring categories in \cite[\S 4.5]{EGNO15}, and $ |Y| $ depends only on the isomorphism class of $ Y $.   

We aim to extend the Riemann Hypothesis (RH) using Robin's reformulation within the class $ \mathfrak{C}_d $ of pairs $ (X, \mathcal{C}) $, where $ X $ is a connected Frobenius algebra object of finite depth $ d $ within the tensor category $ \mathcal{C} $ that it generates. By the definition of finite depth (see \S \ref{sub:depth}), $ \mathcal{C} $ is a finite tensor category and, as such, admits a positive dimension function $ |\cdot| = \FPdim $ (see \S \ref{sub:Dim}). This approach generalizes the quantum Riemann Hypothesis proposed in \cite{PalRHr}, which refines an earlier version from \cite{PalRH} that was subsequently disproven.  Let us speculate a (RH) of depth $ d $, denoted (RH$_d$).
\begin{speculation} \label{spe:RH}
For all $d \ge 2$, there is a constant $\gamma_d$ such that:
$$
\limsup_{(X,\mathcal{C}) \in \mathfrak{C}_d, \ |X| \to \infty} \frac{\sigma(X)}{|X| \log \log |X|}  = e^{\gamma_d},
$$
Furthermore, for all $(X,\mathcal{C}) \in \mathfrak{C}_d$ with $|X|$ large enough,
$$
\sigma(X) < e^{\gamma_d} |X| \log \log |X|.
$$
\end{speculation}

\begin{remark} \label{rk:RHinfty}
To speculate on an infinite-depth (RH), our approach would require restricting to $ X $ that generates a tensor category with a positive dimension $ |\cdot| $ (see \S \ref{sub:Dim}).
 \end{remark}

Let $ d \geq 2 $ be a positive integer, and let $ \mathcal{I}_d $ denote the set $ \big\{ |X| \;\big|\; (X,\mathcal{C}) \in \mathfrak{C}_d \big\} $. Using Speculation \ref{spe:MaxDepth}:
\begin{speculation} \label{spe:acc}
The (RH$_d$) as stated in Speculation \ref{spe:RH} implies that $\mathcal{I}_d$ is countable and has no accumulation points, and that $\gamma_d$ is strictly increasing in $d$.
\end{speculation}

\begin{question}
Can we further infer from (RH$_d$) that there is a minimum gap length in $\mathcal{I}_d$?
\end{question}

The following holds true for irreducible depth 2 subfactors, by Ore's theorem and the proof of Proposition \ref{prop:d2}.

\begin{speculation}  \label{spe:RHc}
Speculation \ref{spe:RH} can be simplified by focusing on the subclass $\mathfrak{D}_d$ which consists of $(X, \mathcal{C}) \in \mathfrak{C}_d$ where the Frobenius subalgebra poset of $X$ is a distributive lattice.
\end{speculation}



Given an intermediate subfactor $ N \subset P \subset M $, we obtain two subfactors: $ N \subset P $ and $ P \subset M $. However, the Frobenius subalgebra associated with the intermediate subfactor corresponds to $ N \subset P $ only. It is necessary to explore the categorical generalization of $ P \subset M $, and in the finite-depth case, the algebraic integer $ \frac{|M:N|}{|M:P|} = |P:N| $. 

\begin{speculation}  \label{spe:AlgInt}
Let $ Y $ be a Frobenius subalgebra of a finite-depth Frobenius algebra $ X $ in $ \mathcal{C} $. Then $ \frac{|X|}{|Y|} $ is an algebraic integer.
\end{speculation}

\begin{proposition} \label{prop:d2}
Assuming Speculations \ref{spe:fiber} and \ref{spe:AlgInt}, (RH$_2$) is equivalent to (RH).
\end{proposition}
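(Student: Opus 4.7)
The plan is to reduce (RH$_2$) to classical Robin inequalities via the fiber functor supplied by Speculation \ref{spe:fiber}, and then to construct for each positive integer $n$ an explicit tight example in $\mathfrak{C}_2$ saturating $\sigma(n)$.

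\textbf{First reduction.} Let $(X,\mathcal{C}) \in \mathfrak{C}_2$. By Speculation \ref{spe:fiber} the category $\mathcal{C}$ admits a fiber functor $F : \mathcal{C} \to \VVec$, and by \cite[Theorem 5.3.12]{EGNO15} we have $\mathcal{C} \simeq \Rep(H)$ for some finite-dimensional Hopf algebra $H$; moreover the positive dimension $|\cdot|$ coincides with $\dim \circ F$, so $|Y| \in \mathbb{Z}_{>0}$ for every object $Y$. Setting $n := |X|$ and letting $Y$ be any Frobenius subalgebra of $X$, Speculation \ref{spe:AlgInt} guarantees that $|X|/|Y|$ is an algebraic integer; being also a positive rational, it is a positive integer. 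Hence $|Y|$ divides $n$, so $\mathcal{D}(X) \subseteq \{d \in \mathbb{Z}_{>0} : d \mid n\}$ and $\sigma(X) \leq \sigma(n)$.

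\textbf{Construction of tight examples.} For every $n \geq 1$ I would take $H_n := \mathbb{C}[\mathbb{Z}/n\mathbb{Z}]$, $\mathcal{C}_n := \Rep(H_n)$, and $X_n := H_n^*$. By Theorem \ref{thm:HopfFrobRep}, $X_n$ is a connected Frobenius algebra in $\mathcal{C}_n$ with $|X_n| = n$; since $X_n$ is the regular representation, it generates $\mathcal{C}_n$, and the existence of the obvious forgetful fiber functor together with Speculation \ref{spe:fiber} places $(X_n, \mathcal{C}_n) \in \mathfrak{C}_2$. Because $H_n$ is the group algebra of an abelian group, \cite[Theorem 4.4]{ILP98} identifies the unitary Frobenius subalgebras of $X_n$ with the coideal subalgebras $\mathrm{Fun}((\mathbb{Z}/n)/K) \subset H_n^*$ indexed by subgroups $K \leq \mathbb{Z}/n$, whose dimensions range over all divisors of $n$. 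Therefore $\mathcal{D}(X_n) = \{d : d \mid n\}$ and $\sigma(X_n) = \sigma(n)$; the associated lattice is the divisor lattice, which is distributive (consistent with Ore's theorem since $\mathbb{Z}/n$ is cyclic), so $(X_n, \mathcal{C}_n) \in \mathfrak{D}_2$.

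\textbf{Conclusion.} Combining the two steps with the classical identity $\limsup_{n \to \infty} \sigma(n)/(n \log\log n) = e^\gamma$ forces $\gamma_2 = \gamma$ in Speculation \ref{spe:RH}: the sequence $(X_n, \mathcal{C}_n)$ realises $e^\gamma$ as a lower bound for $e^{\gamma_2}$, while the pointwise inequality $\sigma(X) \leq \sigma(|X|)$ supplies the matching upper bound. Assuming (RH), Robin's inequality $\sigma(n) < e^\gamma n \log\log n$ for $n$ large combined with the first reduction yields (RH$_2$); conversely, assuming (RH$_2$), testing on $(X_n, \mathcal{C}_n)$ for large $n$ recovers Robin's inequality, which by \cite{Rob84} is equivalent to (RH).

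\textbf{Main obstacle.} The most delicate point is ensuring that no ``exotic'' Frobenius subalgebra of $X_n$---one lying outside the coideal subalgebras arising from group quotients---contributes to $\sigma(X_n)$. In the unitary case this is handled by \cite{ILP98}, but a fully self-contained argument would essentially amount to settling the analogue of Question \ref{Q:LeftCoidealFrobRep} for cyclic groups, where commutativity should make the classification tractable. Everything else is routine bookkeeping once Speculations \ref{spe:fiber} and \ref{spe:AlgInt} are granted.
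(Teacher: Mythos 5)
Your proposal is correct and follows essentially the same route as the paper: the upper bound $\sigma(X)\le\sigma(|X|)$ via integrality of $|X|/|Y|$ from the two speculations, and tightness via the cyclic example (the paper phrases it as the irreducible cyclic group subfactor of index $n$, which is the same object as your $H_n^*$ in $\Rep(\mathbb{C}[\mathbb{Z}/n\mathbb{Z}])$, with Frobenius subalgebras corresponding to subgroups of $C_n$). Your closing worry about exotic Frobenius subalgebras of $X_n$ is legitimate but applies equally to the paper's one-line assertion of the divisor--subgroup--subalgebra correspondence, which in the unitary subfactor picture is settled by the Galois correspondence for crossed products.
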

\begin{proof}
In this case, $ \mathcal{D}(X) $ is a subset of $ \mathcal{D}(|X|) $, the set of divisors of the integer $|X|$. Consequently, $\sigma(X) \le \sigma(|X|)$. However, equality is attained when $ X $ is the connected Frobenius algebra associated with the irreducible cyclic group subfactor of index $ n = |X| $. This is due to the one-to-one correspondence among the divisors $ m $ of $ n $, the subgroups $ C_m $ of $ C_n $, and the Frobenius subalgebras.
\end{proof}




\section*{Acknowledgments}
We thank the anonymous referee for the insightful comments that improved the paper. We extend our gratitude to Keshab Chandra Bakshi, Dave Benson, Pavel Etingof, Jürgen Fuchs, Shamindra Ghosh, Dave Penneys, Maxime Ramzi, Ingo Runkel, Will Sawin, Kenichi Shimizu, Brian Shin, and Serge Skryabin for insightful discussions. The names are listed in alphabetical order by first name. Mainak Ghosh's work is supported by the BJNSF (Grant No. 1S24063). Sebastien Palcoux's work is supported by the NSFC (Grant No. 12471031).

\end{document}